\newtheorem{theorem}{Theorem}
\numberwithin{theorem}{section}
\newtheorem{corollary}[theorem]{Corollary}
\newtheorem{lemma}[theorem]{Lemma}
\newtheorem{proposition}[theorem]{Proposition}
{\theoremstyle{definition}
\newtheorem{definition}[theorem]{Definition}
\newtheorem{remark}[theorem]{Remark}

\newtheorem{assumption}[theorem]{Standing Assumption}}
\newcommand{\atrs}{\mathbf{ATR_0^{\operatorname{set}}}}
\newcommand{\rca}{\mathbf{RCA_0}}
\newcommand{\supp}{\operatorname{supp}}
\newcommand{\suppl}{\operatorname{supp}^{\mathbf L}}
\newcommand{\supps}{\operatorname{supp}^{\operatorname{S}}}
\newcommand{\suppe}{\operatorname{supp}^{\operatorname{\varepsilon(S)}}}
\newcommand{\dom}{\operatorname{dom}}
\newcommand{\rng}{\operatorname{rng}}
\newcommand{\lef}{<^{\operatorname{fin}}}
\newcommand{\leqf}{\leq^{\operatorname{fin}}}
\newcommand{\ordi}{\operatorname{Ord}}
\newcommand{\en}{\operatorname{en}}
\newcommand{\code}{\operatorname{code}}
\newcommand{\hth}{\operatorname{ht}}
\newcommand{\ax}{\operatorname{Ax}}
\newcommand{\len}{\operatorname{len}}
\newcommand{\true}{\operatorname{True}}
\newcommand{\cut}{\operatorname{Cut}}
\newcommand{\refl}{\operatorname{Ref}}
\newcommand{\rep}{\operatorname{Rep}}
\newcommand{\rk}{\operatorname{rk}}
\title[$\Pi^1_1$-Comprehension as a Well-Ordering Principle]{$\Pi^1_1$-Comprehension as a Well-Ordering Principle\footnotemark[1]}
\author{Anton Freund}
\address{Anton Freund, Fachbereich Mathematik, Technische Universit\"at Darmstadt, Schloss\-gartenstr.~7, 64289 Darmstadt, Germany}
\begin{document}

\begin{abstract}
A dilator is a particularly uniform transformation $X\mapsto T_X$ of linear orders that preserves well-foundedness. We say that $X$ is a Bachmann-Howard fixed point of~$T$ if there is an almost order preserving collapsing function $\vartheta:T_X\rightarrow X$ (precise definition to follow). In the present paper we show that $\Pi^1_1$-comprehension is equivalent to the assertion that every dilator has a well-founded Bachmann-Howard fixed point. This proves a conjecture of M.~Rathjen and A.~Montalb{\'a}n.
\end{abstract}

\keywords{Well-Ordering Principles, $\Pi^1_1$-Comprehension, Admissible Sets, Dilators, Ordinal Analysis, Reverse Mathematics}
\subjclass[2010]{03B30, 03D60, 03F15}

\maketitle
{\let\thefootnote\relax\footnotetext{\copyright~2019. This manuscript version is made available under the CC-BY-NC-ND 4.0 license \url{http://creativecommons.org/licenses/by-nc-nd/4.0/}. The paper has been accepted for publication in Advances in Mathematics (doi:10.1016/j.aim.2019.106767).}}

\section{Introduction}

The present work rests on the idea that set existence axioms can be split into computationally simple transformations of linear orders and statements about the preservation of well-foundedness. The first example of this phenomenon was discovered by Girard~\cite[Theorem~5.4.1]{girard87}: Given an order~$X$, consider the set
\begin{equation*}
 2^X=\{\langle x_0,\dots,x_{n-1}\rangle\,|\,x_{n-1}<_X\dots <_X x_0\}
\end{equation*}
with the lexicographic order. Then the statement ``if $X$ is well-founded, then $2^X$ is well-founded as well'' is equivalent to arithmetical comprehension. The literature now contains many results of the same type, which characterize transfinite iterations of the Turing jump~\cite{marcone-montalban}, arithmetical transfinite recursion \cite{friedman-mw,rathjen-weiermann-atr,marcone-montalban}, $\omega$-models of arithmetical transfinite recursion~\cite{rathjen-atr}, $\omega$-models of bar induction~\cite{rathjen-model-bi}, and $\omega$-models of $\Pi^1_1$-comprehension with~\cite{thomson-thesis} and without \cite{thomson-rathjen-Pi-1-1} bar induction. For strong set existence axioms the corresponding transformations of orders are harder to grasp but nevertheless computable. There is no limit on the consistency strength of set existence axioms that can be characterized in this way, at least in principle. On the other hand, there is a limitation in terms of logical complexity: For any computable transformation of linear orders, preservation of well-foundedness is expressed by a $\Pi^1_2$-statement. Thus a genuine $\Pi^1_3$-statement, such as the principle of $\Pi^1_1$-comprehension, cannot be equivalent to an assertion of this form. The limitation arises because we have only considered well-ordering principles of type one, i.e.~transformations of well-orders into well-orders. Rathjen~\cite{rathjen-wops-chicago,rathjen-atr} and Montalb\'an~\cite{montalban-draft,montalban-open-problems} have conjectured that $\Pi^1_1$-comprehension is equivalent to a well-ordering principle of type two. Such a principle should transform a well-ordering principle of type one into a well-order (or into another well-ordering principle of type one, but the type of the codomain can be lowered by Currying). In the present paper we prove Rathjen and Montalb\'an's conjecture.

The type-two well-ordering principle that we will introduce can only take particularly uniform type-one well-ordering principles as input. In order to state the uniformity conditions, we consider the category of linear orders with order embeddings as morphisms. We will omit the forgetful functor from an order to its underlying set. Conversely, a subset of an ordered set will often be considered as a suborder. For a set $X$ we define
\begin{equation*}
 [X]^{<\omega}=\text{``the set of finite subsets of $X$''}.
\end{equation*}
To get a functor we map $f:X\rightarrow Y$ to the function $[f]^{<\omega}:[X]^{<\omega}\rightarrow[Y]^{<\omega}$ with
\begin{equation*}
 [f]^{<\omega}(a)=\{f(s)\,|\,s\in a\}.
\end{equation*}
The following class of type-one well-ordering principles has been singled out by Girard~\cite{girard-pi2} (cf.~also Jervell's~\cite{herbrand-symposium-stern} related notion of homogeneous tree):

\begin{definition}\label{def:prae-dilator}
 A prae-dilator consists of
 \begin{enumerate}[label=(\roman*)]
  \item an endofunctor $T$ of linear orders and
  \item a natural transformation $\supp^T:T\Rightarrow[\cdot]^{<\omega}$ that computes supports, in the sense that any $\sigma\in T_X$ lies in the range of $T_{\iota_\sigma}$, where $\iota_\sigma:\supp^T_X(\sigma)\hookrightarrow X$ is the inclusion.
 \end{enumerate}
 If $T_X$ is well-founded for every well-order $X$, then $(T,\supp^T)$ is called a dilator.
\end{definition}

Girard's notion of pre-dilator (note the different spelling) involves an additional monotonicity condition, which is automatic in the well-founded case, i.e.~for dilators. The natural transformation $\supp^T$ does not appear in Girard's original definition: Instead, Girard demands that (prae-)dilators preserve direct limits and pullbacks. This requirement is equivalent to the existence of (unique and thus natural) support functions, as verified in~\cite[Remark~2.2.2]{freund-thesis}. We will see that it is still very fruitful to make the supports explicit. Also note that Girard defines dilators as endofunctors on the category of ordinals, rather than arbitrary well-orders. This is convenient since isomorphic ordinals are equal. Nevertheless, we do not wish to adopt this restriction, since the Mostowski collapse of arbitrary well-orders is not available in weak set theories.

In order to state our well-ordering principle of type two we need some more terminology: If $(X,<_X)$ is a linear order (or just a preorder), then we define a preorder $\lef_X$ on $[X]^{<\omega}$ by stipulating
\begin{equation*}
 a\lef_X b\quad:\Leftrightarrow\quad\text{``for any $s\in a$ there is a $t\in b$ with $s<_X t$''.}
\end{equation*}
For singletons we write $s\lef_X b$ and $a\lef_X t$ rather than $\{s\}\lef_X b$ resp.~$a\lef_X \{t\}$. In the same manner we define a relation $\leq^{\operatorname{fin}}_X$. We can now introduce the central concept of our investigation (a similar definition can be found in the author's PhD thesis~\cite{freund-thesis} and a preliminary study~\cite{freund-bh-preprint} for the latter):

\begin{definition}\label{def:bachmann-howard-collapse}
 Consider a prae-dilator $(T,\supp^T)$ and an order $X$. A function
 \begin{equation*}
  \vartheta:T_X\rightarrow X
 \end{equation*}
 is called a Bachmann-Howard collapse if the following holds for all $\sigma,\tau\in T_X$:
 \begin{enumerate}[label=(\roman*)]
  \item If we have $\sigma<_{T_X}\tau$ and $\supp^T_X(\sigma)\lef_X\vartheta(\tau)$, then we have $\vartheta(\sigma)<_X\vartheta(t)$.
  \item We have $\supp^T_X(\sigma)\lef_X\vartheta(\sigma)$.
 \end{enumerate}
 If such a function exists, then $X$ is called a Bachmann-Howard fixed point of $T$.
\end{definition}

As an example, consider the transformation of an order $X$ into the set
\begin{equation*}
T_X=1+X+X=\{\bot\}\cup X\cup\{\Omega+x\,|\,x\in X\}
\end{equation*}
with the expected order relation (in particular $\bot<_{T_X}x<_{T_X}\Omega+y$ for all $x,y\in X$). It is straightforward to see that this gives rise to a dilator, where the support functions are given by $\supp^T_X(\bot)=\emptyset$ and $\supp^T_X(x)=\supp^T_X(\Omega+x)=\{x\}$. The order-type of $T_X$ is always bigger than the order-type of~$X$, so that we cannot hope for a well-order $X$ with a completely order preserving collapse $\vartheta:T_X\rightarrow X$. Instead, condition~(i) of the previous definition demands that the order is preserved under a side condition. This condition is inspired by the construction of the Bachmann-Howard ordinal, in particular by the notation system due to Rathjen (see~\cite[Section~1]{rathjen-weiermann-kruskal}). In the example of $T_X=1+X+X$ we can specify a Bachmann-Howard collapse $\vartheta:T_{\omega^\omega}\rightarrow\omega^\omega$ by setting $\vartheta(\bot)=0$, $\vartheta(\alpha)=\alpha+1$ and~$\vartheta(\Omega+\alpha)=\omega\cdot(\alpha+1)$. Conversely, if $\vartheta:T_X\rightarrow X$ is any Bachmann-Howard collapse, then we can define an order embedding $f:\omega^\omega\rightarrow X$ by stipulating $f(0)=\vartheta(\bot)$, $f(\alpha+1)=\vartheta(f(\alpha))$ and $f(\omega\cdot\alpha)=\vartheta(\Omega+f(\alpha))$ for $\alpha>0$. We will be interested in the following general principle:

\begin{definition}\label{def:abstract-bhp}
 The abstract Bachmann-Howard principle is the assertion that every dilator has a well-founded Bachmann-Howard fixed point.
\end{definition}

In order to consider the Bachmann-Howard principle from a meta-mathematical perspective we should discuss its formalization: Throughout this paper we will work in the theory $\atrs$, the set-theoretic version of arithmetical transfinite recursion due to Simpson~\cite{simpson82} (an equivalent but somewhat different axiomatization is presented in~\cite{simpson09}). This theory proves the totality of all primitive recursive set functions in the sense of Jensen and Karp~\cite{jensen-karp}. We may thus assume that a function symbol for each of these functions is present. Most constructions in the present paper will be primitive recursive (in the set-theoretic sense). Occasionally, we will need the additional axioms of $\atrs$: axiom beta, which asserts that every well-founded relation can be collapsed to the $\in$-relation; and the axiom of countability, which asserts that every set is countable. When we speak about class-sized objects (such as dilators) we have to observe two restrictions: We will only consider classes which are primitive recursive (with parameters). Furthermore, we cannot quantify over all primitive recursive classes. We can, however, quantify over each primitive recursive family $(F(u,\cdot))_{u\in\mathbb V}$ of class functions, by quantifying over the set-sized parameter~$u$. In the case of (prae-)dilators these restrictions are harmless: Due to their uniformity, dilators are essentially determined by their (set-sized) restrictions to the category of natural numbers, as shown by Girard~\cite{girard-pi2}. In~\cite{freund-computable} we build on this result to construct a single primitive recursive family that comprises isomorphic copies of all prae-dilators. This allows to express the abstract Bachmann-Howard principle by a single formula (see~\cite[Proposition~2.10]{freund-computable}). One can also represent dilators in second-order arithmetic, but this is not needed in the present paper. Having discussed the formalization of dilators, we can now state our main result (numbered according to its occurrence in the text). A similar result can be found in the author's PhD thesis~\cite{freund-thesis}, building on the earlier preprint~\cite{freund-bh-preprint}.

\newtheorem*{thm:main-abstract}{Theorem \ref{thm:main-abstract}}
\begin{thm:main-abstract}
 The following are equivalent over $\atrs$:
 \begin{enumerate}[label=(\roman*)]
  \item The principle of $\Pi^1_1$-comprehension.
  \item The statement that every set is an element of an admissible set.
  \item The abstract Bachmann-Howard principle.
 \end{enumerate}
\end{thm:main-abstract}

Recall that admissible sets are defined as transitive models of Kripke-Platek set theory. We will assume that any admissible set contains the ordinal $\omega$. Note, however, that statement (ii) would be just as strong without this assumption. The equivalence between (i) and~(ii) is known: J\"ager~\cite[Section~7]{jaeger-admissibles} has shown that representation trees for admissible sets can be constructed in $\mathbf{\Pi^1_1-CA_0}$. In \cite[Section~1.4]{freund-thesis} we have verified that $\atrs$ can transform these representation trees into actual admissible sets. The aim of the present paper is to prove the equivalence between (ii) and~(iii).

The abstract Bachmann-Howard principle appears interesting from various perspectives, including those of set theory, computability theory and proof theory: From a set-theoretic standpoint it can be read as a combinatorial version of $\Sigma$-reflection (which is the characteristic axiom of Kripke-Platek set theory). In computability theory one might ask whether the abstract Bachmann-Howard principle can be used to compute the hyperjump (just as the type-one well-ordering principle $X\mapsto 2^X$ can be used to compute the Turing jump, due to Hirst~\cite{hirst94}). For a proof theorist the theorem sheds light on the role of the Church-Kleene ordinal $\omega_1^{\operatorname{CK}}$ in the collapsing construction. More specifically, our result helps to understand an observation of Pohlers~\cite[Section~9.7]{pohlers-proof-theory}, who has shown that particular instances of collapsing can be interpreted in terms of ordinals below $\omega_1^{\operatorname{CK}}$.

The abstract Bachmann-Howard principle is attractive because of its simplicity, but this comes at a price: The transformation of input (a given dilator $T$) into output (a well-founded Bachmann-Howard fixed-point of $T$) is not underpinned by construction (the specification ``abstract'' refers to this fact). In particular the following questions are not separated: How strong is the assertion that any prae-dilator has a Bachmann-Howard fixed point? And how strong is the additional requirement that there are well-founded fixed points in the case of dilators? Thus the abstract Bachmann-Howard principle is not a well-ordering principle in the strict sense. This defect is fixed in two concurrent papers: In~\cite{freund-categorical} we show that each prae-dilator has a minimal Bachmann-Howard fixed point, which can be constructed by a primitive recursive set function. Due to its minimality, the well-foundedness of this particular fixed point is equivalent to the assertion that some well-founded fixed point exists. In~\cite{freund-computable} we show that the minimal Bachmann-Howard fixed point of a prae-dilator $T$ can be described by a notation system, which is computable relative to a representation of $T$ in second-order arithmetic. We also show that the Bachmann-Howard principle implies arithmetical transfinite recursion (based on a result of Rathjen and Valencia Vizca\'ino~\cite{rathjen-model-bi}). Thus we can finally split the principle of $\Pi^1_1$-comprehension into a computable construction and a statement about the preservation of well-foundedness, over the base theory $\rca$ of computable mathematics. Even though a completely satisfactory solution of Rathjen and Montalb\'an's conjecture requires these additional constructions, the main step is the proof of Theorem~\ref{thm:main-abstract} in the present paper.

We point out that a related characterization can be found in the unpublished second part of Girard's book on proof theory~\cite{girard-book-part2}: Girard states that $\Pi^1_1$-compre\-hension is equivalent to the assertion that his functor $\Lambda$ maps dilators to dilators. He describes a proof, which relies on functorial cut elimination for theories of inductive definitions, but notes that the proof is incomplete because intermediate results are missing. Our argument was devised independently of this approach. It would be very interesting to establish a direct connection between Girard's functor $\Lambda$ and our Bachmann-Howard principle, but we have not yet been able to find one. At the same time, the fundamental insights from Girard's published papers on $\Pi^1_2$-logic~\cite{girard-pi2,girard-intro} were a crucial ingredient for the present work.

Let us explain how to prove implication~(iii)$\Rightarrow$(ii) of Theorem~\ref{thm:main-abstract}: In order to construct admissible sets we use Sch\"utte's method of proof search via deduction chains (see~\cite{schuette56} as well the presentation in \cite[Section~II.3]{schuette77}). The idea is to build an attempted proof, starting with a formula $\varphi$ at the root. If the proof search terminates, then one has a well-founded proof of~$\varphi$. Otherwise the attempted proof has an infinite branch. From this branch one can construct a model in which all formulas on the branch fail. In particular one has a countermodel to $\varphi$. Sch\"utte refers to the nodes of the attempted proof as deduction chains; we will speak of a search tree in order to refer to the attempted proof as a whole. Our construction of admissible sets will work roughly as follows: For each ordinal~$\alpha$ we build a search tree $S_\alpha$, which assumes the Kripke-Platek axioms and derives a contradiction in $\mathbb L_\alpha$-logic. This means that $S_\alpha$ is an infinite proof tree which may use the rule
\begin{prooftree}
\Axiom$\cdots\qquad\varphi\fCenter(a)\qquad\cdots\qquad(a\in\mathbb L_\alpha)$
\UnaryInf$\forall_x\fCenter\varphi(x)$
\end{prooftree}
with a premise for each set in the $\alpha$-th stage of the constructible hierarchy. If one of the trees $S_\alpha$ has an infinite branch, then we can construct a set $M\subseteq\mathbb L_\alpha$ such that $(M,\in)$ satisfies the Kripke-Platek axioms. The transitive collapse of $M$ is the admissible set demanded by~(ii). If the trees $S_\alpha$ are all well-founded, then they form a dilator (with respect to the Kleene-Brouwer order). In that case the abstract Bachmann-Howard principle assumed in~(iii) yields a well-founded Bachmann-Howard fixed point $\beta$. The collapsing function $\vartheta:S_\beta\rightarrow\beta$ allows us to replicate J\"ager's~\cite{jaeger-kripke-platek} ordinal analysis of Kripke-Platek set theory. As a result we learn that $S_\beta$ cannot be a proof of contradiction after all. Thus one of the trees $S_\alpha$ must have an infinite branch, and we obtain~(ii) as explained above. We~remark that the method of deduction chains is well-established for $\omega$-proofs (see in particular~\cite{jaeger-strahm-bi-reflection} and~\cite{rathjen-afshari}). As far as the present author is aware, the only application to functorial families of proofs ($\beta$-proofs) can be found in a paper by Buchholz~\cite{buchholz-inductive-dilator} and its generalization by J\"ager~\cite{jaeger86}. The idea to employ deduction chains for $\beta$-proofs in order to obtain a characterization of $\Pi^1_1$-comprehension is due to Rathjen~\cite{rathjen-wops-chicago,rathjen-atr}. He suggested to use this approach in order to construct $\beta$-models of second-order arithmetic. The present paper seems to contain the first application of these methods in a set-theoretic context.

We now explain how the present paper is organized: Section~\ref{sect:admissible-tu-bh} starts with an easy proof that the abstract Bachmann-Howard principle is sound, which relies on the existence of an uncountable cardinal. We then show that the distinction between countable and uncountable sets can be replaced by the distinction between elements and subclasses of an admissible set. This will prove implication~(ii)$\Rightarrow$(iii) of Theorem~\ref{thm:main-abstract}. In Section~\ref{sect:functorial-L} we prepare the construction of the aforementioned search trees $S_\alpha$: To ensure that these trees form a prae-dilator we cannot, in fact, work with the actual constructible hierarchy~$\mathbb L$. Instead we will describe a functorial construction of term systems $\mathbf L_X$ for all linear orders~$X$. In case that $X$ is isomorphic to an ordinal $\alpha$, the terms in $\mathbf L_X$ can be interpreted by elements of $\mathbb L_\alpha$. The construction of the trees $S_\alpha$ itself can be found in Section~\ref{sect:deduct-admissible}. There we will also show how a branch in $S_\alpha$ can be transformed into an admissible set. Following the proof sketch above, it remains to show that the trees $S_\alpha$ cannot form a dilator. It turns out that a Bachmann-Howard fixed point of $S_\alpha$ is not quite sufficient for this purpose: We need a Bachmann-Howard collapse $\vartheta:\varepsilon(S)_\alpha\rightarrow\alpha$ of a strengthened dilator, which will be constructed in Section~\ref{section:epsilon-variant}. The ordinal analysis itself, which leads to the desired contradiction, can be found in Sections~\ref{sect:infinite-proofs} to~\ref{sect:collapsing}.

To conclude this introduction we briefly discuss how the methods of the present paper might be generalized: One would certainly expect that Kripke-Platek set theory can be replaced by any other set theory $\mathbf T$ for which we have an ordinal analysis. Studying the latter, one should be able to find a type-two well-ordering principle that is equivalent to the statement that every set is contained in a transitive model of~$\mathbf T$. In suitable cases this would also characterize the existence of corresponding $\beta$-models. More generally, one might hope that any natural $\Pi^1_3$-statement is equivalent to a meaningful well-ordering principle of type two. Rathjen and the present author~\cite{freund-rathjen_derivatives} have recently proved another equivalence of this form: The principle of $\Pi^1_1$-bar induction corresponds to the well-ordering principle that transforms a given normal function into its derivative. Apart from these concrete applications, the author hopes that the functorial version of the constructible hierarchy (see Section~\ref{sect:functorial-L} below) will prove fruitful in different contexts.

\subsection*{Acknowledgements} This paper is based on parts of my PhD thesis~\cite{freund-thesis}. I am deeply grateful to Michael Rathjen, my PhD supervisor, for everything he has taught me. Also, I would like to acknowledge support from the University of Leeds.

\section{From Admissible Sets to the Bachmann-Howard Principle}\label{sect:admissible-tu-bh}

The height of a transitive set $u$ is defined as its intersection
\begin{equation*}
 o(u)=u\cap\ordi
\end{equation*}
with the class of ordinals. In the present section we show that $o(\mathbb A)$ is a Bachmann-Howard fixed point of any dilator $T$ with parameters in the admissible set $\mathbb A$. This will establish the implication~(ii)$\Rightarrow$(iii) of Theorem~\ref{thm:main-abstract}.

To present the main idea we begin with a proof of the abstract Bachmann-Howard principle in a strong meta theory: Consider an arbitrary dilator $(T,\supp^T)$. As explained in the introduction, we may assume that $T$ is a primitive recursive set function, possibly with additional arguments as parameters. In a strong meta theory we can consider a regular cardinal $\kappa>\omega$ such that all these parameters are of hereditary cardinality below $\kappa$. It follows that the value $T_\alpha$ has cardinality below~$\kappa$ for any argument~$\alpha<\kappa$. To establish the abstract Bachmann-Howard principle we show that $\kappa$ is a Bachmann-Howard fixed point of~$T$. A Bachmann-Howard collapse $\vartheta:T_\kappa\rightarrow\kappa$ can be defined by recursion along the well-order $T_\kappa$: Assuming that $\vartheta(\sigma)$ is already defined for all~$\sigma<_{T_\kappa}\tau$, we consider the sets
\begin{equation*}
 C(\tau,\alpha)=\supp^T_\kappa(\tau)\cup\{\vartheta(\sigma)\,|\,\sigma<_{T_\kappa}\tau\text{ and }\supp^T_\kappa(\sigma)\subseteq\alpha\}\subseteq\kappa
\end{equation*}
for $\alpha<\kappa$. By the definition of (prae-)dilator any $\sigma\in T_\kappa$ lies in the range of~$T_{\iota_\sigma}$, where $\iota_\sigma:\supp^T_\kappa(\sigma)\hookrightarrow\kappa$ is the inclusion. The condition $\supp^T_\kappa(\sigma)\subseteq\alpha$ ensures that $\iota_\sigma$ factors through $\iota_\alpha:\alpha\hookrightarrow\kappa$. Thus $\sigma$ lies in the range of $T_{\iota_\alpha}:T_\alpha\rightarrow T_\kappa$ as~well. Since $T_\alpha$ has cardinality below $\kappa$, we learn that the same holds for $C(\tau,\alpha)$. We can thus construct a sequence $0=\alpha_0<\alpha_1<\dots<\kappa$ with $C(\tau,\alpha_n)\subseteq\alpha_{n+1}$ for all $n\in\omega$. It is easy to see that $\alpha=\sup_{n\in\omega}\alpha_n<\kappa$ satisfies $C(\tau,\alpha)\subseteq\alpha$. To complete the recursive definition of $\vartheta$ we can now set
\begin{equation*}
 \vartheta(\tau)=\min\{\alpha<\kappa\,|\,C(\tau,\alpha)\subseteq\alpha\}.
\end{equation*}
It is straightforward to verify that $\vartheta:T_\kappa\rightarrow\kappa$ is a Bachmann-Howard collapse (cf.~the proof of Proposition~\ref{prop:admissible-Bachmann-Howard-collapse} below). We point out that the given argument is inspired by the usual construction of the Bachmann-Howard ordinal. Rathjen~\cite[Section~4]{rathjen92} has observed that the cardinal~$\kappa$ can be replaced by the class of all ordinals, provided that the set-theoretic universe satisfies the Kripke-Platek axioms. Similarly, we will show that $\kappa$ can be replaced by an admissible ordinal $o(\mathbb A)$.

Given an admissible set $\mathbb A$ that contains the parameters of a dilator $(T,\supp^T)$, the idea is to define a Bachmann-Howard collapse $\vartheta:T_{o(\mathbb A)}\rightarrow o(\mathbb A)$ by stipulating
\begin{equation*}
 \vartheta(\sigma)=\alpha\quad\Leftrightarrow\quad\mathbb A\vDash\theta_T(\sigma,\alpha),
\end{equation*}
where the formula $\theta_T$ reflects the construction from the previous paragraph. Since $\theta_T$ will need to speak about $T$, we have to define this dilator within $\mathbb A$: According to \cite[Section~2.2]{jensen-karp} the primitive recursive definition of $X\mapsto T_X$ corresponds to a $\Sigma$-formula $\mathcal D_T(X,Y)$, with further free variables for the parameters of $T$, which defines $T$ in the set-theoretic universe. By induction over primitive recursive set functions one shows that $\forall_X\exists_Y\mathcal D_T(X,Y)$ is provable in Kripke-Platek set theory (the crucial case of a primitive recursion is covered by the $\Sigma$-recursion theorem, see e.g.~\cite[Theorem~I.6.4]{barwise-admissible}). Together with the upward absoluteness of $\Sigma$-formulas one can conclude
\begin{equation*}
 \forall_{X\in\mathbb A}\forall_Y(T_X=Y\leftrightarrow Y\in\mathbb A\land\mathbb A\vDash \mathcal D_T(X,Y))
\end{equation*}
for any admissible set $\mathbb A$ that contains the parameters of $T$. In particular we have $T_\gamma\in\mathbb A$ for any $\gamma<o(\mathbb A)$. As for any total function, we can infer that $\mathcal D_T(X,Y)$ is a $\Delta$-formula from the viewpoint of $\mathbb A$. In the following we write $\mathbb A\vDash T_X=Y$ rather than~$\mathbb A\vDash D_T(X,Y)$. Similarly we write $\mathbb A\vDash \sigma\in T_X$ to refer to a $\Delta$-definition of this relation in~$\mathbb A$. Even though we have defined $T$ within $\mathbb A$, the formula $\theta_T$ cannot refer to the value $T_{o(\mathbb A)}$ itself, since the argument $o(\mathbb A)$ is not contained in $\mathbb A$. In the following we work with dilators that approximate $T_{o(\mathbb A)}$ in a particularly convenient way. Afterwards we will transfer the result to arbitrary dilators.

\begin{definition}
 A dilator $(T,\supp^T)$ is called inclusive if any inclusion $\iota:X\hookrightarrow Y$ of linear orders is mapped to an inclusion $T_\iota:T_X\hookrightarrow T_Y$.
\end{definition}

If $T$ is an inclusive dilator with parameters in an admissible set $\mathbb A$, then we have
\begin{equation*}
 T_{o(\mathbb A)}=\bigcup_{\gamma<o(\mathbb A)}T_\gamma\subseteq\mathbb A.
\end{equation*}
The inclusion $\subseteq$ of the equality relies on the fact that any $\sigma\in T_{o(\mathbb A)}$ has finite support $\supp^T_{o(\mathbb A)}(\sigma)\subseteq\gamma$ for some ordinal $\gamma$ below the limit $o(\mathbb A)$. Using the second recursion theorem for admissible sets (see~\cite[Theorem~V.2.3]{barwise-admissible}) we can now construct the required formula:

\begin{definition}
 For each inclusive dilator $(T,\supp^T)$, let $\theta_T(\sigma,\alpha)$ be a $\Sigma$-formula such that $\mathbb A\vDash\theta_T(\sigma,\alpha)$ is equivalent to
 \begin{align*}
\mathbb A\vDash\exists_\gamma(&\sigma\in T_\gamma\,\land\,\forall_{\beta<\gamma}\,\sigma\notin T_\beta\,\land\,{}\\
& \begin{aligned}
\exists_f(&\text{``$f:\omega\rightarrow\ordi$ is a function"}\,\land\, f(0)=\gamma\,\land\,{}\\
& \begin{aligned}\forall_{n\in\omega}\exists_d(&\text{``$d:\{\tau\in T_{f(n)}\,|\,\tau<_{T_{f(n)}}\sigma\}\rightarrow\ordi$ is a function"}\,\land\,{}\\
		&\forall_{\tau\in\dom(d)}\,\theta_T(\tau,d(\tau))\,\land\,{}\\
		&f(n+1)=\sup\{d(\tau)+1\,|\,\tau\in\dom(d)\})\,\land\,{}\end{aligned}\\
& \alpha=\textstyle\sup_{n\in\omega} f(n))),
\end{aligned}
\end{align*}
 for any admissible set $\mathbb A\supseteq\{\sigma,\alpha\}$ that contains the parameters of $(T,\supp^T)$.
\end{definition}

In order to show that $\theta_T$ defines a function on $\mathbb A$, we first establish uniqueness:

\begin{lemma}
 We have
 \begin{equation*}
  \forall_{\sigma\in T_{o(\mathbb A)}}\forall_{\alpha_0,\alpha_1<o(\mathbb A)}(\mathbb A\vDash\theta_T(\sigma,\alpha_0)\land \mathbb A\vDash\theta_T(\sigma,\alpha_1)\rightarrow\alpha_0=\alpha_1),
 \end{equation*}
 for any inclusive dilator $T$ with parameters in the admissible set $\mathbb A$.
\end{lemma}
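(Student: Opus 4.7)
The plan is to prove uniqueness by transfinite induction along the well-order $T_{o(\mathbb A)}$. This induction is legitimate since $T$ is a dilator and $o(\mathbb A)$ is an ordinal, hence $T_{o(\mathbb A)}$ is well-founded. As noted just before the definition of $\theta_T$, inclusiveness gives $T_{o(\mathbb A)}=\bigcup_{\gamma<o(\mathbb A)}T_\gamma$, so every element of $T_{o(\mathbb A)}$ has a well-defined minimal $\gamma$ with $\sigma\in T_\gamma$, and the order is compatible with the inclusions $T_\delta\hookrightarrow T_{o(\mathbb A)}$ for all $\delta<o(\mathbb A)$.

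Now fix $\sigma\in T_{o(\mathbb A)}$ and assume inductively that the uniqueness assertion holds for every $\tau<_{T_{o(\mathbb A)}}\sigma$. Suppose $\mathbb A\vDash\theta_T(\sigma,\alpha_0)$ and $\mathbb A\vDash\theta_T(\sigma,\alpha_1)$, with witnessing ordinals $\gamma_i$, functions $f_i:\omega\to\ordi$ and step-witnesses $d_i^n$ for $i\in\{0,1\}$. The first two conjuncts of $\theta_T$ pin down $\gamma_0=\gamma_1$ as the least ordinal with $\sigma\in T_\gamma$, hence $f_0(0)=f_1(0)$. I then prove $f_0(n)=f_1(n)$ for all $n$ by an auxiliary induction on $n\in\omega$: assuming $f_0(n)=f_1(n)=\delta$, the domains of $d_0^n$ and $d_1^n$ are both equal to $\{\tau\in T_\delta\,|\,\tau<_{T_\delta}\sigma\}$. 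By inclusiveness of $T$ each such $\tau$ lies in $T_{o(\mathbb A)}$ and satisfies $\tau<_{T_{o(\mathbb A)}}\sigma$, so the outer induction hypothesis applies and yields $d_0^n(\tau)=d_1^n(\tau)$ for every $\tau$ in this common domain. Consequently $f_0(n+1)=f_1(n+1)$ by the sup clause. Taking suprema over $n$ then gives $\alpha_0=\alpha_1$.

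The main obstacle is ensuring that the outer induction hypothesis can in fact be invoked on the $\tau$ appearing in the domains of $d_i^n$. This is precisely what inclusiveness of $T$ buys us: without it, $\tau$ would a priori live in $T_\delta$ rather than $T_{o(\mathbb A)}$, and comparing the orders on the two sets would require transporting along a chosen embedding $T_\delta\hookrightarrow T_{o(\mathbb A)}$, breaking the clean identification of the relevant $\tau$ as elements of $T_{o(\mathbb A)}$. A secondary point to verify is that all the objects $\gamma_i$, $f_i$, $d_i^n$ actually lie in $\mathbb A$ (so that the $\Sigma$-formula $\theta_T$ is witnessed inside $\mathbb A$ and the inner induction on $n$ can be phrased externally by ordinary induction on $\omega$); this follows from admissibility together with the $\Delta$-definability of $\tau\in T_\delta$ inside $\mathbb A$ established just before the definition of $\theta_T$.
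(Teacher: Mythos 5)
Your proof is correct and follows essentially the same route as the paper's: transfinite induction along the well-order $T_{o(\mathbb A)}$, with an inner induction on $n$ showing $f_0(n)=f_1(n)$, using inclusiveness to identify the elements of the domains of the $d_i$ as elements of $T_{o(\mathbb A)}$ below $\sigma$ so that the outer induction hypothesis applies. The closing remarks on the witnesses lying in $\mathbb A$ match the paper's (brief) comments on formalizability.
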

\begin{proof}
 We argue by induction over $T_{o(\mathbb A)}$, which is a well-order because $T$ is a dilator. Note that this induction can be formalized in our meta theory $\atrs$: Satisfaction in a model is a primitive recursive relation, and $\atrs$ proves separation for primitive recursive predicates (see~\cite[Section~1]{freund-thesis} for details). To establish the claim for $\sigma\in T_{o(\mathbb A)}$, let us assume $\mathbb A\vDash\theta_T(\sigma,\alpha_i)$ for $i=0,1$. By the defining equivalence of $\theta_T$ we obtain witnesses $\gamma_i,f_i\in\mathbb A$. To conclude $\alpha_0=\alpha_1$ it suffices to show $f_0(n)=f_1(n)$ by induction over $n$. In the base we observe $f_0(0)=\gamma_0=\gamma_1=f_1(0)$. In the step we write $f_0(n)=f(n)=f_1(n)$ and consider the functions
 \begin{equation*}
  d_i:\{\tau\in T_{f(n)}\,|\,\tau<_{T_{f(n)}}\sigma\}\rightarrow\ordi
 \end{equation*}
 from the defining equivalence of $\theta_T$. By functoriality $\tau<_{T_{f(n)}}\sigma$ implies $\tau<_{T_{o(\mathbb A)}}\sigma$. So by induction hypothesis the condition $\theta_T(\tau,d_i(\tau))$ determines~$d_i(\tau)$. This yields $d_0=d_1$ and then $f_0(n+1)=f_1(n+1)$, as desired.
\end{proof}

Using the fact that $\mathbb A$ is admissible, we can now establish existence:

\begin{proposition}
 We have
 \begin{equation*}
  \forall_{\sigma\in T_{o(\mathbb A)}}\exists_{\alpha<o(\mathbb A)}\,\mathbb A\vDash\theta_T(\sigma,\alpha),
 \end{equation*}
 for any inclusive dilator $T$ with parameters in the admissible set $\mathbb A$.
\end{proposition}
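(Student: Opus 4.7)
The plan is to prove the proposition by induction on $\sigma \in T_{o(\mathbb{A})}$ in the meta-theory $\atrs$, using (as in the preceding lemma) that satisfaction in $\mathbb{A}$ is primitive recursive and that $T_{o(\mathbb{A})}$ is a well-order. For the step, I must produce the witnesses $\gamma$ and $f$ demanded by the defining $\Sigma$-formula $\theta_T(\sigma,\alpha)$ inside $\mathbb{A}$, using the induction hypothesis that every $\tau <_{T_{o(\mathbb{A})}} \sigma$ admits a (unique, by the preceding lemma) witness $\beta<o(\mathbb{A})$ with $\mathbb{A}\vDash\theta_T(\tau,\beta)$.

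First I would fix $\gamma = \min\{\delta : \sigma \in T_\delta\}$. This exists and satisfies $\gamma < o(\mathbb{A})$: because $T$ is inclusive, $T_{o(\mathbb{A})} = \bigcup_{\delta < o(\mathbb{A})} T_\delta$, and the support of $\sigma$ is a finite subset of $o(\mathbb{A})$, hence bounded by some $\gamma' < o(\mathbb{A})$; the support condition gives $\sigma \in T_{\gamma'}$, and the least such $\delta$ lies in $\mathbb{A}$. Next I would construct the sequence $f : \omega \to \ordi$ recursively inside $\mathbb{A}$, with $f(0) = \gamma$ and, given $f(n) < o(\mathbb{A})$, the auxiliary function
\begin{equation*}
  d_n : A_n \to \ordi, \qquad A_n = \{\tau \in T_{f(n)} : \tau <_{T_{f(n)}} \sigma\},
\end{equation*}
and then $f(n+1) = \sup\{d_n(\tau)+1 : \tau \in A_n\}$.

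The existence of $d_n$ inside $\mathbb{A}$ is the heart of the argument. Since $f(n) < o(\mathbb{A})$ and $T$ is $\Delta$ on $\mathbb{A}$, we have $T_{f(n)} \in \mathbb{A}$, hence $A_n \in \mathbb{A}$ by $\Delta_0$-separation. By functoriality any $\tau \in A_n$ satisfies $\tau <_{T_{o(\mathbb{A})}} \sigma$, so the induction hypothesis supplies for each such $\tau$ some $\beta < o(\mathbb{A})$ with $\mathbb{A}\vDash\theta_T(\tau,\beta)$. Applying $\Sigma$-replacement in $\mathbb{A}$ to the $\Sigma$-formula $\theta_T(\tau,\beta)$ over the set $A_n$ produces $d_n \in \mathbb{A}$ with $\mathbb{A}\vDash\theta_T(\tau,d_n(\tau))$ for all $\tau \in A_n$. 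The sup defining $f(n+1)$ is then an ordinal in $\mathbb{A}$, because $\rng(d_n)\in\mathbb{A}$ and every element of $\mathbb{A}$ has ordinal rank below $o(\mathbb{A})$. The whole function $f$ is obtained by $\Sigma$-recursion on $\omega$ (available in Kripke--Platek, cf.~\cite[Theorem~I.6.4]{barwise-admissible}), yielding $f \in \mathbb{A}$.

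Finally, $\alpha = \sup_{n\in\omega} f(n)$ is an ordinal and lies in $\mathbb{A}$ since $\rng(f)\in\mathbb{A}$; thus $\alpha < o(\mathbb{A})$. All clauses of the defining equivalence of $\theta_T$ have been witnessed, so $\mathbb{A}\vDash\theta_T(\sigma,\alpha)$. The main obstacle is the verification that $d_n \in \mathbb{A}$: it is here that one genuinely uses admissibility of $\mathbb{A}$ (via $\Sigma$-replacement applied to a $\Sigma$-formula whose uniqueness was secured in the previous lemma), and here that the inclusivity of $T$ is essential, since it guarantees $T_{f(n)} \in \mathbb{A}$ and allows $A_n$ to be formed as a set in $\mathbb{A}$ rather than just a definable class.
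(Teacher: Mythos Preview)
Your proposal is correct and follows essentially the same strategy as the paper: induction on $\sigma\in T_{o(\mathbb A)}$, choice of the minimal $\gamma$, and construction of the function $f$ inside $\mathbb A$ using $\Sigma$-replacement to obtain each $d_n$. The only difference is packaging: you invoke the $\Sigma$-recursion theorem directly to obtain $f\in\mathbb A$, whereas the paper unfolds that recursion by building finite approximations $f_k\in\mathbb A$ via meta-theoretic induction on $k$, observing their uniqueness, and then applying $\Sigma$-replacement once more to collect the map $k\mapsto f_k$ into $\mathbb A$ (setting $f(n)=f_n(n)$).
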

\begin{proof}
 As in the previous proof we argue by induction over $\sigma\in T_{o(\mathbb A)}$. We have already seen that $\sigma\in T_\gamma$ holds for some $\gamma<o(\mathbb A)\subseteq\mathbb A$. Pick the smallest such~$\gamma$, and observe that this is the witness required by the defining equivalence of $\theta_T$. To conclude we must show that $\mathbb A$ contains a suitable function $f:\omega\rightarrow\ordi$. By induction on $k$ we show that there are approximations $f_k\in\mathbb A$ with
 \begin{align*}
 \mathbb A\vDash&\text{``$f_k:k+1\rightarrow\ordi$ is a function"}\,\land\, f_k(0)=\gamma\,\land\\
& \begin{aligned}\forall_{n<k}\exists_d(&\text{``$d:\{\tau\in T_{f_k(n)}\,|\,\tau<_{T_{f_k(n)}}\sigma\}\rightarrow\ordi$ is a function"}\,\land\\
		&\forall_{\tau\in\dom(d)}\,\theta_T(\tau,d(\tau))\,\land\\
		&f_k(n+1)=\sup\{d(\tau)+1\,|\,\tau\in\dom(d)\}).\end{aligned}
\end{align*}
 For $k=0$ we simply set $f_0=\{\langle 0,\gamma\rangle\}$. In order to extend $f_k$ to $f_{k+1}$ it suffices to show that $\mathbb A$ contains a suitable function
 \begin{equation*}
  d:\{\tau\in T_{f_k(n)}\,|\,\tau<_{T_{f_k(n)}}\sigma\}\rightarrow\ordi.
 \end{equation*}
 The domain of $d$ exists by $\Delta$-separation in $\mathbb A$ (see \cite[Theorem~I.4.5]{barwise-admissible}). The induction hypothesis and the previous lemma imply
 \begin{equation*}
  \forall_{\tau\in T_{f_k(n)}}(\tau<_{T_{f_k(n)}}\sigma\rightarrow\exists!_{\delta<o(\mathbb A)}\,\mathbb A\vDash\theta_T(\tau,\delta)).
 \end{equation*}
 Now $\Sigma$-replacement in $\mathbb A$ (see \cite[Theorem~I.4.6]{barwise-admissible}) yields the desired function $d$. This completes the recursive construction of the functions $f_k$. As in the previous lemma, we see that these functions are unique. Thus another application of $\Sigma$-replacement shows that the function $k\mapsto f_k$ lies in $\mathbb A$. Finally, the desired function $f\in\mathbb A$ can be defined by $f(n)=f_n(n)$. It witnesses $\mathbb A\vDash\theta_T(\sigma,\alpha)$ for $\alpha=\sup_{n\in\omega}f(n)$.
\end{proof}

In our meta theory $\atrs$ we invoke primitive recursive separation to complete the construction of our collapsing function:

\begin{definition}
 Consider an inclusive dilator $(T,\supp^T)$ and an admissible set $\mathbb A$ that contains the parameters of $T$. In view of the previous results, the stipulation
 \begin{equation*}
  \vartheta_{\mathbb A}=\{\langle\sigma,\alpha\rangle\in T_{o(\mathbb A)}\times o(\mathbb A)\,|\,\mathbb A\vDash\theta_T(\sigma,\alpha)\}
 \end{equation*}
 defines a function $\vartheta_{\mathbb A}:T_{o(\mathbb A)}\rightarrow o(\mathbb A)$.
\end{definition}

Let us verify that we have indeed constructed a Bachmann-Howard collapse:

\begin{proposition}\label{prop:admissible-Bachmann-Howard-collapse}
 If $(T,\supp^T)$ is an inclusive dilator with parameters in the admissible set~$\mathbb A$, then $\vartheta_{\mathbb A}:T_{o(\mathbb A)}\rightarrow o(\mathbb A)$ is a Bachmann-Howard collapse.
\end{proposition}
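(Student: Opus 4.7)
My plan is to verify both clauses of Definition~\ref{def:bachmann-howard-collapse} by unpacking the defining formula $\theta_T$. I would first fix $\sigma,\tau\in T_{o(\mathbb A)}$ and set $\alpha=\vartheta_{\mathbb A}(\sigma)$, $\beta=\vartheta_{\mathbb A}(\tau)$. The defining equivalence (applied inside $\mathbb A$) supplies witnesses $\gamma,f\in\mathbb A$ for $\alpha$ (with $\gamma$ the least ordinal such that $\sigma\in T_\gamma$, $f(0)=\gamma$, and $\alpha=\sup_n f(n)$) and analogous witnesses $\gamma',g\in\mathbb A$ for $\beta$. The crucial leverage throughout comes from inclusivity of $T$ together with the naturality of $\supp^T$: for any ordinal $\delta\leq o(\mathbb A)$ and any $\rho\in T_{o(\mathbb A)}$, one has $\rho\in T_\delta$ if and only if $\supp^T_{o(\mathbb A)}(\rho)\subseteq\delta$.

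For condition~(ii), the observation is immediate: $\sigma\in T_\gamma$ forces $\supp^T_{o(\mathbb A)}(\sigma)\subseteq\gamma=f(0)\leq\alpha$, so every element of the support is strictly below $\alpha$, which is the definition of $\supp^T_{o(\mathbb A)}(\sigma)\lef_{o(\mathbb A)}\alpha$. (If $\gamma=0$ the support is empty and the condition is vacuous.)

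For condition~(i), assume $\sigma<_{T_{o(\mathbb A)}}\tau$ and $\supp^T_{o(\mathbb A)}(\sigma)\lef_{o(\mathbb A)}\beta$. Then $\supp^T_{o(\mathbb A)}(\sigma)\subseteq\beta=\sup_n g(n)$; since the support is finite there is some $n$ with $\supp^T_{o(\mathbb A)}(\sigma)\subseteq g(n)$, hence $\sigma\in T_{g(n)}$ by the inclusivity principle recalled above. Because $\tau\in T_{\gamma'}$ and $\gamma'=g(0)\leq g(n)$, inclusivity also gives $\tau\in T_{g(n)}$; and since the inclusion $T_{g(n)}\hookrightarrow T_{o(\mathbb A)}$ preserves order, we get $\sigma<_{T_{g(n)}}\tau$. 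Thus $\sigma$ lies in the domain of the function $d$ appearing in the clause that defines $g(n+1)$; the uniqueness lemma for $\theta_T$ identifies $d(\sigma)$ with $\vartheta_{\mathbb A}(\sigma)=\alpha$, so
\begin{equation*}
\vartheta_{\mathbb A}(\sigma)+1=\alpha+1\leq g(n+1)\leq\beta=\vartheta_{\mathbb A}(\tau),
\end{equation*}
as required. The only delicate point I foresee is the passage from ``support contained in $\delta$'' to ``element of $T_\delta$'', which is precisely where inclusivity is used; everything else is a straightforward chase through the definition of $\theta_T$ combined with an appeal to uniqueness to pin down $d(\sigma)$.
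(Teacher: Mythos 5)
Your argument follows the paper's own proof almost exactly: condition~(ii) via $\supp^T_{o(\mathbb A)}(\sigma)\subseteq\gamma=f(0)\leq\vartheta_{\mathbb A}(\sigma)$, and condition~(i) by finding a stage $g(n)$ of the witness for $\theta_T(\tau,\vartheta_{\mathbb A}(\tau))$ that captures $\supp^T_{o(\mathbb A)}(\sigma)$, placing $\sigma$ in the domain of the function $d$, and reading off $\alpha+1\leq g(n+1)\leq\beta$ via the uniqueness lemma. The use of inclusivity to pass between ``support contained in $\delta$'' and ``element of $T_\delta$'' is exactly the intended mechanism.

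There is one step that does not hold as stated: you claim $\gamma'=g(0)\leq g(n)$ in order to conclude $\tau\in T_{g(n)}$. The defining formula $\theta_T$ gives no monotonicity for the sequence $g$; indeed $g(n+1)=\sup\{d(\rho)+1\mid\rho\in\dom(d)\}$ can be smaller than $g(0)$ (for instance, it is $0$ when the domain of $d$ is empty), so $\tau\in T_{g(n)}$ need not follow. The repair is immediate and is the one the paper uses: if $g(n)<g(0)$, then $\supp^T_{o(\mathbb A)}(\sigma)\subseteq g(n)\subseteq g(0)$, so you may simply replace $n$ by $0$; with that substitution both $\sigma$ and $\tau$ lie in $T_{g(n)}$ and the rest of your argument goes through unchanged. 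Aside from this point the proposal is correct.
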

\begin{proof}
 To verify condition~(i) of Definition~\ref{def:bachmann-howard-collapse} we consider elements $\sigma,\tau\in T_{o(\mathbb A)}$ with $\sigma<_{T_{o(\mathbb A)}}\tau$ and $\supp^T_{o(\mathbb A)}(\sigma)\lef\vartheta_{\mathbb A}(\tau)$. If $f$ witnesses $\theta_T(\tau,\vartheta_{\mathbb A}(\tau))$, then we get $\supp^T_{o(\mathbb A)}(\sigma)\lef f(n)$ for some $n$. Thus $\iota_\sigma:\supp^T_{o(\mathbb A)}(\sigma)\hookrightarrow o(\mathbb A)$ factors through the inclusion of $f(n)$ into $o(\mathbb A)$. Invoking the definition of (prae-)dilator we can infer~$\sigma\in T_{f(n)}$. We may also assume $\tau\in T_{f(n)}$, switching to $n=0$ if $f(n)<f(0)$. Then the definition of $\theta_T$ yields $\vartheta_{\mathbb A}(\sigma)<f(n+1)\leq\vartheta_{\mathbb A}(\tau)$, as required. To verify condition~(ii) we consider an arbitrary $\sigma\in T_{o(\mathbb A)}$. Let $\gamma$ and $f$ be witnesses for the defining equivalence of~$\theta_T(\sigma,\vartheta_{\mathbb A}(\sigma))$. In view of $\sigma\in T_\gamma$ we may write $\sigma=T_{\iota_\gamma}(\sigma)$, where $\iota_\gamma:\gamma\hookrightarrow o(\mathbb A)$ is the inclusion. Using the naturality of $\supp^T$ we obtain
 \begin{equation*}
  \supp^T_{o(\mathbb A)}(\sigma)=\supp^T_{o(\mathbb A)}(T_{\iota_\gamma}(\sigma))=[\iota_\gamma]^{<\omega}(\supp^T_\gamma(\sigma))\subseteq\gamma=f(0)\leq\vartheta_{\mathbb A}(\sigma),
 \end{equation*}
 which amounts to the desired condition $\supp^T_{o(\mathbb A)}(\sigma)\lef\vartheta_{\mathbb A}(\sigma)$.
\end{proof}

Finally, we can deduce the direction (ii)$\Rightarrow$(iii) of Theorem~\ref{thm:main-abstract}. Note that the result is established for arbitrary dilators, not just for inclusive ones:

\begin{theorem}\label{thm:admissible-to-bhp}
 If every set is contained in an admissible set, then every dilator has a well-founded Bachmann-Howard fixed point.
\end{theorem}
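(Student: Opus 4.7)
The plan is to reduce the general case to the inclusive one already handled by Proposition~\ref{prop:admissible-Bachmann-Howard-collapse}. Given an arbitrary dilator $(T,\supp^T)$, the first step is to construct a naturally isomorphic inclusive dilator $(T',\supp^{T'})$ using finite supports as canonical names. For a linear order $X$ I would set
\[ T'_X=\{(a,\sigma)\mid a\in[X]^{<\omega}\text{ and }\sigma\in T_a\text{ with }\supp^T_a(\sigma)=a\},\]
equipped with $\supp^{T'}_X(a,\sigma)=a$. The map $\pi_X:T'_X\to T_X$ defined by $\pi_X(a,\sigma)=T_{\iota_a}(\sigma)$, where $\iota_a:a\hookrightarrow X$ is the inclusion, is a bijection: surjectivity uses the support axiom of Definition~\ref{def:prae-dilator}, while injectivity uses the naturality of $\supp^T$ together with the fact that $T$ sends embeddings to embeddings. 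Transport the order of $T_X$ along $\pi_X$ to $T'_X$. For an embedding $f:X\to Y$ define $T'_f(a,\sigma)=([f]^{<\omega}(a),T_{f\upharpoonright a}(\sigma))$; when $f$ is an inclusion one has $[f]^{<\omega}(a)=a$ and $f\upharpoonright a$ is the identity map on $a$, so $T'_f$ acts as the literal inclusion $T'_X\hookrightarrow T'_Y$, which makes $T'$ inclusive. Routine checks then confirm that $T'$ is a (prae-)dilator with the same parameters as $T$ and that $\pi:T'\Rightarrow T$ is a natural isomorphism satisfying $\supp^T_X\circ\pi_X=\supp^{T'}_X$; in particular $T'$ inherits well-foundedness from $T$.

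With $T'$ in hand I would invoke assumption~(ii) of Theorem~\ref{thm:main-abstract} to pick an admissible set $\mathbb A$ containing the parameters of $T$, and hence of $T'$. Proposition~\ref{prop:admissible-Bachmann-Howard-collapse} then produces a Bachmann-Howard collapse $\vartheta':T'_{o(\mathbb A)}\to o(\mathbb A)$. Setting $\vartheta(\tau)=\vartheta'(\pi_{o(\mathbb A)}^{-1}(\tau))$ yields a function $\vartheta:T_{o(\mathbb A)}\to o(\mathbb A)$, and both clauses of Definition~\ref{def:bachmann-howard-collapse} transfer immediately from $\vartheta'$ to $\vartheta$ because $\pi_{o(\mathbb A)}$ is an order isomorphism that commutes with supports. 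Since $o(\mathbb A)$ is an ordinal, it is automatically well-founded, which gives the required well-founded Bachmann-Howard fixed point of $T$.

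The main obstacle I expect is the formal bookkeeping in the first step: one must verify that the passage from $T$ to $(T',\supp^{T'})$ can be carried out by a primitive recursive set function inside $\atrs$, so that $T'$ is a legitimate input to Proposition~\ref{prop:admissible-Bachmann-Howard-collapse}, and that the transported order on $T'_X$ is computed in a manner that preserves well-foundedness. Once this is in place, the naturality calculations and the transfer of the two Bachmann-Howard conditions along $\pi$ follow from nothing more than the naturality of $\supp^T$ and the functoriality of $T$.
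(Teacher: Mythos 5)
Your proposal is correct and follows essentially the same route as the paper: the paper's proof also builds the inclusive dilator (there called $D^T$, with the same pairs $\langle a,\sigma\rangle$, the same morphism action, and the same natural equivalence $\eta^T$), applies Proposition~\ref{prop:admissible-Bachmann-Howard-collapse} to it, and pulls the collapse back along $\left(\eta^T_{o(\mathbb A)}\right)^{-1}$. The bookkeeping concerns you flag at the end are handled in the paper exactly as you anticipate, via the naturality of $\supp^T$ and the functoriality of $T$.
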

\begin{proof}
 Consider an arbitrary dilator $(T,\supp^T)$. The main task is to construct an equivalent dilator which is inclusive: For each order $X$ we consider the set
 \begin{equation*}
 D^T_X=\{\langle a,\sigma\rangle\,|\,a\in[X]^{<\omega}\text{ and }\sigma\in T_a\text{ with }\supp^T_a(\sigma)=a\}.
\end{equation*}
Define functions $\eta^T_X:D^T_X\rightarrow T_X$ by setting $\eta^T_X(\langle a,\sigma\rangle)=T_{\iota_a}(\sigma)$, where $\iota_a:a\hookrightarrow X$ is the inclusion. The condition $\supp^T_a(\sigma)=a$ and the naturality of $\supp^T$ ensure that $a$ can be recovered from $\eta^T_X(\langle a,\sigma\rangle)$, namely as
\begin{equation*}
 \supp^T_X(\eta^T_X(\langle a,\sigma\rangle))=\supp^T_X(T_{\iota_a}(\sigma))=[\iota_a]^{<\omega}(\supp^T_a(\sigma))=a.
\end{equation*}
Since $T_{\iota_a}$ is an embedding, it follows that the function $\eta^T_X:D^T_X\rightarrow T_X$ is injective. It is also surjective: Given $\sigma\in T_X$ we set $a=\supp^T_X(\sigma)$. Invoking the definition of (prae-)dilator we obtain $\sigma=T_{\iota_a}(\sigma_0)$ for some $\sigma_0\in T_a$. In view of
\begin{equation*}
 [\iota_a]^{<\omega}(\supp^T_a(\sigma_0))=\supp^T_X(T_{\iota_a}(\sigma_0))=\supp^T_X(\sigma)=a
\end{equation*}
we have $\supp^T_a(\sigma_0)=a$ and thus $\langle a,\sigma_0\rangle\in D^T_X$. So $\sigma=T_{\iota_a}(\sigma_0)=\eta^T_X(\langle a,\sigma_0\rangle)$ does indeed lie in the range of $\eta^T_X$. One can check that
\begin{equation*}
 D^T_f(\langle a,\sigma\rangle)=\langle[f]^{<\omega}(a),T_{f\restriction a}(\sigma)\rangle
\end{equation*}
turns $D^T$ into a functor from linear orders to sets, and that $\eta^T:D^T\Rightarrow T$ becomes a natural equivalence. Using this equivalence we can transfer the order from $T_X$ to~$D^T_X$. Together with the stipulation
\begin{equation*}
 \supp^{D^T}_X(\langle a,\sigma\rangle)=a
\end{equation*}
we obtain a dilator. One should also observe
\begin{equation*}
 \supp^T_X\circ\eta^T_X=\supp^{D^T}_X.
\end{equation*}
The point is that the dilator $D^T$ is inclusive. By assumption there is an admissible set $\mathbb A$ that contains its parameters. The previous proposition yields a Bachmann-Howard collapse $\vartheta_{\mathbb A}:D^T_{o(\mathbb A)}\rightarrow o(\mathbb A)$. It is straightforward to deduce that
\begin{equation*}
 \vartheta=\vartheta_{\mathbb A}\circ\left(\eta^T_{o(\mathbb A)}\right)^{-1}:T_{o(\mathbb A)}\rightarrow o(\mathbb A)
\end{equation*}
is a Bachmann-Howard collapse as well. This shows that $o(\mathbb A)$ is a well-founded Bachmann-Howard fixed point of $T$, as desired.
\end{proof}

\section{A Functorial Version of the Constructible Hierarchy}\label{sect:functorial-L}

In this section we construct a term system $\mathbf L_X^u$ for any linear order $X$ and any transitive set $u$. If $X$ is an ordinal, then the terms from $\mathbf L_X^u$ can be interpreted as sets in the constructible hierarchy over $u$. The point is that the construction of $\mathbf L^u_X$ is functorial in $X$. This property will be needed for our functorial approach to proof search, as described in the introduction to the present paper. We remark that term versions of the constructible hierarchy are well-known in proof theory (see the work of J\"ager~\cite{jaeger-KPN}, as well as Sch\"utte's~\cite{schuette64} earlier work on ramified analysis). It seems that the functoriality of these term systems has not been checked before.

We consider object formulas in the language with relation symbols $\in$ and $=$. All formulas are assumed to be in negation normal form. This means that they consist of negated and unnegated prime formulas, the connectives $\land$ and $\lor$, as well as existential and universal quantifiers. To compute the negation of a formula one applies de Morgan's rules and deletes double negations. Similarly, other connectives can be used as defined operations on negation normal forms. Bounded quantifiers are considered as separate logical symbols, so that the formulas $\exists_{x\in y}\varphi$ and \mbox{$\exists_x(x\in y\land\varphi)$} are equivalent but syntactically different. A formula without unbounded quantifiers is called $\Delta_0$-formula or bounded formula. Let us define the promised term systems:

\begin{definition}\label{def:term-version-L}
 Consider a transitive set $u$. For each linear order $X$ we define a set~$\mathbf L^u_X$ and a support function $\suppl_X:\mathbf L^u_X\rightarrow[X]^{<\omega}$ by the following induction:
 \begin{enumerate}[label=(\roman*)]
  \item Each element $v\in u$ is a term in $\mathbf L^u_X$, with support $\suppl_X(v)=\emptyset$.
  \item For each $s\in X$ we have a term $L^u_s$ in $\mathbf L^u_X$, with support $\suppl_X(L^u_s)=\{s\}$.
  \item Consider a $\Delta_0$-formula $\varphi(x,y_1,\dots,y_n)$ with all free variables displayed, as well as an $s\in X$ and terms $a_1,\dots,a_n\in\mathbf L^u_X$ with $\suppl_X(a_i)\lef_X s$. Then we have a term $\{x\in L^u_s\,|\,\varphi(x,a_1,\dots,a_n)\}$ in $\mathbf L^u_X$, with support
  \begin{equation*}
   \suppl_X(\{x\in L^u_s\,|\,\varphi(x,a_1,\dots,a_n)\})=\{s\}\cup\suppl_X(a_1)\cup\dots\cup\suppl_X(a_n).
  \end{equation*}
 \end{enumerate}
\end{definition}

Our first goal is to define an interpretation of $\mathbf L^u_X$ in the special case that $X$ is an ordinal. For $\alpha<\beta$ it is straightforward to observe
\begin{equation*}
 \mathbf L^u_\alpha=\{a\in\mathbf L^u_\beta\,|\,\suppl_\beta(a)\lef\alpha\}.
\end{equation*}
A more general property will be established in Proposition~\ref{prop:constructible-hierarchy-functorial}.

\begin{definition}\label{def:interpretation-constructible-terms}
Let $u$ be a transitive set. The interpretation function
\begin{equation*}
 \llbracket\cdot\rrbracket_\alpha:\mathbf L^u_\alpha\rightarrow\mathbb L^u_\alpha
\end{equation*}
and its image $\mathbb L^u_\alpha$ are defined by recursion over $\alpha$, with recursive clauses
 \begin{align*}
  \llbracket v\rrbracket_\alpha&=v\quad\text{for $v\in u$},\\
  \llbracket L^u_\gamma\rrbracket_\alpha&=\mathbb L^u_\gamma,\\
  \llbracket\{x\in L^u_\gamma\,|\,\varphi(x,b_1,\dots,b_n)\}\rrbracket_\alpha&=\{x\in\mathbb L^u_\gamma\,|\,\mathbb L^u_\gamma\vDash\varphi(x,\llbracket b_1\rrbracket_\gamma,\dots,\llbracket b_n\rrbracket_\gamma)\}.
  \end{align*}
\end{definition}

For $\alpha<\beta$ and $a\in\mathbf L^u_\alpha\subseteq\mathbf L^u_\beta$ we clearly have $\llbracket a\rrbracket_\alpha=\llbracket a\rrbracket_\beta$. Thus we will write~$\llbracket a\rrbracket$ or even $a$ at the place of $\llbracket a\rrbracket_\alpha$. Before we compare our sets $\mathbb L^u_\alpha$ with the usual constructible hierarchy, let us record some easy consequences of the definitions:

\begin{lemma}\label{lem:basic-properties-constructible}
 The following holds for any transitive set $u$:
 \begin{enumerate}[label=(\alph*)]
  \item The set $\mathbb L^u_\alpha$ is transitive for all ordinals $\alpha$.
  \item We have $\mathbb L^u_\alpha=\llbracket L^u_\alpha\rrbracket\in\mathbb L^u_\beta$ for all ordinals $\alpha<\beta$.
 \end{enumerate}
\end{lemma}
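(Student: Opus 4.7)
Both parts unravel directly from Definitions~\ref{def:term-version-L} and~\ref{def:interpretation-constructible-terms}, together with the two observations recorded in the paragraph between them: first, that $\mathbf L^u_\gamma\subseteq\mathbf L^u_\alpha$ whenever $\gamma<\alpha$; and second, that for $a$ in this smaller set the interpretations $\llbracket a\rrbracket_\gamma$ and $\llbracket a\rrbracket_\alpha$ agree. My plan is to dispose of (a) by a single case split on the constructor of a term witnessing membership, and to read off (b) from the second recursive clause of Definition~\ref{def:interpretation-constructible-terms}. No induction on $\alpha$ is needed.

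For (a), fix $x\in\mathbb L^u_\alpha$ and $y\in x$. Since $\mathbb L^u_\alpha$ is by definition the image of $\llbracket\cdot\rrbracket_\alpha$, we may write $x=\llbracket a\rrbracket_\alpha$ for some $a\in\mathbf L^u_\alpha$ and consider the three clauses of Definition~\ref{def:term-version-L}. If $a=v\in u$, then $x=v$, and transitivity of $u$ gives $y\in u$; since elements of $u$ are themselves terms with identity interpretation, $y\in\mathbb L^u_\alpha$. If $a=L^u_\gamma$ with $\gamma\in\alpha$, then $x=\mathbb L^u_\gamma$, so $y=\llbracket b\rrbracket_\gamma$ for some $b\in\mathbf L^u_\gamma\subseteq\mathbf L^u_\alpha$; invoking the agreement of the two interpretations on the smaller set yields $y=\llbracket b\rrbracket_\alpha\in\mathbb L^u_\alpha$. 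If $a$ is a set-builder term $\{z\in L^u_\gamma\,|\,\varphi(z,\vec b)\}$ (necessarily with $\gamma\in\alpha$), then $x\subseteq\mathbb L^u_\gamma$ by Definition~\ref{def:interpretation-constructible-terms}, so $y\in\mathbb L^u_\gamma$ and the previous case again places $y$ in $\mathbb L^u_\alpha$.

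For (b), suppose $\alpha<\beta$, i.e.~$\alpha\in\beta$. Clause (ii) of Definition~\ref{def:term-version-L} applied with $X=\beta$ yields the term $L^u_\alpha\in\mathbf L^u_\beta$, and the second recursive clause of Definition~\ref{def:interpretation-constructible-terms} evaluates $\llbracket L^u_\alpha\rrbracket_\beta=\mathbb L^u_\alpha$. Since $\mathbb L^u_\beta$ is by definition the image of $\llbracket\cdot\rrbracket_\beta$, this immediately delivers both $\mathbb L^u_\alpha=\llbracket L^u_\alpha\rrbracket$ and $\mathbb L^u_\alpha\in\mathbb L^u_\beta$.

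No real obstacle arises; the lemma is a sanity check on the functorial term construction. The only care required is bookkeeping of subscripts: the decisive point is the agreement $\llbracket a\rrbracket_\gamma=\llbracket a\rrbracket_\alpha$ for $a\in\mathbf L^u_\gamma\subseteq\mathbf L^u_\alpha$ noted just after Definition~\ref{def:interpretation-constructible-terms}, which is precisely what licenses treating $\mathbb L^u_\gamma$ as a subset of $\mathbb L^u_\alpha$ and thereby underwrites the second and third cases of~(a).
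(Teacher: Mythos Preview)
Your proof is correct and complete. The paper itself omits the proof of this lemma entirely, treating both parts as immediate consequences of Definitions~\ref{def:term-version-L} and~\ref{def:interpretation-constructible-terms} together with the two observations you invoke (the inclusion $\mathbf L^u_\gamma\subseteq\mathbf L^u_\alpha$ for $\gamma<\alpha$, stated just before Definition~\ref{def:interpretation-constructible-terms}, and the agreement $\llbracket a\rrbracket_\gamma=\llbracket a\rrbracket_\alpha$, stated just after it). Your case split on the term constructor for~(a) and your direct reading of the second recursive clause for~(b) are exactly the routine unpacking the paper leaves to the reader.
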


Based on these facts one can verify that we have
\begin{align*}
 \mathbb L^u_0&=u,\\
 \mathbb L^u_{\alpha+1}&=\text{``the $\Delta_0$-definable subsets of $\mathbb L^u_\alpha$''},\\
 \mathbb L^u_\lambda&=\bigcup_{\gamma<\lambda}\mathbb L^u_\gamma\quad\text{for limit ordinals $\lambda$.}
\end{align*}
The only difference to the usual constructible hierarchy is that we restrict the logical complexity of definable subsets in the successor step. This makes no essential difference, since any definable subset of $\mathbb L^u_\alpha$ is a $\Delta_0$-definable subset of $\mathbb L^u_{\alpha+1}$. The restriction to $\Delta_0$-formulas will be convenient for technical reasons. The reader may also have observed that the terms $L^u_\gamma$ and $\{x\in L^u_\gamma\,|\,x=x\}$ have the same interpretation. In the context of infinite proof trees it will nevertheless be important to have a separate term~$L^u_\gamma$. The following result about the height of the transitive set~$\mathbb L^u_\alpha$ is established as for the usual constructible hierarchy (see e.g.~\cite[Lemma~13.2]{jech03}):

\begin{lemma}\label{lem:height-L}
 For any transitive set $u$ and any ordinal $\alpha$ we have
 \begin{equation*}
  o(\mathbb L^u_\alpha)=o(u)+\alpha.
 \end{equation*}
\end{lemma}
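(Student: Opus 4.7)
I plan to prove $o(\mathbb L^u_\alpha)=o(u)+\alpha$ by transfinite induction on $\alpha$, using the description of $\mathbb L^u_\alpha$ as $u$, $\Delta_0$-definable subsets, or unions at the three kinds of stages (established just before the lemma).

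For the base case $\alpha=0$ the claim is $o(\mathbb L^u_0)=o(u)=o(u)+0$, which is immediate from $\mathbb L^u_0=u$. For a limit ordinal $\lambda$, since $\mathbb L^u_\lambda=\bigcup_{\gamma<\lambda}\mathbb L^u_\gamma$ and the operation $v\mapsto v\cap\ordi$ commutes with directed unions, the induction hypothesis gives
\begin{equation*}
 o(\mathbb L^u_\lambda)=\bigcup_{\gamma<\lambda}o(\mathbb L^u_\gamma)=\bigcup_{\gamma<\lambda}(o(u)+\gamma)=o(u)+\lambda.
\end{equation*}

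The successor step $\alpha\mapsto\alpha+1$ is the substantive case. I would prove the two inequalities separately. For the lower bound I want to exhibit $o(u)+\alpha$ as an element of $\mathbb L^u_{\alpha+1}$. By the induction hypothesis $o(u)+\alpha=o(\mathbb L^u_\alpha)=\mathbb L^u_\alpha\cap\ordi$, and since the predicate ``$x$ is an ordinal'' (i.e.\ $x$ is transitive and linearly ordered by $\in$) is $\Delta_0$, this set is a $\Delta_0$-definable subset of $\mathbb L^u_\alpha$, hence lies in $\mathbb L^u_{\alpha+1}$. Transitivity of $\mathbb L^u_{\alpha+1}$ (Lemma~\ref{lem:basic-properties-constructible}) then forces $o(u)+\alpha\subseteq o(\mathbb L^u_{\alpha+1})$, so $o(\mathbb L^u_{\alpha+1})\geq o(u)+\alpha+1$. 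For the upper bound I would take an arbitrary ordinal $\beta\in\mathbb L^u_{\alpha+1}$ and use that every element of $\mathbb L^u_{\alpha+1}$ is a subset of $\mathbb L^u_\alpha$; thus all ordinals less than $\beta$ lie in $\mathbb L^u_\alpha$, so $\beta\subseteq o(\mathbb L^u_\alpha)=o(u)+\alpha$, which gives $\beta\leq o(u)+\alpha$. Combining both bounds yields $o(\mathbb L^u_{\alpha+1})=o(u)+\alpha+1$.

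The only point requiring a little care is the claim that ``$x$ is an ordinal'' is genuinely $\Delta_0$ and absolute for the transitive set $\mathbb L^u_\alpha$, so that the definable-subset clause for $\mathbb L^u_{\alpha+1}$ really produces the set $\mathbb L^u_\alpha\cap\ordi$; this follows from the standard bounded-quantifier formulation of ``transitive set linearly ordered by $\in$'' together with transitivity of $\mathbb L^u_\alpha$. Apart from this, each step is a routine consequence of the recursive description of $\mathbb L^u_\bullet$ stated above, and the induction can be formalised in $\atrs$ because the set-theoretic definitions involved are all primitive recursive.
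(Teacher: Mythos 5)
Your proof is correct and is exactly the standard argument the paper has in mind: the paper gives no proof of Lemma~\ref{lem:height-L}, deferring to the analogous result for the usual constructible hierarchy (Jech, Lemma~13.2), and your induction --- with the successor step obtaining $o(u)+\alpha=\mathbb L^u_\alpha\cap\ordi$ as a $\Delta_0$-definable (hence new) subset for the lower bound, and using that elements of $\mathbb L^u_{\alpha+1}$ are subsets of $\mathbb L^u_\alpha$ for the upper bound --- is precisely that argument relativized to $u$. The point you flag, that ``$x$ is an ordinal'' is $\Delta_0$ and absolute for the transitive set $\mathbb L^u_\alpha$, is indeed the only detail needing care, and you handle it correctly.
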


We have reconstructed the usual constructible hierarchy $\mathbb L^u$ via a family of term systems $\mathbf L^u_X$. In the following we investigate these term systems in their own right. To exhibit their functorial properties we define corresponding maps on morphisms:

\begin{definition}\label{def:constructible-morphisms}
 For each embedding $f:X\rightarrow Y$ we define a map $\mathbf L^u_f:\mathbf L^u_X\rightarrow\mathbf L^u_Y$ by the recursion
 \begin{align*}
  \mathbf L^u_f(v)&=v,\\
  \mathbf L^u_f(L^u_s)&=L^u_{f(s)},\\
  \mathbf L^u_f(\{x\in L^u_s\,|\,\varphi(x,a_1,\dots,a_n)\})&=\{x\in L^u_{f(s)}\,|\,\varphi(x,\mathbf L^u_f(a_1),\dots,\mathbf L^u_f(a_n))\}.
 \end{align*}
\end{definition}

The following proof shows that the range of $\mathbf L^u_f$ is indeed contained in~$\mathbf L^u_Y$:

\begin{proposition}\label{prop:constructible-hierarchy-functorial}
 For each transitive set $u$, the maps $X\mapsto\mathbf L^u_X$ and $f\mapsto\mathbf L^u_f$ form a functor from linear orders to sets. The functions $\suppl_X:\mathbf L^u_X\rightarrow[X]^{<\omega}$ form a natural transformation. They compute supports, in the sense that we have
 \begin{equation*}
  \rng(\mathbf L^u_f)=\{b\in\mathbf L^u_Y\,|\,\suppl_Y(b)\subseteq\rng(f)\}
 \end{equation*}
 for any order embedding $f:X\rightarrow Y$.
\end{proposition}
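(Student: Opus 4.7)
My plan is to prove the three assertions simultaneously by induction on the term structure of $\mathbf L^u_X$ (as given by Definition~\ref{def:term-version-L}), after first observing a general fact about $\lef$: since any order embedding $f:X\to Y$ preserves and reflects $<$, we have $a\lef_X s\Leftrightarrow[f]^{<\omega}(a)\lef_Y f(s)$ for any finite $a\subseteq X$ and $s\in X$.

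First I would verify simultaneously that (a) $\mathbf L^u_f$ as defined by the recursion in Definition~\ref{def:constructible-morphisms} actually lands in $\mathbf L^u_Y$, and (b) $\suppl_Y(\mathbf L^u_f(a))=[f]^{<\omega}(\suppl_X(a))$. The three clauses of Definition~\ref{def:term-version-L} give three cases. The cases $a=v\in u$ and $a=L^u_s$ are immediate. In the third case $a=\{x\in L^u_s\mid\varphi(x,\overline b)\}$, part (b) for the $b_i$ combined with the side condition $\suppl_X(b_i)\lef_X s$ and the preliminary observation gives $\suppl_Y(\mathbf L^u_f(b_i))\lef_Y f(s)$, which is exactly the side condition required for $\{x\in L^u_{f(s)}\mid\varphi(x,\mathbf L^u_f(\overline b))\}$ to be a legitimate term of $\mathbf L^u_Y$; the support identity (b) then follows by inspection. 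Functoriality ($\mathbf L^u_{\mathrm{id}_X}=\mathrm{id}_{\mathbf L^u_X}$ and $\mathbf L^u_{g\circ f}=\mathbf L^u_g\circ\mathbf L^u_f$) is then a routine induction on terms using the same three cases.

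Naturality of $\suppl$ is precisely statement (b) above, so nothing further is needed. For the range characterization, the inclusion $\subseteq$ is immediate from naturality. For $\supseteq$, I would argue by induction on $b\in\mathbf L^u_Y$. If $b=v\in u$, take $v\in\mathbf L^u_X$. If $b=L^u_t$, then $\suppl_Y(b)=\{t\}\subseteq\rng(f)$, so $t=f(s)$ for a unique $s\in X$ (using that $f$ is an embedding, hence injective), and $\mathbf L^u_f(L^u_s)=b$. If $b=\{x\in L^u_t\mid\varphi(x,c_1,\dots,c_n)\}$, then $\{t\}\cup\bigcup_i\suppl_Y(c_i)\subseteq\rng(f)$, so $t=f(s)$ and each $\suppl_Y(c_i)\subseteq\rng(f)$; by induction each $c_i=\mathbf L^u_f(a_i)$ for some $a_i\in\mathbf L^u_X$, and we then form $a=\{x\in L^u_s\mid\varphi(x,a_1,\dots,a_n)\}$ mapping to $b$.

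The main obstacle is verifying, in this last step, that the candidate preimage $a$ actually satisfies the side condition $\suppl_X(a_i)\lef_X s$ required for it to be a legitimate term of $\mathbf L^u_X$. This is where the preliminary observation on $\lef$ does the essential work: from $\suppl_Y(c_i)\lef_Y t$ (which holds because $b$ is a term of $\mathbf L^u_Y$) and naturality $\suppl_Y(c_i)=[f]^{<\omega}(\suppl_X(a_i))$, together with $t=f(s)$, we get $[f]^{<\omega}(\suppl_X(a_i))\lef_Y f(s)$, and then reflection of $<$ by $f$ yields $\suppl_X(a_i)\lef_X s$ as needed. Everything else is routine term induction.
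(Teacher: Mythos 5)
Your proposal is correct and follows essentially the same route as the paper: a simultaneous term induction establishing that $\mathbf L^u_f$ lands in $\mathbf L^u_Y$ together with the naturality of $\suppl$, followed by the range characterization, with the forward inclusion read off from naturality. The only difference is cosmetic and concerns the converse inclusion: the paper factors the inclusion $\iota:\suppl_Y(b)\hookrightarrow Y$ as $f\circ g$ and reduces to showing $b\in\mathbf L^u_{\suppl_Y(b)}$ with $\mathbf L^u_\iota$ an inclusion, whereas you construct the preimage directly by induction on $b$; both hinge on the same observation that embeddings reflect $\lef$, so that the candidate preimage satisfies the side condition of Definition~\ref{def:term-version-L}(iii).
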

\begin{proof}
 We should first verify $\mathbf L^u_f(a)\in\mathbf L^u_Y$ for $a\in\mathbf L^u_X$ and an embedding $f:X\rightarrow Y$. This can be done by induction over the term $a$, if one simultaneously checks the naturality condition
 \begin{equation*}
  \suppl_Y(\mathbf L^u_f(a))=[f]^{<\omega}(\suppl_X(a)).
 \end{equation*}
 The most interesting case is that of a term $a=\{x\in L^u_s\,|\,\varphi(x,a_1,\dots,a_n)\}$. Invoking naturality for $a_i$ we see that $\suppl_X(a_i)\lef_X s$ implies $\suppl_Y(\mathbf L^u_f(a_i))\lef_Y f(s)$. This ensures that $\mathbf L^u_f(a)=\{x\in L^u_{f(s)}\,|\,\varphi(x,\mathbf L^u_f(a_1),\dots,\mathbf L^u_f(a_n))\}$ is a term in $\mathbf L^u_Y$, as desired. The functoriality of $\mathbf L^u$ is established by a straightforward induction over terms. Since $\suppl$ is natural, any $b=\mathbf L^u_f(a)\in\rng(\mathbf L^u_f)$ must satisfy
 \begin{equation*}
  \suppl_Y(b)=[f]^{<\omega}(\suppl_X(a))\subseteq\rng(f).
 \end{equation*}
 Conversely, if an element $b\in\mathbf L^u_Y$ satisfies $\suppl_Y(b)\subseteq\rng(f)$, then the inclusion $\iota:\suppl_Y(b)\hookrightarrow Y$ factors as $\iota=f\circ g$ with $g:\suppl_Y(b)\rightarrow X$. By induction over terms one checks that $b$ lies in $\mathbf L^u_{\suppl_Y(b)}$ and that $\mathbf L^u_\iota:\mathbf L^u_{\suppl_Y(b)}\rightarrow\mathbf L^u_Y$ is an inclusion. Thus $b=\mathbf L^u_\iota(b)=\mathbf L^u_f(\mathbf L^u_g(b))$ lies in the range of $\mathbf L^u_f$, as claimed.
\end{proof}

For our functorial proof search it will be crucial to have compatible enumerations of the term systems $\mathbf L^u_X$. The following is inspired by Girard's~\cite{girard-pi2} work on dilators:

\begin{theorem}\label{thm:enumerations-L}
Consider a countable transitive set $u=\{u_i\,|\,i\in\omega\}$ with a fixed enumeration. One can construct functions
\begin{equation*}
\en_X:[X]^{<\omega}\times\omega\rightarrow\mathbf L_X^u,\qquad
\code_X:[X]^{<\omega}\times\mathbf L_X^u\rightarrow\omega
\end{equation*}
which satisfy
\begin{equation*}
\en_X(x,\code_X(x,a))=a
\end{equation*}
whenever we have $\suppl_X(a)\subseteq x$. The naturality conditions
\begin{align*}
\en_Y([f]^{<\omega}(x),n)&=\mathbf L_f^u(\en_X(x,n)),\\
\code_Y([f]^{<\omega}(x),\mathbf L_f^u(a))&=\code_X(x,a)
\end{align*}
hold for any order embedding $f:X\rightarrow Y$.
\end{theorem}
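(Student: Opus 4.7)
My approach is to reduce the encoding of terms over $X$ with support contained in a finite set $x \in [X]^{<\omega}$ to the encoding of terms over a single canonical finite order of cardinality $|x|$, using the functoriality of $\mathbf L^u$ established in Proposition~\ref{prop:constructible-hierarchy-functorial}.

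First I would construct, for each $k \in \omega$, a primitive recursive bijection $e_k : \omega \to \mathbf L^u_k$ together with its primitive recursive inverse $c_k$, where $k = \{0 < \cdots < k-1\}$ is viewed as a finite linear order. The set $\mathbf L^u_k$ is countable and stratifies naturally by term rank: the rank-$0$ terms are the elements of $u$ (enumerated by the given sequence $u_0, u_1, \ldots$) together with the $k$ terms $L^u_0, \ldots, L^u_{k-1}$; a rank-$(r{+}1)$ term has the shape $\{x \in L^u_i \mid \varphi(x, a_1, \ldots, a_n)\}$ for a $\Delta_0$-formula $\varphi$ (primitive recursively enumerable via G\"odel coding), an $i < k$, and a tuple of previously constructed terms $a_j$ of rank $\leq r$ subject to $\suppl_k(a_j) \lef_k i$. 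A standard dovetailing through these countably many layers yields the required $e_k$ and $c_k$ as primitive recursive set functions.

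Next, given $x = \{s_0 <_X \cdots <_X s_{k-1}\} \in [X]^{<\omega}$, let $\iota_x : k \hookrightarrow X$ denote the unique order embedding with $\iota_x(i) = s_i$. By Proposition~\ref{prop:constructible-hierarchy-functorial} the range of $\mathbf L^u_{\iota_x}$ is precisely $\{a \in \mathbf L^u_X \mid \suppl_X(a) \subseteq x\}$, and a routine induction on term structure shows that $\mathbf L^u_{\iota_x}$ is itself injective. I would then set
\begin{align*}
\en_X(x, n) &= \mathbf L^u_{\iota_x}(e_{|x|}(n)), \\
\code_X(x, a) &= c_{|x|}\bigl((\mathbf L^u_{\iota_x})^{-1}(a)\bigr) \quad \text{when } \suppl_X(a) \subseteq x,
\end{align*}
with an arbitrary default (say $0$) in the complementary case. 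The round-trip identity $\en_X(x, \code_X(x, a)) = a$ whenever $\suppl_X(a) \subseteq x$ is then immediate from $e_{|x|} \circ c_{|x|} = \operatorname{id}$.

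For naturality, fix an order embedding $f : X \to Y$ and $x \in [X]^{<\omega}$, and set $y = [f]^{<\omega}(x)$. Since $f$ is strictly monotone, $|y| = |x|$ and the ordered enumeration of $x$ maps to that of $y$, whence $\iota_y = f \circ \iota_x$. Functoriality yields $\mathbf L^u_{\iota_y} = \mathbf L^u_f \circ \mathbf L^u_{\iota_x}$, and the two naturality equations drop out by unravelling the definitions of $\en$ and $\code$. The main obstacle I anticipate is the careful setup of the enumerations $e_k$ and $c_k$: stratifying by rank and formula complexity so that both directions of the bijection are demonstrably primitive recursive within $\atrs$ requires some bookkeeping. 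Once that is in place, everything else is a direct application of the functorial apparatus already developed.
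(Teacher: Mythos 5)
Your proposal is correct and follows essentially the same route as the paper: reduce to a canonical coding of $\mathbf L^u_{|x|}$ for the finite order $|x|$, transport along the increasing enumeration $\iota_x$ using Proposition~\ref{prop:constructible-hierarchy-functorial}, and derive naturality from $\iota_{[f]^{<\omega}(x)}=f\circ\iota_x$. The only cosmetic difference is that you demand a full bijection $e_k:\omega\to\mathbf L^u_k$, whereas the paper only needs a coding pair with $\en^0_k(\code^0_k(a))=a$ (which also avoids the degenerate case of a finite $\mathbf L^u_0=u$); both versions work.
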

\begin{proof}
For any natural number $n$ we can define G\"odel numbers for the terms in~$\mathbf L^u_n$, using the given enumeration of $u$. This yields a family of functions
\begin{equation*}
\en_n^0:\omega\rightarrow\mathbf L_n^u,\qquad\code_n^0:\mathbf L_n^u\rightarrow\omega
\end{equation*}
which satisfy
\begin{equation*}
\en_n^0(\code_n^0(a))=a.
\end{equation*}
Given a finite subset $x$ of a linear order $X$, we write
\begin{equation*}
|\iota^X_x|:|x|\rightarrow X
\end{equation*}
for the increasing enumeration of $x\subseteq X$. By the previous proposition any $a\in\mathbf L_X^u$ with $\suppl_X(a)\subseteq x$ lies in the range of $\mathbf L_{|\iota^X_x|}^u$. The representation $a=\mathbf L_{|\iota^X_x|}^u(a_0)$ is unique, since $\mathbf L_{|\iota^X_x|}^u$ factors into the bijection $\mathbf L^u_{|x|}\cong\mathbf L^u_x$ and the inclusion $\mathbf L^u_x\hookrightarrow\mathbf L^u_X$. Note that $a_0$ can be computed by a primitive recursive set function, namely as
\begin{equation*}
 a_0=\bigcup\{b\in\mathbf L^u_{|x|}\,|\,a=\mathbf L_{|\iota^X_x|}^u(b)\}.
\end{equation*}
Now the desired functions can be defined by
\begin{align*}
\en_X(x,n)&=\mathbf L_{|\iota^X_x|}^u(\en_{|x|}^0(n)),\\
\code_X(x,a)&=\begin{cases}
		\code_{|x|}^0(a_0) & \text{if $\suppl_X(a)\subseteq x$ and $a=\mathbf L_{|\iota^X_x|}^u(a_0)$},\\
		0 & \text{if $\suppl_X(a)\nsubseteq x$}.
	       \end{cases}
\end{align*}
Consider an $a\in\mathbf L_X^u$ with $\suppl_X(a)\subseteq x$. Writing $a=\mathbf L_{|\iota^X_x|}^u(a_0)$ we obtain
\begin{equation*}
 \en_X(x,\code_X(x,a))=\mathbf L_{|\iota^X_x|}^u(\en_{|x|}^0(\code^0_{|x|}(a_0)))=\mathbf L_{|\iota^X_x|}^u(a_0)=a,
\end{equation*}
as desired. To establish naturality we note $|[f]^{<\omega}(x)|=|x|$ and $|\iota_{[f]^{<\omega}(x)}^Y|=f\circ|\iota^X_x|$ for any embedding $f:X\rightarrow Y$. Thus we can compute
\begingroup
\allowdisplaybreaks[0]
\begin{multline*}
\en_Y([f]^{<\omega}(x),n)=\mathbf L_{|\iota^Y_{[f]^{<\omega}(x)}|}^u(\en_{|[f]^{<\omega}(x)|}^0(n))=\\
=\mathbf L_f^u(\mathbf L_{|\iota^X_x|}^u(\en_{|x|}^0(n)))=\mathbf L_f^u(\en_X(x,n)).
\end{multline*}
\endgroup%
Since $\suppl$ is a natural transformation we see
\begin{equation*}
\suppl_X(a)\subseteq x\,\Leftrightarrow\, [f]^{<\omega}(\suppl_X(a))\subseteq[f]^{<\omega}(x)\,\Leftrightarrow\,\suppl_Y(\mathbf L_f^u(a))\subseteq[f]^{<\omega}(x).
\end{equation*}
So for $\suppl_X(a)\subseteq x$ with $a=\mathbf L_{|\iota^X_x|}^u(a_0)$ we obtain $\mathbf L_f^u(a)=\mathbf L_{|\iota_{[f]^{<\omega}(x)}^Y|}^u(a_0)$ and
\begin{equation*}
\code_Y([f]^{<\omega}(x),\mathbf L_f^u(a))=\code_{|[f]^{<\omega}(x)|}^0(a_0)=\code_{|x|}^0(a_0)=\code_X(x,a).
\end{equation*}
If we have $\suppl_X(a)\nsubseteq x$ and thus $\suppl_Y(\mathbf L_f^u(a))\nsubseteq[f]^{<\omega}(x)$, then we get
\begin{equation*}
\code_Y([f]^{<\omega}(x),\mathbf L_f^u(a))=0=\code_X(x,a),
\end{equation*}
which completes the proof of naturality.
\end{proof}

The following observation will be needed later:

\begin{corollary}\label{cor:support-enumeration}
 We have $\suppl_X(\en_X(x,n))\subseteq x$ for arbitrary $x\in[X]^{<\omega}$ and $n\in\omega$.
\end{corollary}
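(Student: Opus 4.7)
The plan is to unfold the definition of $\en_X$ given in the proof of Theorem~\ref{thm:enumerations-L} and then apply the naturality of $\suppl$ established in Proposition~\ref{prop:constructible-hierarchy-functorial}. Recall that in that proof $\en_X$ was constructed as
\begin{equation*}
\en_X(x,n)=\mathbf L^u_{|\iota^X_x|}(\en^0_{|x|}(n)),
\end{equation*}
where $|\iota^X_x|:|x|\to X$ is the monotone enumeration of the finite set $x\subseteq X$, and $\en^0_{|x|}:\omega\to\mathbf L^u_{|x|}$ is an enumeration of the term system over the natural number~$|x|$.

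Applying the naturality of $\suppl$ from Proposition~\ref{prop:constructible-hierarchy-functorial} to the embedding $f=|\iota^X_x|$ and the term $a=\en^0_{|x|}(n)\in\mathbf L^u_{|x|}$, I obtain
\begin{equation*}
\suppl_X(\en_X(x,n))=\suppl_X(\mathbf L^u_{|\iota^X_x|}(\en^0_{|x|}(n)))=[|\iota^X_x|]^{<\omega}(\suppl_{|x|}(\en^0_{|x|}(n))).
\end{equation*}
Since the right-hand side is the image of some finite set under $[|\iota^X_x|]^{<\omega}$, it is contained in the range of $|\iota^X_x|$, which is exactly $x$. Therefore $\suppl_X(\en_X(x,n))\subseteq x$, as claimed.

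There is no real obstacle here: the corollary is a direct bookkeeping consequence of how $\en_X$ was assembled from the ``anonymous'' enumeration $\en^0_{|x|}$ via the canonical embedding $|\iota^X_x|$, combined with the already-proved naturality of supports. The only thing to be careful about is writing out the equality $\rng(|\iota^X_x|)=x$ explicitly, which is immediate from the fact that $|\iota^X_x|$ is the monotone enumeration of~$x$.
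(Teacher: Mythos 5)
Your proof is correct and is essentially the paper's own argument: both reduce the claim to the naturality of $\suppl$ applied to an embedding whose range is $x$, the only cosmetic difference being that you unfold the explicit definition $\en_X(x,n)=\mathbf L^u_{|\iota^X_x|}(\en^0_{|x|}(n))$ and use the enumeration $|\iota^X_x|$, whereas the paper treats $\en$ as a black box and invokes the naturality clause of Theorem~\ref{thm:enumerations-L} for the inclusion $\iota_x:x\hookrightarrow X$. No gaps.
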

\begin{proof}
 Write $\iota_x:x\hookrightarrow X$ for the inclusion. The naturality of $\en$ and $\suppl$ yields
 \begin{multline*}
  \suppl_X(\en_X(x,n))=\suppl_X(\en_X([\iota_x]^{<\omega}(x),n))=\\
  =\suppl_X(\mathbf L^u_{\iota_x}(\en_x(x,n)))=[\iota_x]^{<\omega}(\suppl_x(\en_x(x,n)))\subseteq x,
 \end{multline*}
 as promised.
\end{proof}

Similarly to the usual order on the constructible hierarchy, we can now define compatible order relations on the term systems $\mathbf L^u_X$:

\begin{definition}
Let $u=\{u_i\,|\,i\in\omega\}$ be a countable transitive set with a fixed enumeration. If $(X,<_X)$ is a linear order, then we write $<_X^*$ for the colexicographic order on $[X]^{<\omega}$: For $x\neq x'$ we have
\begin{equation*}
x<_X^*x'\quad\Leftrightarrow\quad\textstyle\max_{<_X}(x\triangle x')\in x',
\end{equation*}
where $x\triangle x'$ denotes the symmetric difference. The relation $<_{\mathbf L^u_X}$ on $\mathbf L_X^u$ is given by
\begin{equation*}
a<_{\mathbf L^u_X} b\quad\Leftrightarrow\quad\begin{cases}
\text{either $\suppl_X(a)<_X^*\suppl_X(b)$},\\[1.5ex]
\parbox[t]{0.7\textwidth}{or $\suppl_X(a)=\suppl_X(b)$ and\newline\hspace*{2cm} $\code_X(\suppl_X(a),a)<\code_X(\suppl_X(b),b)$.}
\end{cases}
\end{equation*}
\end{definition}

We record the following fundamental fact:

\begin{lemma}\label{less-L-linear-well}
If $X$ is a linear order resp.~a well-order, then so is $(\mathbf L_X^u,<_{\mathbf L^u_X})$.
\end{lemma}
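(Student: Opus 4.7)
The plan is to handle linearity and well-foundedness separately, with the key observation being that the order $<_{\mathbf L^u_X}$ is a lexicographic combination of the colexicographic order $<_X^*$ on $[X]^{<\omega}$ and the natural-number order on codes.

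For linearity I would first verify that $<_X^*$ is a linear order on $[X]^{<\omega}$ whenever $<_X$ is: irreflexivity is immediate from the definition via the symmetric difference, while transitivity and trichotomy follow from the classical argument for colexicographic orderings on finite sets (for trichotomy of $x \neq x'$, look at $\max_{<_X}(x \triangle x')$ and ask in which of the two sets it lies). Irreflexivity and transitivity of $<_{\mathbf L^u_X}$ then drop out of a standard lexicographic combination. For trichotomy, given $a \neq b$ in $\mathbf L^u_X$, either the supports differ, in which case $<_X^*$ orders them, or $\suppl_X(a) = \suppl_X(b) = x$, and then Theorem~\ref{thm:enumerations-L} yields
\begin{equation*}
 a = \en_X(x,\code_X(x,a))\quad\text{and}\quad b = \en_X(x,\code_X(x,b)),
\end{equation*}
so $a \neq b$ forces $\code_X(x,a) \neq \code_X(x,b)$, and the codes are compared in $\omega$.

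For well-foundedness, suppose $X$ is a well-order and consider a putative infinite descending chain $a_0 >_{\mathbf L^u_X} a_1 >_{\mathbf L^u_X} a_2 > \dots$. By the definition of $<_{\mathbf L^u_X}$, the sequence $\suppl_X(a_0), \suppl_X(a_1), \dots$ is weakly $<_X^*$-descending in $[X]^{<\omega}$. The main auxiliary fact I would establish is therefore that $([X]^{<\omega}, <_X^*)$ is well-founded whenever $X$ is: if $x_0 >_X^* x_1 >_X^* \dots$ were an infinite descent, one could inductively extract from the sequence of maxima $\max_{<_X}(x_n \triangle x_{n+1})$ either an infinite $<_X$-descending sequence in $X$, or (after passing to the stabilising largest elements) reduce to the well-foundedness of $<_X^*$ on strictly smaller finite sets, giving a contradiction by induction on the eventual size of the symmetric differences. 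Once $<_X^*$ is well-founded, the sequence $\suppl_X(a_n)$ must be eventually constant equal to some $x$; from that index on, $\code_X(x,a_n)$ is a strictly descending sequence of natural numbers, which is impossible.

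The main obstacle, although a minor one, is the well-foundedness of the colexicographic order $<_X^*$ on $[X]^{<\omega}$ over a well-founded $X$. A cleaner route, which I would adopt if the direct induction on finite sets becomes awkward to formalise in $\atrs$, is to embed $([X]^{<\omega},<_X^*)$ order-preservingly into $2^X$ (in Girard's sense used in the introduction), by sending a finite set $x = \{s_0 <_X \dots <_X s_{k-1}\}$ to the strictly decreasing tuple $\langle s_{k-1}, \dots, s_0 \rangle$; since $2^X$ is the standard example of a dilator that preserves well-foundedness (equivalent to $\aca$ over $\rca$), this immediately gives well-foundedness of $<_X^*$, and the rest of the argument proceeds as above.
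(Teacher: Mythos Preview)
Your proposal is correct and follows essentially the same approach as the paper's own proof, which is a brief sketch: the paper isolates exactly the trichotomy argument via $\en_X$ and $\code_X$ that you give, and defers the rest (irreflexivity, transitivity, well-foundedness of the lexicographic combination) to ``straightforward'' with a reference to the thesis. Your elaboration of the well-foundedness step via the order-preserving embedding of $([X]^{<\omega},<_X^*)$ into $2^X$ is a clean way to make the straightforward part explicit and is available in $\atrs$.
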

\begin{proof}
 To see that $<_{\mathbf L^u_X}$ is trichotomous one observes that $\suppl_X(a)=\suppl_X(b)$ and $\code_X(\suppl_X(a),a)=\code_X(\suppl_X(b),b)$ implies
\begin{multline*}
a=\en_X(\suppl_X(a),\code_X(\suppl_X(a),a))=\\
=\en_X(\suppl_X(b),\code_X(\suppl_X(b),b))=b.
\end{multline*}
 The remaining verifications are straightforward (cf.~\cite[Section~3.1]{freund-thesis}).
\end{proof}

Let us now summarize the functorial properties of our construction:

\begin{theorem}\label{thm:constructible-dilator}
 Let $u=\{u_i\,|\,i\in\omega\}$ be a countable transitive set with a fixed enumeration. Then $(\mathbf L^u,\suppl)$ is a dilator.
\end{theorem}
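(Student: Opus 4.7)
The plan is to assemble the theorem from the three main results already established in this section: Proposition~\ref{prop:constructible-hierarchy-functorial} (functoriality on sets, naturality and support-computation for $\suppl$), Theorem~\ref{thm:enumerations-L} (naturality of $\en$ and $\code$), and Lemma~\ref{less-L-linear-well} (linearity/well-foundedness of $<_{\mathbf L^u_X}$). The only genuinely new verification required is that the morphism map $f \mapsto \mathbf L^u_f$ really lands in the category of linear orders, i.e.\ that $\mathbf L^u_f : \mathbf L^u_X \to \mathbf L^u_Y$ is an \emph{order embedding} whenever $f : X \to Y$ is. Once this is shown, I can read off the remaining clauses of Definition~\ref{def:prae-dilator} and upgrade from prae-dilator to dilator.

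First I would observe that an order embedding $f : X \to Y$ induces a strictly increasing map $[f]^{<\omega} : ([X]^{<\omega}, <_X^*) \to ([Y]^{<\omega}, <_Y^*)$, since the colexicographic order depends only on the induced order on the underlying finite sets and $f$ preserves maxima of finite sets. Given $a, b \in \mathbf L^u_X$, set $a' = \mathbf L^u_f(a)$ and $b' = \mathbf L^u_f(b)$. Naturality of $\suppl$ (from Proposition~\ref{prop:constructible-hierarchy-functorial}) gives $\suppl_Y(a') = [f]^{<\omega}(\suppl_X(a))$ and analogously for $b$. If $a$ and $b$ are separated by the first clause of $<_{\mathbf L^u_X}$, i.e.\ $\suppl_X(a) <_X^* \suppl_X(b)$, the observation above transports this to $\suppl_Y(a') <_Y^* \suppl_Y(b')$. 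If instead $\suppl_X(a) = \suppl_X(b)$, then $\suppl_Y(a') = \suppl_Y(b')$ and by the naturality of $\code$ from Theorem~\ref{thm:enumerations-L} we have
\begin{equation*}
\code_Y(\suppl_Y(a'), a') = \code_X(\suppl_X(a), a),
\end{equation*}
and likewise for $b$, so the second-clause comparison is preserved verbatim. Since $<_{\mathbf L^u_Y}$ is trichotomous (Lemma~\ref{less-L-linear-well}) the converse also holds, and $\mathbf L^u_f$ is an order embedding.

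With this in hand, $\mathbf L^u$ is an endofunctor of linear orders. The natural transformation $\suppl : \mathbf L^u \Rightarrow [\cdot]^{<\omega}$ was already furnished by Proposition~\ref{prop:constructible-hierarchy-functorial}. The support condition of Definition~\ref{def:prae-dilator}(ii) is immediate from the same proposition: applied to $f = \iota_a : \suppl_X(a) \hookrightarrow X$, the range description yields $a \in \rng(\mathbf L^u_{\iota_a})$, because $\suppl_X(a) \subseteq \rng(\iota_a)$. Hence $(\mathbf L^u, \suppl)$ is a prae-dilator. Finally, to conclude that it is a dilator I invoke Lemma~\ref{less-L-linear-well}: if $X$ is a well-order, then $(\mathbf L^u_X, <_{\mathbf L^u_X})$ is well-founded.

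The only step with any content is the preservation of $<_{\mathbf L^u_X}$ under morphisms, and this rests entirely on the three naturality properties (for $\suppl$, $\en$, $\code$) packaged in the earlier results; so I anticipate no serious obstacle, only the care needed to handle the two clauses of $<_{\mathbf L^u_X}$ separately and to check that the colexicographic order on $[X]^{<\omega}$ is itself functorial in $X$.
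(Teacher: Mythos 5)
Your proposal is correct and follows essentially the same route as the paper: establish order preservation of $\mathbf L^u_f$ from the naturality of $\suppl$ and $\code$, then cite Proposition~\ref{prop:constructible-hierarchy-functorial} for the prae-dilator conditions and Lemma~\ref{less-L-linear-well} for well-foundedness. In fact you treat the colexicographic case more explicitly than the paper, which only spells out the second clause of $<_{\mathbf L^u_X}$.
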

\begin{proof}
 In Proposition~\ref{prop:constructible-hierarchy-functorial} we have seen that $\mathbf L^u$ is a functor from linear orders to sets. To conclude that it is a functor into the category of linear orders we must show that $\mathbf L^u_f:\mathbf L^u_X\rightarrow\mathbf L^u_Y$ is order preserving for any embedding $f:X\rightarrow Y$. Let us consider the case where $a<_{\mathbf L^u_X}b$ holds because of $\suppl_X(a)=\suppl_X(b)$ and~$\code_X(\suppl_X(a),a)<\code_X(\suppl_X(b),b)$. The functoriality of $\suppl$ yields
 \begin{equation*}
  \suppl_Y(\mathbf L^u_f(a))=[f]^{<\omega}(\suppl_X(a))=[f]^{<\omega}(\suppl_X(b))=\suppl_Y(\mathbf L^u_f(b)).
 \end{equation*}
 Using Theorem~\ref{thm:enumerations-L} we obtain
\begin{multline*}
\code_Y(\suppl_Y(\mathbf L_f^u(a)),\mathbf L_f^u(a))=\code_Y([f]^{<\omega}(\suppl_X(a)),\mathbf L_f^u(a))=\\
=\code_X(\suppl_X(a),a)<\code_X(\suppl_X(b),b)=\\
=\code_Y([f]^{<\omega}(\suppl_X(b)),\mathbf L_f^u(b))=\code_Y(\suppl_Y(\mathbf L_f^u(b)),\mathbf L_f^u(b)).
\end{multline*}
Thus we see $\mathbf L_f^u(a)<_{\mathbf L^u_Y}\mathbf L_f^u(b)$, as required. The conditions for a prae-dilator have been established in Proposition~\ref{prop:constructible-hierarchy-functorial}. Together with Lemma~\ref{less-L-linear-well} we can conclude that $(\mathbf L^u,\suppl)$ is a dilator.
\end{proof}

In a sense, the theorem does not touch on the core of the constructible hierarchy: It considers the elements of $\mathbf L^u_X$ from a purely syntactical perspective and does not explain the significance of the formula $\varphi$ in a term of the form $\{x\in L^u_s\,|\,\varphi(x,\vec a)\}$. To give a functorial approach to semantics, we rely on an infinitary verification calculus introduced by J\"ager~\cite{jaeger-KPN,jaeger-kripke-platek}. The latter operates on $\mathbf L^u_X$-formulas, which are defined as set-theoretic formulas with elements of $\mathbf L^u_X$ as parameters (constant symbols). Recall that we only consider formulas in negation normal form. In particular negation is a defined operation and the formula $\neg\neg\varphi$ is syntactically equal to $\varphi$. In the following we adopt notation from Buchholz~\cite{buchholz-local-predicativity}, who attributes it to Tait. The assumption $\{0,1\}\subseteq u\subseteq\mathbf L^u_X$ provides indices for binary connectives.

\begin{definition}\label{def:disj-conj-formula}
Consider a transitive set $u\supseteq\{0,1\}$ and a linear order $X$. To each closed $\mathbf L^u_X$-formula~$\varphi$ we associate a disjunction $\varphi\simeq\bigvee_{a\in\iota_X(\varphi)}\varphi_a$ or a conjunction $\varphi\simeq\bigwedge_{a\in\iota_X(\varphi)}\varphi_a$, which can be infinite. More precisely, $\varphi$ is assigned a type (disjunctive or conjunctive), an index set $\iota_X(\varphi)\subseteq\mathbf L_X^u$, and a sequence of closed $\mathbf L^u_X$-formulas $\varphi_a$ for $a\in\iota_X(\varphi)$. The disjunctive types are
\begingroup
\allowdisplaybreaks
\begin{align*}
 b\in v&\simeq\begin{cases}
                \bigvee\emptyset & \text{if $b$ is a term $v'\in u$ with $v'\notin v$,}\\
                \textstyle\bigvee_{a\in\{v'\in u\,|\,v'\in v\}} a=b & \text{if $b$ is not of the form $v'\in u$,}
               \end{cases}\\
 b\notin v&\simeq\bigvee\emptyset\qquad\text{if $b$ is a term $v'\in u$ with $v'\in v$},\\
 b\in L_s^u&\simeq\textstyle\bigvee_{\suppl_X(a)\lef_X s}a=b,\\
 b\in\{x\in L_s^u\,|\,\varphi(x,\vec c)\}&\simeq\textstyle\bigvee_{\suppl_X(a)\lef_X s}\varphi(a,\vec c)\land a=b,\\
 b_0\neq b_1 & \simeq\textstyle\bigvee_{a\in\{0,1\}}\,\exists_{x\in b_a}x\notin b_{1-a},\\
 \psi_0\lor\psi_1&\simeq\textstyle\bigvee_{a\in\{0,1\}}\psi_a,\\
 \exists_{x\in b}\psi(x)&\simeq\begin{cases}
                                   \parbox[t]{0.65\textwidth}{\parbox[t]{3.5cm}{$\textstyle\bigvee_{a\in\{v'\in u\,|\,v'\in v\}}\psi(a)$}if $b$ is a term $v\in u$,}\\
                                   \parbox[t]{0.65\textwidth}{\parbox[t]{3.5cm}{$\textstyle\bigvee_{\suppl_X(a)\lef_X s}\psi(a)$}if $b$ is a term $L_s^u$,}\\
                                   \parbox[t]{0.65\textwidth}{$\textstyle\bigvee_{\suppl_X(a)\lef_X s}\varphi(a,\vec c)\land\psi(a)$\newline\hspace*{3.5cm}if $b$ is a term $\{y\in L_s^u\,|\,\varphi(y,\vec c)\}$,}
                               \end{cases}\\
 \exists_x\psi(x)&\simeq\textstyle\bigvee_{a\in\mathbf L_X^u}\psi(a).
\end{align*}
\endgroup
The given disjunctive clauses cover precisely one of the formulas $\psi=\neg\neg\psi$ and~$\neg\psi$. To cover the remaining formula we consider $\varphi=\neg\psi$ resp.~$\varphi=\psi$ and stipulate
\begin{equation*}
 \neg\varphi\simeq\textstyle\bigwedge_{a\in\iota_X(\varphi)}\neg\,\varphi_a\qquad\text{if}\qquad\varphi\simeq\textstyle\bigvee_{a\in\iota_X(\varphi)}\varphi_a.
\end{equation*}
More precisely, if the formula $\varphi$ is disjunctive, then the formula $\neg\varphi$ is conjunctive and we have $\iota_X(\neg\varphi)=\iota_X(\varphi)$ as well as $(\neg\varphi)_a=\neg(\varphi_a)$ for $a\in\iota_X(\varphi)$.
\end{definition}

Before we establish any functorial properties, let us show that we have defined a sound and complete calculus for satisfaction in the actual constructible hierarchy. Recall that Definition~\ref{def:interpretation-constructible-terms} constructs a function $\llbracket\cdot\rrbracket:\mathbf L^u_\alpha\rightarrow\mathbb L^u_\alpha$ for each ordinal $\alpha$. When we write $\mathbb L^u_\alpha\vDash\varphi$ for an $\mathbf L^u_\alpha$-formula $\varphi$ we assume that each constant symbol $a\in\mathbf L^u_\alpha$ that occurs in $\varphi$ is interpreted as the set $\llbracket a\rrbracket\in\mathbb L^u_\alpha$.

\begin{proposition}\label{prop:clauses-inf-verification-sound}
 For each transitive set $u\supseteq\{0,1\}$ and each ordinal $\alpha$ we have
 \begin{align*}
  \mathbb L_\alpha^u\vDash\varphi&\,\Leftrightarrow\,\exists_{a\in\iota_\alpha(\varphi)}\,\mathbb L^u_\alpha\vDash\varphi_a\quad\text{for each closed $\mathbf L_\alpha^u$-formula $\varphi\simeq\textstyle\bigvee_{a\in\iota_\alpha(\varphi)}\varphi_a$},\\
  \mathbb L_\alpha^u\vDash\varphi&\,\Leftrightarrow\,\forall_{a\in\iota_\alpha(\varphi)}\,\mathbb L^u_\alpha\vDash\varphi_a\quad\text{for each closed $\mathbf L_\alpha^u$-formula $\varphi\simeq\textstyle\bigwedge_{a\in\iota_\alpha(\varphi)}\varphi_a$}.
 \end{align*}
\end{proposition}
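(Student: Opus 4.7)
The plan is a case analysis on the clauses of Definition~\ref{def:disj-conj-formula} that assign $\varphi$ its type. The conjunctive clauses arise precisely as negations of the disjunctive ones, with the same index set and $(\neg\varphi)_a=\neg\varphi_a$; the conjunctive equivalence therefore follows from the disjunctive one by contraposition, so it suffices to establish the disjunctive equivalence case by case.

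For the verification I will lean on two background facts. First, the identification $\mathbf L^u_s=\{a\in\mathbf L^u_\alpha\,|\,\suppl_\alpha(a)\lef_\alpha s\}$ noted after Definition~\ref{def:term-version-L}, together with the invariance $\llbracket a\rrbracket_\alpha=\llbracket a\rrbracket_s$ for such $a$, means that the elements of $\mathbb L^u_s$ are exactly the interpretations of terms $a$ of support $\lef_\alpha s$. Second, by Lemma~\ref{lem:basic-properties-constructible} the set $\mathbb L^u_s$ is transitive and satisfies $\mathbb L^u_s\subseteq\mathbb L^u_\alpha$. With these in hand, most clauses unfold routinely. For $b\in v$ and $b\notin v$ with $v\in u$, I use the transitivity of $u$ to identify the index set $\{v'\in u\,|\,v'\in v\}$ with $v$ itself and check the three sub-cases separately; the trivially false sub-cases are arranged by Definition~\ref{def:disj-conj-formula} to produce an empty disjunction that correctly reflects falsity. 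The clauses for $b\in L^u_s$, $\psi_0\lor\psi_1$, and $\exists_x\psi(x)$ are immediate from the above characterization together with surjectivity of $\llbracket\cdot\rrbracket_\alpha:\mathbf L^u_\alpha\to\mathbb L^u_\alpha$; the extensionality clause $b_0\neq b_1$ follows from extensionality in the transitive set $\mathbb L^u_\alpha$, since any witness $x\in\llbracket b_a\rrbracket\setminus\llbracket b_{1-a}\rrbracket$ lies in $\mathbb L^u_\alpha$ and hence equals $\llbracket c\rrbracket$ for some $c\in\mathbf L^u_\alpha$.

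The one substantive step concerns the separation clauses $b\in\{x\in L^u_s\,|\,\varphi(x,\vec c)\}$ and the bounded quantifier $\exists_{x\in\{y\in L^u_s\,|\,\varphi\}}\psi(x)$. The right-hand side refers to satisfaction of $\varphi$ in $\mathbb L^u_\alpha$, whereas the interpretation $\llbracket\{x\in L^u_s\,|\,\varphi\}\rrbracket$ from Definition~\ref{def:interpretation-constructible-terms} is defined via satisfaction in $\mathbb L^u_s$. These must be bridged by $\Delta_0$-absoluteness: since $\varphi$ is $\Delta_0$ and the parameters $c_i$ (by the formation constraint in Definition~\ref{def:term-version-L}) as well as any witness $a$ have support $\lef_\alpha s$, their interpretations lie in the transitive subset $\mathbb L^u_s\subseteq\mathbb L^u_\alpha$, so the two satisfaction relations agree on $\varphi$ with these parameters. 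This is the main and essentially the only non-bureaucratic point of the proof, and it is also the structural reason that the separation clause in Definition~\ref{def:term-version-L} had to be restricted to $\Delta_0$-formulas. The remaining sub-cases of the bounded quantifier $\exists_{x\in b}\psi(x)$ are handled by splitting on the form of $b$ and reusing the characterization of $\llbracket a\rrbracket\in\llbracket b\rrbracket$ already established in the corresponding $\cdot\in b$ clause.
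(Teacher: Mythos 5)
Your proposal is correct and follows essentially the same route as the paper: a case analysis over the disjunctive clauses of Definition~\ref{def:disj-conj-formula}, using the characterization of $\mathbf L^u_s$ via supports together with $\Delta_0$-absoluteness to bridge satisfaction in $\mathbb L^u_s$ and in $\mathbb L^u_\alpha$ for the separation clauses, with the conjunctive cases obtained by duality. The paper merely works one representative case (the bounded existential over a separation term) and leaves the rest as similar, whereas you enumerate them, but the substance is identical.
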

\begin{proof}
 As an example for the disjunctive case we consider a formula $\varphi=\exists_{x\in b}\psi(x)$ with $b=\{y\in L^u_\gamma\,|\,\theta(y)\}$ for some $\gamma<\alpha$. Recall that $\mathbb L^u_\gamma$ is the image of $\mathbf L^u_\gamma$ under the interpretation function $\llbracket\cdot\rrbracket$. For $a\in\mathbf L^u_\alpha$ the condition $\suppl_\alpha(a)\lef\gamma$ is equivalent to $a\in \mathbf L^u_\gamma$. Together with the absoluteness of $\Delta_0$-formulas we obtain
 \begin{equation*}
  \llbracket b\rrbracket=\{y\in\mathbb L^u_\gamma\,|\,\mathbb L^u_\gamma\vDash\theta(y)\}=\{\llbracket a\rrbracket\,|\,a\in\mathbf L^u_\alpha\text{ and }\suppl_\alpha(a)\lef\gamma\text{ and }\mathbb L^u_\alpha\vDash\theta(a)\}.
 \end{equation*}
 This implies the desired equivalence
 \begin{equation*}
  \exists_{x\in\llbracket b\rrbracket}\,\mathbb L^u_\alpha\vDash\psi(x)\,\Leftrightarrow\,\exists_{a\in\mathbf L^u_\alpha}(\suppl_\alpha(a)\lef\gamma\land\mathbb L^u_\alpha\vDash\theta(a)\land\psi(a)).
 \end{equation*}
 The other disjunctive cases are checked in a similar manner. Once they are established, the claim for conjunctive formulas follows by duality.
\end{proof}

Consider the infinitary proof system which allows to deduce an $\mathbf L^u_\alpha$-formula $\varphi$ with $\varphi\simeq\bigwedge_{a\in\iota_\alpha(\varphi)}\varphi_a$ resp.~$\varphi\simeq\bigvee_{a\in\iota_\alpha(\varphi)}\varphi_a$ once one has established the premise~$\varphi_a$ for all resp.~some $a\in\iota_\alpha(\varphi)$. The previous proposition implies that well-founded proofs in this system are sound. Completeness will be needed in the following form:

\begin{theorem}\label{thm:verifications-complete}
 Consider a transitive set $u\supseteq\{0,1\}$ and an ordinal $\alpha$. Assume that $F$ is a set of closed $\mathbf L^u_\alpha$-formulas with the following properties:
 \begin{itemize}
  \item If $F$ contains $\varphi\simeq\bigwedge_{a\in\iota_\alpha(\varphi)}\varphi_a$, then it contains $\varphi_a$ for some $a\in\iota_\alpha(\varphi)$.
  \item If $F$ contains $\varphi\simeq\bigvee_{a\in\iota_\alpha(\varphi)}\varphi_a$, then it contains $\varphi_a$ for all $a\in\iota_\alpha(\varphi)$.
 \end{itemize}
 Then we have $\mathbb L^u_\alpha\nvDash\varphi$ for any formula $\varphi\in F$.
\end{theorem}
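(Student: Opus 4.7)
The plan is to prove by transfinite induction on a suitable ordinal rank $\rk(\varphi)$ of closed $\mathbf L^u_\alpha$-formulas that every $\varphi\in F$ satisfies $\mathbb L^u_\alpha\nvDash\varphi$. The rank is to be chosen so that whenever $\varphi\simeq\bigvee_{a\in\iota_\alpha(\varphi)}\varphi_a$ or $\varphi\simeq\bigwedge_{a\in\iota_\alpha(\varphi)}\varphi_a$, one has $\rk(\varphi_a)<\rk(\varphi)$ for every $a\in\iota_\alpha(\varphi)$.

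For the rank I would first assign to each term $a\in\mathbf L^u_\alpha$ the set-theoretic rank of its interpretation $\llbracket a\rrbracket\in\mathbb L^u_\alpha$, which by Lemma~\ref{lem:height-L} is strictly below $o(u)+\alpha$. The rank of a closed formula would then combine a parameter bound, namely the supremum of ranks of terms that appear during the iterated decomposition, with a finite offset for propositional and quantifier nesting, ordered lexicographically. The critical clause to check is the unbounded existential $\exists_x\psi(x)\simeq\bigvee_{a\in\mathbf L^u_\alpha}\psi(a)$: here the parameter bound attached to $\exists_x\psi(x)$ is set to $o(u)+\alpha$, while any $\psi(a)$ has parameter bound strictly below $o(u)+\alpha$, which gives a strict rank decrease. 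The remaining clauses (bounded quantifier, membership, propositional connectives) admit a strict rank decrease by routine inspection of Definition~\ref{def:disj-conj-formula}, with duality handling the conjunctive forms.

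The inductive step is then a direct appeal to the closure properties of $F$ together with Proposition~\ref{prop:clauses-inf-verification-sound}. If $\varphi\simeq\bigwedge_{a\in\iota_\alpha(\varphi)}\varphi_a$ lies in $F$, the first closure property provides some $\varphi_a\in F$ with $\rk(\varphi_a)<\rk(\varphi)$; the induction hypothesis gives $\mathbb L^u_\alpha\nvDash\varphi_a$, and Proposition~\ref{prop:clauses-inf-verification-sound} concludes $\mathbb L^u_\alpha\nvDash\varphi$. If $\varphi\simeq\bigvee_{a\in\iota_\alpha(\varphi)}\varphi_a$ lies in $F$, the second closure property forces every $\varphi_a$ into $F$; the induction hypothesis refutes each of them in $\mathbb L^u_\alpha$, and Proposition~\ref{prop:clauses-inf-verification-sound} again gives the conclusion. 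The base cases correspond to empty disjunctions such as $b\in v$ with $b=v'\in u$ and $v'\notin v$, which are automatically false in $\mathbb L^u_\alpha$; empty conjunctions cannot occur in $F$ at all, since the first closure property would then be unsatisfiable.

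The main obstacle I anticipate lies in the careful design of the rank. The delicate point is the unbounded existential clause, where the substituted term $a$ may internally carry $\Delta_0$-formulas of arbitrarily high syntactic complexity, so one cannot simply induct on syntactic depth of the pure formula. Using the set-theoretic rank of $\llbracket a\rrbracket$, which is uniformly bounded below $o(u)+\alpha$ for $a\in\mathbf L^u_\alpha$, circumvents this difficulty. Once the rank is in place, the inductive argument itself is short and essentially mechanical.
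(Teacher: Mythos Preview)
Your overall strategy---define an ordinal rank on closed $\mathbf L^u_\alpha$-formulas with $\rk(\varphi_a)<\rk(\varphi)$ for every $a\in\iota_\alpha(\varphi)$, then induct using Proposition~\ref{prop:clauses-inf-verification-sound}---is exactly right and is what the paper does. The gap is in your proposed rank.

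Assigning to a term $a$ the set-theoretic rank of $\llbracket a\rrbracket$ fails at the bounded-quantifier clause, which you dismissed as routine. Take $b=\{x\in L^u_\gamma\mid x\neq x\}$ for some large $\gamma<\alpha$; then $\llbracket b\rrbracket=\emptyset$ has set-theoretic rank~$0$. Consider the bounded formula $\varphi=\forall_{x\in b}\,(x\in x)$. Its only parameter is $b$, so your parameter bound for $\varphi$ is~$0$. But by Definition~\ref{def:disj-conj-formula} the components are $\varphi_a=(a=a)\lor(a\in a)$ for all $a\in\mathbf L^u_\alpha$ with $\suppl_\alpha(a)\lef\gamma$, and $\llbracket a\rrbracket$ ranges over all of $\mathbb L^u_\gamma$, with set-theoretic rank unbounded below $o(u)+\gamma$. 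Hence your rank of $\varphi_a$ can exceed that of $\varphi$ by as much as you like, and no finite lexicographic offset repairs this. The same phenomenon arises for $\exists_{x\in b}$ and for the membership clause $c\in b$ when $b$ is a comprehension term with small interpretation.

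The fix is to weight a term by its syntactic \emph{stage} rather than by the rank of its interpretation: the paper sets $\hth(v)=0$ for $v\in u$ and $\hth(L^u_\gamma)=\hth(\{x\in L^u_\gamma\mid\dots\})=\omega\cdot(1+\gamma)$, then builds the formula height on top of this with carefully tuned additive constants (e.g.\ $\hth(a\in b)=\max\{\hth(a)+6,\hth(b)+1\}$ and $\hth(a=b)=\max\{\hth(a),\hth(b),5\}+4$) so that every clause of Definition~\ref{def:disj-conj-formula} yields a strict decrease. With this choice the term $b$ above has height $\omega\cdot(1+\gamma)$, which correctly dominates all $a$ arising in its decomposition. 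The verification that $\hth(\varphi_a)<\hth(\varphi)$ is in fact the substantive part of the proof; once it is in place, the induction step is---as you say---mechanical.
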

\begin{proof}
 Following J\"ager~\cite{jaeger-KPN} we will assign an ordinal height $\hth(\varphi)$ to each closed $\mathbf L^u_\alpha$-formula $\varphi$. This assignment will satisfy the condition
 \begin{equation*}
  \hth(\varphi_a)<\hth(\varphi)\qquad\text{for all $a\in\iota_\alpha(\varphi)$}.
 \end{equation*}
 The claim that $\varphi\in F$ implies $\mathbb L^u_\alpha\nvDash\varphi$ can then be established by a straightforward induction on $\hth(\varphi)$, using the previous proposition for the induction step. The required height of formulas depends on the height of terms in $\mathbf L^u_\alpha$, which is given by
 \begin{equation*}
 \hth(v)=0\quad\text{for $v\in u$},\qquad\hth(L_\gamma^u)=\hth(\{x\in L_\gamma^u\,|\,\varphi(x,\vec a)\})=\omega\cdot(1+\gamma).
\end{equation*}
 By recursion over the length of closed $\mathbf L^u_\alpha$-formulas we can now define
 \begingroup
\allowdisplaybreaks
 \begin{align*}
 \hth(a\in b)=\hth(a\notin b)&=\max\{\hth(a)+6,\hth(b)+1\},\\
 \hth(a=b)=\hth(a\neq b)&=\max\{\hth(a),\hth(b),5\}+4,\\
 \hth(\varphi_0\lor\varphi_1)=\hth(\varphi_0\land\varphi_1)&=\max\{\hth(\varphi_0),\hth(\varphi_1)\}+1,\\
 \hth(\exists_{x\in b}\varphi(x))=\hth(\forall_{x\in b}\varphi(x))&=\max\{\hth(b),\hth(\varphi(0))+2\},\\
 \hth(\exists_x\varphi(x))=\hth(\forall_x\varphi(x))&=\max\{\omega\cdot(1+\alpha),\hth(\varphi(0))+1\}.
\end{align*}
\endgroup
 It is straightforward to show $\hth(\theta)<\omega\cdot(1+\gamma)$ for any $\gamma<\alpha$ and any $\Delta_0$-formula~$\theta$ with parameters in $\mathbf L^u_\gamma\subseteq\mathbf L^u_\alpha$. As in \cite[Lemma~3]{buchholz-local-predicativity} we also get
 \begin{equation*}
 \hth(\varphi(a))<\max\{\omega\cdot(1+\gamma),\hth(\varphi(0))+1\}\qquad\text{for all $a\in\mathbf L^u_\gamma$},
\end{equation*}
 for any $\gamma\leq\alpha$ and an arbitrary $\mathbf L^u_\alpha$-formula $\varphi=\varphi(x)$. Based on these facts one can verify the desired inequality $\hth(\varphi_a)<\hth(\varphi)$. As an example we consider a formula $\varphi=\exists_{x\in b}\psi(x)$ with $b=\{y\in L^u_\gamma\,|\,\theta(y)\}$ for some $\gamma<\alpha$. Any element $a\in\iota_\alpha(\varphi)$ satisfies~$a\in\mathbf L^u_\gamma$, which indeed yields
\begin{equation*}
 \hth(\varphi_a)=\max\{\hth(\theta(a))+1,\hth(\psi(a))+1\}<\max\{\omega\cdot(1+\gamma),\hth(\psi(0))+2\}=\hth(\varphi).
\end{equation*}
Once all disjunctive cases are checked, the conjunctive cases follow by duality. Then an induction on $\hth(\varphi)$ yields the claim of the theorem, as explained above.
\end{proof}

To conclude this section we show that our verification calculus is functorial. Given an $\mathbf L^u_X$-formula $\varphi$ and an order embedding $f:X\rightarrow Y$, we write $\varphi[f]$ for the $\mathbf L^u_Y$-formula that results from $\varphi$ by replacing any parameter $a\in\mathbf L^u_X$ in $\varphi$ by the parameter $\mathbf L^u_f(a)\in\mathbf L^u_Y$. Observe that negation is natural: For any $\mathbf L^u_X$-formula $\varphi$ and any order embedding $f:X\rightarrow Y$ we have
\begin{equation*}
 (\neg\varphi)[f]=\neg(\varphi[f]).
\end{equation*}
Substitution enjoys a similar property: Consider an $\mathbf L^u_X$-formula $\varphi(x)$ and write $\varphi[f](x)$ to indicate the free variable of this formula. Then we have
\begin{equation*}
 \varphi(a)[f]=\varphi[f](\mathbf L^u_f(a))
\end{equation*}
for any term $a\in\mathbf L^u_X$. The following completes our functorial approach to semantics:

\begin{theorem}\label{thm:conj-disj-natural}
Let $u\supseteq\{0,1\}$ be a transitive set. For any embedding $f:X\rightarrow Y$ the $\mathbf L_X^u$-formula $\varphi$ and the $\mathbf L_Y^u$-formula $\varphi[f]$ have the same type (disjunctive or conjunctive), we have
\begin{equation*}
a\in\iota_X(\varphi)\quad\Leftrightarrow\quad\mathbf L_f^u(a)\in\iota_Y(\varphi[f])
\end{equation*}
for all $a\in\mathbf L_X^u$, and $\varphi_a[f]=\varphi[f]_{\mathbf L_f^u(a)}$ holds for all $a\in\iota_X(\varphi)$.
\end{theorem}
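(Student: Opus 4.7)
The plan is to proceed by induction on the complexity of the closed formula $\varphi$, verifying each disjunctive clause of Definition~\ref{def:disj-conj-formula} in turn. The conjunctive cases require no separate work: since negation is syntactically natural in $f$, i.e.\ $(\neg\varphi)[f]=\neg(\varphi[f])$, and since the definition stipulates $\iota_X(\neg\varphi)=\iota_X(\varphi)$ and $(\neg\varphi)_a=\neg(\varphi_a)$, the conjunctive version of the statement follows immediately from the disjunctive one applied to the subformula $\varphi$.

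Before touching the cases I would record three supporting facts to which every subcase reduces. First, by Definition~\ref{def:constructible-morphisms}, $\mathbf{L}^u_f$ acts as the identity on $u$, sends $L^u_s$ to $L^u_{f(s)}$, and sends $\{x\in L^u_s\mid\psi(x,\vec a)\}$ to $\{x\in L^u_{f(s)}\mid\psi(x,\mathbf{L}^u_f(\vec a))\}$; in particular it preserves the syntactic shape of each of the three kinds of term that trigger distinct disjunctive clauses. Second, by the naturality of $\suppl$ established in Proposition~\ref{prop:constructible-hierarchy-functorial} together with the fact that $f$ is an order embedding, $\suppl_X(a)\lef_X s$ is equivalent to $\suppl_Y(\mathbf{L}^u_f(a))\lef_Y f(s)$, and similarly for the case of bounds from $u$. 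Third, substitution commutes with the translation, $\varphi(a)[f]=\varphi[f](\mathbf{L}^u_f(a))$, which is the remark stated immediately before the theorem.

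With these in hand each disjunctive clause reduces to a mechanical check. As a representative example take $\varphi=(b\in L^u_s)$: the translation $\varphi[f]=(\mathbf{L}^u_f(b)\in L^u_{f(s)})$ is of the same form, hence disjunctive of the same subtype, and the indexing sets $\iota_X(\varphi)=\{a\in\mathbf{L}^u_X\mid\suppl_X(a)\lef_X s\}$ and $\iota_Y(\varphi[f])=\{a'\in\mathbf{L}^u_Y\mid\suppl_Y(a')\lef_Y f(s)\}$ match up under $\mathbf{L}^u_f$ by the second supporting fact, while $\varphi_a=(a=b)$ and $\varphi[f]_{\mathbf{L}^u_f(a)}=(\mathbf{L}^u_f(a)=\mathbf{L}^u_f(b))$ agree by the first. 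The unbounded existential case $\exists_x\psi(x)$ uses that $\mathbf{L}^u_f$ is a (total) function and the third supporting fact; the propositional and $b\in v$, $b\notin v$, $b_0\neq b_1$ cases are similar but even simpler, relying on $\mathbf{L}^u_f\restriction u=\mathrm{id}_u$.

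The most delicate case, and the main bookkeeping obstacle, is the bounded existential quantifier $\exists_{x\in b}\psi(x)$, because the disjunctive clause branches on the formal shape of the bound $b$. Here I would argue that the first supporting fact forces $\mathbf{L}^u_f(b)$ to fall in the same syntactic branch as $b$, so that $\varphi[f]$ is disjunctive of the \emph{same} sub-form; each branch then matches the pattern already handled above, with the additional observation in the $\Delta_0$-comprehension branch that the conjuncts $\psi(a)$ and $\varphi(a,\vec c)$ transform compatibly under $[f]$ via the third supporting fact, so that $(\varphi(a,\vec c)\land\psi(a))[f]=\varphi[f](\mathbf{L}^u_f(a),\mathbf{L}^u_f(\vec c))\land\psi[f](\mathbf{L}^u_f(a))$, which is precisely $\varphi[f]_{\mathbf{L}^u_f(a)}$. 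Once all disjunctive subcases are dispatched, the opening remark on negation closes the conjunctive cases, finishing the induction.
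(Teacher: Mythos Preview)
Your proposal is correct and follows essentially the same approach as the paper: a case-by-case verification of the disjunctive clauses using the preservation of term shapes by $\mathbf{L}^u_f$, the naturality of $\suppl$, and the naturality of substitution, with the conjunctive cases obtained by duality via $(\neg\varphi)[f]=\neg(\varphi[f])$. One small remark: you phrase it as ``induction on the complexity of $\varphi$'', but no inductive hypothesis is ever invoked (the components $\varphi_a$ need not be subformulas of $\varphi$), so it is really just a case distinction on the outermost connective---exactly as in the paper.
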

\begin{proof}
 As a representative example we consider a disjunctive formula $\varphi=\exists_{x\in b}\psi(x)$ with $b=\{y\in L^u_s\,|\,\theta(y)\}$. The formula $\varphi[f]=\exists_{x\in\mathbf L^u_f(b)}\psi[f](x)$ is disjunctive as well. In view of $\mathbf L^u_f(b)=\{y\in L^u_{f(s)}\,|\,\theta[f](y)\}$ the naturality of $\suppl$ yields
 \begin{multline*}
  a\in\iota_X(\varphi)\,\Leftrightarrow\,\suppl_X(a)\lef_X s\,\Leftrightarrow\,[f]^{<\omega}(\suppl_X(a))\lef_Y f(s)\,\Leftrightarrow\\
  \Leftrightarrow\,\suppl_Y(\mathbf L_f^u(a))\lef_Y f(s)\,\Leftrightarrow\,\mathbf L_f^u(a)\in\iota_Y(\varphi[f]).
 \end{multline*}
 Using the naturality of substitution we also get
 \begin{equation*}
  \varphi_a[f]=(\theta(a)\land\psi(a))[f]=\theta[f](\mathbf L_f^u(a))\land\psi[f](\mathbf L_f^u(a))=\varphi[f]_{\mathbf L_f^u(a)}.
 \end{equation*}
 The other disjunctive cases are verified similarly. The claim for conjunctive formulas follows by duality and the naturality of negation (cf.~\cite[Section~3.1]{freund-thesis}).
\end{proof}

\section{From Search Trees to Admissible Sets}\label{sect:deduct-admissible}

In the introduction of this paper we have explained how admissible sets can be constructed via Sch\"utte's method of deduction chains (search trees). The details of the construction are worked out in the present section.

Recall that admissible sets are defined as transitive models of Kripke-Platek set theory (with infinity). The crucial axiom scheme of this theory is $\Delta_0$-collection, which has the form
\begin{equation*}
 \forall_{z_1,\dots,z_k}\forall_v(\forall_{x\in v}\exists_y\theta(x,y,z_1,\dots,z_k)\rightarrow\exists_w\forall_{x\in v}\exists_{y\in w}\theta(x,y,z_1,\dots,z_k))
\end{equation*}
with a $\Delta_0$-formula $\theta$. We call $k$ the number of parameters of the given instance of collection. For technical reasons it will be convenient to restrict this number:

\begin{lemma}\label{lem:parameters-collection-restricted}
 There is a number $C$ such that the following holds for any transitive set $u\ni\omega$ and any limit ordinal $\alpha$: If $\mathbb L^u_\alpha$ satisfies all instances of $\Delta_0$-collection with at most $C$ parameters, then it is an admissible set.
\end{lemma}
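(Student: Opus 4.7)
The plan is to prove the lemma with $C=1$ (any fixed positive integer would work). The argument splits into two independent pieces. First, the non-collection axioms of Kripke-Platek hold in $\mathbb L^u_\alpha$ automatically whenever $\alpha$ is a limit and $\omega\in u$. Second, every instance of $\Delta_0$-collection with $k$ parameters reduces, uniformly and provably, to a single instance with $1$ parameter, via the standard coding of finite tuples as sets.

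For the first piece, extensionality holds because $\mathbb L^u_\alpha$ is transitive (Lemma~\ref{lem:basic-properties-constructible}(a)), foundation is inherited from the ambient universe, and infinity is witnessed by $\omega\in u\subseteq\mathbb L^u_\alpha$. For pairing, union, and $\Delta_0$-separation, note that any $a,b\in\mathbb L^u_\alpha$ lie in some $\mathbb L^u_\gamma$ with $\gamma<\alpha$ (by Lemma~\ref{lem:basic-properties-constructible}(b) and transitivity). Then $\{a,b\}$, $\bigcup a$, and $\{x\in a\,|\,\theta(x,\vec c\,)\}$ for any $\Delta_0$-formula $\theta$ with parameters $\vec c\in\mathbb L^u_\gamma$ are all $\Delta_0$-definable subsets of $\mathbb L^u_\gamma$ in the sense of Definition~\ref{def:term-version-L}, so they appear in $\mathbb L^u_{\gamma+1}\subseteq\mathbb L^u_\alpha$ (using $\gamma+1<\alpha$, as $\alpha$ is a limit).

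For the second piece, fix a $\Delta_0$ coding of finite tuples, e.g.\ iterated Kuratowski pairing $\langle z_1,\dots,z_k\rangle:=(z_1,(z_2,\dots,(z_{k-1},z_k)\dots))$. The projection predicates ``$y=\pi_i(z)$'' are expressible by $\Delta_0$-formulas whose quantifiers range over $z$ and a fixed finite iteration of its members, so for any $\Delta_0$-formula $\theta(x,y,z_1,\dots,z_k)$ the formula
\begin{equation*}
\theta'(x,y,z):\Leftrightarrow\theta\bigl(x,y,\pi_1(z),\dots,\pi_k(z)\bigr),
\end{equation*}
formally obtained by prefixing bounded existentials $\exists_{z_i}(\pi_i(z)=z_i\land\dots)$, is again $\Delta_0$. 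By the first piece $\mathbb L^u_\alpha$ is closed under pairing, hence under iterated pairing, so $\langle z_1,\dots,z_k\rangle\in\mathbb L^u_\alpha$ whenever $z_1,\dots,z_k\in\mathbb L^u_\alpha$. Applying the hypothesised $1$-parameter instance of $\Delta_0$-collection to $\theta'$ with parameter $\langle z_1,\dots,z_k\rangle$ and unwinding the projections yields the original $k$-parameter instance for $\theta$. Since $k$ and $\theta$ were arbitrary, $\mathbb L^u_\alpha$ satisfies the full $\Delta_0$-collection scheme, and admissibility follows.

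The only delicate point is the routine verification that the chosen tuple coding really does admit $\Delta_0$ projections whose bounding terms live inside $\mathbb L^u_\alpha$; this is classical but must be done with the bounded-quantifier conventions of Section~\ref{sect:functorial-L} firmly in mind. Beyond that, the argument is pure bookkeeping, and the lemma's constant $C$ can in fact be taken to be $1$.
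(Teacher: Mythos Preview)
Your argument is correct, and it takes a genuinely different route from the paper's own proof. Both proofs share the first piece: the non-collection Kripke--Platek axioms hold in any limit stage $\mathbb L^u_\alpha$ of the relativized constructible hierarchy, and infinity is witnessed by $\omega\in u$. The divergence is in the second piece. The paper appeals to the existence of a $\Delta_0$ truth definition, which reduces the entire $\Delta_0$-collection scheme to \emph{finitely many} instances; since a finite list of formulas has a bounded number of free variables, a parameter bound $C$ follows as a by-product. You instead reduce a $k$-parameter instance directly to a $1$-parameter instance via iterated Kuratowski pairing, exploiting the fact that $\mathbb L^u_\alpha$ is closed under pairing (established in your first piece) and that the projection predicates are $\Delta_0$.

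Your approach is more elementary and more explicit: it yields $C=1$ and avoids the machinery of a universal $\Delta_0$ formula. The paper's approach, on the other hand, delivers a stronger intermediate conclusion (finitely many instances suffice, not merely boundedly many parameters), though the lemma as stated only requires the weaker fact. For the purposes of the paper either argument is adequate, since Definition~\ref{def:enum-axioms} only needs a countable enumeration of the relevant collection axioms, and both arguments supply one.
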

\begin{proof}
 Apart from collection and infinity, the Kripke-Platek axioms hold in any limit stage of the relativized constructible hierarchy (cf.~\cite[Exercise~II.5.16]{barwise-admissible}). The axiom of infinity is covered by the assumption $\omega\in u\subseteq\mathbb L^u_\alpha$. It is known that finitely many instances of $\Delta_0$-collection suffice to deduce all other instances of this axiom scheme, using a truth definition for $\Delta_0$-formulas. In particular we may place a bound on the number of parameters in the collection axioms.
\end{proof}

The method of deduction chains will allow us to search for an ordinal $\alpha$ such that~$\mathbb L^u_\alpha$ satisfies a given collection of formulas. In order to ensure the assumptions of the previous lemma we consider the following:

\begin{definition}\label{def:enum-axioms}
 Fix a number $C$ as in Lemma~\ref{lem:parameters-collection-restricted} and an enumeration $(\ax_n)_{n\geq 1}$ of all instances of $\Delta_0$-collection with at most $C$ parameters. Furthermore, let $\ax_0$ be the formula $\forall_x\exists_y\,y=x\cup\{x\}$.
\end{definition}

If the set $\mathbb L^u_\alpha$ satisfies $\ax_0$, then its height $o(\mathbb L^u_\alpha)=\mathbb L^u_\alpha\cap\ordi$ is a limit ordinal or zero. Since we have $o(\mathbb L^u_\alpha)=o(u)+\alpha$ by Lemma~\ref{lem:height-L}, the same must hold for~$\alpha$. The case $\alpha=0$ can be excluded if we assume that $o(u)$ is a successor ordinal. Thus we obtain the following consequence of Lemma~\ref{lem:parameters-collection-restricted}:

\begin{corollary}\label{cor:coll-yields-admissible}
 Consider an ordinal $\alpha$ and a transitive set $u\ni\omega$ such that $o(u)$ is a successor ordinal. If $\mathbb L^u_\alpha\vDash\ax_n$ holds for all $n\in\omega$, then $\mathbb L^u_\alpha$ is an admissible set.
\end{corollary}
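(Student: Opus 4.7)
The plan is to combine Lemma~\ref{lem:height-L}, Lemma~\ref{lem:parameters-collection-restricted}, and the observation already flagged in the paragraph preceding the statement. The key observation is that the hypothesis $\mathbb L^u_\alpha\vDash\ax_n$ for all $n\in\omega$ packages together two kinds of information: from $n\ge 1$ we get every instance of $\Delta_0$-collection with at most $C$ parameters, which is exactly what Lemma~\ref{lem:parameters-collection-restricted} needs; from $n=0$ we get closure under the successor operation, which will be used to pin down the ordinal~$\alpha$.

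First I would use Lemma~\ref{lem:height-L} to rewrite the height of the model as $o(\mathbb L^u_\alpha)=o(u)+\alpha$. Satisfaction of $\ax_0=\forall_x\exists_y\,y=x\cup\{x\}$ in the transitive set $\mathbb L^u_\alpha$ forces $o(\mathbb L^u_\alpha)$ to be closed under ordinal successor, so $o(\mathbb L^u_\alpha)$ is either $0$ or a limit ordinal. Because $o(u)$ is assumed to be a successor ordinal (in particular nonzero), the sum $o(u)+\alpha$ cannot vanish, and if $\alpha$ were zero or a successor the sum $o(u)+\alpha$ would itself be a successor, contradicting the conclusion just drawn. Hence $\alpha$ must be a limit ordinal.

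Having established that $\alpha$ is a limit ordinal, I would then appeal directly to Lemma~\ref{lem:parameters-collection-restricted}: the assumption $u\ni\omega$ is in place, $\alpha$ is a limit, and $\mathbb L^u_\alpha$ satisfies every instance of $\Delta_0$-collection with at most $C$ parameters by virtue of satisfying each $\ax_n$ for $n\ge 1$. The lemma then delivers the conclusion that $\mathbb L^u_\alpha$ is admissible.

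There is no real obstacle here; the corollary is a packaging result. The only point requiring a moment's care is the arithmetic that excludes $\alpha=0$ and $\alpha$ a successor, and this is exactly why the hypothesis that $o(u)$ is a successor was built into the statement. Everything else is a direct citation of the preceding lemma and the height formula.
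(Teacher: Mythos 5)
Your argument is correct and follows essentially the same route as the paper: use $\ax_0$ to see that $o(\mathbb L^u_\alpha)=o(u)+\alpha$ is zero or a limit, use the assumption that $o(u)$ is a successor to exclude $\alpha=0$ and $\alpha$ a successor, and then apply Lemma~\ref{lem:parameters-collection-restricted}. The only difference is that you spell out the ordinal arithmetic slightly more explicitly than the paper does.
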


Let us collect various properties of the parameter $u$ that have been used so far:

\begin{assumption}\label{ass:properties-u}
 Throughout the following we fix a transitive and countable set $u$ with given enumeration $u=\{u_i\,|\,i\in\omega\}$. We assume that its height is a successor ordinal $o(u)>\omega$. The assumptions on $u$ will be discharged in the proof of Theorem~\ref{thm:main-abstract}.
\end{assumption}

In particular $o(u)>\omega$ ensures $\{0,1\}\subseteq u\subseteq\mathbf L^u_X$, as required in Definition~\ref{def:disj-conj-formula}. For our construction of search trees we need some further terminology: Let $X^{<\omega}$ be the set of finite sequences with entries in a given set $X$. As usual we obtain a tree if we order the sequences in $X^{<\omega}$ by end extension. Given a sequence $\sigma=\langle\sigma_0,\dots,\sigma_{n-1}\rangle\in X^{<\omega}$ and an element $x\in X$ we write $\sigma^\frown x=\langle\sigma_0,\dots,\sigma_{n-1},x\rangle$; for $k\leq n=\len(\sigma)$ we put $\sigma\!\restriction\!k=\langle\sigma_0,\dots,\sigma_{k-1}\rangle$. In the previous section we have defined functorial term systems $\mathbf L^u_X$ that represent the constructible hierarchy along an arbitrary order $X$, together with support functions $\suppl_X:\mathbf L^u_X\rightarrow[X]^{<\omega}$. Let us now consider the tree $(\mathbf L^u_X)^{<\omega}$ and the functions
\begin{gather*}
 \supps_X:(\mathbf L^u_X)^{<\omega}\rightarrow[X]^{<\omega},\\
 \supps_X(\langle a_0,\dots,a_{n-1}\rangle)=\suppl_X(a_0)\cup\dots\cup\suppl_X(a_{n-1}).
\end{gather*}
For each order $X$ we will define a search tree $S^u_X\subseteq(\mathbf L^u_X)^{<\omega}$. The nodes of $S^u_X$ will be labelled by $\mathbf L^u_X$-sequents, which are defined as finite sequences of closed $\mathbf L^u_X$-formulas (cf.~the previous section). The intended interpretation of a sequent $\Gamma=\langle\varphi_0,\dots,\varphi_{n-1}\rangle$ is the disjunction $\varphi_0\lor\dots\lor\varphi_{n-1}$. Thus the empty sequent~$\langle\rangle$ is a canonical representation of falsity. In the context of sequents we write $\Gamma,\varphi$ rather than $\Gamma^\frown\varphi$. The order of formulas in a sequent will be crucial for some purposes but irrelevant for others: In the latter case we write $\varphi\in\Gamma$ resp.~$\Gamma\subseteq\Gamma'$ to express that $\varphi$ is an entry of $\Gamma$ and that $\varphi\in\Gamma'$ holds for all $\varphi\in\Gamma$. As a final ingredient for our search trees we fix a surjection
\begin{equation*}
 \omega\ni n\mapsto\langle\pi_0(n),\pi_1(n),\pi_2(n)\rangle\in\omega\times\omega\times\omega
\end{equation*}
with $\pi_i(n)\leq n$. Now we can describe the promised construction:

\begin{definition}\label{def:search-tree}
For each order $X$ we define a tree $S^u_X\subseteq(\mathbf L_X^u)^{<\omega}$ and a labelling $l_X:S^u_X\rightarrow\text{``$\mathbf L_X^u$-sequents''}$ by recursion over~$\sigma\in(\mathbf L_X^u)^{<\omega}$: In the base case we set
\begin{equation*}
 \langle\rangle\in S_X^u\quad\text{and}\quad l_X(\langle\rangle)=\langle\rangle.
\end{equation*}
In the step we distinguish odd and even stages: For $\sigma\in S^u_X$ of length $2n$ we set
\begin{equation*}
 \sigma^\frown a\in S_X^u\Leftrightarrow a=0\quad\text{and}\quad l_X(\sigma^\frown 0)=l_X(\sigma),\neg\ax_n.
\end{equation*}
If $\sigma\in S^u_X$ has length $2n+1$, then we write $\varphi$ for the $\pi_0(n)$-th formula in the sequent~$l_X(\sigma)$. In case $\varphi\simeq\bigwedge_{a\in\iota_X(\varphi)}\varphi_a$ is conjunctive (cf.~Definition~\ref{def:disj-conj-formula}) we set
\begin{equation*}
 \sigma^\frown a\in S_X^u\Leftrightarrow a\in\iota_X(\varphi)\quad\text{and}\quad l_X(\sigma^\frown a)=l_X(\sigma),\varphi_a.
\end{equation*}
In case $\varphi\simeq\bigvee_{a\in\iota_X(\varphi)}\varphi_a$ is disjunctive we compute
\begin{equation*}
 b=\en_X(\supps_X(\sigma\!\restriction\!\pi_1(n)),\pi_2(n)),
\end{equation*}
using the enumeration function from Theorem~\ref{thm:enumerations-L}. Then we set
\begin{equation*}
 \sigma^\frown a\in S_X^u\Leftrightarrow a=0\quad\text{and}\quad l_X(\sigma^\frown 0)=\begin{cases}
                                                                                       l_X(\sigma),\varphi_b & \text{if $b\in\iota_X(\varphi)$},\\
                                                                                       l_X(\sigma)           & \text{otherwise}.
                                                                                      \end{cases}
\end{equation*}
\end{definition}

For a function $f:\omega\rightarrow\mathbf L_X^u$ we write $f\!\restriction\!n=\langle f(0),\dots,f(n-1)\rangle\in(\mathbf L_X^u)^{<\omega}$ and
\begin{equation*}
 X_f=\bigcup_{n\in\omega}\supps_X(f\!\restriction\!n)=\bigcup_{n\in\omega}\suppl_X(f(n))\subseteq X.
\end{equation*}
If $f\!\restriction\!n\in S^u_X$ holds for all $n\in\omega$, then $f$ is called a branch of $S^u_X$. One of our main results shows that such a branch yields a transitive model of set theory:

\begin{theorem}
 Assume that $f$ is a branch of the search tree $S^u_X$, for some linear order $X$. If $X_f$ is well-founded with order-type $\alpha$, then $\mathbb L^u_\alpha$ is an admissible set.
\end{theorem}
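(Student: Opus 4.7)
The plan is to pull back the formulas occurring along the branch $f$ to $\mathbf L^u_\alpha$-formulas, apply the completeness-type statement Theorem~\ref{thm:verifications-complete} to this collection, and then invoke Corollary~\ref{cor:coll-yields-admissible}.

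First, let $\iota: X_f \hookrightarrow X$ denote the inclusion and let $\pi: X_f \to \alpha$ be the unique order isomorphism witnessing that $X_f$ has order type $\alpha$. A straightforward induction on $n$ shows that every formula $\varphi$ occurring as an entry of $l_X(f\!\restriction\!n)$ satisfies $\suppl_X(a) \subseteq X_f$ for every parameter $a$ of $\varphi$: at even stages the new formula $\neg\ax_n$ contains no parameters from $\mathbf L^u_X$; at odd conjunctive stages the newly introduced parameter is $a = f(2n+1)$ itself, whose support lies in $\supps_X(f\!\restriction\!(2n+2)) \subseteq X_f$; and at odd disjunctive stages the new parameter $b = \en_X(\supps_X(f\!\restriction\!\pi_1(n)), \pi_2(n))$ has support in $\supps_X(f\!\restriction\!\pi_1(n)) \subseteq X_f$ by Corollary~\ref{cor:support-enumeration}. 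By Proposition~\ref{prop:constructible-hierarchy-functorial} each such parameter $a$ is uniquely of the form $\mathbf L^u_\iota(a_0)$ with $a_0 \in \mathbf L^u_{X_f}$, and I set $a^* = \mathbf L^u_\pi(a_0) \in \mathbf L^u_\alpha$. Replacing every parameter $a$ in $\varphi$ by $a^*$ yields a closed $\mathbf L^u_\alpha$-formula $\varphi^*$; equivalently, if $\varphi = \psi[\iota]$ for the underlying $\mathbf L^u_{X_f}$-formula $\psi$, then $\varphi^* = \psi[\pi]$. Define
\[
 F = \{\varphi^* \mid \text{$\varphi$ is an entry of $l_X(f\!\restriction\!n)$ for some $n \in \omega$}\}.
\]

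The next step is to verify that $F$ satisfies the two closure conditions of Theorem~\ref{thm:verifications-complete}. By Theorem~\ref{thm:conj-disj-natural}, the formulas $\psi$, $\varphi = \psi[\iota]$ and $\varphi^* = \psi[\pi]$ all have the same type (disjunctive or conjunctive), and their index sets correspond bijectively under $\mathbf L^u_\iota$ and $\mathbf L^u_\pi$, with $\psi_{a_0}[\iota] = \varphi_a$ and $\psi_{a_0}[\pi] = (\varphi^*)_{a^*}$ whenever $a = \mathbf L^u_\iota(a_0)$. If $\varphi^* \in F$ is conjunctive, then $\varphi$ occupies some position $k$ of $l_X(f\!\restriction\!m)$; picking any odd stage $2n+1 \geq m$ with $\pi_0(n) = k$, the value $a = f(2n+1) \in \iota_X(\varphi)$ makes $\varphi_a$ appear in the next sequent, so $(\varphi^*)_{a^*} = (\varphi_a)^* \in F$. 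The disjunctive case is the crucial one: given $a^* \in \iota_\alpha(\varphi^*)$, the pullback $a \in \iota_X(\varphi)$ satisfies $\suppl_X(a) \subseteq X_f$, so I can choose $m$ so large that $\varphi$ sits at some position $k$ of $l_X(f\!\restriction\!m)$ and $\suppl_X(a) \subseteq \supps_X(f\!\restriction\!m)$, using that $X_f$ is the increasing union of the supports. Setting $c = \code_X(\supps_X(f\!\restriction\!m), a)$, surjectivity of the enumeration $n \mapsto (\pi_0(n), \pi_1(n), \pi_2(n))$ with $\pi_i(n) \leq n$ supplies some $n \geq \max(k, m, c)$ with $(\pi_0(n), \pi_1(n), \pi_2(n)) = (k, m, c)$. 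Then $2n+1 \geq m$, so $\varphi$ is still at position $k$ of $l_X(f\!\restriction\!(2n+1))$, and Theorem~\ref{thm:enumerations-L} gives $b = \en_X(\supps_X(f\!\restriction\!m), c) = a \in \iota_X(\varphi)$; thus $\varphi_a$ is added to the sequent and $(\varphi^*)_{a^*} = (\varphi_a)^* \in F$.

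Finally, Theorem~\ref{thm:verifications-complete} yields $\mathbb L^u_\alpha \nvDash \varphi^*$ for every $\varphi^* \in F$. Every formula $\neg\ax_n$ enters the branch at an even stage and contains no $\mathbf L^u_X$-parameters, so $\neg\ax_n = (\neg\ax_n)^* \in F$ and hence $\mathbb L^u_\alpha \vDash \ax_n$ for all $n \in \omega$. Assumption~\ref{ass:properties-u} guarantees $\omega \in u$ and that $o(u)$ is a successor ordinal, so Corollary~\ref{cor:coll-yields-admissible} allows us to conclude that $\mathbb L^u_\alpha$ is admissible. The main obstacle is the disjunctive closure step: one must coordinate the surjection $n \mapsto (\pi_0(n), \pi_1(n), \pi_2(n))$ with the functorial enumeration of $\mathbf L^u$ so that every potential disjunct is actually decomposed at some stage of the search, and this is precisely what the encoding-decoding identity of Theorem~\ref{thm:enumerations-L} was designed to permit.
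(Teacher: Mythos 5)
Your proof is correct and follows essentially the same route as the paper's: both define the set $F$ of $\mathbf L^u_\alpha$-formulas obtained by transporting the branch's sequent entries along the order isomorphism onto $X_f$, verify the closure conditions of Theorem~\ref{thm:verifications-complete} using the surjection onto triples together with the naturality of the enumeration functions from Theorem~\ref{thm:enumerations-L}, and conclude via Corollary~\ref{cor:coll-yields-admissible}. The only (cosmetic) differences are that you factor the embedding through $X_f$ instead of working directly with $h:\alpha\to X$, and you re-derive the parameter-support fact that the paper records as Corollary~\ref{cor:operator-control-search-trees}.
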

\begin{proof}
 Let $h:\alpha\rightarrow X$ be the unique embedding with image $X_f$. We will verify that
 \begin{equation*}
  F=\{\varphi\text{ is an $\mathbf L^u_\alpha$-formula}\,|\,\varphi[h]\in l_X(f\!\restriction\!m)\text{ for some $m\in\omega$}\}
 \end{equation*}
 satisfies the assumptions of Theorem~\ref{thm:verifications-complete}. Assuming that this is the case, we can conclude as follows: The construction of $S^u_X$ ensures $\neg\ax_n\in l_X(f\!\restriction\!(2n+1))$ for any axiom from Definition~\ref{def:enum-axioms}. Since these axioms contain no parameters we have $\neg\ax_n[h]=\neg\ax_n$ and thus $\neg\ax_n\in F$. Using Theorem~\ref{thm:verifications-complete} we obtain $\mathbb L^u_\alpha\nvDash\neg\ax_n$ for all $n\in\omega$. The claim follows by Corollary~\ref{cor:coll-yields-admissible}. To establish the required properties of $F$, let us first consider a conjunctive $\mathbf L^u_\alpha$-formula $\varphi\simeq\bigwedge_{a\in\iota_\alpha(\varphi)}\varphi_a\in F$. Assume that $\varphi[h]$ is the $k$-th formula in $l_X(f\!\restriction\!m)$. Pick a number $n$ with $\pi_0(n)=k$ and $\pi_1(n)=m$, which ensures $m\leq n$. Considering the definition of $S^u_X$, we see that $\varphi[h]$ remains the $k$-th formula in $l_X(f\!\restriction\!(2n+1))$. Theorem~\ref{thm:conj-disj-natural} shows that $\varphi[h]$ is conjunctive. Since $f$ is a branch we have
\begin{equation*}
f\!\restriction\!(2n+2)=f\!\restriction\!(2n+1)^\frown f(2n+1)\in S^u_X.
\end{equation*}
The definition of $S^u_X$ yields $f(2n+1)\in\iota_X(\varphi[h])$ and $\varphi[h]_{f(2n+1)}\in l_X(f\!\restriction\!(2n+2))$. In view of $\suppl_X(f(2n+1))\subseteq X_f=\rng(h)$ Proposition~\ref{prop:constructible-hierarchy-functorial} provides an $a\in\mathbf L^u_\alpha$ with~$f(2n+1)=\mathbf L^u_h(a)$. By Theorem~\ref{thm:conj-disj-natural} we get $a\in\iota_\alpha(\varphi)$ and~$\varphi_a[h]=\varphi[h]_{f(2n+1)}$. This implies $\varphi_a\in F$, as required. It remains to consider a disjunctive $\mathbf L^u_\alpha$-formula $\varphi\simeq\bigvee_{a\in\iota_\alpha(\varphi)}\varphi_a$ that lies in $F$: Assume that $\varphi[h]$ is the $k_0$-th formula in $l_X(f\!\restriction\!m)$. For any $a\in\iota_\alpha(\varphi)\subseteq\mathbf L^u_\alpha$ we have $\suppl_X(\mathbf L^u_h(a))=[h]^{<\omega}(\suppl_\alpha(a))\subseteq X_f$. Thus we can pick a $k_1\geq m$ with $\suppl_X(\mathbf L^u_h(a))\subseteq\supps_X(f\!\restriction\!k_1)$. By Theorem~\ref{thm:enumerations-L} we have
\begin{equation*}
\mathbf L^u_h(a)=\en_X(\supps_X(f\!\restriction\!k_1),k_2)
\end{equation*}
for $k_2=\code_X(\supps_X(f\!\restriction\!k_1),\mathbf L^u_h(a))$. Now we pick a number $n$ with $\pi_i(n)=k_i$. In particular we have $m\leq n$, so that $\varphi[h]$ is the $\pi_0(n)$-th formula in $l_X(f\!\restriction\!(2n+1))$. Theorem~\ref{thm:conj-disj-natural} ensures that $\varphi[h]$ is disjunctive and that we have $\mathbf L^u_h(a)\in\iota_X(\varphi[h])$. Then the definition of $S^u_X$ yields $\varphi_a[h]=\varphi[h]_{\mathbf L^u_h(a)}\in l_X(f\!\restriction\!(2n+2))$. We have thus established $\varphi_a\in F$ for an arbitrary $a\in\iota_\alpha(\varphi)$.
\end{proof}

Recall that we work in the meta theory $\atrs$, which includes axiom beta. This axiom appears to be necessary if one wants to draw the following conclusion:

\begin{corollary}\label{cor:branch-to-admissible}
 If there is a well-order $X$ such that the search tree $S^u_X$ has a branch, then there is an admissible set $\mathbb A$ with $u\subseteq\mathbb A$.
\end{corollary}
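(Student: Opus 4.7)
The plan is to reduce the corollary to the preceding theorem, using axiom beta of $\atrs$ to turn the subset $X_f$ of the well-order $X$ into an actual ordinal.

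First I would fix a well-order $X$ together with a branch $f$ of $S^u_X$, and consider
\begin{equation*}
 X_f=\bigcup_{n\in\omega}\suppl_X(f(n))\subseteq X
\end{equation*}
as a suborder of $X$. Since $X$ is a well-order, every nonempty subset of $X_f$ has a least element in the $<_X$-ordering, so $X_f$ is itself a well-order. At this point axiom beta is invoked (this is the only step that genuinely goes beyond the arithmetical content of $\atrs$): it supplies the Mostowski collapse of $X_f$, that is, a unique ordinal $\alpha$ together with an order isomorphism $h\colon\alpha\rightarrow X_f$. In particular $X_f$ has order-type $\alpha$ in the sense required by the previous theorem.

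Next I would apply the previous theorem directly to $f$ and $X$: since $X_f$ is well-founded with order-type $\alpha$, we conclude that $\mathbb L^u_\alpha$ is an admissible set. It remains only to verify $u\subseteq\mathbb L^u_\alpha$, which I would set $\mathbb A=\mathbb L^u_\alpha$ to witness. Here I would use that, by the recursive clauses recorded after Definition~\ref{def:interpretation-constructible-terms}, we have $\mathbb L^u_0=u$, and a straightforward induction on $\alpha$ (using transitivity of $\mathbb L^u_\alpha$ from Lemma~\ref{lem:basic-properties-constructible}(a) and $\mathbb L^u_\gamma\in\mathbb L^u_\alpha$ for $\gamma<\alpha$ from Lemma~\ref{lem:basic-properties-constructible}(b)) yields $u=\mathbb L^u_0\subseteq\mathbb L^u_\alpha$.

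There is essentially no hard step here; the entire content of the corollary sits in the previous theorem. The only substantive point is recognising that axiom beta, which is available in $\atrs$ by assumption, is precisely what is needed to pass from the suborder $X_f\subseteq X$ to an honest set-theoretic ordinal $\alpha$ so that the stratified hierarchy $\mathbb L^u_\alpha$ is available and the previous theorem applies.
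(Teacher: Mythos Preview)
Your proposal is correct and follows essentially the same route as the paper's proof: pass to the well-founded suborder $X_f\subseteq X$, invoke axiom beta to obtain its order-type $\alpha$, apply the preceding theorem to get that $\mathbb L^u_\alpha$ is admissible, and observe $u\subseteq\mathbb L^u_\alpha$. The only cosmetic difference is that the paper justifies $u\subseteq\mathbb L^u_\alpha$ directly from Definition~\ref{def:interpretation-constructible-terms} (each $v\in u$ is a term in $\mathbf L^u_\alpha$ with $\llbracket v\rrbracket_\alpha=v$), whereas you argue via $\mathbb L^u_0=u$ and monotonicity; both are fine.
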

\begin{proof}
 Consider a well-order $X$ and a branch $f$ of $S^u_X$. The suborder $X_f\subseteq X$ is well-founded as well. In the presence of axiom beta we can consider its order-type~$\alpha$. The previous theorem tells us that $\mathbb L^u_\alpha$ is an admissible set. In view of Definition~\ref{def:interpretation-constructible-terms} we have $u\subseteq\mathbb L^u_\alpha$.
\end{proof}

In the rest of this section we show that our construction of search trees is functorial. According to Definition~\ref{def:constructible-morphisms} any embedding $f:X\rightarrow Y$ yields a map~$\mathbf L^u_f:\mathbf L^u_X\rightarrow\mathbf L^u_Y$. On the level of finite sequences we can consider the function
\begin{gather*}
 S^u_f:(\mathbf L^u_X)^{<\omega}\rightarrow(\mathbf L^u_Y)^{<\omega},\\
 S^u_f(\langle a_0,\dots,a_{n-1}\rangle)=\langle \mathbf L^u_f(a_0),\dots,\mathbf L^u_f(a_{n-1})\rangle.
\end{gather*}
From Proposition~\ref{prop:constructible-hierarchy-functorial} we know that $\suppl:\mathbf L^u\Rightarrow[\cdot]^{<\omega}$ is a natural transformation. As a straightforward consequence, the maps $\supps_X:(\mathbf L^u_X)^{<\omega}\rightarrow[X]^{<\omega}$ defined above are natural as well. Let us establish the crucial functorial result:

\begin{proposition}\label{prop:search-trees-embeddings}
 For any embedding $f:X\rightarrow Y$ and any $\sigma\in(\mathbf L^u_X)^{<\omega}$ we have
 \begin{equation*}
  \sigma\in S^u_X\quad\Leftrightarrow\quad S^u_f(\sigma)\in S^u_Y.
 \end{equation*}
\end{proposition}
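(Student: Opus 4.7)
The plan is to prove the equivalence by induction on the length of $\sigma$, simultaneously establishing the stronger auxiliary claim that whenever $\sigma \in S^u_X$ one has
\begin{equation*}
l_Y(S^u_f(\sigma)) = l_X(\sigma)[f],
\end{equation*}
that is, the label is transported by $f$ in the obvious way. The base case $\sigma = \langle\rangle$ is immediate from the definition.

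For the induction step I would look at a one-step extension $\sigma^\frown a$ and split according to whether $\sigma$ has even or odd length, matching the three sub-cases of Definition~\ref{def:search-tree}. In all cases I will use that $\mathbf L^u_f$ is an injective functor on linear orders (so $\mathbf L^u_f(a) = 0 \Leftrightarrow a = 0$, since $\mathbf L^u_f(0) = 0$ by Definition~\ref{def:constructible-morphisms}) and that $S^u_f(\sigma^\frown a) = S^u_f(\sigma)^\frown \mathbf L^u_f(a)$. At an even stage $2n$ both memberships reduce to ``$a = 0$,'' and the label propagates because $\neg\mathrm{Ax}_n$ is parameter-free, so $(\neg\mathrm{Ax}_n)[f] = \neg\mathrm{Ax}_n$. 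At a conjunctive odd stage, let $\varphi$ be the $\pi_0(n)$-th formula of $l_X(\sigma)$; by the induction hypothesis on labels, $\varphi[f]$ is the $\pi_0(n)$-th formula of $l_Y(S^u_f(\sigma))$, and Theorem~\ref{thm:conj-disj-natural} tells me that $\varphi[f]$ is conjunctive with $\iota_Y(\varphi[f]) = \mathbf L^u_f[\iota_X(\varphi)]$ and $\varphi[f]_{\mathbf L^u_f(a)} = \varphi_a[f]$. The equivalence of memberships and the label identity then fall out directly.

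The main technical case is the disjunctive odd stage, and the argument here is what genuinely requires the compatibility of enumerations developed in Section~\ref{sect:functorial-L}. Writing $b = \en_X(\supps_X(\sigma\!\restriction\!\pi_1(n)),\pi_2(n))$ and $b' = \en_Y(\supps_Y(S^u_f(\sigma)\!\restriction\!\pi_1(n)),\pi_2(n))$, I use that $S^u_f$ commutes with restriction, that $\supps$ is natural (a consequence of the naturality of $\suppl$ given by Proposition~\ref{prop:constructible-hierarchy-functorial}), and then the naturality of $\en$ from Theorem~\ref{thm:enumerations-L} to compute
\begin{equation*}
b' = \en_Y([f]^{<\omega}(\supps_X(\sigma\!\restriction\!\pi_1(n))),\pi_2(n)) = \mathbf L^u_f(b).
\end{equation*}
Theorem~\ref{thm:conj-disj-natural} then yields $b \in \iota_X(\varphi) \Leftrightarrow b' \in \iota_Y(\varphi[f])$ and $\varphi[f]_{b'} = \varphi_b[f]$, so the branching condition ``$a = 0$'' is preserved on both sides and the two possible label extensions match under $[f]$ in both the ``hit'' and ``miss'' sub-cases.

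The step I expect to be the main obstacle is precisely this disjunctive case, where three different naturalities (of $\suppl$, of $\en$, and of the disjunction/conjunction assignment) must be chained together in exactly the right way so that the witness $b$ chosen in $X$ corresponds to the witness $b'$ chosen in $Y$. Once this alignment is in place, the remaining bookkeeping — namely verifying the label identity in each sub-case and pushing the induction through — is routine, using that substitution and negation commute with $[f]$ as recorded just before Theorem~\ref{thm:conj-disj-natural}.
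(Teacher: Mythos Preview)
Your proposal is correct and follows essentially the same approach as the paper: induction on the length of $\sigma$ together with the simultaneous label identity $l_Y(S^u_f(\sigma)) = l_X(\sigma)[f]$, the even case handled via parameter-freeness of the axioms and $\mathbf L^u_f(0)=0$, and the disjunctive odd case via the chained naturalities of $\supps$, $\en$, and the disjunction/conjunction assignment from Theorem~\ref{thm:conj-disj-natural}. One minor slip: Theorem~\ref{thm:conj-disj-natural} only gives the biconditional $a\in\iota_X(\varphi)\Leftrightarrow\mathbf L^u_f(a)\in\iota_Y(\varphi[f])$ for $a\in\mathbf L^u_X$, not the set equality $\iota_Y(\varphi[f])=\mathbf L^u_f[\iota_X(\varphi)]$ you wrote, but since $\sigma^\frown a\in(\mathbf L^u_X)^{<\omega}$ forces $a\in\mathbf L^u_X$, only the biconditional is needed and your argument goes through.
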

\begin{proof}
 In the previous section we have seen that each $\mathbf L^u_X$-formula $\varphi$ can be transformed into an $\mathbf L^u_Y$-formula $\varphi[f]$. Given an $\mathbf L^u_X$-sequent $\Gamma=\langle\varphi_0,\dots,\varphi_{n-1}\rangle$ we write~$\Gamma[f]=\langle\varphi_0[f],\dots,\varphi_{n-1}[f]\rangle$ for the corresponding $\mathbf L^u_Y$-sequent. The claim of the proposition can be established by induction on $\sigma$ if one simultaneously checks
 \begin{equation*}
  l_X(\sigma)[f]=l_Y(S^u_f(\sigma))
 \end{equation*}
 in case $\sigma\in S^u_X$. The induction step is most interesting for a sequence $\sigma\in S^u_X$ with~$\len(\sigma)=2n+1$. Let $\varphi$ be the $\pi_0(n)$-th formula in $l_X(\sigma)$. The simultaneous induction hypothesis ensures that $\varphi[f]$ is the $\pi_0(n)$-th formula in $l_Y(S^u_f(\sigma))$. By Theorem~\ref{thm:conj-disj-natural} the formulas $\varphi$ and $\varphi[f]$ have the same type. First assume that they are conjunctive. Invoking the construction of search trees and Theorem~\ref{thm:conj-disj-natural} we get
 \begin{equation*}
  \sigma^\frown a\in S^u_X\,\Leftrightarrow\, a\in\iota_X(\varphi)\,\Leftrightarrow\,\mathbf L^u_f(a)\in\iota_Y(\varphi[f])\,\Leftrightarrow\, S^u_f(\sigma^\frown a)=S^u_f(\sigma)^\frown\mathbf L^u_f(a)\in S^u_Y,
 \end{equation*}
 as required. If we have $\sigma^\frown a\in S^u_X$, then Theorem~\ref{thm:conj-disj-natural} also yields
\begin{equation*}
 l_X(\sigma^\frown a)[f]=l_X(\sigma)[f],\varphi_a[f]=l_Y(S^u_f(\sigma)),\varphi[f]_{\mathbf L^u_f(a)}=l_Y(S^u_f(\sigma)^\frown\mathbf L^u_f(a)).
\end{equation*}
Now assume that $\varphi$ and $\varphi[f]$ are disjunctive. Recall that we have $0\in o(u)\subseteq u$ by Assumption~\ref{ass:properties-u}. In view of Definition~\ref{def:constructible-morphisms} we can conclude
\begin{equation*}
 \sigma^\frown a\in S^u_X\,\Leftrightarrow\, a=0\,\Leftrightarrow\,\mathbf L^u_f(a)=0\,\Leftrightarrow\,S^u_f(\sigma^\frown a)=S^u_f(\sigma)^\frown\mathbf L^u_f(a)\in S^u_Y.
\end{equation*}
 Following the construction of search trees we consider the $\mathbf L^u_X$-term
 \begin{equation*}
  b=\en_X(\supps_X(\sigma\!\restriction\!\pi_1(n)),\pi_2(n)).
 \end{equation*}
 Crucially, Theorem~\ref{thm:enumerations-L} and the naturality of $\supps$ allow us to infer
 \begin{equation*}
  \mathbf L^u_f(b)=\en_Y([f]^{<\omega}\circ\supps_X(\sigma\!\restriction\!\pi_1(n)),\pi_2(n))=\en_Y(\supps_Y(S^u_f(\sigma)\!\restriction\!\pi_1(n)),\pi_2(n)).
 \end{equation*}
 Also note that $b\in\iota_X(\varphi)$ is equivalent to $\mathbf L^u_f(b)\in\iota_Y(\varphi[f])$, by Theorem~\ref{thm:conj-disj-natural}. If these equivalent statements are satisfied, then the construction of search trees yields
 \begin{equation*}
  l_X(\sigma^\frown 0)[f]=l_X(\sigma)[f],\varphi_b[f]=l_Y(S^u_f(\sigma)),\varphi[f]_{\mathbf L^u_f(b)}=l_Y(S^u_f(\sigma^\frown 0)).
 \end{equation*}
 The remaining cases are straightforward. Concerning the induction step for a sequence of even length, we point out that the axioms listed in Definition~\ref{def:enum-axioms} are formulas of pure set theory, so that we have $\neg\ax_n[f]=\neg\ax_n$.
\end{proof}

In view of the proposition we may write $S^u_f:S^u_X\rightarrow S^u_Y$ and $\supps_X:S^u_X\rightarrow[X]^{<\omega}$ for the restrictions of these functions to $S^u_X\subseteq(\mathbf L^u_X)^{<\omega}$. To consider $S^u_X$ as a linear order we recall that Lemma~\ref{less-L-linear-well} provides an order relation $<_{\mathbf L^u_X}$ on $\mathbf L^u_X$. As usual, the Kleene-Brouwer order on the tree $S^u_X\subseteq(\mathbf L^u_X)^{<\omega}$ is defined by
\begin{equation*}
\sigma_0<_{S^u_X}\sigma_1\quad\Leftrightarrow\quad\begin{cases}
                                                   \text{either $\sigma_0$ is a proper end extension of $\sigma_1$,}\\
                                                   \text{or we have $\sigma_i=\sigma^\frown a_i^\frown\sigma_i'$ with $a_0<_{\mathbf L^u_X} a_1$.}
                                                  \end{cases} 
\end{equation*}
To understand the second clause, note that $a_0$ and $a_1$ are the first entries on which the sequences $\sigma_0$ and~$\sigma_1$ disagree. If $X$ is well-founded, then Lemma~\ref{less-L-linear-well} ensures that $(\mathbf L^u_X,<_{\mathbf L^u_X})$ is well-founded as well. In this case $(S^u_X,<_{S^u_X})$ is well-founded if the tree $S^u_X$ has no branch (this well-known property of the Kleene-Brouwer order can be established in our meta theory~$\atrs$, as verified in~\cite[Lemma~3.3.4]{freund-thesis}). Let us now summarize the functorial properties of our search trees:

\begin{proposition}\label{prop:search-trees-prae-dilator}
 The maps $X\mapsto(S^u_X,<_{S^u_X})$, $f\mapsto S^u_f$ and $X\mapsto\supps_X$ form a prae-dilator.
\end{proposition}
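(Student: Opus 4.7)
The plan is to verify, one by one, the three conditions that define a prae-dilator (Definition~\ref{def:prae-dilator}): functoriality of $S^u$ into linear orders, naturality of $\supps$, and the support condition that every $\sigma \in S^u_X$ lies in the range of $S^u_{\iota_\sigma}$.

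First, I would handle the functorial structure. That $S^u_f$ maps $S^u_X$ into $S^u_Y$ for every embedding $f:X\to Y$ is precisely Proposition~\ref{prop:search-trees-embeddings}. Since $S^u_f$ acts entry-wise via $\mathbf L^u_f$, the identities $S^u_{g\circ f}=S^u_g\circ S^u_f$ and $S^u_{\mathrm{id}_X}=\mathrm{id}_{S^u_X}$ follow immediately from the corresponding identities for $\mathbf L^u$ established in Proposition~\ref{prop:constructible-hierarchy-functorial}.

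Next, I would show that $S^u_f$ is an order embedding for the Kleene-Brouwer order. This splits into the two clauses defining $<_{S^u_X}$. If $\sigma_0$ is a proper end extension of $\sigma_1$, then $S^u_f(\sigma_0)$ is a proper end extension of $S^u_f(\sigma_1)$ because $S^u_f$ acts entry-wise and preserves lengths. If $\sigma_i=\sigma^\frown a_i^\frown\sigma_i'$ with $a_0<_{\mathbf L^u_X}a_1$, then $S^u_f(\sigma_i)=S^u_f(\sigma)^\frown\mathbf L^u_f(a_i)^\frown S^u_f(\sigma_i')$, and Theorem~\ref{thm:constructible-dilator} (which asserts that $\mathbf L^u_f$ is order preserving) yields $\mathbf L^u_f(a_0)<_{\mathbf L^u_Y}\mathbf L^u_f(a_1)$, giving $S^u_f(\sigma_0)<_{S^u_Y}S^u_f(\sigma_1)$. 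Injectivity of $S^u_f$ also follows from injectivity of $\mathbf L^u_f$.

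Naturality of $\supps$ is then routine: from the entry-wise definition $\supps_X(\langle a_0,\dots,a_{n-1}\rangle)=\bigcup_i\suppl_X(a_i)$, the naturality of $\suppl$ in Proposition~\ref{prop:constructible-hierarchy-functorial}, and the fact that $[f]^{<\omega}$ distributes over finite unions, we obtain $\supps_Y\circ S^u_f=[f]^{<\omega}\circ\supps_X$.

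The most delicate ingredient is the support condition. Given $\sigma=\langle a_0,\dots,a_{n-1}\rangle\in S^u_X$, set $x=\supps_X(\sigma)$ and let $\iota_\sigma:x\hookrightarrow X$ be the inclusion. Each $a_i$ satisfies $\suppl_X(a_i)\subseteq x=\rng(\iota_\sigma)$, so Proposition~\ref{prop:constructible-hierarchy-functorial} supplies unique $b_i\in\mathbf L^u_x$ with $\mathbf L^u_{\iota_\sigma}(b_i)=a_i$. Set $\tau=\langle b_0,\dots,b_{n-1}\rangle\in(\mathbf L^u_x)^{<\omega}$, so that $S^u_{\iota_\sigma}(\tau)=\sigma$. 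To conclude $\tau\in S^u_x$, I would invoke Proposition~\ref{prop:search-trees-embeddings} in its reverse direction: since $S^u_{\iota_\sigma}(\tau)=\sigma\in S^u_X$, we get $\tau\in S^u_x$. Thus $\sigma$ indeed lies in the range of $S^u_{\iota_\sigma}$, completing the verification. This last step is the main (minor) obstacle, as it crucially uses both directions of the equivalence in Proposition~\ref{prop:search-trees-embeddings}, rather than just the forward direction needed for functoriality.
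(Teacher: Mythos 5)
Your proposal is correct and follows essentially the same route as the paper: the functoriality and naturality claims are handled exactly as the paper indicates (the paper merely compresses them into ``one easily infers''), and your treatment of the support condition — lifting each entry via Proposition~\ref{prop:constructible-hierarchy-functorial} to form a preimage sequence and then invoking the reverse direction of the equivalence in Proposition~\ref{prop:search-trees-embeddings} to see that it lies in $S^u_{\supps_X(\sigma)}$ — is precisely the step the paper itself flags as the crucial one.
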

\begin{proof}
From Theorem~\ref{thm:constructible-dilator} we know that $(\mathbf L^u,\suppl)$ is a (prae-)dilator. One easily infers that $S^u$ is an endofunctor of linear orders and that $\supps:S^u\Rightarrow[\cdot]^{<\omega}$ is a natural transformation. It remains to show that the functions $\supps_X:S^u_X\rightarrow[X]^{<\omega}$ compute supports in the sense of Definition~\ref{def:prae-dilator}: Consider an arbitrary $\sigma\in S^u_X$ and write $\iota_\sigma:\supps_X(\sigma)\hookrightarrow X$ for the inclusion. For each entry $a$ of the sequence~$\sigma$ we have $\suppl_X(a)\subseteq\supps_X(\sigma)=\rng(\iota_\sigma)$. By Proposition~\ref{prop:constructible-hierarchy-functorial} it follows that $a$ lies in the range of the function $\mathbf L^u_{\iota_\sigma}:\mathbf L^u_{\supps_X(\sigma)}\rightarrow\mathbf L^u_X$. If we apply this argument to all entries of $\sigma$, then we obtain a sequence $\sigma_0\in(\mathbf L^u_{\supps_X(\sigma)})^{<\omega}$ with $\sigma=S^u_{\iota_\sigma}(\sigma_0)$. Crucially, Proposition~\ref{prop:search-trees-embeddings} ensures $\sigma_0\in S^u_{\supps_X(\sigma)}$. Thus $\sigma$ lies in the range of the restricted function $S^u_{\iota_\sigma}:S^u_{\supps_X(\sigma)}\rightarrow S^u_X$, as required.
\end{proof}

The following byproduct of our investigation will be needed later:

\begin{corollary}\label{cor:operator-control-search-trees}
 Consider a node $\sigma\in S^u_X$ and a formula $\varphi\in l_X(\sigma)$. If the parameter $a\in\mathbf L^u_X$ occurs in $\varphi$, then we have $\suppl_X(a)\subseteq\supps_X(\sigma)$.
\end{corollary}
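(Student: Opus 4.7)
The plan is to prove the statement by induction on the length of $\sigma\in S^u_X$, simultaneously for all $X$, using the recursive construction from Definition~\ref{def:search-tree}. The base case $\sigma=\langle\rangle$ is vacuous, since $l_X(\langle\rangle)=\langle\rangle$ contains no formulas.

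For the induction step I would consider a node $\sigma^\frown a\in S^u_X$ and note that $\supps_X(\sigma^\frown a)=\supps_X(\sigma)\cup\suppl_X(a)$. The formulas in $l_X(\sigma^\frown a)$ come in two kinds: those already appearing in $l_X(\sigma)$, which are handled by the induction hypothesis together with the inclusion $\supps_X(\sigma)\subseteq\supps_X(\sigma^\frown a)$; and the newly added formula, which must be analysed case by case according to the parity of $\len(\sigma)$. If $\len(\sigma)=2n$, then $a=0$ and the new formula is $\neg\ax_n$, which is a formula of pure set theory without parameters from $\mathbf L^u_X\setminus u$, so the claim is immediate.

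The main content lies in the step where $\len(\sigma)=2n+1$ and the $\pi_0(n)$-th formula $\varphi$ of $l_X(\sigma)$ is conjunctive or disjunctive. In the conjunctive case, the label gains the formula $\varphi_a$ for some $a\in\iota_X(\varphi)$; inspection of all clauses in Definition~\ref{def:disj-conj-formula} shows that every parameter of $\varphi_a$ is either a parameter of $\varphi$, an element of $u$ (hence has empty support), or the index $a$ itself. By the induction hypothesis the parameters of $\varphi$ have support inside $\supps_X(\sigma)$, and $\suppl_X(a)\subseteq\supps_X(\sigma^\frown a)$ by the very definition of $\supps$; hence the parameters of $\varphi_a$ have support in $\supps_X(\sigma^\frown a)$. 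In the disjunctive case, the new formula is $\varphi_b$ with
\begin{equation*}
 b=\en_X(\supps_X(\sigma\!\restriction\!\pi_1(n)),\pi_2(n)).
\end{equation*}
Here Corollary~\ref{cor:support-enumeration} is decisive: it gives $\suppl_X(b)\subseteq\supps_X(\sigma\!\restriction\!\pi_1(n))\subseteq\supps_X(\sigma)$. Once again the same case analysis of the clauses of Definition~\ref{def:disj-conj-formula} shows that the parameters of $\varphi_b$ all have supports inside $\supps_X(\sigma)\cup\suppl_X(b)\subseteq\supps_X(\sigma^\frown 0)$.

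The only mildly subtle point, and the one I would flag as the main obstacle, is the bookkeeping in the conjunctive/disjunctive case analysis: one must check each clause of Definition~\ref{def:disj-conj-formula} to confirm that $\varphi_a$ introduces no ``hidden'' parameters beyond those of $\varphi$ and the index $a$ (for example, the clause for $b\in\{x\in L^u_s\,|\,\theta(x,\vec c)\}$ reintroduces the parameters $\vec c$, but these are already parameters of $\varphi$). Once this is inspected once and for all, the induction goes through cleanly and the corollary is proved.
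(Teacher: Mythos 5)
Your argument is correct, but it is genuinely different from the one in the paper. The paper obtains the corollary almost for free from the functorial machinery: by the support property of $\supps$ (proof of Proposition~\ref{prop:search-trees-prae-dilator}) one writes $\sigma=S^u_{\iota_\sigma}(\sigma_0)$ with $\sigma_0\in S^u_{\supps_X(\sigma)}$, and the simultaneous claim $l_X(\sigma)[f]=l_Y(S^u_f(\sigma))$ from the proof of Proposition~\ref{prop:search-trees-embeddings} then shows that every $\varphi\in l_X(\sigma)$ has the form $\varphi_0[\iota_\sigma]$; hence each parameter of $\varphi$ lies in $\rng(\mathbf L^u_{\iota_\sigma})$ and its support is contained in $\rng(\iota_\sigma)=\supps_X(\sigma)$ by naturality of $\suppl$. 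You instead redo the induction along the construction of $S^u_X$ and track parameters through every clause of Definition~\ref{def:disj-conj-formula}. That works, and Corollary~\ref{cor:support-enumeration} is indeed the decisive ingredient in the disjunctive case, exactly as in the paper's underlying setup; but it duplicates bookkeeping that Proposition~\ref{prop:search-trees-embeddings} has already carried out once and for all. One small point to tighten in your version: in the clause for $b\in\{x\in L^u_s\,|\,\theta(x,\vec c)\}$ the terms $\vec c$ are not literally parameters of $\varphi$ but subterms of one of its parameters, so the induction hypothesis does not apply to them directly; you need the additional (immediate) observation that $\suppl_X(c_i)\subseteq\suppl_X(\{x\in L^u_s\,|\,\theta(x,\vec c)\})$ by Definition~\ref{def:term-version-L}(iii), or you should strengthen the induction hypothesis to cover all subterms of parameters. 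With that fixed, your more elementary route is sound; the paper's route is shorter and reuses the naturality results, which is why it is stated as a corollary.
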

\begin{proof}
 As in the previous proof we write $\sigma=S^u_{\iota_\sigma}(\sigma_0)$ with $\sigma_0\in S^u_{\supps_X(\sigma)}$. The proof of Proposition~\ref{prop:search-trees-embeddings} yields an $\mathbf L^u_{\supps_X(\sigma)}$-formula $\varphi_0$ with $\varphi=\varphi_0[\iota_\sigma]$. Thus we have $a=\mathbf L^u_{\iota_\sigma}(a_0)$ for some $a_0\in\mathbf L^u_{\supps_X(\sigma)}$. Using the naturality of $\suppl$ we get
 \begin{equation*}
  \suppl_X(a)=[\iota_\sigma]^{<\omega}(\suppl_{\supps_X(\sigma)}(a_0))\subseteq\rng(\iota_\sigma)=\supps_X(\sigma),
 \end{equation*}
as promised.
\end{proof}

Combining functorial and non-functorial results we obtain the following:

\begin{theorem}\label{thm:admissible-set-dilator}
 One of the following statements must hold:
 \begin{enumerate}[label=(\roman*)]
  \item There is an admissible set $\mathbb A$ with $u\subseteq\mathbb A$.
  \item The construction of search trees results in a dilator $(S^u,\supps)$.
 \end{enumerate}
\end{theorem}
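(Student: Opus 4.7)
The plan is to argue by contrapositive: assuming that the construction of search trees does \emph{not} produce a dilator, I will derive statement~(i) from the previous results. Most of the heavy machinery needed has already been assembled, so the proof reduces to collating it cleanly; there is no real obstacle.

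First I would invoke Proposition~\ref{prop:search-trees-prae-dilator}, which states that $(S^u,\supps)$ is a prae-dilator. By Definition~\ref{def:prae-dilator}, the only remaining requirement for being a dilator is that $S^u_X$ is a well-founded linear order whenever $X$ is a well-order. So if~(ii) fails, there must exist a well-order $X$ such that the Kleene-Brouwer order $(S^u_X,<_{S^u_X})$ is ill-founded.

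Next I would reduce ill-foundedness of $<_{S^u_X}$ to the existence of a branch of the tree~$S^u_X$. Since $X$ is a well-order, Lemma~\ref{less-L-linear-well} gives that $(\mathbf L^u_X,<_{\mathbf L^u_X})$ is a well-order. The standard fact that the Kleene-Brouwer order on a tree whose label set is well-ordered is well-founded precisely when the tree has no infinite branch is already cited in the paragraph preceding Proposition~\ref{prop:search-trees-prae-dilator} (with a reference to \cite[Lemma~3.3.4]{freund-thesis}), and is available in the meta theory $\atrs$. Applied to $S^u_X\subseteq(\mathbf L^u_X)^{<\omega}$, this yields a branch $f:\omega\rightarrow\mathbf L^u_X$ of~$S^u_X$.

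Finally I would apply Corollary~\ref{cor:branch-to-admissible} to the well-order $X$ and the branch $f$: this produces an admissible set $\mathbb A$ with $u\subseteq\mathbb A$, which is exactly statement~(i). Combining these three steps establishes the dichotomy. The argument uses axiom beta (via Corollary~\ref{cor:branch-to-admissible}, to collapse the well-founded suborder $X_f$ to its order-type) and primitive recursive separation (already built into the framework of $\atrs$ described in the introduction), both of which are part of our base theory.
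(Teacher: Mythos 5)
Your proposal is correct and follows essentially the same route as the paper: note that $(S^u,\supps)$ is already a prae-dilator by Proposition~\ref{prop:search-trees-prae-dilator}, so failure of~(ii) forces $(S^u_X,<_{S^u_X})$ to be ill-founded for some well-order $X$, extract a branch via the Kleene--Brouwer property together with Lemma~\ref{less-L-linear-well}, and apply Corollary~\ref{cor:branch-to-admissible}. The only difference is that you spell out the intermediate steps slightly more explicitly than the paper does.
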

\begin{proof}
 Assume that statement~(ii) fails. Since $(S^u,\supps)$ is a prae-dilator, this can only happen if $(S^u_X,<_{S^u_X})$ is ill-founded for some well-order $X$. We can conclude that the tree $S^u_X$ has a branch, as explained above. Now Corollary~\ref{cor:branch-to-admissible} yields an admissible set $\mathbb A\supseteq u$, so that statement~(i) holds.
\end{proof}

In the remaining sections we will refute statement~(ii) under the assumption that the abstract Bachmann-Howard principle holds. Thus the latter implies the existence of admissible sets, as required for the crucial implication (iii)$\Rightarrow$(ii) of Theorem~\ref{thm:main-abstract}. Note that we can construct an admissible set $\mathbb A\ni x$ if we start with a suitable set~$u\ni x$.

\section{The \texorpdfstring{$\varepsilon$}{epsilon}-Variant of a Dilator}\label{section:epsilon-variant}

In this section we transform the (prae-)dilator $S^u$ from the previous section into a stronger (prae-)dilator $\varepsilon(S)^u$. The latter will be needed in the ordinal analysis that completes the proof of Theorem~\ref{thm:main-abstract}. The point of the construction becomes clear when we consider an ordinal $\alpha$ that is a Bachmann-Howard fixed point of~$\varepsilon(S)^u$: We will see that $\alpha$ is at least as big as the usual Bachmann-Howard ordinal.

Our first goal is to define an order $\varepsilon(S)^u_X$ for each given order $X$. Similar constructions (without reference to prae-dilators) have been studied by Afshari, Rathjen and Valencia Vizca\'ino~\cite{rathjen-afshari,rathjen-model-bi}. As a preparation we define auxiliary term systems $\varepsilon^0(S)^u_X\supseteq\varepsilon(S)^u_X$. In order to understand the following definition it may help to think of $\Omega$ as a notation for the cardinal~$\aleph_1$. The terms $\mathfrak e_x$ and~$\mathfrak E_\sigma$ represent countable resp.~uncountable $\varepsilon$-numbers.

\begin{definition}
 Given a linear order $(X,<_X)$, the set $\varepsilon^0(S)^u_X$ is inductively generated by the following clauses:
 \begin{enumerate}[label=(\roman*)]
  \item The set $\varepsilon^0(S)^u_X$ contains terms $0$ and $\Omega$. Furthermore it contains terms $\mathfrak e_x$ and $\mathfrak E_\sigma$ for all elements $x\in X$ resp.~$\sigma\in S^u_X$.
  \item If $t_0,\dots,t_n$ are elements of $\varepsilon^0(S)^u_X$, then so is the term $\omega^{t_0}+\cdots+\omega^{t_n}$.
 \end{enumerate}
 The length of terms is given by the function $L:\varepsilon^0(S)^u_X\rightarrow\omega$ with
 \begin{gather*}
  L(0)=L(\Omega)=L(\mathfrak e_x)=L(\mathfrak E_\sigma)=0,\\
  L(\omega^{t_0}+\cdots+\omega^{t_n})=L(t_0)+\dots+L(t_n)+1.
 \end{gather*}
\end{definition}

Intuitively, the cardinal $\aleph_1$ can be represented by $\omega^0+\omega^\Omega$ as well as $\Omega$. To obtain unique representations we single out the terms in Cantor normal form:

\begin{definition}\label{def:eps(S)-order}
 To define $\varepsilon(S)^u_X\subseteq\varepsilon^0(S)^u_X$ and ${<_{\varepsilon(S)^u_X}}\subseteq\varepsilon(S)^u_X\times\varepsilon(S)^u_X$ we decide $r\in\varepsilon(S)^u_X$ and $s<_{\varepsilon(S)^u_X}t$ by simultaneous recursion on $L(r)$ resp.~$L(s)+L(t)$:
 \begin{enumerate}[label=(\roman*)]
  \item We have $\{0,\Omega,\mathfrak e_x,\mathfrak E_\sigma\}\subseteq\varepsilon(S)^u_X$ for arbitrary $x\in X$ resp.~$\sigma\in S^u_X$.
  \item We have $\omega^{r_0}+\cdots+\omega^{r_n}\in\varepsilon(S)^u_X$ if we have $\{r_0,\dots,r_n\}\subseteq\varepsilon(S)^u_X$ and
  \begin{itemize}
   \item either $n=0$ and $r_0$ is not of the form $\Omega,\mathfrak e_x$ or $\mathfrak E_\sigma$,
   \item or $n>0$ and $r_n\leq_{\varepsilon(S)^u_X}\dots\leq_{\varepsilon(S)^u_X} r_0$ (where $s\leq_{\varepsilon(S)^u_X} t$ abbreviates $s<_{\varepsilon(S)^u_X}t\lor s=t$, the latter denoting equality as terms).
  \end{itemize}
 \end{enumerate}
 For $s,t\in\varepsilon(S)^u_X$ we have $s<_{\varepsilon(S)^u_X}t$ if and only if one of the following holds:
 \begin{enumerate}[label=(\roman*')]
  \item We have $s=0$ and $t\neq 0$.
  \item We have $s=\mathfrak e_x$ and
  \begin{itemize}
   \item either $t$ is of the form $\Omega,\mathfrak E_\tau$ or $\mathfrak e_y$ with $x<_X y$,
   \item or we have $t=\omega^{t_0}+\cdots+\omega^{t_n}$ and $s\leq t_0$.
  \end{itemize}
  \item We have $s=\Omega$ and
  \begin{itemize}
   \item either $t$ is of the form $\mathfrak E_\tau$,
   \item or we have $t=\omega^{t_0}+\cdots+\omega^{t_n}$ and $s\leq t_0$.
  \end{itemize}
  \item We have $s=\mathfrak E_\sigma$ and
  \begin{itemize}
   \item either $t$ is of the form $\mathfrak E_\tau$ with $\sigma<_{S^u_X}\tau$,
   \item or we have $t=\omega^{t_0}+\cdots+\omega^{t_n}$ and $s\leq t_0$.
  \end{itemize}
  \item We have $s=\omega^{s_0}+\cdots+\omega^{s_m}$ and
  \begin{itemize}
   \item either $t$ is of the form $\mathfrak e_y,\Omega$ or $\mathfrak E_\tau$ and we have $s_0<_{\varepsilon(S)^u_X}t$,
   \item or we have $t=\omega^{t_0}+\cdots+\omega^{t_n}$ and one of the following holds:
   \begin{itemize}
    \item Either we have $m<n$ and $s_i=t_i$ for all $i\leq m$,
    \item or there is $j\leq\min\{m,n\}$ with $s_j<_{\varepsilon(S)^u_X}t_j$ and $s_i=t_i$ for $i<j$.
   \end{itemize}
  \end{itemize}
 \end{enumerate}
\end{definition}

The following type of result is standard (cf.~\cite[Theorem~14.2]{schuette77}):

\begin{lemma}
 If $X$ is a linear order, then so is $(\varepsilon(S)^u_X,<_{\varepsilon(S)^u_X})$.
\end{lemma}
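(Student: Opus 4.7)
The plan is to establish irreflexivity, trichotomy, and transitivity of $<_{\varepsilon(S)^u_X}$ on $\varepsilon(S)^u_X$ by a simultaneous induction on the lengths of the involved terms. The base data we rely on are: the linearity of $<_X$ on $X$, the linearity of $<_{\mathbf L^u_X}$ on $\mathbf L^u_X$ supplied by Lemma~\ref{less-L-linear-well}, and hence the linearity of the Kleene-Brouwer order $<_{S^u_X}$ on $S^u_X$ (which holds without any well-foundedness assumption on $X$, and follows from the description given just before Proposition~\ref{prop:search-trees-prae-dilator}).

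First I would verify irreflexivity $s \not<_{\varepsilon(S)^u_X} s$ by induction on $L(s)$: for the atomic terms $0,\Omega,\mathfrak e_x,\mathfrak E_\sigma$ this is immediate from irreflexivity of $<_X$ and $<_{S^u_X}$ and from the syntactic distinctness clauses of~(i')--(iv'); for $s=\omega^{s_0}+\cdots+\omega^{s_m}$ clause~(v') against itself forces either a strict inequality $s_j<_{\varepsilon(S)^u_X} s_j$, contradicting the induction hypothesis, or $m<m$, which is impossible.

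Next I would prove trichotomy by induction on $L(s)+L(t)$, splitting into the five possibilities for the head symbol of $s$ and the five for $t$. Most of the $25$ cases are read off directly from the clauses~(i')--(v'), using trichotomy of $<_X$ and $<_{S^u_X}$ at the atomic level and the inductive trichotomy for proper subterms (which are strictly shorter). The only slightly delicate mixed cases are those confronting a sum $\omega^{s_0}+\cdots+\omega^{s_m}$ with an atomic term $t\in\{\mathfrak e_y,\Omega,\mathfrak E_\tau\}$: here I would apply the inductive trichotomy to the leading exponent $s_0$ against $t$, and use clauses~(ii')--(v') to see that exactly one of $s<t$, $s=t$, $s>t$ holds (the case $s_0=t$ is excluded by the constraints on $s_0$ in Definition~\ref{def:eps(S)-order}(ii), which forbid $s_0\in\{\Omega,\mathfrak e_y,\mathfrak E_\tau\}$ when $m=0$).

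Finally, transitivity is proved by induction on $L(r)+L(s)+L(t)$, assuming $r<s$ and $s<t$ and deducing $r<t$. The argument is again a case distinction on the head symbols of $r,s,t$. The atomic-atomic-atomic sub-cases reduce to transitivity of $<_X$ or of $<_{S^u_X}$. The remaining sub-cases all reduce, after applying the appropriate clause of Definition~\ref{def:eps(S)-order}, to transitivity on shorter terms, together with trichotomy (already established at lower complexity) to compare leading exponents of sums. I expect the main obstacle to be bookkeeping: the mixed cases where $s$ is a sum but $r$ or $t$ is atomic, combined with the lexicographic comparison between two sums in clause~(v'), produce a longish table of sub-cases. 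Each individual case is a short symbolic manipulation, and none of them introduces any new mathematical content beyond the inductive hypotheses and the linearity of $<_X$ and $<_{S^u_X}$; this is exactly the pattern of the classical proof for Cantor normal forms, extended by the two extra atomic sorts $\mathfrak e_x$ and $\mathfrak E_\sigma$, so I would refer the reader to a standard source such as \cite[Theorem~14.2]{schuette77} for the calculational details, as the author does.
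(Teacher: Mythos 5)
Your proposal is correct and follows essentially the same route as the paper, which proves irreflexivity by induction on $L(s)$, transitivity by induction on $L(r)+L(s)+L(t)$, and trichotomy by induction on $L(s)+L(t)$, likewise deferring the calculational details to the standard Cantor-normal-form argument in \cite[Theorem~14.2]{schuette77}. The only (harmless) difference is the order in which trichotomy and transitivity are treated.
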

\begin{proof}
 Refute $s<_{\varepsilon(S)^u_X}s$ by induction on $L(s)$. Then show that $r<_{\varepsilon(S)^u_X}s<_{\varepsilon(S)^u_X}t$ implies $r<_{\varepsilon(S)^u_X}t$, arguing by induction on $L(r)+L(s)+L(t)$. Finally, use induction on $L(s)+L(t)$ to show that we have $s<_{\varepsilon(S)^u_X}t$, $s=t$ or $t<_{\varepsilon(S)^u_X}s$.
\end{proof}

Since $S^u$ is a functor, it transforms each embedding $f:X\rightarrow Y$ of linear orders into an embedding $S^u_f:S^u_X\rightarrow S^u_Y$. Our next goal is to deduce that the term systems $\varepsilon(S)^u_X$ are functorial as well.

\begin{definition}
 Consider an embedding $f:X\rightarrow Y$. By recursion over terms we define a function $\varepsilon(S)^u_f:\varepsilon(S)^u_X\rightarrow\varepsilon^0(S)^u_Y$ with
 \begin{gather*}
  \varepsilon(S)^u_f(0)=0,\quad\varepsilon(S)^u_f(\Omega)=\Omega,\quad \varepsilon(S)^u_f(\mathfrak e_x)=\mathfrak e_{f(x)},\quad\varepsilon(S)^u_f(\mathfrak E_\sigma)=\mathfrak E_{S^u_f(\sigma)},\\
  \varepsilon(S)^u_f(\omega^{t_0}+\cdots+\omega^{t_n})=\omega^{\varepsilon(S)^u_f(t_0)}+\dots+\omega^{\varepsilon(S)^u_f(t_n)}.
 \end{gather*}
\end{definition}

The following result allows us to view $\varepsilon(S)^u_f$ as a function with codomain $\varepsilon(S)^u_Y$ rather than $\varepsilon^0(S)^u_Y$.

\begin{lemma}
 Given any embedding $f:X\rightarrow Y$, the function $\varepsilon(S)^u_f$ is an embedding of $(\varepsilon(S)^u_X,<_{\varepsilon(S)^u_X})$ into $(\varepsilon(S)^u_Y,<_{\varepsilon(S)^u_Y})$.
\end{lemma}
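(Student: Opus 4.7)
I will prove both parts of the claim --- that $\varepsilon(S)^u_f$ lands in $\varepsilon(S)^u_Y$, and that it strictly preserves the order --- by a single simultaneous induction, modelled on the joint recursion used to define $\varepsilon(S)^u_X$ and $<_{\varepsilon(S)^u_X}$ in Definition~\ref{def:eps(S)-order}. Concretely, I would prove
\begin{enumerate}[label=(\alph*)]
 \item if $r \in \varepsilon(S)^u_X$ then $\varepsilon(S)^u_f(r) \in \varepsilon(S)^u_Y$, by induction on $L(r)$;
 \item if $s,t \in \varepsilon(S)^u_X$ with $s <_{\varepsilon(S)^u_X} t$ then $\varepsilon(S)^u_f(s) <_{\varepsilon(S)^u_Y} \varepsilon(S)^u_f(t)$, by induction on $L(s)+L(t)$.
\end{enumerate}
The two inductions feed each other in the evident way: to check (a) for a term $\omega^{r_0}+\cdots+\omega^{r_n}$ with $n>0$ I need (b) applied to the exponents $r_{i+1} \leq_{\varepsilon(S)^u_X} r_i$ (which are strictly shorter), and to carry out (b) for compound terms I need (a) for the images of their exponents.

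For (a), the atomic cases $0,\Omega,\mathfrak e_x,\mathfrak E_\sigma$ are immediate from the defining clauses of $\varepsilon(S)^u_f$. For $\omega^{r_0}+\cdots+\omega^{r_n}$ I observe that $\varepsilon(S)^u_f$ preserves the syntactic shape of each head symbol: it sends $\Omega,\mathfrak e_x,\mathfrak E_\sigma$ to terms of the same kind, and sends an $\omega$-sum to an $\omega$-sum. Hence the ``singleton'' Cantor normal form condition ($n=0$ and $r_0 \notin \{\Omega,\mathfrak e_x,\mathfrak E_\sigma\}$) is preserved, and the ``multi-summand'' condition $r_n \leq \cdots \leq r_0$ is preserved by the inductive instances of~(b).

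For (b), I would proceed by case analysis on the five clauses (i')--(v') of Definition~\ref{def:eps(S)-order}. The cases $s=0$ and $s=\Omega$ are purely syntactic. For $s=\mathfrak e_x$ the sub-case $t=\mathfrak e_y$ with $x<_X y$ uses that $f$ is an order embedding, so $f(x) <_Y f(y)$; the sub-cases $t=\Omega$ or $t=\mathfrak E_\tau$ are syntactic; and the sub-case $t = \omega^{t_0}+\cdots$ with $s\leq t_0$ reduces to the induction hypothesis applied to the strictly shorter pair $(s,t_0)$. The case $s=\mathfrak E_\sigma$ with $t=\mathfrak E_\tau$ is the crucial one: it uses that $S^u_f$ is itself an order embedding for the Kleene--Brouwer order (which is part of $S^u$ being a prae-dilator, Proposition~\ref{prop:search-trees-prae-dilator}) to pass from $\sigma <_{S^u_X}\tau$ to $S^u_f(\sigma) <_{S^u_Y} S^u_f(\tau)$. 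The case $s = \omega^{s_0}+\cdots+\omega^{s_m}$ reduces entirely to the induction hypothesis on the exponents.

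Injectivity then follows automatically: since both $(\varepsilon(S)^u_X,<_{\varepsilon(S)^u_X})$ and $(\varepsilon(S)^u_Y,<_{\varepsilon(S)^u_Y})$ are linear orders by the preceding lemma, a strictly order-preserving map between them is an embedding. The only real obstacle is bookkeeping: one must verify that every sub-term produced by $\varepsilon(S)^u_f$ on the right-hand side of an inequality still satisfies the Cantor normal form condition so that clause (v') is applicable --- but this is exactly what (a) guarantees, which is why the two parts must be proved simultaneously. Naturality of $\suppl$ and the order-embedding property of $S^u_f$ are the only non-trivial facts used; everything else is combinatorics on the term grammar.
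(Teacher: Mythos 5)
Your proof is correct and follows exactly the paper's argument: a simultaneous induction on $L(r)$ resp.\ $L(s)+L(t)$, with the atomic cases handled syntactically, the $\mathfrak e_x$ case via $f$ being an embedding, and the $\mathfrak E_\sigma$ case via $S^u_f$ being an order embedding. (The only slight overstatement is that naturality of $\suppl$ is not actually needed here, since the order $<_{\varepsilon(S)^u_X}$ makes no reference to supports.)
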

\begin{proof}
By simultaneous induction on $L(r)$ resp.~$L(s)+L(t)$ one sees that $r\in\varepsilon(S)^u_X$ implies $\varepsilon(S)^u_f(r)\in\varepsilon(S)^u_Y$ and that $s<_{\varepsilon(S)^u_X}t$ implies $\varepsilon(S)^u_f(s)<_{\varepsilon(S)^u_Y}\varepsilon(S)^u_f(t)$.
\end{proof}

The prae-dilator $S^u$ comes with support functions $\supps_X:S^u_X\rightarrow[X]^{<\omega}$. In order to turn $\varepsilon(S)^u$ into a prae-dilator we extend them as follows:

\begin{definition}\label{def:suppe}
 For each order $X$ we define a map $\suppe_X:\varepsilon(S)^u_X\rightarrow[X]^{<\omega}$ by
 \begin{gather*}
  \suppe_X(0)=\suppe_X(\Omega)=\emptyset,\quad\suppe_X(\mathfrak e_x)=\{x\},\quad\suppe_X(\mathfrak E_\sigma)=\supps_X(\sigma),\\
  \suppe_X(\omega^{t_0}+\cdots+\omega^{t_n})=\suppe_X(t_0)\cup\dots\cup\suppe_X(t_n).
 \end{gather*}
\end{definition}

Let us summarize the functorial properties of the term systems $\varepsilon(S)^u_X$:

\begin{proposition}
 The maps $X\mapsto(\varepsilon(S)^u_X,<_{\varepsilon(S)^u_X})$, $f\mapsto\varepsilon(S)^u_f$ and $X\mapsto\suppe_X$ form a prae-dilator.
\end{proposition}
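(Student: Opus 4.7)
The plan is to verify the three requirements defining a prae-dilator: that $X\mapsto\varepsilon(S)^u_X$ together with $f\mapsto\varepsilon(S)^u_f$ is an endofunctor of linear orders, that $\suppe$ is natural, and that $\suppe$ computes supports. Each of these reduces, clause by clause in the inductive definition, to the corresponding property of $(S^u,\supps)$, which is already a prae-dilator by Proposition~\ref{prop:search-trees-prae-dilator}; and the previous lemma has moreover shown that $\varepsilon(S)^u_f$ is an order embedding whenever $f$ is.

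For functoriality of $\varepsilon(S)^u$ and naturality of $\suppe$, straightforward inductions on term length will suffice. The only atomic cases requiring real input are those of $\mathfrak E_\sigma$, where I would invoke the corresponding identities for $S^u$: the equations $S^u_{\mathrm{id}_X}(\sigma)=\sigma$, $S^u_{g\circ f}(\sigma)=S^u_g(S^u_f(\sigma))$, and $\supps_Y(S^u_f(\sigma))=[f]^{<\omega}(\supps_X(\sigma))$ translate directly to the required equations for $\mathfrak E_\sigma$. The Cantor-normal-form clauses propagate because unions distribute over $[f]^{<\omega}$ and because the recursive definitions of $\varepsilon(S)^u_f$ and $\suppe$ operate componentwise on the exponents.

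The substantive step is the support-computation property: for any $t\in\varepsilon(S)^u_X$, the term $t$ must lie in the range of $\varepsilon(S)^u_{\iota_t}$, where $\iota_t:\suppe_X(t)\hookrightarrow X$. I would argue by induction on $L(t)$. The cases $t\in\{0,\Omega\}$ and $t=\mathfrak e_x$ are immediate. For $t=\mathfrak E_\sigma$ the inclusion $\iota_t$ coincides with $\supps_X(\sigma)\hookrightarrow X$; applying the support-computation property for $S^u$ gives $\sigma=S^u_{\iota_t}(\sigma_0)$ for some $\sigma_0\in S^u_{\supps_X(\sigma)}$, and then $\varepsilon(S)^u_{\iota_t}(\mathfrak E_{\sigma_0})=\mathfrak E_\sigma$. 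For $t=\omega^{t_0}+\cdots+\omega^{t_n}$ we have $\suppe_X(t_i)\subseteq\suppe_X(t)$ for each $i$, so composing the inductive witnesses for the $t_i$ with the inclusions $\suppe_X(t_i)\hookrightarrow\suppe_X(t)$ and using the functoriality just proved yields terms $s_i\in\varepsilon(S)^u_{\suppe_X(t)}$ with $\varepsilon(S)^u_{\iota_t}(s_i)=t_i$.

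The main obstacle I foresee, though a mild one, is checking that the reconstructed expression $\omega^{s_0}+\cdots+\omega^{s_n}$ satisfies the normal-form conditions of Definition~\ref{def:eps(S)-order}, so that it genuinely lies in $\varepsilon(S)^u_{\suppe_X(t)}$. The descent $s_n\leq\cdots\leq s_0$ transfers from $t_n\leq\cdots\leq t_0$ because $\varepsilon(S)^u_{\iota_t}$ is an order embedding, and the side condition forbidding $s_0\in\{\Omega,\mathfrak e_y,\mathfrak E_\tau\}$ when $n=0$ transfers because the recursive definition of $\varepsilon(S)^u_f$ preserves the top-level syntactic shape of each constructor. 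Once this verification is carried out, the proof is complete: the three prae-dilator conditions have been reduced to the analogous facts for $(S^u,\supps)$.
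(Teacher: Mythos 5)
Your proposal is correct and follows essentially the same route as the paper: functoriality and naturality of $\suppe$ by induction on terms (reducing the $\mathfrak E_\sigma$ case to the corresponding facts for $(S^u,\supps)$), and the support-computation property by induction on terms, factoring the inclusions $\suppe_X(t_i)\hookrightarrow X$ through $\iota_t$ and using that $\varepsilon(S)^u_{\iota_t}$ is an order embedding to verify the normal-form clause. Your explicit check of the $n=0$ side condition via preservation of the top-level constructor is a detail the paper leaves implicit, but the argument is otherwise identical.
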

\begin{proof}
 We have already seen that $\varepsilon(S)^u$ maps orders to orders and embeddings to embeddings. From Proposition~\ref{prop:search-trees-prae-dilator} we know that $(S^u,\supps)$ is a prae-dilator. By induction over terms it is straightforward to deduce that $\varepsilon(S)^u$ is a functor and that $\suppe:\varepsilon(S)^u\Rightarrow[\cdot]^{<\omega}$ is a natural transformation. It remains to show that any term $s\in\varepsilon(S)^u_X$ lies in the range of $\varepsilon(S)^u_{\iota_s}$, where $\iota_s:\suppe_X(s)\hookrightarrow X$ is the inclusion. This can be established by induction over $s$. The first interesting case is~$s=\mathfrak E_\sigma$: In view of $\suppe_X(s)=\supps_X(\sigma)$ the corresponding property of $\supps_X$ ensures that $\sigma$ lies in the range of~$S^u_{\iota_s}$, say~$\sigma=S^u_{\iota_s}(\sigma_0)$. Then $s=\varepsilon(S)^u_{\iota_s}(\mathfrak E_{\sigma_0})$ lies in the range of $\varepsilon(S)^u_{\iota_s}$, as needed. Let us also consider the case $s=\omega^{s_0}+\dots+\omega^{s_n}$: For each $i\leq n$ we have $\suppe_X(s_i)\subseteq\suppe_X(s)$, which means that the inclusion $\suppe_X(s_i)\hookrightarrow X$ factors through~$\iota_s$. Together with the induction hypothesis we obtain terms $s_i'$ with $s_i=\varepsilon(S)^u_{\iota_s}(s_i')$. Since $\varepsilon(S)^u_{\iota_s}$ is an order embedding, the term $s'=\omega^{s_0'}+\dots+\omega^{s_n'}$ satisfies clause~(ii) of Definition~\ref{def:eps(S)-order}. Again we can conclude that $s=\varepsilon(S)^u_{\iota_s}(s')$ lies in the range of $\varepsilon(S)^u_{\iota_s}$.
\end{proof}

Let us also address the question of well-foundedness:

\begin{theorem}\label{thm:eps-dilator}
 If $S^u$ is a dilator, then so is $\varepsilon(S)^u$.
\end{theorem}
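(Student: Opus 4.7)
The plan is that $\varepsilon(S)^u$ has already been shown to be a prae-dilator, so the only content is well-foundedness of $\varepsilon(S)^u_X$ whenever $X$ is a well-order. I would establish this by constructing a strictly order-preserving interpretation $\pi\colon\varepsilon(S)^u_X\to\ordi$ into the ordinals.

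First, the hypothesis that $S^u$ is a dilator makes $S^u_X$ well-founded, and axiom beta in $\atrs$ yields the order types $\alpha$ and $\beta$ of $X$ and $S^u_X$ together with enumerating isomorphisms $|\cdot|_X\colon X\to\alpha$ and $|\cdot|_S\colon S^u_X\to\beta$. I would then define $\pi$ by recursion on term length,
\begin{equation*}
 \pi(0)=0,\quad \pi(\mathfrak e_x)=\varepsilon_{|x|_X},\quad \pi(\Omega)=\varepsilon_\alpha,\quad \pi(\mathfrak E_\sigma)=\varepsilon_{\alpha+1+|\sigma|_S},
\end{equation*}
\begin{equation*}
 \pi(\omega^{t_0}+\dots+\omega^{t_n})=\omega^{\pi(t_0)}+\dots+\omega^{\pi(t_n)},
\end{equation*}
using that the enumeration $\gamma\mapsto\varepsilon_\gamma$ of the epsilon numbers and the ordinal arithmetic needed to formulate it are available in $\atrs$.

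The heart of the argument is a simultaneous induction on $L(r)$ and on $L(s)+L(t)$ showing that $\pi(r)$ is a well-formed ordinal (so that the last clause indeed produces a Cantor normal form) and that $s<_{\varepsilon(S)^u_X}t$ entails $\pi(s)<\pi(t)$. The key design feature is that the three atomic kinds of non-zero terms $\mathfrak e_x$, $\Omega$, $\mathfrak E_\sigma$ are all sent to genuine epsilon numbers, which are fixed by $\gamma\mapsto\omega^\gamma$ and additively principal. This makes the cross-cases from Definition~\ref{def:eps(S)-order} routine: if, for example, $\mathfrak e_x<_{\varepsilon(S)^u_X}\omega^{t_0}+\dots+\omega^{t_n}$ by clause~(ii') with $\mathfrak e_x\leq t_0$, then $\pi(\mathfrak e_x)=\omega^{\pi(\mathfrak e_x)}\leq\omega^{\pi(t_0)}\leq\pi(\omega^{t_0}+\dots+\omega^{t_n})$, with strict inequality either because $\pi(\mathfrak e_x)<\pi(t_0)$ by induction hypothesis or because the normal-form condition of Definition~\ref{def:eps(S)-order}(ii) forces $n\geq 1$ when $\mathfrak e_x=t_0$ (contributing an additional positive summand). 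The cases for $\Omega$ and $\mathfrak E_\sigma$ are analogous, the reverse cross-cases in clause~(v') use that $\pi(\mathfrak e_y)$, $\pi(\Omega)$, $\pi(\mathfrak E_\tau)$ are additively principal, and the purely normal-form subcase of clause~(v') reduces to the standard lexicographic comparison of Cantor normal forms. The atomic orderings $0<\mathfrak e_x<\Omega<\mathfrak E_\sigma$ and $\mathfrak e_x<\mathfrak e_y$, $\mathfrak E_\sigma<\mathfrak E_\tau$ transfer to the ordinals by monotonicity of $\varepsilon_\cdot$ together with $|x|_X<\alpha<\alpha+1+|\sigma|_S$.

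Once $\pi$ is a strict order embedding, well-foundedness of $\varepsilon(S)^u_X$ is immediate, completing the proof. The only real obstacle is the case analysis in the order-preservation step, and in particular the verification that the cross-cases between atomic terms and sums in Cantor normal form come out strictly; this is precisely what the choice of $\pi$ — sending every atomic non-zero term to an additively principal epsilon number — is engineered to make work.
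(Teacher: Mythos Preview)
Your proposal is correct and matches the semantic argument the paper sketches as its alternative: the paper's primary proof is by citation to Afshari and Rathjen for a Gentzen-style syntactic well-ordering proof in $\mathbf{ACA_0^+}$, but it then remarks that in the presence of axiom beta one may instead form the well-order $Y=X\cup\{\Omega\}\cup S^u_X$, take its order type, and construct an order isomorphism between $\varepsilon(S)^u_X\cong\varepsilon_Y$ and the ordinal $\varepsilon_{\operatorname{otp}(Y)}$. Your map $\pi$ is exactly this isomorphism written out explicitly, with your choice $\pi(\mathfrak e_x)=\varepsilon_{|x|_X}$, $\pi(\Omega)=\varepsilon_\alpha$, $\pi(\mathfrak E_\sigma)=\varepsilon_{\alpha+1+|\sigma|_S}$ corresponding to the position of each atom in $Y$.
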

\begin{proof}
 In view of the previous proposition it suffices to show that $\varepsilon(S)^u$ preserves well-foundedness. Given a well-order $X$, we consider the disjoint union
 \begin{equation*}
  Y=X\cup\{\Omega\}\cup S^u_X
 \end{equation*}
 with the expected order (i.e.~we have $x<_Yx'<_Y\Omega<_Y\sigma<_Y\sigma'$ for any elements $x<_Xx'$ of $X$ and any elements $\sigma<_{S^u_X}\sigma'$ of $S^u_X$). The assumption that $S^u$ is a dilator tells us that $S^u_X$ is a well-order. Thus $Y$ is also a well-order. With the above definition of $Y$, our order $\varepsilon(S)^u_X$ coincides with the order $\varepsilon_Y$ considered by Afshari and Rathjen~\cite{rathjen-afshari}. These authors show that the well-foundedness of $\varepsilon_Y$ can be established in the theory $\mathbf{ACA_0^+}$, which is contained in our meta theory~$\atrs$. We remark that the result in~\cite{rathjen-afshari} relies on a syntactic well-ordering proof, which extends Gentzen's proof for the well-foundedness of $\varepsilon_0$. In the presence of axiom beta one can, alternatively, give a semantical argument: Let $\alpha$ be the order-type of the well-order $Y$. It is straightforward to construct an order isomorphism between the term system $\varepsilon_Y$ and the ordinal $\varepsilon_\alpha$ (recall that $\beta\mapsto\varepsilon_\beta$ is the increasing enumeration of the class $\{\gamma\,|\,\omega^\gamma=\gamma\}$).
\end{proof}

Our overall goal is to construct an admissible set $\mathbb A$ with $u\subseteq\mathbb A$. In view of Theorem~\ref{thm:admissible-set-dilator} we may assume that $S^u$ is a dilator. Then the previous theorem ensures that $\varepsilon(S)^u$ is a dilator as well. Invoking the abstract Bachmann-Howard principle from Definition~\ref{def:abstract-bhp} we can justify the following:

\begin{assumption}\label{ass:bachmann-howard-collapse}
 Throughout the following we fix an ordinal $\alpha$ and a Bachmann-Howard collapse $\vartheta:\varepsilon(S)^u_\alpha\rightarrow\alpha$. The assumption that such objects exist will be discharged in the proof of Theorem~\ref{thm:main-abstract}.
\end{assumption}

Similar to the usual construction of the Bachmann-Howard ordinal, the suborders
\begin{equation*}
 Z\cap\Omega=\{s\in Z\,|\,s<_{\varepsilon(S)^u_\alpha}\Omega\}
\end{equation*}
with $Z\subseteq\varepsilon(S)^u_\alpha$ will play a particularly important role.

\begin{lemma}\label{lem:countable-part-epsilon_alpha}
 The order $\varepsilon(S)^u_\alpha\cap\Omega$ is isomorphic to the ordinal~$\varepsilon_\alpha$.
\end{lemma}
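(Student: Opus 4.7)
My plan is to define the isomorphism by term recursion in the natural way and then verify order-preservation, injectivity, and surjectivity. Define $\pi:\varepsilon(S)^u_\alpha\cap\Omega\to\ordi$ by
\begin{equation*}
\pi(0)=0,\qquad\pi(\mathfrak e_x)=\varepsilon_x,\qquad\pi(\omega^{t_0}+\cdots+\omega^{t_n})=\omega^{\pi(t_0)}+\cdots+\omega^{\pi(t_n)},
\end{equation*}
the last line using ordinary ordinal arithmetic. Since $x<\alpha$ implies $\varepsilon_x<\varepsilon_\alpha$, and $\varepsilon_\alpha$ is closed under ordinary addition and under $\beta\mapsto\omega^\beta$ for $\beta<\varepsilon_\alpha$, an induction on terms shows that the range of $\pi$ is contained in $\varepsilon_\alpha$. (In $\atrs$ one first constructs $\varepsilon_\alpha$ as a term-based well-order as in the proof of Theorem~\ref{thm:eps-dilator} and then uses axiom beta to identify it with an ordinal.)

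Next I would show that $\pi$ is an order embedding, by simultaneous induction on $L(s)+L(t)$ matching the clauses (i')--(v') of Definition~\ref{def:eps(S)-order} with the analogous clauses for the standard comparison of ordinals in Cantor normal form. The only nontrivial clauses are (ii') and (v'): for (ii'), note that $\pi(\mathfrak e_x)=\varepsilon_x$ is an $\varepsilon$-number, so $\varepsilon_x<\omega^{\pi(t_0)}+\cdots$ iff $\varepsilon_x\leq\pi(t_0)$, matching the subclause $s\leq t_0$; for (v') the Cantor normal form comparison transfers directly from the term level to ordinals by induction hypothesis.

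Finally I would prove surjectivity of $\pi$ onto $\varepsilon_\alpha$ by transfinite induction on $\gamma<\varepsilon_\alpha$. If $\gamma=0$ use the term $0$. If $\gamma$ is an $\varepsilon$-number, then $\gamma=\varepsilon_\beta$ for some $\beta$, and $\gamma<\varepsilon_\alpha$ forces $\beta<\alpha$, so $\mathfrak e_\beta$ is a term with $\pi(\mathfrak e_\beta)=\gamma$. Otherwise write $\gamma=\omega^{\gamma_0}+\cdots+\omega^{\gamma_n}$ in Cantor normal form; in all cases each $\gamma_i<\gamma$, so by induction hypothesis we obtain terms $t_i$ with $\pi(t_i)=\gamma_i$. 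The $t_i$ satisfy $t_n\leq_{\varepsilon(S)^u_\alpha}\cdots\leq_{\varepsilon(S)^u_\alpha}t_0$ by the order-preserving property established above, and when $n=0$ the assumption that $\gamma$ is not an $\varepsilon$-number forces $\gamma_0$ to not be an $\varepsilon$-number either (otherwise $\gamma=\omega^{\gamma_0}=\gamma_0$), so $t_0$ is neither of the form $\mathfrak e_x$ nor equal to $\Omega$ or $\mathfrak E_\sigma$ (the latter two are excluded because $\gamma_0<\Omega$ means $t_0<\Omega$). Hence $\omega^{t_0}+\cdots+\omega^{t_n}$ satisfies clause~(ii) of Definition~\ref{def:eps(S)-order} and serves as a preimage of $\gamma$.

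The main obstacle I anticipate is the case analysis in the surjectivity step, specifically ensuring that the side condition in Definition~\ref{def:eps(S)-order}(ii) (the ``$n=0$'' case) is respected when constructing preimages; this is exactly the clause that forbids writing an $\varepsilon$-number twice, and it pairs correctly with the case distinction between ``$\gamma$ is an $\varepsilon$-number'' and ``$\gamma$ has a nontrivial Cantor normal form''. Injectivity then follows from order-preservation together with the fact that $<_{\varepsilon(S)^u_\alpha}$ is a linear order (as already shown).
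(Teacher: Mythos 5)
Your proposal is correct and follows exactly the route of the paper's (two-sentence) proof: interpret $\mathfrak e_\gamma$ as $\varepsilon_\gamma$ and $\omega^{t_0}+\cdots+\omega^{t_n}$ as an ordinal in Cantor normal form. You simply spell out the order-preservation and surjectivity verifications that the paper leaves implicit, and your handling of the clause-(ii) side condition in the surjectivity step is accurate.
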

\begin{proof}
 Interpret each term $\mathfrak e_\gamma\in\varepsilon(S)^u_\alpha$ with $\gamma\in\alpha$ by the ordinal $\varepsilon_\gamma<\varepsilon_\alpha$. Given interpretations of $s_0,\dots,s_n$, interpret the term $\omega^{s_0}+\dots+\omega^{s_n}$ as an ordinal in Cantor normal form.
\end{proof}

In contrast to the previous lemma, the next result relies on the assumption that~$\alpha$ is a Bachmann-Howard fixed point:

\begin{proposition}\label{prop:alpha-epsilon-number}
 The restriction of $\vartheta$ to $\varepsilon(S)^u_\alpha\cap\Omega\cong\varepsilon_\alpha$ is a fully order preserving map into $\alpha$. Thus we have $\varepsilon_\alpha=\alpha$.
\end{proposition}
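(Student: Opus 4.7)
The plan is to find a strictly order-preserving map from $\varepsilon(S)^u_\alpha\cap\Omega$ into $\alpha$: combined with the isomorphism $\varepsilon(S)^u_\alpha\cap\Omega\cong\varepsilon_\alpha$ from Lemma~\ref{lem:countable-part-epsilon_alpha}, this yields $\varepsilon_\alpha\leq\alpha$, and the reverse inequality $\alpha\leq\varepsilon_\alpha$ is immediate from the fact that $\beta\mapsto\varepsilon_\beta$ is a normal function. I claim that $\vartheta$ itself, restricted to $\varepsilon(S)^u_\alpha\cap\Omega$, is already such a map. Given $s,t\in\varepsilon(S)^u_\alpha\cap\Omega$ with $s<t$, Bachmann-Howard condition~(i) reduces the desired $\vartheta(s)<\vartheta(t)$ to the inequality $\suppe_\alpha(s)\lef_\alpha\vartheta(t)$, so the whole task becomes verifying this side condition.

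The key observation is an elementary fact about the term structure: if $s=\omega^{s_0}+\cdots+\omega^{s_n}$ then $s_i\leq s_0<s$ in $\varepsilon(S)^u_\alpha$ for each $i$ (the strict inequality $s_0<s$ follows by induction on $L(s_0)$ using the clauses of Definition~\ref{def:eps(S)-order}). From this I would derive two lemmas by induction on term length. Lemma~A: for every $s<\Omega$ in $\varepsilon(S)^u_\alpha$ and every $x\in\suppe_\alpha(s)$ we have $\mathfrak e_x\leq s$. In the base $s=\mathfrak e_x$ this is trivial; in the step $s=\omega^{s_0}+\cdots+\omega^{s_n}$, any $x\in\suppe_\alpha(s_i)$ satisfies $\mathfrak e_x\leq s_i\leq s_0<s$ by the induction hypothesis applied to $s_i$ (note $s_i<\Omega$ since $s<\Omega$ forces $s_0<\Omega$). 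Lemma~B: for every $x\in\alpha$ and every $t\in\varepsilon(S)^u_\alpha$ with $\mathfrak e_x\leq t<\Omega$, there exists $y\in\suppe_\alpha(t)$ with $x\leq y$ in $\alpha$. For $t=\mathfrak e_y$ one reads off $x\leq y$ directly from Definition~\ref{def:eps(S)-order}; for $t=\omega^{t_0}+\cdots+\omega^{t_n}$ the definition forces $\mathfrak e_x\leq t_0$, and the inductive hypothesis applied to $t_0$ supplies a $y\in\suppe_\alpha(t_0)\subseteq\suppe_\alpha(t)$.

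With these lemmas in hand the main argument is short. Fix $s<t$ in $\varepsilon(S)^u_\alpha\cap\Omega$ and an arbitrary $x\in\suppe_\alpha(s)$. Lemma~A gives $\mathfrak e_x\leq s<t$, hence $\mathfrak e_x\leq t$; Lemma~B then yields a $y\in\suppe_\alpha(t)$ with $x\leq y$; and Bachmann-Howard condition~(ii) applied to $t$ gives $y<\vartheta(t)$. Hence $x\leq y<\vartheta(t)$, which proves $\suppe_\alpha(s)\lef_\alpha\vartheta(t)$, and condition~(i) delivers $\vartheta(s)<\vartheta(t)$. The isomorphism with $\varepsilon_\alpha$ then yields the displayed equality.

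The main obstacle is that the naive transfinite induction on $t$---with hypothesis ``$\vartheta$ is order preserving below $t$''---is circular: to bound $\vartheta(\mathfrak e_x)$ against $\vartheta(t)$ via condition~(i) one needs $x<\vartheta(t)$, which is precisely what one is trying to establish. Lemma~B bypasses this by extracting the witness $y$ purely from the syntactic shape of $t$, leaving only the local Bachmann-Howard condition~(ii) to do the ordinal-arithmetic bookkeeping.
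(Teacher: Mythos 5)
Your proof is correct and follows essentially the same route as the paper: the paper establishes $\suppe_\alpha(s)\leq^{\operatorname{fin}}\suppe_\alpha(t)$ for $s<t<\Omega$ by a single induction on $L(s)+L(t)$ and then applies conditions (ii) and (i) of Definition~\ref{def:bachmann-howard-collapse} exactly as you do. Your Lemmas A and B merely unpack that one-line induction into two explicit syntactic lemmas (Lemma A is Lemma~\ref{lem:collapse-support}(a) in disguise), so no new idea is involved.
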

\begin{proof}
 A straightforward induction on $L(s)+L(t)$ yields $\suppe_\alpha(s)\leq^{\operatorname{fin}}\suppe_\alpha(t)$ for $s<_{\varepsilon(S)^u_\alpha}t<_{\varepsilon(S)^u_\alpha}\Omega$. With Definition~\ref{def:bachmann-howard-collapse}(ii) we get $\suppe_\alpha(s)\lef\vartheta(t)$. By clause~(i) of the same definition we can infer $\vartheta(s)<\vartheta(t)$.
\end{proof}

We will often omit the isomorphism between $\varepsilon(S)^u_\alpha\cap\Omega$ and $\varepsilon_\alpha=\alpha$, as well as certain subscripts (e.g.~we write $\suppe_\alpha(s)\lef t$, where we have $\suppe_\alpha(s)\subseteq\alpha$ and $t\in\varepsilon(S)^u_\alpha$). The usual construction of the Bachmann-Howard ordinal (see e.g.~\cite[Section~1]{rathjen-weiermann-kruskal}) suggests that the support of $\mathfrak e_\gamma$ should be $\{\varepsilon_\gamma\}$ rather than~$\{\gamma\}$. The difference disappears in view of the following result:

\begin{proposition}\label{prop:collapsing-epsilon}
 We have $\varepsilon_{\vartheta(s)}=\vartheta(s)$ for any $s\in\varepsilon(S)^u_\alpha$ with $\Omega\leq s$.
\end{proposition}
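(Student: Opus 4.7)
The plan is to show that $\vartheta(s)$ is a nonzero limit ordinal which is closed under the function $\gamma\mapsto\varepsilon_\gamma$. Combined with the trivial inequality $\varepsilon_\beta\geq\beta$ and the continuity of $\beta\mapsto\varepsilon_\beta$ at limits, this will yield $\varepsilon_{\vartheta(s)}=\vartheta(s)$.

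First I would check that $\vartheta(s)>0$. Since $0<_{\varepsilon(S)^u_\alpha}\Omega\leq_{\varepsilon(S)^u_\alpha}s$ and $\suppe_\alpha(0)=\emptyset\lef\vartheta(s)$ holds vacuously, condition (i) of Definition~\ref{def:bachmann-howard-collapse} gives $\vartheta(0)<\vartheta(s)$, so $\vartheta(s)\geq 1$.

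The main step is to show $\varepsilon_\gamma<\vartheta(s)$ for every $\gamma<\vartheta(s)$. For such a $\gamma$ I would consider the term $\mathfrak e_\gamma\in\varepsilon(S)^u_\alpha$. Since $\mathfrak e_\gamma<_{\varepsilon(S)^u_\alpha}\Omega\leq_{\varepsilon(S)^u_\alpha}s$ and $\suppe_\alpha(\mathfrak e_\gamma)=\{\gamma\}\lef\vartheta(s)$, Definition~\ref{def:bachmann-howard-collapse}(i) gives $\vartheta(\mathfrak e_\gamma)<\vartheta(s)$. Under the isomorphism of Lemma~\ref{lem:countable-part-epsilon_alpha}, the term $\mathfrak e_\gamma$ corresponds to the ordinal $\varepsilon_\gamma$, and by Proposition~\ref{prop:alpha-epsilon-number} the restriction of $\vartheta$ to $\varepsilon(S)^u_\alpha\cap\Omega$ is an order-preserving map into the ordinal $\alpha$. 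Since any order-preserving map between ordinals is inflationary (a routine induction on the argument), we obtain $\vartheta(\mathfrak e_\gamma)\geq\varepsilon_\gamma$, and hence $\varepsilon_\gamma\leq\vartheta(\mathfrak e_\gamma)<\vartheta(s)$.

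To conclude, I would note that $\varepsilon_\gamma\geq\gamma+1$ always holds, so the previous paragraph gives $\gamma+1\leq\varepsilon_\gamma<\vartheta(s)$ for every $\gamma<\vartheta(s)$. This shows that $\vartheta(s)$ has no immediate predecessor, so $\vartheta(s)>0$ is a limit ordinal. By the continuity of the $\varepsilon$-function we get
\begin{equation*}
\varepsilon_{\vartheta(s)}=\sup\{\varepsilon_\gamma\mid\gamma<\vartheta(s)\}\leq\vartheta(s),
\end{equation*}
and the reverse inequality is automatic. The argument is short and its only mild subtlety is keeping the identifications $\varepsilon(S)^u_\alpha\cap\Omega\cong\varepsilon_\alpha=\alpha$ straight (so that one may apply BH-condition~(i) on the left-hand side while reasoning about ordinals on the right); no further obstacle is expected.
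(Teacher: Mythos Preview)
Your proof is correct and follows essentially the same strategy as the paper's: show $\vartheta(s)>0$ via $\vartheta(0)<\vartheta(s)$, then show closure under $\gamma\mapsto\varepsilon_\gamma$ by collapsing a suitable term below $\Omega$ and using that $\vartheta$ is inflationary on $\varepsilon(S)^u_\alpha\cap\Omega$. The only cosmetic difference is that the paper works with the term $\omega^{\mathfrak e_\gamma}+\omega^0$ (representing $\varepsilon_\gamma+1$) to get $\varepsilon_\gamma+1<\vartheta(s)$ directly, whereas you use $\mathfrak e_\gamma$ to get $\varepsilon_\gamma<\vartheta(s)$ and then invoke $\varepsilon_\gamma\geq\gamma+1$ separately to conclude the limit property.
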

\begin{proof}
We will show that $\vartheta(s)$ is a limit ordinal with $\varepsilon_\gamma<\vartheta(s)$ for any $\gamma<\vartheta(s)$. This is enough to establish the claim, since it yields
\begin{equation*}
 \varepsilon_{\vartheta(s)}=\sup\{\varepsilon_\gamma\,|\,\gamma<\vartheta(s)\}\leq\vartheta(s).
\end{equation*}
First observe $0<\vartheta(s)$: Indeed Definition~\ref{def:bachmann-howard-collapse} yields $\vartheta(0)<\vartheta(s)$, because we have $0<\Omega\leq s$ and $\suppe_\alpha(0)=\emptyset\lef\vartheta(s)$. To conclude it suffices to show that $\gamma<\vartheta(s)$ implies $\varepsilon_\gamma+1<\vartheta(s)$. The isomorphism $\varepsilon_\alpha\cong\varepsilon(S)^u_\alpha\cap\Omega$ identifies $\varepsilon_\gamma+1$ with $\omega^{\mathfrak e_\gamma}+\omega^0$. By the previous proposition we get $\varepsilon_\gamma+1\leq\vartheta(\omega^{\mathfrak e_\gamma}+\omega^0)$, as for any order preserving map between ordinals. Thus it remains to show $\vartheta(\omega^{\mathfrak e_\gamma}+\omega^0)<\vartheta(s)$ for an arbitrary $\gamma<\vartheta(s)$. We observe
 \begin{equation*}
  \suppe_\alpha(\omega^{\mathfrak e_\gamma}+\omega^0)=\{\gamma\}\lef\vartheta(s).
 \end{equation*}
 Together with $\omega^{\mathfrak e_\gamma}+\omega^0<\Omega\leq s$ we get $\vartheta(\omega^{\mathfrak e_\gamma}+\omega^0)<\vartheta(s)$ by Definition~\ref{def:bachmann-howard-collapse}.
\end{proof}

In order to avoid the side condition $\Omega\leq s$ we will work with $\vartheta(\Omega+s)$ at the place of $\vartheta(s)$. Rather than an ad hoc definition of $\Omega+s$, we give a general definition of addition and exponentiation, which will be needed later:

\begin{lemma}\label{lem:ordinal-addition}
 The usual operations of addition and exponentiation on the ordinal $\varepsilon_\alpha\cong\varepsilon(S)^u_\alpha\cap\Omega$ can be extended to functions
 \begin{equation*}
  +:\varepsilon(S)^u_\alpha\times\varepsilon(S)^u_\alpha\rightarrow\varepsilon(S)^u_\alpha\qquad\text{and}\qquad\omega^{(-)}:\varepsilon(S)^u_\alpha\rightarrow\varepsilon(S)^u_\alpha,
 \end{equation*}
 such that the following holds for all $r,s,t\in\varepsilon(S)^u_\alpha$:
 \begin{enumerate}[label=(\alph*)]
  \item If we have $s<t$, then we have $r+s<r+t$ and $\omega^s<\omega^t$.
  \item We have $s\leq s+t$ and $t\leq s+t$, as well as $s\leq\omega^s$.
  \item We have $(r+s)+t=r+(s+t)$.
  \item If we have $s<\omega^r$ and $t<\omega^r$, then we have $s+t<\omega^r$.
  \item We have $\suppe_\alpha(t)\subseteq\suppe_\alpha(s+t)\subseteq\suppe_\alpha(s)\cup\suppe_\alpha(t)$, as well as $\suppe_\alpha(\omega^s)=\suppe_\alpha(s)$.
 \end{enumerate}
\end{lemma}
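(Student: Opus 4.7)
The plan is to mimic the standard definition of Cantor normal form arithmetic, working syntactically on terms. Call an \emph{addend} any term of the form $\Omega$, $\mathfrak e_x$, $\mathfrak E_\sigma$, or $\omega^r$ (the latter form requiring $r\notin\{\Omega,\mathfrak e_x,\mathfrak E_\sigma\}$, in view of Definition~\ref{def:eps(S)-order}(ii)). These are precisely the principal additive terms of the system, and by clause~(ii) of that definition every nonzero $t\in\varepsilon(S)^u_\alpha$ decomposes uniquely as a weakly $<_{\varepsilon(S)^u_\alpha}$-decreasing sum $t=a_0+\cdots+a_n$ of addends.

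Given $s,t\in\varepsilon(S)^u_\alpha$ with decompositions $s=a_0+\cdots+a_m$ and $t=b_0+\cdots+b_n$, I would define $s+0:=s$ and otherwise $s+t:=a_0+\cdots+a_j+b_0+\cdots+b_n$, where $j$ is the largest index $\leq m$ with $a_j\geq b_0$ (and $j=-1$ if no such index exists, in which case $s+t:=t$). The result is a valid term in $\varepsilon(S)^u_\alpha$ by the choice of $j$ and the validity of the decomposition of $t$. Define $\omega^s:=s$ for $s\in\{\Omega,\mathfrak e_x,\mathfrak E_\sigma\}$, and otherwise let $\omega^s$ be the length-one term $\omega^s$ (permitted by Definition~\ref{def:eps(S)-order}(ii), since in this case $s\notin\{\Omega,\mathfrak e_x,\mathfrak E_\sigma\}$).

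The verifications of (a)--(e) are then bookkeeping on addend lists. Monotonicity (a) follows from the lexicographic clause~(v') of Definition~\ref{def:eps(S)-order}, comparing the decompositions of $r+s$ with $r+t$ and of $\omega^s$ with $\omega^t$. For (b), the inequality $t\leq s+t$ holds because $b_0,\ldots,b_n$ appears as a suffix of the decomposition of $s+t$, while $s\leq s+t$ follows because the decomposition of $s+t$ agrees with that of $s$ up to position $j$ and then continues with $b_0\geq a_{j+1}$; the inequality $s\leq\omega^s$ is immediate on the atomic generator cases and reduces to $s_0<\omega^s$ when $s=\omega^{s_0}+\cdots+\omega^{s_m}$. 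Associativity (c) reduces to a case analysis on where the two cutoff indices of $(r+s)+t$ and $r+(s+t)$ fall, yielding the same addend list in each case. For (d), the hypothesis $s,t<\omega^r$ forces every addend of $s$ and $t$ to be $<\omega^r$ (again by clause~(v') applied to the leading addend); then the addend list of $s+t$ still consists only of terms $<\omega^r$, with leading addend $\max(a_0,b_0)<\omega^r$, so a final appeal to (v') gives $s+t<\omega^r$. The support computations in (e) are immediate from Definition~\ref{def:suppe}: $\suppe_\alpha(\omega^s)=\suppe_\alpha(s)$ is read off directly from the case split, and $\suppe_\alpha(t)\subseteq\suppe_\alpha(s+t)\subseteq\suppe_\alpha(s)\cup\suppe_\alpha(t)$ follows because the addend list of $s+t$ contains all of $b_0,\ldots,b_n$ and only addends that already occur in the decompositions of $s$ or $t$.

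I do not expect a serious obstacle: the one point of real care is the existence and uniqueness of the addend decomposition and the verification that the merged list satisfies clause~(ii) of Definition~\ref{def:eps(S)-order}, after which each of (a)--(e) is a short induction or a one-line comparison. Agreement with standard ordinal arithmetic on $\varepsilon(S)^u_\alpha\cap\Omega\cong\varepsilon_\alpha$ is automatic, because on those terms the above clauses reproduce the usual Cantor normal form definitions of ordinal sum and $\omega$-exponentiation.
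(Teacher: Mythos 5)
Your proposal is correct and follows essentially the same route as the paper: a unique weakly decreasing decomposition into additively principal components, addition by merging the two lists at the maximal cutoff index, and exponentiation via a case split on whether the exponent is one of the atoms $\Omega,\mathfrak e_x,\mathfrak E_\sigma$. The paper merely packages this as the exponent-list notation $\omega\langle s_0,\dots,s_{n-1}\rangle$ rather than your addend lists, which is the same decomposition in different clothing.
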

\begin{proof}
 For $s_{n-1}\leq\dots\leq s_0$ we introduce the notation
 \begin{equation*}
  \omega\langle s_0,\dots,s_{n-1}\rangle=\begin{cases}
                                          0 & \text{if $n=0$},\\
                                          s_0 & \text{if $n=1$ and $s_0$ is of the form $\mathfrak e_x,\Omega$ or $\mathfrak E_\sigma$},\\
                                          \omega^{s_0}+\dots+\omega^{s_{n-1}} & \text{otherwise}.
                                         \end{cases}
 \end{equation*}
 Then we have $\omega\langle s_0,\dots,s_{n-1}\rangle\in\varepsilon(S)^u_\alpha$, and any term in $\varepsilon(S)^u_\alpha$ can be uniquely written in this form. Thus addition can be defined by
 \begin{equation*}
  \omega\langle s_0,\dots,s_{n-1}\rangle+\omega\langle t_0,\dots,t_{m-1}\rangle=\omega\langle s_0,\dots,s_{i-1},t_0,\dots,t_{m-1}\rangle,
 \end{equation*}
 where $i\leq n$ is maximal with $t_0\leq s_{i-1}$ (set $i=0$ if $s_0<t_0$, and $i=n$ if $m=0$). Exponentiation is given by $\omega^s=\omega\langle s\rangle$. The clauses from Definition~\ref{def:eps(S)-order} amount to
 \begin{equation*}
  \omega\langle s_0,\dots,s_{n-1}\rangle<\omega\langle t_0,\dots,t_{m-1}\rangle\,\Leftrightarrow\,\begin{cases}
                                                                                                     \text{either $n<m$ and $s_i=t_i$ for $i<n$},\\[.3ex]
                                                                                                     \parbox[t]{6cm}{or there is $j<\min\{n,m\}$ with $s_j<t_j$ and $s_i=t_i$ for $i<j$.}
                                                                                                  \end{cases}
 \end{equation*}
 One can also observe
 \begin{equation*}
  \suppe_\alpha(\omega\langle s_0,\dots,s_{n-1}\rangle)=\suppe_\alpha(s_0)\cup\dots\cup\suppe_\alpha(s_{n-1}).
 \end{equation*}
 Now it is standard to deduce the properties claimed in the lemma (cf.~\cite[Paragraph~14]{schuette77} as well as~\cite[Lemma~4.2.5]{freund-thesis}).
\end{proof}

The collapse $\vartheta$ provided by Assumption~\ref{ass:bachmann-howard-collapse} can now be modified as follows:

\begin{definition}
 The function $\bar\vartheta:\varepsilon(S)^u_\alpha\rightarrow\varepsilon(S)^u_\alpha\cap\Omega$ is defined by $\bar\vartheta(s)=\mathfrak e_{\vartheta(\Omega+s)}$.
\end{definition}

We recover properties of the usual Bachmann-Howard construction:

\begin{proposition}\label{prop:bar-collapse}
 The following holds for any $s,t\in\varepsilon(S)^u_\alpha$:
 \begin{enumerate}[label=(\alph*)]
  \item If we have $s<t$ and $\suppe_\alpha(s)\lef\bar\vartheta(t)$, then we have $\bar\vartheta(s)<\bar\vartheta(t)$.
  \item We have $\suppe_\alpha(s)\lef\bar\vartheta(s)$.
  \item If we have $\bar\vartheta(s)\leq^{\operatorname{fin}}\suppe_\alpha(t)$, then we have $\bar\vartheta(s)<\bar\vartheta(t)$.
 \end{enumerate}
\end{proposition}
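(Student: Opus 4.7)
The plan is to prove each clause by reducing it to Definition~\ref{def:bachmann-howard-collapse} applied to the shifted arguments $\Omega+s$ and $\Omega+t$, using Proposition~\ref{prop:collapsing-epsilon} to identify $\bar\vartheta(s)$ with the ordinal $\vartheta(\Omega+s)$. Concretely, since $\Omega\leq\Omega+s$ by Lemma~\ref{lem:ordinal-addition}(b), Proposition~\ref{prop:collapsing-epsilon} gives $\varepsilon_{\vartheta(\Omega+s)}=\vartheta(\Omega+s)$, and the isomorphism $\varepsilon_\alpha\cong\varepsilon(S)^u_\alpha\cap\Omega$ of Lemma~\ref{lem:countable-part-epsilon_alpha} identifies the term $\bar\vartheta(s)=\mathfrak e_{\vartheta(\Omega+s)}$ with the ordinal $\vartheta(\Omega+s)\in\alpha$. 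This identification will be used silently throughout, together with the observation that Lemma~\ref{lem:ordinal-addition}(e) yields $\suppe_\alpha(\Omega+s)=\suppe_\alpha(s)$ (because $\suppe_\alpha(\Omega)=\emptyset$ by Definition~\ref{def:suppe}).

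For clause~(a), I would apply Definition~\ref{def:bachmann-howard-collapse}(i) to $\Omega+s$ and $\Omega+t$. The strict inequality $\Omega+s<\Omega+t$ follows from $s<t$ via Lemma~\ref{lem:ordinal-addition}(a), and the side condition $\suppe_\alpha(\Omega+s)\lef\vartheta(\Omega+t)$ is just the hypothesis $\suppe_\alpha(s)\lef\bar\vartheta(t)$ rewritten using the above identifications. This delivers $\vartheta(\Omega+s)<\vartheta(\Omega+t)$, i.e.\ $\bar\vartheta(s)<\bar\vartheta(t)$ by clause~(ii) of Definition~\ref{def:eps(S)-order} applied to the $\mathfrak e_\gamma$-terms. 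Clause~(b) is even more direct: Definition~\ref{def:bachmann-howard-collapse}(ii) applied to $\Omega+s$ gives $\suppe_\alpha(\Omega+s)\lef\vartheta(\Omega+s)$, which after the identification reads $\suppe_\alpha(s)\lef\bar\vartheta(s)$.

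Clause~(c) then follows immediately from~(b): if $\bar\vartheta(s)\leqf\suppe_\alpha(t)$ there is some $x\in\suppe_\alpha(t)$ with $\bar\vartheta(s)\leq x$, and~(b) (applied to $t$) gives $x<\bar\vartheta(t)$, whence $\bar\vartheta(s)<\bar\vartheta(t)$.

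I do not anticipate a serious obstacle: the argument is essentially bookkeeping, consisting in translating the Bachmann-Howard inequalities for $\vartheta$ at the arguments $\Omega+s,\Omega+t$ into inequalities for $\bar\vartheta$ at $s,t$. The only subtle point is making the identifications precise—in particular confirming that $\bar\vartheta(s)\in\varepsilon(S)^u_\alpha\cap\Omega$ really does correspond to the ordinal $\vartheta(\Omega+s)\in\alpha$, which is exactly what Proposition~\ref{prop:collapsing-epsilon} provides, and that the supports behave correctly under the addition $\Omega+(-)$, which is supplied by Lemma~\ref{lem:ordinal-addition}(e).
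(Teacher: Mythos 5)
Your proof is correct and follows essentially the same route as the paper: both reduce each clause to Definition~\ref{def:bachmann-howard-collapse} applied at $\Omega+s$ and $\Omega+t$, using $\suppe_\alpha(\Omega+s)=\suppe_\alpha(s)$ from Lemma~\ref{lem:ordinal-addition} and the identification $\bar\vartheta(s)=\mathfrak e_{\vartheta(\Omega+s)}\cong\varepsilon_{\vartheta(\Omega+s)}=\vartheta(\Omega+s)$ supplied by Proposition~\ref{prop:collapsing-epsilon}, with (c) read off from (b). The only nitpick is that the comparison of $\mathfrak e$-terms at the end of (a) is governed by clause~(ii') rather than clause~(ii) of Definition~\ref{def:eps(S)-order}.
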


\begin{proof}
 (a) Under the isomorphism $\varepsilon(S)^u_\alpha\cap\Omega\cong\varepsilon_\alpha=\alpha$, the term $\bar\vartheta(t)$ is identified with the ordinal $\varepsilon_{\vartheta(\Omega+t)}$. Using Lemma~\ref{lem:ordinal-addition} and Proposition~\ref{prop:collapsing-epsilon} we obtain
 \begin{equation*}
  \suppe_\alpha(\Omega+s)=\suppe_\alpha(s)\lef\varepsilon_{\vartheta(\Omega+t)}=\vartheta(\Omega+t)
 \end{equation*}
 and $\Omega+s<\Omega+t$. By Definition~\ref{def:bachmann-howard-collapse} we get $\vartheta(\Omega+s)<\vartheta(\Omega+t)$ and thus $\bar\vartheta(s)<\bar\vartheta(t)$.\\
 (b) Invoking Lemma~\ref{lem:ordinal-addition}, Definition~\ref{def:bachmann-howard-collapse} and Proposition~\ref{prop:collapsing-epsilon} we see
 \begin{equation*}
 \suppe_\alpha(s)=\suppe_\alpha(\Omega+s)\lef\vartheta(\Omega+s)=\varepsilon_{\vartheta(\Omega+s)}=\bar\vartheta(s).
 \end{equation*}
 (c) This is an immediate consequence of part (b).
\end{proof}

Let us collect more facts about supports and collapsing values:

\begin{lemma}\label{lem:collapse-support}
 The following holds for any $s,t\in\varepsilon(S)^u_\alpha$:
 \begin{enumerate}[label=(\alph*)]
  \item We have $\suppe_\alpha(s)\leqf s$.
  \item We have $\suppe_\alpha(\bar\vartheta(t))=\{\bar\vartheta(t)\}$.
  \item If we have $\suppe_\alpha(s)\lef\bar\vartheta(t)$ and $s<\Omega$, then we have $s<\bar\vartheta(t)$.
 \end{enumerate}
\end{lemma}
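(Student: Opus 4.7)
The plan for part (a) is a straightforward induction on the length $L(s)$, handling each term-constructor in turn. The base cases $s=0$ and $s=\Omega$ are trivial since the support is empty. For $s=\mathfrak e_x$, the support is $\{x\}$, and under the identification $\varepsilon(S)^u_\alpha\cap\Omega\cong\varepsilon_\alpha=\alpha$ of Lemma~\ref{lem:countable-part-epsilon_alpha} the term $\mathfrak e_x$ corresponds to the ordinal $\varepsilon_x\geq x$. For $s=\mathfrak E_\sigma$, the support $\supps_\alpha(\sigma)\subseteq\alpha$ is a set of ordinals, each of which corresponds to a term $<\Omega$, whereas $\mathfrak E_\sigma>\Omega$ by clause~(iii$'$) of Definition~\ref{def:eps(S)-order}; so every element of the support is strictly below $\mathfrak E_\sigma$. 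Finally, for $s=\omega^{s_0}+\dots+\omega^{s_n}$, the inductive hypothesis gives $\suppe_\alpha(s_i)\leqf s_i$ for each $i$, and by iterated application of Lemma~\ref{lem:ordinal-addition}(b,c) one has $s_i\leq\omega^{s_i}\leq s$; taking unions yields $\suppe_\alpha(s)\leqf s$.

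For part (b) one merely unwinds definitions. By construction $\bar\vartheta(t)=\mathfrak e_{\vartheta(\Omega+t)}$, so Definition~\ref{def:suppe} gives $\suppe_\alpha(\bar\vartheta(t))=\{\vartheta(\Omega+t)\}$. Since $\Omega+t\geq\Omega$, Proposition~\ref{prop:collapsing-epsilon} applies and yields $\varepsilon_{\vartheta(\Omega+t)}=\vartheta(\Omega+t)$. Under the identification $\varepsilon(S)^u_\alpha\cap\Omega\cong\alpha$, the term $\bar\vartheta(t)=\mathfrak e_{\vartheta(\Omega+t)}$ corresponds to the ordinal $\varepsilon_{\vartheta(\Omega+t)}=\vartheta(\Omega+t)$; hence $\{\vartheta(\Omega+t)\}$ and $\{\bar\vartheta(t)\}$ are the same singleton, as required.

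Part (c) will again be proved by induction on $L(s)$, using $s<\Omega$ to restrict the possible form of $s$. The cases $s=\Omega$ and $s=\mathfrak E_\sigma$ are excluded by $s<\Omega$, and $s=0$ is immediate from $0<\bar\vartheta(t)$. For $s=\mathfrak e_x$, the hypothesis $\{x\}\lef\bar\vartheta(t)$ combined with part~(b) gives $x<\vartheta(\Omega+t)$ as ordinals in $\alpha$, and clause~(ii$'$) of the order definition turns this into $\mathfrak e_x<_{\varepsilon(S)^u_\alpha}\mathfrak e_{\vartheta(\Omega+t)}=\bar\vartheta(t)$. For a sum $s=\omega^{s_0}+\dots+\omega^{s_n}$ with $s<\Omega$, inspection of clause~(v$'$) shows that the top exponent $s_0$ must satisfy $s_0<\Omega$, whence all $s_i<\Omega$ by monotonicity of the exponents. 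Since $\suppe_\alpha(s_i)\subseteq\suppe_\alpha(s)\lef\bar\vartheta(t)$, the inductive hypothesis gives $s_i<\bar\vartheta(t)$ for each $i$; in particular $s_0<\bar\vartheta(t)=\mathfrak e_{\vartheta(\Omega+t)}$, and clause~(v$'$) applied with $t=\mathfrak e_y$ yields $s<\bar\vartheta(t)$.

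The argument is largely a mechanical verification against Definition~\ref{def:eps(S)-order}; the only genuine subtlety is to keep careful track of the identifications between the ordinal $\alpha$, its image in $\varepsilon(S)^u_\alpha\cap\Omega$, and the term $\mathfrak e_{\vartheta(\Omega+t)}$. Part~(b) is the linchpin of this bookkeeping: it is what permits one to identify the ``inner" ordinal $\vartheta(\Omega+t)$ appearing in the support with the ``outer" collapsing value $\bar\vartheta(t)$, which will be used routinely in later sections when verifying side conditions of the form $\suppe_\alpha(s)\lef\bar\vartheta(t)$.
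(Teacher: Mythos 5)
Your proposal is correct and follows essentially the same route as the paper: induction over terms for (a) and (c), the same unwinding of $\bar\vartheta(t)=\mathfrak e_{\vartheta(\Omega+t)}$ via Proposition~\ref{prop:collapsing-epsilon} for (b), and the same appeal to clauses (ii$'$) and (v$'$) of Definition~\ref{def:eps(S)-order}. The only differences are cosmetic: you spell out the cases $s=\mathfrak E_\sigma$ and $s=0$, and the observation that $s<\Omega$ forces $s_0<\Omega$, which the paper leaves implicit.
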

\begin{proof}
 (a) We argue by induction on the term $s$. For $s=\mathfrak e_\gamma$ we observe
 \begin{equation*}
  \suppe_\alpha(s)=\{\gamma\}\leqf\varepsilon_\gamma=s.
 \end{equation*}
 For $s=\omega^{s_0}+\dots+\omega^{s_n}$ we note that any $r\in\suppe_\alpha(s)$ lies in some set~$\suppe_\alpha(s_i)$. By induction hypothesis we get $r\leq s_i\leq s_0\leq s$. For $\Omega\leq s$ the claim is trivial, since we have $\suppe_\alpha(s)\subseteq\alpha\cong\varepsilon(S)^u_\alpha\cap\Omega$.\\
 (b) Using Proposition~\ref{prop:collapsing-epsilon} we get
 \begin{equation*}
  \suppe_\alpha(\bar\vartheta(t))=\suppe_\alpha(\mathfrak e_{\vartheta(\Omega+t)})=\{\vartheta(\Omega+t)\}=\{\varepsilon_{\vartheta(\Omega+t)}\}=\{\bar\vartheta(t)\}.
 \end{equation*}
 (c) We argue by induction on $s$. First assume $s=\mathfrak e_\gamma$. By Proposition~\ref{prop:collapsing-epsilon} we get
 \begin{equation*}
  \{\gamma\}=\suppe_\alpha(s)\lef\bar\vartheta(t)=\varepsilon_{\vartheta(\Omega+t)}=\vartheta(\Omega+t).
 \end{equation*}
 This implies $s=\mathfrak e_\gamma<\mathfrak e_{\vartheta(\Omega+t)}=\bar\vartheta(t)$. Now consider a term $s=\omega^{s_0}+\dots+\omega^{s_n}$. From the induction hypothesis we obtain $s_0<\bar\vartheta(t)=\mathfrak e_{\vartheta(\Omega+t)}$. Invoking clause~(v') of Definition~\ref{def:eps(S)-order} we can conclude $s<\bar\vartheta(t)$.
\end{proof}

The following observation will not be needed, but it is nevertheless instructive:

\begin{remark}
 Any ordinal $\alpha$ that satisfies Assumption~\ref{ass:bachmann-howard-collapse} is at least as big as the usual Bachmann-Howard ordinal: The latter can be characterized as the order-type of the notation system $\vartheta_\emptyset\cap\Omega$ presented in \cite[Section~2.1]{rathjen-model-bi} (due to Rathjen, cf.~the reference to his 1989 lecture notes in the cited paper). We recall that this notation system contains a function symbol $\vartheta$, i.e.~for each term $s\in\vartheta_\emptyset$ we have a term $\vartheta s\in\vartheta_\emptyset\cap\Omega$. The order of terms depends on a map $\vartheta_\emptyset\ni s\mapsto s^*\in\vartheta_\emptyset\cap\Omega$; in particular we have $(\vartheta s)^*=\vartheta s$. It is straightforward to define an embedding $f:\vartheta_\emptyset\rightarrow\varepsilon(S)^u_\alpha$ by induction on terms. The crucial clause is $f(\vartheta s)=\bar\vartheta(f(s))$. To see that $f$ is order preserving one must verify
 \begin{equation*}
  f(s^*)=\max(\suppe_\alpha(f(s))\cup\{0\})
 \end{equation*}
 for all $s\in\vartheta_\emptyset$. In the case of a term $\vartheta s$ the previous lemma yields
 \begin{equation*}
  \suppe_\alpha(f(\vartheta s))=\suppe_\alpha(\bar\vartheta(f(s)))=\{\bar\vartheta(f(s))\}.
 \end{equation*}
 Thus we indeed get
 \begin{equation*}
  \max(\suppe_\alpha(f(\vartheta s))\cup\{0\})=\bar\vartheta(f(s))=f(\vartheta s)=f((\vartheta s)^*).
 \end{equation*}
 Using Proposition~\ref{prop:bar-collapse} it is now straightforward to check that $f$ is an order embedding. It restricts to an embedding of $\vartheta_\emptyset\cap\Omega$ into $\varepsilon(S)^u_\alpha\cap\Omega\cong\alpha$. Thus $\alpha$ is at least as big as the order-type of $\vartheta_\emptyset\cap\Omega$, which is the Bachmann-Howard ordinal.
\end{remark}

\section{From Search Tree to Proof Tree}\label{sect:infinite-proofs}

In this section we construct an infinite proof tree~$P^u_\alpha$ that extends the search tree $S^u_\alpha$ from Section~\ref{sect:deduct-admissible}. Our first goal is to define the appropriate notion of infinite proof. Just as the search tree $S^u_\alpha$, our proofs will be labelled subtrees of $(\mathbf L^u_\alpha)^{<\omega}$. In particular each node will be labelled by a rule, which controls the local structure of the proof. To understand the following one should recall the verification calculus from Definition~\ref{def:disj-conj-formula}. We assume that all $\mathbf L^u_\alpha$-formulas are closed, unless indicated otherwise.

\begin{samepage}
\begin{definition}\label{def:rules}
 An $\mathbf L^u_\alpha$-rule is an expression from the first column of the following table, provided that the corresponding condition in the third column is satisfied. To each $\mathbf L^u_\alpha$-rule $r$ we assign an arity $\iota(r)\subseteq\mathbf L^u_\alpha$, given in the second column.
 {\def\arraystretch{1.75}\tabcolsep=8pt
\setlength{\LTpre}{\baselineskip}\setlength{\LTpost}{\baselineskip}
\begin{longtable}{l|l|p{0.55\textwidth}}\hline
$\mathbf L^u_\alpha$-rule & arity & condition\\ \hline
$(\true,\varphi)$ &  $\emptyset$ & $\varphi$ is a bounded $\mathbf L^u_\alpha$-formula with $\mathbb L^u_\alpha\vDash\varphi$\\
$(\bigwedge,\varphi)$ & $\iota_\alpha(\varphi)$ & $\varphi$ is a conjunctive $\mathbf L^u_\alpha$-formula\\
$(\bigvee,\varphi,a)$ & $\{0\}$ & $\varphi$ is a disjunctive $\mathbf L^u_\alpha$-formula and $a\in\iota_\alpha(\varphi)$\\
$(\cut,\varphi)$ & $\{0,1\}$ & $\varphi$ is an $\mathbf L^u_\alpha$-formula\\
$(\refl,\exists_w\forall_{x\in a}\exists_{y\in w}\theta)$ & $\{0\}$ & $\theta$ is a bounded $\mathbf L^u_\alpha$-formula in which $x$ and $y$ may be free, and we have $a\in\mathbf L^u_\alpha$\\
$(\rep,a)$ & $\{a\}$ & $a\in\mathbf L^u_\alpha$\\ \hline
\end{longtable}}
\end{definition}
\end{samepage}

We can now make our notion of infinite proof precise:

\begin{definition}\label{def:u-a-proof}
 A $(u,\alpha)$-proof consists of a tree $P\subseteq(\mathbf L^u_\alpha)^{<\omega}$ and labellings
 \begin{equation*}
  l:P\rightarrow\text{``$\mathbf L^u_\alpha$-sequents''},\qquad r:P\rightarrow\text{``$\mathbf L^u_\alpha$-rules''},\qquad o:P\rightarrow\varepsilon(S)^u_\alpha,
 \end{equation*}
 such that we have $\sigma^\frown a\in P$ and $o(\sigma^\frown a)<o(\sigma)$ for all $\sigma\in P$ and $a\in\iota(r(\sigma))$. Furthermore the following conditions must be satisfied for all $\sigma\in P$:
 {\def\arraystretch{1.75}\tabcolsep=8pt
\setlength{\LTpre}{\baselineskip}\setlength{\LTpost}{\baselineskip}
\begin{longtable}{l|p{0.65\textwidth}}\hline
If $r(\sigma)$ is \dots & \dots\ then we have \dots\\ \hline
$(\true,\varphi)$ & $\varphi\in l(\sigma)$.\\
$(\bigwedge,\varphi)$ & $\varphi\in l(\sigma)$ and $l(\sigma^\frown a)\subseteq l(\sigma),\varphi_a$ for all $a\in\iota_\alpha(\varphi)$.\\
$(\bigvee,\varphi,a)$ & $\varphi\in l(\sigma)$, $l(\sigma^\frown 0)\subseteq l(\sigma),\varphi_a$ and $\suppl_\alpha(a)\lef o(\sigma)$.\\ 
$(\cut,\varphi)$ & $l(\sigma^\frown 0)\subseteq l(\sigma),\neg\varphi$ and $l(\sigma^\frown 1)\subseteq l(\sigma),\varphi$.\\
$(\refl,\exists_w\forall_{x\in a}\exists_{y\in w}\theta)$ & $\exists_w\forall_{x\in a}\exists_{y\in w}\theta\in l(\sigma)$ and $l(\sigma^\frown 0)\subseteq l(\sigma),\forall_{x\in a}\exists_y\theta$,\newline as well as~$\Omega\leq o(\sigma)$.\\ 
$(\rep,a)$ & $l(\sigma^\frown a)\subseteq l(\sigma)$.\\ \hline
\end{longtable}}
\noindent To express that these conditions hold we say that $P$ is locally correct at $\sigma$.
\end{definition}

The condition $\suppl_\alpha(a)\lef o(\sigma)$ for a rule $(\bigvee,\cdot)$ controls the size of existential witnesses in a proof. Note that we can compare $\suppl_\alpha(a)\subseteq\alpha$ and $o(\sigma)\in\varepsilon(S)^u_\alpha$ via the isomorphism $\alpha=\varepsilon_\alpha\cong\varepsilon(S)^u_\alpha\cap\Omega$ from the previous section (in particular $\suppl_\alpha(a)\lef o(\sigma)$ is trivial in case $\Omega\leq o(\sigma)$). The repetition rule $(\rep,\cdot)$ is trivial from a semantical viewpoint, but it will be crucial for the formalization of proof transformations in a weak meta theory. We have the following soundness result:

\begin{proposition}\label{prop:soundness-below-Omega}
If $P=(P,l,r,o)$ is a $(u,\alpha)$-proof of height $o(\langle\rangle)<\Omega$, then we have $\mathbb L^u_\alpha\vDash\varphi$ for some $\mathbf L^u_\alpha$-formula $\varphi\in l(\langle\rangle)$ in the end-sequent of $P$.
\end{proposition}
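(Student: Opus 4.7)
The plan is to argue by transfinite induction on the height $o(\sigma) < \Omega$, showing the stronger claim that every node $\sigma \in P$ with $o(\sigma) < \Omega$ has some $\varphi \in l(\sigma)$ with $\mathbb{L}^u_\alpha \vDash \varphi$. Since $o(\sigma^\frown a) < o(\sigma) < \Omega$ for every successor, the whole sub-proof lives in the countable part $\varepsilon(S)^u_\alpha \cap \Omega$, which is isomorphic to $\varepsilon_\alpha = \alpha$ by Lemma~\ref{lem:countable-part-epsilon_alpha} and Proposition~\ref{prop:alpha-epsilon-number}, so the induction is well-defined. Moreover, because $o(\sigma) < \Omega$ excludes the condition $\Omega \leq o(\sigma)$ demanded by the $(\refl,\cdot)$ rule, reflection rules cannot appear anywhere in the subtree rooted at $\langle\rangle$; this is the reason we can get a naive soundness statement without appealing to collapsing. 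The induction itself is formalisable in $\atrs$ since $\mathbb{L}^u_\alpha \vDash \varphi$ is a primitive recursive predicate in $\varphi$, so primitive recursive separation is available.

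The induction step then proceeds by a routine case analysis on $r(\sigma)$. For $(\true,\varphi)$ the rule's own side condition already gives $\varphi \in l(\sigma)$ and $\mathbb{L}^u_\alpha \vDash \varphi$. For $(\rep,a)$ the induction hypothesis on $\sigma^\frown a$ yields some verified $\psi \in l(\sigma^\frown a) \subseteq l(\sigma)$. For $(\cut,\varphi)$, applying the hypothesis to both children gives $\mathbb{L}^u_\alpha \vDash \psi_0$ with $\psi_0 \in l(\sigma),\neg\varphi$ and $\mathbb{L}^u_\alpha \vDash \psi_1$ with $\psi_1 \in l(\sigma),\varphi$; if neither $\psi_i$ is in $l(\sigma)$, then $\psi_0 = \neg\varphi$ and $\psi_1 = \varphi$ would be simultaneously satisfied, a contradiction. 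For $(\bigvee,\varphi,a)$ the induction hypothesis on $\sigma^\frown 0$ gives a verified formula in $l(\sigma),\varphi_a$; if it is already in $l(\sigma)$ we are done, otherwise $\mathbb{L}^u_\alpha \vDash \varphi_a$ together with $a \in \iota_\alpha(\varphi)$ yields $\mathbb{L}^u_\alpha \vDash \varphi$ by the disjunctive clause of Proposition~\ref{prop:clauses-inf-verification-sound}.

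The $(\bigwedge,\varphi)$ case is the one requiring the most care. Applying the induction hypothesis to each premise $\sigma^\frown a$ for $a \in \iota_\alpha(\varphi)$ yields either some $\psi \in l(\sigma)$ with $\mathbb{L}^u_\alpha \vDash \psi$ (in which case we are done immediately), or else $\mathbb{L}^u_\alpha \vDash \varphi_a$. If the first alternative fails for every $a$, then $\mathbb{L}^u_\alpha \vDash \varphi_a$ holds for all $a \in \iota_\alpha(\varphi)$, and the conjunctive clause of Proposition~\ref{prop:clauses-inf-verification-sound} gives $\mathbb{L}^u_\alpha \vDash \varphi$, as required. This step silently uses that the universe of discourse for the premises of $(\bigwedge,\varphi)$ is exactly $\iota_\alpha(\varphi)$, which matches the arity prescribed in Definition~\ref{def:rules} and is hence met by every locally correct $(u,\alpha)$-proof.

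The main obstacle is conceptual rather than technical: one must recognise that the height bound $o(\langle\rangle) < \Omega$ is doing double duty, both providing a well-ordered induction principle on the subtree and ruling out the only rule whose soundness cannot be justified by elementary means, namely $(\refl,\cdot)$. Once this is observed, no genuine ordinal analysis is needed at this stage; the collapsing construction from Section~\ref{sect:collapsing} will later be used precisely to bring an arbitrary proof's height below $\Omega$ so that this proposition becomes applicable.
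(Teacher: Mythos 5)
Your proof is correct and follows essentially the same route as the paper: transfinite induction on $o(\sigma)<\Omega$, a case distinction on $r(\sigma)$ using Proposition~\ref{prop:clauses-inf-verification-sound} for the logical rules (with the same ``either a satisfied formula already lies in $l(\sigma)$ or all premises verify $\varphi_a$'' dichotomy in the $(\bigwedge,\varphi)$ case), and the observation that the side condition $\Omega\leq o(\sigma)$ excludes $(\refl,\cdot)$ below $\Omega$. No issues.
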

\begin{proof}
By induction on $o(\sigma)\in\varepsilon(S)^u_\alpha\cap\Omega\cong\alpha$ we show that $\mathbb L^u_\alpha$ satisfies some formula in $l(\sigma)$. The induction step is established by case distinction on the rule~$r(\sigma)$. Let us consider the case $r(\sigma)=(\bigwedge,\varphi)$. By local correctness we have $\sigma^\frown a\in P$ and $o(\sigma^\frown a)<o(\sigma)<\Omega$ for any $a\in\iota_\alpha(\varphi)$. The induction hypothesis tells us that $\mathbb L^u_\alpha$ satisfies some formula in $l(\sigma^\frown a)\subseteq l(\sigma),\varphi_a$. If this formula lies in $l(\sigma)$, then we are done. Otherwise we get $\mathbb L^u_\alpha\vDash\varphi_a$ for all $a\in\iota_\alpha(\varphi)$. Then Proposition~\ref{prop:clauses-inf-verification-sound} allows us to conclude $\mathbb L^u_\alpha\vDash\varphi$. By local correctness we have $\varphi\in l(\sigma)$, which completes the induction step. The other cases are established similarly. Note that we cannot have $r(\sigma)=(\refl,\exists_w\forall_{x\in a}\exists_{y\in w}\theta)$, as local correctness would require $\Omega\leq o(\sigma)$.
\end{proof}

We cannot apply the semantic argument to proofs of height above $\Omega$, because we do not know whether $\mathbb L^u_\alpha$ satisfies reflection. Instead, we will extend the soundness result in an indirect way: J\"ager's~\cite{jaeger-kripke-platek} ordinal analysis of Kripke-Platek set theory shows that certain proofs can be collapsed to proofs of height below $\Omega$. In the following sections we will adapt this ordinal analysis to our setting. In the rest of the present section we show that the search tree $S^u_\alpha$ can be extended to a $(u,\alpha)$-proof of height above $\Omega$. Let us first construct proofs of the axioms listed in Definition~\ref{def:enum-axioms}:

\begin{lemma}\label{lem:P_0}
There is a $(u,\alpha)$-proof $P_0=(P_0,l_0,r_0,o_0)$ with height $o_0(\langle\rangle)=\Omega$ and end-sequent $l_0(\langle\rangle)=\langle\ax_0\rangle$, where $\ax_0$ is the formula $\forall_x\exists_y\,y=x\cup\{x\}$.
\end{lemma}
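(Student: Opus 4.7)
My plan is to construct a proof tree of depth three. At the root~$\langle\rangle$ I would apply the $(\bigwedge,\ax_0)$ rule, whose arity is all of~$\mathbf L^u_\alpha$; for each $a\in\mathbf L^u_\alpha$ the child~$\langle a\rangle$ should prove the instance $\exists_y\,y=a\cup\{a\}$ by invoking a $(\bigvee,\cdot,b_a)$ rule with an explicit witness~$b_a\in\mathbf L^u_\alpha$; and the grandchild $\langle a,0\rangle$ will be an axiom of the form $(\true,b_a=a\cup\{a\})$, so that the tree terminates there.

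The key step is to exhibit~$b_a$ representing $\llbracket a\rrbracket\cup\{\llbracket a\rrbracket\}$ uniformly in~$a$. Since $\suppl_\alpha(a)$ is finite, I would pick an ordinal $s_a<\alpha$ with $\suppl_\alpha(a)\lef s_a$ and set
\begin{equation*}
b_a=\{x\in L^u_{s_a}\mid x\in a\lor x=a\}.
\end{equation*}
The defining formula is~$\Delta_0$, so $b_a$ is a well-formed term with $\suppl_\alpha(b_a)=\{s_a\}\cup\suppl_\alpha(a)$. The condition $\suppl_\alpha(a)\lef s_a$ forces $\llbracket a\rrbracket\in\mathbb L^u_{s_a}$, and hence $\llbracket a\rrbracket\subseteq\mathbb L^u_{s_a}$ by transitivity (Lemma~\ref{lem:basic-properties-constructible}); evaluating $\llbracket b_a\rrbracket$ via Definition~\ref{def:interpretation-constructible-terms} then yields $\llbracket b_a\rrbracket=\llbracket a\rrbracket\cup\{\llbracket a\rrbracket\}$. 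Reading $b_a=a\cup\{a\}$ as the bounded formula $\forall_{z\in b_a}(z\in a\lor z=a)\land\forall_{z\in a}(z\in b_a)\land a\in b_a$, this identity holds in~$\mathbb L^u_\alpha$, which legitimates the axiom rule at the grandchildren.

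For the ordinal labelling I would assign $o_0(\langle a,0\rangle)=0$, $o_0(\langle a\rangle)=s_a+1$ (viewed inside $\varepsilon(S)^u_\alpha\cap\Omega\cong\alpha$ via Proposition~\ref{prop:alpha-epsilon-number}), and $o_0(\langle\rangle)=\Omega$. The descent condition $o_0(\sigma^\frown a)<o_0(\sigma)$ is immediate, and the side condition $\suppl_\alpha(b_a)\lef o_0(\langle a\rangle)$ at the $(\bigvee,\cdot,b_a)$ node reduces to $\{s_a\}\cup\suppl_\alpha(a)\lef s_a+1$, which holds by the choice of~$s_a$. The remaining local correctness clauses from Definition~\ref{def:u-a-proof} are direct case checks, so I expect no serious obstacle; the only subtle point is ensuring that the intended set-theoretic witness really lives in~$\mathbf L^u_\alpha$ with the right interpretation, which is arranged by taking $s_a$ above the support of~$a$.
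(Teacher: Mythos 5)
Your construction is correct and coincides with the paper's own proof: the same three-level tree with $(\bigwedge,\ax_0)$ at the root, a $(\bigvee,\cdot,b_a)$ inference with witness $b_a=\{z\in L^u_{s_a}\mid z\in a\lor z=a\}$ at each child, a $(\true,\cdot)$ leaf, and the same ordinal labels $0$, $s_a+1$, $\Omega$ (the paper simply takes $s_a=\sup\{\beta+1\mid\beta\in\suppl_\alpha(a)\}$, and notes explicitly that $\alpha$ is a limit by Proposition~\ref{prop:alpha-epsilon-number}, which is what guarantees your $s_a<\alpha$ exists). No gaps.
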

\begin{proof}
Proposition~\ref{prop:alpha-epsilon-number} ensures that $\alpha$ is a limit. For each $a\in\mathbf L^u_\alpha$ we compute
\begin{equation*}
\beta_a=\sup\{\beta+1\,|\,\beta\in\suppl_\alpha(a)\}\in\alpha.
\end{equation*}
From $\suppl_\alpha(a)\lef\beta_a$ we can infer $a\in\mathbf L^u_{\beta_a}$, as observed after Definition~\ref{def:term-version-L}. In view of Definition~\ref{def:interpretation-constructible-terms} we get $\llbracket a\rrbracket\in\mathbb L^u_{\beta_a}$. Now we form the $\mathbf L^u_\alpha$-term
\begin{equation*}
b_a=\{z\in L^u_{\beta_a}\,|\,z\in a\lor z=a\}.
\end{equation*}
Its interpretation is given by
\begin{equation*}
\llbracket b_a\rrbracket=\{z\in\mathbb L^u_{\beta_a}\,|\,\mathbb L^u_{\beta_a}\vDash z\in\llbracket a\rrbracket\lor z=\llbracket a\rrbracket\}=\llbracket a\rrbracket\cup\{\llbracket a\rrbracket\}.
\end{equation*}
The desired proof can now be visualized as
\begin{prooftree}
\def\extraVskip{3pt}
\AxiomC{$\cdots$}
\AxiomC{$\vdash^0 b_a=a\cup\{a\}$}
\RightLabel{$(\bigvee)$}
\UnaryInfC{$\vdash^{\beta_a+1}\exists_y\,y=a\cup\{a\}$}
\AxiomC{$\cdots$}
\RightLabel{($\bigwedge$).}
\TrinaryInfC{$\vdash^\Omega\forall_x\exists_y\,y=x\cup\{x\}$}
\end{prooftree}
This means that the leaves of $P_0$ have the form $\langle a,0\rangle$ for arbitrary $a\in\mathbf L^u_\alpha$. They receive the labels
\begin{equation*}
l_0(\langle a,0\rangle)=\langle b_a=a\cup\{a\}\rangle,\quad r_0(\langle a,0\rangle)=(\true,b_a=a\cup\{a\}),\quad o_0(\langle a,0\rangle)=0.
\end{equation*}
By the above we have $\mathbb L^u_\alpha\vDash b_a=a\cup\{a\}$, so that $r_0(\langle a,0\rangle)$ is indeed an $\mathbf L^u_\alpha$-rule. On the next level of $P_0$ we have the labels
\begin{equation*}
l_0(\langle a\rangle)=\langle\exists_y\,y=a\cup\{a\}\rangle,\quad r_0(\langle a\rangle)=(\bigvee,\exists_y\,y=a\cup\{a\},b_a),\quad o_0(\langle a\rangle)=\beta_a+1.
\end{equation*}
Local correctness follows from $\exists_y\,y=a\cup\{a\}\simeq\bigvee_{b\in\mathbf L^u_\alpha}b=a\cup\{a\}$ and
\begin{equation*}
\suppl_\alpha(b_a)=\{\beta_a\}\cup\suppl_\alpha(a)\lef\beta_a+1=o_0(\langle a\rangle).
\end{equation*}
The root of $P_0$ is labelled by
\begin{equation*}
l_0(\langle\rangle)=\langle\forall_x\exists_y\,y=x\cup\{x\}\rangle,\quad r_0(\langle\rangle)=(\bigwedge,\forall_x\exists_y\,y=x\cup\{x\}),\quad o_0(\langle\rangle)=\Omega.
\end{equation*}
Note that $\beta_a+1\in\alpha\cong\varepsilon(S)^u_\alpha\cap\Omega$ implies $o_0(\langle a\rangle)<o_0(\langle\rangle)$. The other local correctness conditions hold because of $\forall_x\exists_y\,y=x\cup\{x\}\simeq\bigwedge_{a\in\mathbf L^u_\alpha}\exists_y\,y=a\cup\{a\}$.
\end{proof}

The remaining axioms can be deduced from the reflection rule:

\begin{lemma}\label{lem:P_n+1}
There are $(u,\alpha)$-proofs $P_{n+1}=(P_{n+1},l_{n+1},r_{n+1},o_{n+1})$ with ordinal height $o_{n+1}(\langle\rangle)<\Omega+\omega=\omega^\Omega+\omega^{\omega^0}$ and end-sequents $l_{n+1}(\langle\rangle)=\langle\ax_{n+1}\rangle$, where
\begin{equation*}
\ax_{n+1}=\forall_{z_1,\dots,z_k}\forall_v(\forall_{x\in v}\exists_y\theta(x,y,z_1,\dots,z_k)\rightarrow\exists_w\forall_{x\in v}\exists_{y\in w}\theta(x,y,z_1,\dots,z_k))
\end{equation*}
is the $n$-th instance of $\Delta_0$-collection, following the enumeration from Definition~\ref{def:enum-axioms}.
\end{lemma}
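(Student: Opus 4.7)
The plan is to derive $\ax_{n+1}$ from a tautology by one application of the reflection rule, surrounded by two $\bigvee$-inferences and $k{+}1$ outer $\bigwedge$-inferences for the universal quantifiers. The key auxiliary result I would first establish is a \emph{tautology lemma}: for every $\mathbf L^u_\alpha$-formula $\psi$ there is a cut-free, locally correct subtree ending in the sequent $\langle\neg\psi,\psi\rangle$ whose root ordinal label lies strictly below $\Omega$. This is proved by recursion on the formula~$\psi$. At bounded~$\psi$ exactly one of $\psi,\neg\psi$ is satisfied in~$\mathbb L^u_\alpha$, so the rule $(\true,\cdot)$ closes the leaf with ordinal~$0$. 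For a conjunctive $\psi\simeq\bigwedge_{a\in\iota_\alpha(\psi)}\psi_a$ (the disjunctive case is dual) one applies $\bigwedge$ on~$\psi$ and, inside each branch indexed by~$a$, applies $\bigvee$ on~$\neg\psi\simeq\bigvee_{a\in\iota_\alpha(\psi)}\neg\psi_a$ with the same index~$a$, then invokes the recursion hypothesis on $\psi_a$. The side condition $\suppl_\alpha(a)\lef o(\sigma)$ on the $\bigvee$-rule is arranged by choosing $o(\sigma)$ above all supports encountered; because $\alpha=\varepsilon_\alpha$ (Proposition~\ref{prop:alpha-epsilon-number}), the relevant suprema of such ordinals remain strictly below $\Omega\cong\aleph_1$, so one obtains a bound in terms of $\hth(\psi)$ from Theorem~\ref{thm:verifications-complete}.

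With the tautology lemma in hand, fix terms $a_1,\dots,a_k,b\in\mathbf L^u_\alpha$ and write $\varphi=\forall_{x\in b}\exists_y\theta(x,y,\vec a)$ and $\psi=\exists_w\forall_{x\in b}\exists_{y\in w}\theta(x,y,\vec a)$. The tautology lemma supplies a subtree with end-sequent $\langle\neg\varphi,\varphi\rangle$ whose root ordinal $h_\varphi<\Omega$. I graft this subtree as the premise of a $(\refl,\psi)$-inference whose conclusion is labelled by $\langle\neg\varphi,\psi\rangle$ with ordinal~$\Omega$: this is locally correct because $\langle\neg\varphi,\varphi\rangle\subseteq\langle\neg\varphi,\psi\rangle,\varphi$, because $\Omega\leq\Omega$, and because $h_\varphi<\Omega$. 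I then append two $\bigvee$-inferences $(\bigvee,\neg\varphi\lor\psi,1)$ and $(\bigvee,\neg\varphi\lor\psi,0)$, whose indices $0,1\in u$ have empty support so no side condition is active; this yields a subtree with end-sequent $\langle\neg\varphi\lor\psi\rangle$ and root ordinal $\Omega+2$. Uniformly in the choice of witnesses I now apply $k+1$ successive $\bigwedge$-inferences of arity $\mathbf L^u_\alpha$ corresponding to $\forall_v,\forall_{z_k},\dots,\forall_{z_1}$, each adding one to the ordinal. The result is the required proof~$P_{n+1}$ with root ordinal $\Omega+(k+3)<\Omega+\omega=\omega^\Omega+\omega^{\omega^0}$.

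\textbf{Main obstacle.} The genuine work is the tautology lemma of Step~1; once it is in place the surrounding proof-tree construction is a bookkeeping exercise. The subtlety is that $\iota_\alpha(\exists_x\psi(x))=\mathbf L^u_\alpha$ has cardinality on the order of $\alpha$, so the recursive construction of tautology proofs for quantified formulas produces branches indexed by arbitrary terms of arbitrarily high support in~$\alpha$; ensuring that the $\bigvee$-support condition is respected while simultaneously keeping the root ordinal below~$\Omega$ hinges precisely on the fact that $\alpha$ is an $\varepsilon$-number. All other steps---local correctness of the reflection and $\bigvee$/$\bigwedge$ applications, the strict ordinal inequalities $o(\sigma^\frown a)<o(\sigma)$, and the shape of $\ax_{n+1}$ in negation normal form---are routine verifications against Definition~\ref{def:u-a-proof}.
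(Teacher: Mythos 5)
Your overall architecture is the same as the paper's (tautology sequent $\langle\neg\varphi,\varphi\rangle$, one reflection inference, two $\bigvee$'s for the disjunction, $k+1$ outer $\bigwedge$'s), but the key auxiliary claim is wrong, and wrongly justified at exactly the point you single out as the crux. Your tautology lemma asserts that $\langle\neg\psi,\psi\rangle$ can always be derived with root ordinal \emph{strictly below} $\Omega$. This fails for the formula it is actually applied to, $\varphi=\forall_{x\in b}\exists_y\theta(x,y,\vec a)$, because $\neg\varphi$ contains the unbounded quantifier $\forall_y$. In the standard symmetric derivation, the node introducing $\forall_y\neg\theta(c,y,\vec a)$ by $(\bigwedge,\cdot)$ has one premise for each $d\in\mathbf L^u_\alpha$, and that premise introduces $\exists_y\theta(c,y,\vec a)$ by $(\bigvee,\cdot,d)$, whose side condition forces $o(\sigma^\frown d)>\max\suppl_\alpha(d)$. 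As $d$ ranges over all of $\mathbf L^u_\alpha$ (e.g.\ over the terms $L^u_\gamma$), these maxima are cofinal in $\alpha\cong\varepsilon(S)^u_\alpha\cap\Omega$, so the ordinal label of the $\bigwedge$-node cannot lie in $\varepsilon(S)^u_\alpha\cap\Omega$ and must be $\geq\Omega$. The hypothesis $\alpha=\varepsilon_\alpha$ does not rescue this: it guarantees closure under the ordinal arithmetic of Lemma~\ref{lem:ordinal-addition}, but it cannot bound a supremum taken over a family cofinal in $\alpha$ itself. Your own reference point confirms this: Theorem~\ref{thm:verifications-complete} assigns $\hth(\forall_y\chi)\geq\omega\cdot(1+\alpha)\geq\alpha$, i.e.\ \emph{not} below $\Omega$.

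The damage is local and repairable, which is why the paper proceeds differently: it derives $\langle\neg\varphi,\varphi\rangle$ explicitly with root ordinal $\Omega+5$ (the inner $\forall_y$-node sitting at exactly $\Omega$, its premises at $\beta_d+1<\Omega$), applies reflection at $\Omega+6$, and finishes at $\Omega+8$ plus the outer $\bigwedge$'s, still comfortably below $\Omega+\omega$. Note that local correctness of $(\refl,\cdot)$ only needs $\Omega\leq o(\sigma)$ and $o(\sigma^\frown 0)<o(\sigma)$; it does not need the premise to lie below $\Omega$, so placing the reflection conclusion at $\Omega+6$ rather than at $\Omega$ costs nothing. You should replace ``$h_\varphi<\Omega$'' by ``$h_\varphi=\Omega+c$ for a fixed finite $c$'' and adjust the bookkeeping; the target bound $o_{n+1}(\langle\rangle)<\Omega+\omega$ is then still met. (The rest of your assembly — the two $\bigvee$-steps on indices $0,1$ with empty support, and the $k+1$ unbounded $\bigwedge$'s, which carry no support side condition — is correct and matches the paper.)
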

\begin{proof}
Above the root of $P_{n+1}$ we have $k+1$ applications of the rule $(\bigwedge,\cdot)$, which introduce the universal quantifiers at the beginning of the formula $\ax_{n+1}$. Thus we have a node $\langle a_1,\dots,a_k,b\rangle\in P_{n+1}$ for all $a_1,\dots,a_k,b\in\mathbf L^u_\alpha$. The proof above this node can be visualized as
\begin{prooftree}
\def\extraVskip{3pt}
\AxiomC{$\vdash^{\Omega+5}\neg\forall_{x\in b}\exists_y\theta(x,y,\vec a),\forall_{x\in b}\exists_y\theta(x,y,\vec a)$}
\RightLabel{$(\refl)$}
\UnaryInfC{$\vdash^{\Omega+6}\neg\forall_{x\in b}\exists_y\theta(x,y,\vec a),\exists_w\forall_{x\in b}\exists_{y\in w}\theta(x,y,\vec a)$}
\RightLabel{$(\bigvee)$}
\UnaryInfC{$\vdash^{\Omega+7}\neg\forall_{x\in b}\exists_y\theta(x,y,\vec a),\neg\forall_{x\in b}\exists_y\theta(x,y,\vec a)\lor\exists_w\forall_{x\in b}\exists_{y\in w}\theta(x,y,\vec a)$}
\RightLabel{$(\bigvee).$}
\UnaryInfC{$\vdash^{\Omega+8}\neg\forall_{x\in b}\exists_y\theta(x,y,\vec a)\lor\exists_w\forall_{x\in b}\exists_{y\in w}\theta(x,y,\vec a)$}
\end{prooftree}
We leave it to the reader to determine the precise rule used at each node. At the crucial node $\langle a_1,\dots,a_k,b,0,0\rangle$ we have the labels
\begin{align*}
l_{n+1}(\langle\vec a,b,0,0\rangle)&=\langle\neg\forall_{x\in b}\exists_y\theta(x,y,\vec a),\exists_w\forall_{x\in b}\exists_{y\in w}\theta(x,y,\vec a)\rangle,\\
r_{n+1}(\langle\vec a,b,0,0\rangle)&=(\refl,\exists_w\forall_{x\in b}\exists_{y\in w}\theta(x,y,\vec a)),\\
o_{n+1}(\langle\vec a,b,0,0\rangle)&=\Omega+6=\omega^\Omega+\underbrace{\omega^0+\dots+\omega^0}_{\text{six times}}.
\end{align*}
It is straightforward to see that the local correctness conditions are satisfied. In particular we have $\Omega\leq o_{n+1}(\langle\vec a,b,0,0\rangle)$, as required in the case of a rule $(\refl,\cdot)$. It remains to derive the sequent above the reflection rule, which has the form $\langle\neg\varphi,\varphi\rangle$. Such sequents can be derived in general, but we prefer to focus on the present case: Assume that we are concerned with a term of the form $b=\{z\in L^u_\gamma\,|\,\varphi(z)\}$ (the cases $b=L^u_\gamma$ and $b=v$ with $v\in u$ are somewhat easier). Then we have
\begin{equation*}
\forall_{x\in b}\exists_y\theta(x,y,\vec a)\simeq\textstyle\bigwedge_{\suppl_\alpha(c)\lef\gamma}\neg\varphi(c)\lor\exists_y\theta(c,y,\vec a),
\end{equation*}
and the remaining part of the proof can be visualized as
\vspace{-\baselineskip}
\begin{prooftree}
\def\extraVskip{3pt}
\AxiomC{$\cdots$}
\AxiomC{$\vdash^0\varphi(c),\neg\varphi(c)$}
\AxiomC{$\cdots$}
\AxiomC{$\vdash^0\neg\theta(c,d,\vec a),\theta(c,d,\vec a)$}
\UnaryInfC{$\vdash^{\beta_d+1}\neg\theta(c,d,\vec a),\exists_y\theta(c,y,\vec a)$}
\AxiomC{$\cdots$}
\TrinaryInfC{$\vdash^\Omega\forall_y\neg\theta(c,y,\vec a),\exists_y\theta(c,y,\vec a)$}
\BinaryInfC{$\vdash^{\Omega+1}\varphi(c)\land\forall_y\neg\theta(c,y,\vec a),\neg\varphi(c),\exists_y\theta(c,y,\vec a)$}
\doubleLine
\UnaryInfC{$\vdash^{\Omega+3}\varphi(c)\land\forall_y\neg\theta(c,y,\vec a),\neg\varphi(c)\lor\exists_y\theta(c,y,\vec a)$}
\UnaryInfC{$\vdash^{\Omega+4}\neg\forall_{x\in b}\exists_y\theta(x,y,\vec a),\neg\varphi(c)\lor\exists_y\theta(c,y,\vec a)$}
\AxiomC{$\cdots$}
\RightLabel{.}
\TrinaryInfC{$\vdash^{\Omega+5}\neg\forall_{x\in b}\exists_y\theta(x,y,\vec a),\forall_{x\in b}\exists_y\theta(x,y,\vec a)$}
\end{prooftree}
The leaves can be labelled by rules $(\true,\cdot)$, since one of the bounded formulas $\varphi(c)$ and $\neg\varphi(c)$ (resp.~$\neg\theta(c,d,\vec a)$ and $\theta(c,d,\vec a)$) must hold in $\mathbb L^u_\alpha$. The required bound $\beta_d$ with $\suppl_\alpha(d)\lef\beta_d+1<\Omega$ is computed as in the previous proof. The double line indicates two applications of $(\bigvee,\cdot)$. 
\end{proof}

To get the following result we attach the constructed proofs to the open assumptions of the search tree $S^u_\alpha$:

\begin{proposition}\label{prop:proof-from-search-tree}
 There is a $(u,\alpha)$-proof $P^u_\alpha=(P^u_\alpha,l^u_\alpha,r^u_\alpha,o^u_\alpha)$ with empty end-sequent $l^u_\alpha(\langle\rangle)=\langle\rangle$ and ordinal height $o^u_\alpha(\langle\rangle)=\mathfrak E_{\langle\rangle}$.
\end{proposition}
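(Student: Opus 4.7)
The plan is to extend the search tree $S^u_\alpha$ to a $(u,\alpha)$-proof by discharging, via cut against the finite derivations $P_n$ of Lemma~\ref{lem:P_0} and Lemma~\ref{lem:P_n+1}, each axiom label $\neg\ax_n$ introduced at even stages of the search. Concretely, take
\begin{equation*}
 P^u_\alpha \;=\; S^u_\alpha \;\cup\; \{\sigma^\frown 1^\frown\tau \mid \sigma\in S^u_\alpha,\ \len(\sigma)=2n,\ \tau\in P_n\},
\end{equation*}
let $l^u_\alpha$ and $r^u_\alpha$ be inherited from $S^u_\alpha$ (with the rule scheme below) and from $l_n,r_n$ on each grafted copy of $P_n$, and set $o^u_\alpha(\sigma)=\mathfrak E_\sigma$ on $S^u_\alpha$ and $o^u_\alpha(\sigma^\frown 1^\frown\tau)=o_n(\tau)$ on the grafts. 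At the root this gives $l^u_\alpha(\langle\rangle)=l_\alpha(\langle\rangle)=\langle\rangle$ and $o^u_\alpha(\langle\rangle)=\mathfrak E_{\langle\rangle}$, as required.

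At each $\sigma\in S^u_\alpha$ the rule is chosen to mirror Definition~\ref{def:search-tree}. For $\sigma$ of even length $2n$ set $r^u_\alpha(\sigma)=(\cut,\ax_n)$: the left premise $\sigma^\frown 0\in S^u_\alpha$ carries $l_\alpha(\sigma),\neg\ax_n$, and the right premise $\sigma^\frown 1$ is the root of the grafted $P_n$, carrying end-sequent $\langle\ax_n\rangle\subseteq l_\alpha(\sigma),\ax_n$. For $\sigma$ of odd length $2n+1$ with $\pi_0(n)$-th formula $\varphi$: use $(\bigwedge,\varphi)$ when $\varphi$ is conjunctive (the prescribed children agree with those of $S^u_\alpha$, including degenerate leaves when $\iota_\alpha(\varphi)=\emptyset$); use $(\bigvee,\varphi,b)$ when $\varphi$ is disjunctive and $b=\en_\alpha(\supps_\alpha(\sigma\!\restriction\!\pi_1(n)),\pi_2(n))$ lies in $\iota_\alpha(\varphi)$; and use $(\rep,0)$ otherwise. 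Local correctness inside each grafted $P_n$ is inherited verbatim from $P_n$ itself.

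The essential check is the ordinal decrease $o^u_\alpha(\sigma^\frown a)<o^u_\alpha(\sigma)$ together with the side condition $\suppl_\alpha(b)\lef o^u_\alpha(\sigma)$ of the $(\bigvee,\cdot)$ rule. For a child inside $S^u_\alpha$, the node $\sigma^\frown a$ is a proper end-extension of $\sigma$, hence $\sigma^\frown a<_{S^u_\alpha}\sigma$ under Kleene--Brouwer, so clause~(iv') of Definition~\ref{def:eps(S)-order} yields $\mathfrak E_{\sigma^\frown a}<\mathfrak E_\sigma$. At a grafted cut we need $o_n(\langle\rangle)<\mathfrak E_\sigma$, which holds because Lemmas~\ref{lem:P_0} and~\ref{lem:P_n+1} bound $o_n(\langle\rangle)$ by $\Omega+\omega$, while clauses~(iii') and~(v') of Definition~\ref{def:eps(S)-order} give $\Omega+\omega<\mathfrak E_\sigma$ for every $\sigma\in S^u_\alpha$. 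For the side condition, Corollary~\ref{cor:support-enumeration} yields $\suppl_\alpha(b)\subseteq\supps_\alpha(\sigma\!\restriction\!\pi_1(n))\subseteq\alpha$, and every element of $\alpha\cong\varepsilon(S)^u_\alpha\cap\Omega$ lies strictly below $\mathfrak E_\sigma$.

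Conceptually the argument is unproblematic; the main obstacle is the bookkeeping needed to verify local correctness at every node of $S^u_\alpha$---in particular the degenerate odd-length leaves (handled automatically because $(\bigwedge,\varphi)$ with $\iota_\alpha(\varphi)=\emptyset$ requires no children) and the reflection-rule nodes inside each $P_n$, where the local condition $\Omega\leq o(\sigma)$ is precisely the bound guaranteed by the construction in Lemma~\ref{lem:P_n+1}---and to confirm that the Kleene--Brouwer ordinals $\mathfrak E_\sigma$ dominate the small ordinals used inside each $P_n$, which reduces entirely to the comparison clauses of Definition~\ref{def:eps(S)-order}.
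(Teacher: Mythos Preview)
Your proposal is correct and follows essentially the same approach as the paper: graft the proofs $P_n$ onto the even-length nodes of $S^u_\alpha$ via cuts over $\ax_n$, assign ordinals $\mathfrak E_\sigma$ on the search-tree part, and verify local correctness using the Kleene--Brouwer order and the bound $o_n(\langle\rangle)<\Omega+\omega<\mathfrak E_\sigma$. The only detail you leave implicit that the paper makes explicit is the uniqueness of the decomposition $\sigma^\frown 1^\frown\tau$ (which holds because $\sigma^\frown 1\notin S^u_\alpha$ for even-length $\sigma$), but this does not affect the argument.
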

\begin{proof}
 Invoking the proofs $P_n=(P_n,l_n,r_n,o_n)$ from the previous lemmas, the underlying tree of our proof can be given by
 \begin{equation*}
  P^u_\alpha=S^u_\alpha\cup\{\sigma^\frown 1^\frown\tau\,|\,\sigma\in S^u_\alpha\land\exists_{n\in\omega}(\len(\sigma)=2n\land\tau\in P_n)\}.
 \end{equation*}
 Note that the decomposition $\sigma^\frown 1^\frown\tau$ is unique: According to Definition~\ref{def:search-tree} we have $\sigma^\frown 1\notin S^u_\alpha$ when $\sigma$ has even length. Recall that the search tree comes with a function $l_\alpha:S^u_\alpha\rightarrow\text{``$\mathbf L^u_\alpha$-sequents''}$. Thus we can define $l^u_\alpha:P^u_\alpha\rightarrow\text{``$\mathbf L^u_\alpha$-sequents''}$~by
 \begin{align*}
  l^u_\alpha(\sigma)&=l_\alpha(\sigma)\quad&&\text{for $\sigma\in S^u_\alpha$},\\
  l^u_\alpha(\sigma^\frown 1^\frown\tau)&=l_n(\tau)\quad&&\text{for $\sigma\in S^u_\alpha$ with $\len(\sigma)=2n$ and $\tau\in P_n$}.
 \end{align*}
 In particular we have $l^u_\alpha(\langle\rangle)=l_\alpha(\langle\rangle)=\langle\rangle$, as claimed by the proposition. Similarly we set $r^u_\alpha(\sigma^\frown 1^\frown\tau)=r_n(\tau)$ and $o^u_\alpha(\sigma^\frown 1^\frown\tau)=o_n(\tau)$. Then local correctness at $\sigma^\frown 1^\frown\tau\in P^u_\alpha$ follows from local correctness at $\tau\in P_n$. It remains to define $r^u_\alpha(\sigma)$ and $o^u_\alpha(\sigma)$ for $\sigma\in S^u_\alpha$ and to verify local correctness at these nodes. In view of Definition~\ref{def:eps(S)-order} we can set
 \begin{equation*}
  o^u_\alpha(\sigma)=\mathfrak E_\sigma\qquad\text{for $\sigma\in S^u_\alpha$},
 \end{equation*}
 similar to a construction of Rathjen and Valencia Vizca\'ino~\cite{rathjen-model-bi}. To define $r^u_\alpha(\sigma)$ we follow Definition~\ref{def:search-tree}: For $\sigma\in S^u_\alpha$ with $\len(\sigma)=2n$ we put $r^u_\alpha(\sigma)=(\cut,\ax_n)$. The construction of search trees yields $\sigma^\frown 0\in S^u_\alpha\subseteq P^u_\alpha$ and $l^u_\alpha(\sigma^\frown 0)=l^u_\alpha(\sigma),\neg\ax_n$, as required by Definition~\ref{def:u-a-proof}. Since $<_{S^u_\alpha}$ is the Kleene-Brouwer order on $S^u_\alpha$ we have $\sigma^\frown 0<_{S^u_\alpha}\sigma$ and thus $o^u_\alpha(\sigma^\frown 0)<_{\varepsilon(S)^u_\alpha}o^u_\alpha(\sigma)$. From $\langle\rangle\in P_n$ we infer $\sigma^\frown 1\in P^u_\alpha$ and
 \begin{equation*}
  l^u_\alpha(\sigma^\frown 1)=l_n(\langle\rangle)=\langle\ax_n\rangle\subseteq l^u_\alpha(\sigma),\ax_n,
 \end{equation*}
 as well as
 \begin{equation*}
  o^u_\alpha(\sigma^\frown 1)=o_n(\langle\rangle)<\Omega+\omega=\omega^\Omega+\omega^{\omega^0}<\mathfrak E_\sigma=o^u_\alpha(\sigma).
 \end{equation*}
 This shows that $P^u_\alpha$ is locally correct at nodes $\sigma\in S^u_\alpha$ of even length. Now consider a node $\sigma\in S^u_\alpha$ with $\len(\sigma)=2n+1$. We write $\varphi$ for the $\pi_0(n)$-th formula of $l_\alpha(\sigma)$. If $\varphi$ is conjunctive, we put $r^u_\alpha(\sigma)=(\bigwedge,\varphi)$. If $\varphi$ is disjunctive, we compute $b\in\mathbf L^u_\alpha$ as in Definition~\ref{def:search-tree}. In case $b\in\iota_\alpha(\varphi)$ we set $r^u_\alpha(\sigma)=(\bigvee,\varphi,b)$, otherwise we set $r^u_\alpha(\sigma)=(\rep,0)$. It is straightforward to verify the local correctness conditions.
\end{proof}

\section{Transforming Infinite Proofs}

To define transformations of infinite proofs we would like to use recursion over their height. Unfortunately this is not possible in our meta theory~$\atrs$, which does not prove that the subtrees of $(\mathbf L^u_\alpha)^{<\omega}$ form a set. Buchholz~\cite{buchholz91,buchholz-notations-set-theory} has introduced an elegant method for the formalization of infinite proofs in weak theories: The idea is to represent each proof by a term that reflects its role in the ordinal analysis. In the first half of the present section we adapt this approach to our setting. In the second half we use it to implement cut elimination. Our first goal is to define a term system that reconstructs the proof $P^u_\alpha=(P^u_\alpha,l^u_\alpha,r^u_\alpha,o^u_\alpha)$ from Proposition~\ref{prop:proof-from-search-tree}:

\begin{definition}\label{def:proof-codes}
 A basic $(u,\alpha)$-code is an expression of the form $P^u_\alpha\sigma$ with $\sigma\in P^u_\alpha$. We consider the functions
  \begin{align*}
l_{\langle\rangle}&:\text{``basic $(u,\alpha)$-codes''}\rightarrow\text{``$\mathbf L^u_\alpha$-sequents''},& l_{\langle\rangle}(P^u_\alpha\sigma)&=l^u_\alpha(\sigma),\\
r_{\langle\rangle}&:\text{``basic $(u,\alpha)$-codes''}\rightarrow\text{``$\mathbf L^u_\alpha$-rules''}, & r_{\langle\rangle}(P^u_\alpha\sigma)&=r^u_\alpha(\sigma),\\
o_{\langle\rangle}&:\text{``basic $(u,\alpha)$-codes''}\rightarrow\varepsilon(S)^u_\alpha, & o_{\langle\rangle}(P^u_\alpha\sigma)&=o^u_\alpha(\sigma),\\
 \end{align*}
 as well as the function
 \begin{gather*}
  n:\text{``basic $(u,\alpha)$-codes''}\times\mathbf L^u_\alpha\rightarrow\text{``basic $(u,\alpha)$-codes''},\\
  n(P_S\sigma,a)=\begin{cases}
                   P^u_\alpha\,\sigma^\frown a & \text{if $\sigma^\frown a\in P^u_\alpha$},\\
                   P^u_\alpha\langle\rangle & \text{otherwise}.
                 \end{cases}
 \end{gather*}
\end{definition}

The basic $(u,\alpha)$-codes are the constant symbols of a system of $(u,\alpha)$-codes that will be dynamically extended over the following sections. The functions $l_{\langle\rangle}$, $r_{\langle\rangle}$, $o_{\langle\rangle}$ and $n$ will extend to all $(u,\alpha)$-codes by recursion over terms: Whenever we add a new function symbol to the system of $(u,\alpha)$-codes, we will add corresponding recursive clauses. Anticipating this development, the following is formulated for arbitrary $(u,\alpha)$-codes rather than just for basic ones.

\begin{definition}\label{def:code-reconstruct-proof}
 By recursion over finite sequences we define a function
 \begin{gather*}
  \bar n:\text{``$(u,\alpha)$-codes''}\times(\mathbf L^u_\alpha)^{<\omega}\rightarrow\text{``$(u,\alpha)$-codes''},\\
  \bar n(P,\langle\rangle)=P,\qquad\bar n(P,\sigma^\frown a)=n(\bar n(P,\sigma),a).
 \end{gather*}
 In the following we write $\iota(P)$ rather than $\iota(r_{\langle\rangle}(P))$ for the arity of the last rule of~$P$ (cf.~Definition~\ref{def:rules}). To each $(u,\alpha)$-code $P$ we associate a tree $[P]\subseteq(\mathbf L^u_\alpha)^{<\omega}$: We have $\langle\rangle\in[P]$, and if $\sigma\in[P]$ is given by recursion, then we stipulate
 \begin{equation*}
  \sigma^\frown a\in[P]\quad\Leftrightarrow\quad a\in\iota(\bar n(P,\sigma)).
 \end{equation*}
 We also consider the functions
\begin{align*}
l_P&:[P]\rightarrow\text{``$\mathbf L^u_\alpha$-sequents''},& l_P(\sigma)&=l_{\langle\rangle}(\bar n(P,\sigma)),\\
r_P&:[P]\rightarrow\text{``$\mathbf L^u_\alpha$-rules''},& r_P(\sigma)&=r_{\langle\rangle}(\bar n(P,\sigma)),\\
o_P&:[P]\rightarrow\varepsilon(S)^u_\alpha,& o_P(\sigma)&=o_{\langle\rangle}(\bar n(P,\sigma)).
\end{align*}
The tuple $[P]=([P],l_P,r_P,o_P)$ is called the interpretation of $P$.
\end{definition}

Let us reformulate Definition~\ref{def:u-a-proof} in terms of codes:

\begin{definition}\label{def:condition-L}
 We say that a $(u,\alpha)$-code $P$ satisfies condition (L) if we have
 \begin{equation*}
  o_{\langle\rangle}(n(P,a))<o_{\langle\rangle}(P)\qquad\text{for all $a\in\iota(P)$}
 \end{equation*}
 and if the relevant condition from the following table is satisfied:
 {\def\arraystretch{1.75}\tabcolsep=4pt
\setlength{\LTpre}{\baselineskip}\setlength{\LTpost}{\baselineskip}
\begin{longtable}{l|p{0.7\textwidth}}\hline
If $r_{\langle\rangle}(P)$ is \dots & \dots\ then we have \dots\\ \hline
$(\true,\varphi)$ & $\varphi\in l_{\langle\rangle}(P)$.\\
$(\bigwedge,\varphi)$ & $\varphi\in l_{\langle\rangle}(P)$ and $l_{\langle\rangle}(n(P,a))\subseteq l_{\langle\rangle}(P),\varphi_a$ for all $a\in\iota_\alpha(\varphi)$.\\
$(\bigvee,\varphi,a)$ & $\varphi\in l_{\langle\rangle}(P)$ and $l_{\langle\rangle}(n(P,0))\subseteq l_{\langle\rangle}(P),\varphi_a$,\newline as well as $\suppl_\alpha(a)\lef o_{\langle\rangle}(P)$.\\ 
$(\cut,\varphi)$ & $l_{\langle\rangle}(n(P,0))\subseteq l_{\langle\rangle}(P),\neg\varphi$ and $l_{\langle\rangle}(n(P,1))\subseteq l_{\langle\rangle}(P),\varphi$.\\
$(\refl,\exists_w\forall_{x\in a}\exists_{y\in w}\theta)$ & $\exists_w\forall_{x\in a}\exists_{y\in w}\theta\in l_{\langle\rangle}(P)$ and $l_{\langle\rangle}(n(P,0))\subseteq l_{\langle\rangle}(P),\forall_{x\in a}\exists_y\theta$,\newline as well as~$\Omega\leq o_{\langle\rangle}(P)$.\\ 
$(\rep,a)$ & $l_{\langle\rangle}(n(P,a))\subseteq l_{\langle\rangle}(P)$.\\ \hline
\end{longtable}}
\end{definition}

Condition (L) for the single $(u,\alpha)$-code $P$ only ensures local correctness of $[P]$ at the root. To see that $[P]$ is correct at every node we need to consider the entire system of $(u,\alpha)$-codes:

\begin{lemma}
 All basic $(u,\alpha)$-codes satisfy condition (L).
\end{lemma}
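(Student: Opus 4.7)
The plan is to observe that Definition~\ref{def:condition-L}, restricted to a basic code $P = P^u_\alpha\sigma$, is essentially a notational rewriting of the local correctness conditions from Definition~\ref{def:u-a-proof} at the node $\sigma\in P^u_\alpha$. Since Proposition~\ref{prop:proof-from-search-tree} establishes that $P^u_\alpha$ is a bona fide $(u,\alpha)$-proof, it is locally correct at every node, and condition (L) follows.

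More concretely, I would first fix $\sigma\in P^u_\alpha$ and set $P=P^u_\alpha\sigma$. Unfolding Definition~\ref{def:proof-codes} gives $l_{\langle\rangle}(P)=l^u_\alpha(\sigma)$, $r_{\langle\rangle}(P)=r^u_\alpha(\sigma)$, $o_{\langle\rangle}(P)=o^u_\alpha(\sigma)$ and $\iota(P)=\iota(r^u_\alpha(\sigma))$. Next I would dispose of the ordinal inequality in (L): for any $a\in\iota(r^u_\alpha(\sigma))$, Definition~\ref{def:u-a-proof} yields $\sigma^\frown a\in P^u_\alpha$ with $o^u_\alpha(\sigma^\frown a)<o^u_\alpha(\sigma)$. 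Consequently the non-trivial branch of $n$ applies, so $n(P,a)=P^u_\alpha\,\sigma^\frown a$, and the desired inequality $o_{\langle\rangle}(n(P,a))<o_{\langle\rangle}(P)$ is just $o^u_\alpha(\sigma^\frown a)<o^u_\alpha(\sigma)$.

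It then remains to run through the table of Definition~\ref{def:condition-L} case by case. In each case the statement to verify coincides, after substituting the identities above, with the corresponding line of the table in Definition~\ref{def:u-a-proof} applied at $\sigma$. For example, if $r^u_\alpha(\sigma)=(\bigwedge,\varphi)$, then for every $a\in\iota_\alpha(\varphi)$ we have $n(P,a)=P^u_\alpha\,\sigma^\frown a$ (by the previous paragraph applied to this particular arity), so $l_{\langle\rangle}(n(P,a))=l^u_\alpha(\sigma^\frown a)\subseteq l^u_\alpha(\sigma),\varphi_a=l_{\langle\rangle}(P),\varphi_a$ by local correctness of $P^u_\alpha$ at $\sigma$. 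The remaining cases $(\true,\varphi)$, $(\bigvee,\varphi,a)$, $(\cut,\varphi)$, $(\refl,\cdot)$ and $(\rep,a)$ are verified identically; only the side condition $\suppl_\alpha(a)\lef o_{\langle\rangle}(P)$ for the $\bigvee$-rule and the side condition $\Omega\leq o_{\langle\rangle}(P)$ for the $\refl$-rule need to be singled out, and both are immediate from the corresponding clauses of Definition~\ref{def:u-a-proof}.

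There is no real obstacle here: the lemma is a routine translation between the proof-theoretic formalism of Definition~\ref{def:u-a-proof} and the term-based formalism introduced for codes. The only point requiring any care is ensuring that $n(P,a)$ never evaluates to the fallback value $P^u_\alpha\langle\rangle$ within the scope of condition (L); this is secured by the general fact, built into the definition of $(u,\alpha)$-proof, that $\sigma^\frown a\in P^u_\alpha$ holds whenever $a\in\iota(r^u_\alpha(\sigma))$.
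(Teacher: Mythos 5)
Your proposal is correct and follows essentially the same route as the paper: both derive condition (L) for $P^u_\alpha\sigma$ directly from the local correctness of the $(u,\alpha)$-proof $P^u_\alpha$ at the node $\sigma$ (guaranteed by Proposition~\ref{prop:proof-from-search-tree}), checking the ordinal inequality via $n(P^u_\alpha\sigma,a)=P^u_\alpha\,\sigma^\frown a$ and noting that the remaining table entries are mere notational translations. Your explicit remark that the fallback clause of $n$ is never triggered for $a\in\iota(r^u_\alpha(\sigma))$ is a small but welcome addition to what the paper leaves implicit.
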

\begin{proof}
 Condition (L) for the $(u,\alpha)$-code $P^u_\alpha\sigma$ follows from local correctness of the $(u,\alpha)$-proof $P^u_\alpha$ at the node $\sigma$: Consider an arbitrary $a\in\iota(P^u_\alpha\sigma)=\iota(r^u_\alpha(\sigma))$. In view of Proposition~\ref{prop:proof-from-search-tree} and Definition~\ref{def:u-a-proof} we get $\sigma^\frown a\in P^u_\alpha$ and then
 \begin{equation*}
  o_{\langle\rangle}(n(P^u_\alpha\sigma,a))=o_{\langle\rangle}(P^u_\alpha\,\sigma^\frown a)=o^u_\alpha(\sigma^\frown a)<o^u_\alpha(\sigma)=o_{\langle\rangle}(P^u_\alpha\sigma).
 \end{equation*}
 The remaining conditions are verified similarly.
\end{proof}

Condition (L) for arbitrary $(u,\alpha)$-codes is established by induction over terms: Whenever we add a new function symbol to the system of $(u,\alpha)$-codes, we will verify the corresponding induction step. Thus the following remains valid for all $(u,\alpha)$-codes that we introduce:

\begin{proposition}\label{prop:code-into-proof}\label{prop:codes-to-proofs}
 If $P$ is a $(u,\alpha)$-code, then its interpretation $[P]$ is a $(u,\alpha)$-proof.
\end{proposition}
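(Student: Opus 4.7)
The plan is to verify, for every node $\sigma\in[P]$, each of the clauses required by Definition~\ref{def:u-a-proof}, using condition~(L) applied to the code $\bar n(P,\sigma)$ rather than to $P$ itself. The essential observation is that $\bar n(P,\cdot)$ is compositional: from the recursive definition we immediately obtain
\[
 \bar n(P,\sigma^\frown a)=n(\bar n(P,\sigma),a),
\]
and consequently $l_P(\sigma^\frown a)=l_{\langle\rangle}(n(\bar n(P,\sigma),a))$, and similarly for $r_P$ and $o_P$. Thus the local structure of $[P]$ at any node $\sigma$ is controlled by the ``one-step'' behaviour of the code $\bar n(P,\sigma)$.

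First I would note that $[P]$ is by construction a subtree of $(\mathbf L^u_\alpha)^{<\omega}$, since the definition builds it by closing under the successor clause $\sigma^\frown a\in[P]\Leftrightarrow a\in\iota(\bar n(P,\sigma))$. Next, I would fix an arbitrary $\sigma\in[P]$ and an arbitrary $a\in\iota(r_P(\sigma))$. Unfolding definitions, $\iota(r_P(\sigma))=\iota(r_{\langle\rangle}(\bar n(P,\sigma)))=\iota(\bar n(P,\sigma))$, so by the recursive clause defining $[P]$ we have $\sigma^\frown a\in[P]$. For the ordinal decrease, apply the first clause of condition~(L) to the code $\bar n(P,\sigma)$ to obtain
\[
 o_P(\sigma^\frown a)=o_{\langle\rangle}(n(\bar n(P,\sigma),a))<o_{\langle\rangle}(\bar n(P,\sigma))=o_P(\sigma).
\]

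The bulk of the verification is then a case distinction on $r_P(\sigma)=r_{\langle\rangle}(\bar n(P,\sigma))$. In every case the relevant correctness clause of Definition~\ref{def:u-a-proof} at the node $\sigma$ translates, via the identities $l_P(\sigma^\frown a)=l_{\langle\rangle}(n(\bar n(P,\sigma),a))$ and $o_P(\sigma)=o_{\langle\rangle}(\bar n(P,\sigma))$, into the corresponding clause of condition~(L) for the single code $\bar n(P,\sigma)$. For instance, if $r_P(\sigma)=(\bigwedge,\varphi)$, then condition~(L) yields $\varphi\in l_{\langle\rangle}(\bar n(P,\sigma))=l_P(\sigma)$ and $l_{\langle\rangle}(n(\bar n(P,\sigma),a))\subseteq l_{\langle\rangle}(\bar n(P,\sigma)),\varphi_a$, i.e.~$l_P(\sigma^\frown a)\subseteq l_P(\sigma),\varphi_a$, for every $a\in\iota_\alpha(\varphi)$. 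The remaining rules $(\true,\varphi)$, $(\bigvee,\varphi,a)$, $(\cut,\varphi)$, $(\refl,\cdot)$ and $(\rep,a)$ are handled identically; in the reflection case the side condition $\Omega\leq o_P(\sigma)$ is again just $\Omega\leq o_{\langle\rangle}(\bar n(P,\sigma))$.

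The main point that has to be emphasised in the proof is not the individual verifications, which are mere unfolding of definitions, but the structural observation that the assertion of the proposition rests entirely on the statement that every $(u,\alpha)$-code satisfies condition~(L). Hence as new function symbols are added to the code system in later sections, the validity of Proposition~\ref{prop:codes-to-proofs} for those symbols will be inherited automatically, provided one discharges the obligation of verifying condition~(L) for each new constructor. In the present stage only basic codes exist, and the preceding lemma has already established~(L) for those; so the proposition follows.
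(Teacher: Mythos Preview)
Your proposal is correct and follows essentially the same approach as the paper: reduce local correctness of $[P]$ at a node $\sigma$ to condition~(L) for the code $\bar n(P,\sigma)$, using the compositionality $\bar n(P,\sigma^\frown a)=n(\bar n(P,\sigma),a)$. The paper's proof is terser (it spells out only the ordinal-decrease clause and leaves the rest to ``similarly''), but your more explicit case distinction and your remark about the proposition remaining valid as new constructors are added are entirely in line with the paper's intent as articulated in Remark~\ref{rmk:codes-methodology}.
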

\begin{proof}
 The local correctness of $[P]$ at the node $\sigma$ follows from condition (L) for the $(u,\alpha)$-code $\bar n(P,\sigma)$: Consider an arbitrary $a\in\iota(r_P(\sigma))=\iota(\bar n(P,\sigma))$. In view of Definition~\ref{def:code-reconstruct-proof} we get $\sigma^\frown a\in[P]$ and
 \begin{equation*}
  o_P(\sigma^\frown a)=o_{\langle\rangle}(\bar n(P,\sigma^\frown a))=o_{\langle\rangle}(n(\bar n(P,\sigma),a))<o_{\langle\rangle}(\bar n(P,\sigma))=o_P(\sigma),
 \end{equation*}
 as required by Definition~\ref{def:u-a-proof}. The remaining conditions are verified similarly.
\end{proof}

We have already indicated that the system of $(u,\alpha)$-codes will be extended dynamically. Let us now explain how this works in detail:

\begin{remark}\label{rmk:codes-methodology}
 As we present different steps of our ordinal analysis we will introduce function symbols
 \begin{equation*}
  \mathcal I_{\varphi,a},\qquad\mathcal R_{\varphi},\qquad\mathcal E,\qquad\mathcal B_{\varphi,\gamma},\qquad\mathcal C_t.
 \end{equation*}
 At the end of Section~\ref{sect:collapsing} we will have completed the following inductive definition:
 \begin{itemize}
  \item Any basic $(u,\alpha)$-code is a $(u,\alpha)$-code.
  \item If $\mathcal F$ is a $k$-ary function symbol listed above and $P_1,\dots,P_k$ are $(u,\alpha)$-codes, then $\mathcal F P_1\dots P_k$ is a $(u,\alpha)$-code as well.
 \end{itemize}
 In order to capture certain properties of the $(u,\alpha)$-codes we will introduce functions
 \begin{align*}
  d&:\text{``$(u,\alpha)$-codes''}\rightarrow\omega,\\
  h_0&:\text{``$(u,\alpha)$-codes''}\rightarrow\varepsilon(S)^u_\alpha,\\
  h_1&:\text{``$(u,\alpha)$-codes''}\rightarrow[\alpha]^{<\omega}.
 \end{align*}
 We must also define extensions
 \begin{align*}
  l_{\langle\rangle}&:\text{``$(u,\alpha)$-codes''}\rightarrow\text{``$\mathbf L^u_\alpha$-sequents''},& r_{\langle\rangle}&:\text{``$(u,\alpha)$-codes''}\rightarrow\text{``$\mathbf L^u_\alpha$-rules''},\\
  o_{\langle\rangle}&:\text{``$(u,\alpha)$-codes''}\rightarrow\varepsilon(S)^u_\alpha,& n&:\text{``$(u,\alpha)$-codes''}\times\mathbf L^u_\alpha\rightarrow\text{``$(u,\alpha)$-codes''}
 \end{align*}
 of the functions considered above. This works as follows:
 \begin{enumerate}[label=(\roman*)]
  \item Define the values $l_{\langle\rangle}(P)$, $o_{\langle\rangle}(P)$, $d(P)$, $h_0(P)$ and $h_1(P)$ by simultaneous recursion over the $(u,\alpha)$-code $P$.
  \item Then define the value $r_{\langle\rangle}(P)$ and the function $a\mapsto n(P,a)$, again by simultaneous recursion over $P$.
 \end{enumerate}
 It is crucial to complete step~(i) before step~(ii): For example the rule $r_{\langle\rangle}(\mathcal C_tP)$ will depend on ordinals $o_{\langle\rangle}(n(P,a))$, similarly to~\cite[Definition~5.3]{buchholz-notations-set-theory}. Since $n(P,a)$ is not a subterm of~$P$ we cannot define $r_{\langle\rangle}$ and $o_{\langle\rangle}$ in the same recursion. At the end of Section~\ref{sect:collapsing} the functions $l_{\langle\rangle}$, $r_{\langle\rangle}$, $o_{\langle\rangle}$ and $n$ will be defined on all $(u,\alpha)$-codes. Thus Definition~\ref{def:code-reconstruct-proof} will provide an interpretation $[P]$ of any $(u,\alpha)$-code $P$. To control the behaviour of the functions $d$, $h_0$ and $h_1$ we will introduce local correctness conditions (C1), (C2), (H1), (H2) and~(H3). Condition (L) from Definition~\ref{def:condition-L} must also be established for all $(u,\alpha)$-codes. This can be achieved as follows:
 \begin{enumerate}[label=(\roman*')]
  \item Show that any $(u,\alpha)$-code $P$ satisfies condition (H1), by induction over $P$.
  \item Then use simultaneous induction over $P$ to show that any $(u,\alpha)$-code $P$ satisfies conditions (L), (C1), (C2), (H2) and~(H3). At the same time one should confirm that $r_{\langle\rangle}(P)$ is an $\mathbf L^u_\alpha$-rule and that $n(P,a)$ is a $(u,\alpha)$-code.
 \end{enumerate}
 Again it is crucial to complete step~(i') first: We will need condition~(H1) for $n(P,a)$ to show condition (L) for $\mathcal C_tP$ (cf.~\cite[Lemma~5.1]{buchholz-notations-set-theory}). At the end of Section~\ref{sect:collapsing} we will have established condition~(L) for all $(u,\alpha)$-codes. By the proof of Proposition~\ref{prop:code-into-proof} it will follow that the interpretation of any $(u,\alpha)$-code is a $(u,\alpha)$-proof in the sense of~Definition~\ref{def:u-a-proof}. In the present remark we have described the formal structure~of our argument. Over the course of the following sections it will be filled with content. We stress that the order of presentation will not coincide with the official order of the argument: To give a readable account of the ordinal analysis we will state the recursive clauses for (i,ii) and the induction steps for (i',ii') as they occur naturally. This is unproblematic, because it does not matter in which order the clauses are printed. What matters is that the clauses respect the dependencies discussed above (e.g.~$r_{\langle\rangle}(\mathcal C_tP)$ may depend on $o_{\langle\rangle}(n(P,a))$, but $o_{\langle\rangle}(\mathcal C_tP)$ may not depend on~$r_{\langle\rangle}(P)$). To conclude this remark we explain the following \emph{fa\c{c}on de parler}: Several results will claim that the system of $(u,\alpha)$-codes ``can be extended'' by a function symbol with certain properties. By such a claim we mean that we can provide suitable recursive clauses for (i,ii) and proofs of the induction steps for (i',ii').
\end{remark}

Following Buchholz~\cite{buchholz91}, we will now use $(u,\alpha)$-codes to define cut elimination. The latter is needed as an auxiliary construction for our ordinal analysis. The complexity of cut formulas is measured as follows:

\begin{definition}\label{def:formula-rank}
 The rank $\rk(\varphi)\in\omega$ of an $\mathbf L^u_\alpha$-formula $\varphi$ is inductively defined by the following clauses:
 \begin{enumerate}[label=(\roman*)]
  \item If $\varphi$ is bounded, then we set $\rk(\varphi)=0$.
  \item If $\varphi_0$ or $\varphi_1$ is unbounded, then we set
  \begin{equation*}
   \rk(\varphi_0\lor\varphi_1)=\rk(\varphi_0\land\varphi_1)=\max\{\rk(\varphi_0),\rk(\varphi_1)\}+1.
  \end{equation*}
  \item If $\varphi$ is unbounded and $a$ is an element of $\mathbf L^u_\alpha$ or a variable, then we set
  \begin{equation*}
   \rk(\exists_{x\in a}\varphi)=\rk(\forall_{x\in a}\varphi)=\rk(\varphi)+2.
  \end{equation*}
  \item We set $\rk(\exists_x\varphi)=\rk(\forall_x\varphi)=\rk(\varphi)+1$.
 \end{enumerate}
\end{definition}

It is straightforward to check the following connection with Definition~\ref{def:disj-conj-formula}:

\begin{lemma}\label{lem:rank-instances}
 Assume that $\varphi$ is a closed $\mathbf L^u_\alpha$-formula with $\rk(\varphi)>0$. Then we have $\rk(\varphi_a)<\rk(\varphi)$ for all $a\in\iota_\alpha(\varphi)$.
\end{lemma}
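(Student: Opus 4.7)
The plan is a case analysis on the form of $\varphi$ as enumerated in Definition~\ref{def:disj-conj-formula}. Observe first that the premise $\rk(\varphi) > 0$, combined with clause~(i) of Definition~\ref{def:formula-rank}, forces $\varphi$ to be unbounded. This immediately eliminates every atomic disjunctive/conjunctive type from Definition~\ref{def:disj-conj-formula} (i.e.\ $b\in v$, $b\notin v$, $b\in L^u_s$, $b\in\{x\in L^u_s\mid\theta(x,\vec c)\}$, $b_0\neq b_1$ and their negations), so only the propositional connectives and the quantifiers remain. Throughout the argument I will tacitly use the fact that substituting a closed term for a free variable preserves rank, since the inductive clauses of Definition~\ref{def:formula-rank} depend only on the logical structure.

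If $\varphi=\psi_0\lor\psi_1$ (or dually $\neg\psi_0\land\neg\psi_1$) with at least one of $\psi_0,\psi_1$ unbounded, then $\iota_\alpha(\varphi)=\{0,1\}$ and $\varphi_a=\psi_a$, so clause~(ii) gives $\rk(\varphi)=\max\{\rk(\psi_0),\rk(\psi_1)\}+1>\rk(\psi_a)=\rk(\varphi_a)$. If $\varphi=\exists_x\psi(x)$ or $\varphi=\forall_x\psi(x)$, then $\varphi_a=\psi(a)$ for $a\in\mathbf L^u_\alpha$, and clause~(iv) yields $\rk(\varphi)=\rk(\psi)+1>\rk(\psi(a))=\rk(\varphi_a)$.

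The only delicate case is the bounded quantifier $\varphi=\exists_{x\in b}\psi(x)$ (or its dual) with $\psi$ unbounded, so that clause~(iii) gives $\rk(\varphi)=\rk(\psi)+2$. When $b$ is a term $v\in u$ or $L^u_s$, Definition~\ref{def:disj-conj-formula} sets $\varphi_a=\psi(a)$, whence $\rk(\varphi_a)=\rk(\psi)<\rk(\varphi)$. When $b=\{y\in L^u_s\mid\theta(y,\vec c)\}$, the disjunctive (respectively conjunctive) instance is $\varphi_a=\theta(a,\vec c)\land\psi(a)$ (respectively $\neg\theta(a,\vec c)\lor\psi(a)$). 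Here $\theta$ is by construction a $\Delta_0$-formula, so $\rk(\theta(a,\vec c))=0$ by clause~(i), and clause~(ii) yields $\rk(\varphi_a)=\max\{0,\rk(\psi)\}+1=\rk(\psi)+1<\rk(\psi)+2=\rk(\varphi)$.

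The conjunctive cases are handled by the duality $\neg\varphi\simeq\bigwedge_{a\in\iota_\alpha(\varphi)}\neg\varphi_a$ from Definition~\ref{def:disj-conj-formula}, together with the auxiliary observation $\rk(\neg\chi)=\rk(\chi)$, which follows by a routine induction on $\chi$ using that negation normal form preserves the outer logical structure. I do not expect any genuine obstacle: the argument is entirely syntactic. The only point of design worth noting is that the gap of $2$ in clause~(iii) of Definition~\ref{def:formula-rank} is calibrated exactly to absorb the extra propositional connective introduced when instantiating a bounded quantifier ranging over a set-builder term.
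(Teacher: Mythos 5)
Your proof is correct and is exactly the routine case analysis the paper has in mind (the paper omits the proof, merely calling the lemma ``straightforward to check''). You correctly identify the one non-trivial point, namely that the increment of $2$ in clause~(iii) of Definition~\ref{def:formula-rank} is there precisely to absorb the conjunction $\theta(a,\vec c)\land\psi(a)$ arising from a bounded quantifier over a set-builder term, and your auxiliary observations (substitution and negation preserve rank) are both valid and suffice to close the remaining cases.
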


The function $d:\text{``$(u,\alpha)$-codes''}\rightarrow\omega$ mentioned in Remark~\ref{rmk:codes-methodology} will control the rank of cut formulas in a proof. We have explained that $d$ is to be defined by recursion over $(u,\alpha)$-codes. So far we have only introduced the basic $(u,\alpha)$-codes of Definition~\ref{def:proof-codes}. Thus we can currently only state the base case of the definition of~$d$. Further recursive clauses will be added as the system of $(u,\alpha)$-codes is extended.

\begin{definition}\label{def:cut-rank-basic}
 We set $d(P^u_\alpha\sigma)=C+6$ for any basic $(u,\alpha)$-code $P^u_\alpha\sigma$, where $C$ is the constant from Definition~\ref{def:enum-axioms}.
\end{definition}

As part of the following result we state the local correctness conditions (C1) and~(C2), which have also been mentioned in Remark~\ref{rmk:codes-methodology}. They are to be established by induction over $(u,\alpha)$-codes. At the moment we can only consider the basic $(u,\alpha)$-codes, which form the base case of the induction. Whenever we extend the system of $(u,\alpha)$-codes we will add a proof of the corresponding induction step.

\begin{lemma}
 The following holds for any basic $(u,\alpha)$-code $P$:
 \begin{enumerate}[label=(C\arabic*)]
  \item If $r_{\langle\rangle}(P)=(\cut,\varphi)$ is a cut rule, then we have $\rk(\varphi)<d(P)$.
  \item We have $d(n(P,a))\leq d(P)$ for any $a\in\iota(P)$.
 \end{enumerate}
\end{lemma}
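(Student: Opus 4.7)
My plan is to exploit the fact that basic $(u,\alpha)$-codes are highly rigid: the cut function $d$ is constant, so (C2) collapses to a triviality, and all the work goes into (C1), which reduces to bounding the ranks of the axioms enumerated in Definition~\ref{def:enum-axioms}.

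For (C2), I would simply observe that for any basic $(u,\alpha)$-code $P = P^u_\alpha\sigma$ and any $a \in \iota(P)$, the value $n(P^u_\alpha\sigma,a)$ is, by Definition~\ref{def:proof-codes}, again a basic $(u,\alpha)$-code (either $P^u_\alpha\,\sigma^\frown a$ or $P^u_\alpha\langle\rangle$). Hence $d(n(P,a)) = C+6 = d(P)$, so the inequality holds with equality.

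For (C1), the first step is to track down which cuts can appear. Inspecting the construction in Proposition~\ref{prop:proof-from-search-tree} together with Lemmas~\ref{lem:P_0} and~\ref{lem:P_n+1}, the rule $r^u_\alpha(\sigma)$ is a cut exactly when $\sigma \in S^u_\alpha$ has even length, in which case the cut formula is $\ax_n$ for the corresponding $n$; no cuts are introduced in the subtrees $P_n$ built in those lemmas. It therefore suffices to show $\rk(\ax_n) < C+6$ for every $n$.

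The main obstacle — such as it is — is a careful bookkeeping of ranks according to Definition~\ref{def:formula-rank}, crucially remembering that bounded quantifiers over a bounded matrix keep the formula bounded (so rank $0$), and only add $2$ when the matrix is unbounded. For $\ax_0 = \forall_x\exists_y\,y = x\cup\{x\}$ the matrix is $\Delta_0$, giving $\rk(\ax_0) = 2$. For $\ax_{n+1}$ with $k \le C$ parameters, writing $A = \forall_{x\in v}\exists_y\theta$ and $B = \exists_w\forall_{x\in v}\exists_{y\in w}\theta$, one computes $\rk(\theta)=0$, $\rk(\exists_y\theta)=1$, $\rk(A)=3$, while $\forall_{x\in v}\exists_{y\in w}\theta$ is bounded and so $\rk(B)=1$. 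Then $\rk(A \to B) = \rk(\neg A \lor B) = \max(3,1)+1 = 4$, and the outer $\forall_v$ and the $k$ universal quantifiers $\forall_{z_1},\dots,\forall_{z_k}$ raise the rank by $1+k$. This gives $\rk(\ax_{n+1}) = 5+k \le 5+C < C+6 = d(P)$, completing~(C1).
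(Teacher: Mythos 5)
Your proposal is correct and follows essentially the same route as the paper: (C2) is immediate because $n(P^u_\alpha\sigma,a)$ is again a basic code with $d=C+6$, and (C1) reduces to checking that the only cut formulas occurring in $P^u_\alpha$ are the axioms $\ax_n$, with $\rk(\ax_0)=2$ and $\rk(\ax_{n+1})=k+5\le C+5<C+6$. Your explicit rank bookkeeping (in particular that $\forall_{x\in v}\exists_{y\in w}\theta$ stays bounded while $\forall_{x\in v}\exists_y\theta$ gets rank $3$) matches the values the paper simply asserts.
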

\begin{proof}
 Condition (C2) is trivial for a basic $(u,\alpha)$-code $P=P^u_\alpha\sigma$: Since $n(P^u_\alpha\sigma,a)$ is a basic $(u,\alpha)$-code as well, we have $d(n(P^u_\alpha\sigma,a))=C+6=d(P^u_\alpha\sigma)$. Now assume $r_{\langle\rangle}(P^u_\alpha\sigma)=r^u_\alpha(\sigma)=(\cut,\varphi)$. Considering the proof of Proposition~\ref{prop:proof-from-search-tree}, the formula~$\varphi$ must be one of the axioms $\ax_n$ from Definition~\ref{def:enum-axioms}. We have
 \begin{equation*}
  \rk(\ax_0)=\rk(\forall_x\exists_y\,y=x\cup\{x\})=2<C+6=d(P^u_\alpha\sigma),
 \end{equation*}
 as required by condition (C1). For a $\Delta_0$-collection axiom
 \begin{equation*}
  \ax_{n+1}=\forall_{z_1,\dots,z_k}\forall_v(\forall_{x\in v}\exists_y\theta(x,y,z_1,\dots,z_k)\rightarrow\exists_w\forall_{x\in v}\exists_{y\in w}\theta(x,y,z_1,\dots,z_k))
 \end{equation*}
 with $k\leq C$ parameters we have $\rk(\ax_{n+1})=k+5<C+6=d(P^u_\alpha\sigma)$. 
\end{proof}

The following result constitutes the first extension of our system of $(u,\alpha)$-codes. It implements an operation known as inversion, which transforms a proof of a formula $\varphi=\forall_x\psi(x)\simeq\bigwedge_{a\in\mathbf L^u_\alpha}\psi(a)$ into a proof of an arbitrary instance $\varphi_a=\psi(a)$.

\begin{lemma}
 For any conjunctive $\mathbf L^u_\alpha$-formula $\varphi$ and any $a\in\iota_\alpha(\varphi)$ we can extend the system of $(u,\alpha)$-codes by a unary function symbol $\mathcal I_{\varphi,a}$, such that we have
 \begin{equation*}
  l_{\langle\rangle}(\mathcal I_{\varphi,a}P)=(l_{\langle\rangle}(P)\backslash\{\varphi\})\cup\{\varphi_a\},\qquad o_{\langle\rangle}(\mathcal I_{\varphi,a}P)=o_{\langle\rangle}(P),\qquad d(\mathcal I_{\varphi,a}P)=d(P)
 \end{equation*}
 for any $(u,\alpha)$-code $P$.
\end{lemma}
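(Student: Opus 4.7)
The plan is to introduce $\mathcal{I}_{\varphi,a}$ by the two-step recursive scheme of Remark~\ref{rmk:codes-methodology}, following the classical Buchholz-style formalization of the inversion transformation. For step~(i) I would simply stipulate the required values $l_{\langle\rangle}(\mathcal{I}_{\varphi,a}P) = (l_{\langle\rangle}(P) \setminus \{\varphi\}) \cup \{\varphi_a\}$, $o_{\langle\rangle}(\mathcal{I}_{\varphi,a}P) = o_{\langle\rangle}(P)$, and $d(\mathcal{I}_{\varphi,a}P) = d(P)$, deferring analogous clauses for $h_0, h_1$ until those functions have been introduced.

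For step~(ii) I would distinguish two cases on $r_{\langle\rangle}(P)$. If $r_{\langle\rangle}(P) = (\bigwedge, \varphi)$ with $\varphi$ the very formula being inverted, I would set $r_{\langle\rangle}(\mathcal{I}_{\varphi,a}P) = (\rep, a)$ and $n(\mathcal{I}_{\varphi,a}P, a) = \mathcal{I}_{\varphi,a}(n(P, a))$, routing the inversion through the $a$-th premise of the $\bigwedge$-inference. In every other case I would preserve the rule and push the inversion through to every premise: $r_{\langle\rangle}(\mathcal{I}_{\varphi,a}P) = r_{\langle\rangle}(P)$ and $n(\mathcal{I}_{\varphi,a}P, b) = \mathcal{I}_{\varphi,a}(n(P, b))$ for each $b \in \iota(r_{\langle\rangle}(P))$.

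For step~(i'/ii') of Remark~\ref{rmk:codes-methodology} I would then verify condition~(L) together with (C1) and (C2) by induction over $P$. The ordinal inequality in~(L) is immediate from $o_{\langle\rangle}(\mathcal{I}_{\varphi,a}P) = o_{\langle\rangle}(P)$ and the same condition for $P$; the rank condition~(C1) is preserved because no new cuts are introduced; (C2) follows from $d(\mathcal{I}_{\varphi,a}P) = d(P)$ and the induction hypothesis. The label clauses have to be checked rule-by-rule using Definition~\ref{def:rules}, aided by two observations: first, since $\varphi$ is conjunctive, it is never the principal formula of a disjunctive, reflection, or $(\true,\cdot)$ rule when $\varphi$ is unbounded; second, inspection of Definition~\ref{def:disj-conj-formula} yields $\varphi_a \neq \varphi$, so substituting $\varphi$ by $\varphi_a$ is a genuine simplification rather than a fixed point.

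The key step that I expect to cause the most work is the label verification in the principal case. Starting from $l_{\langle\rangle}(n(P, a)) \subseteq l_{\langle\rangle}(P), \varphi_a$, the recursively inverted premise $\mathcal{I}_{\varphi,a}(n(P, a))$ has label $(l_{\langle\rangle}(n(P, a)) \setminus \{\varphi\}) \cup \{\varphi_a\}$; this must be shown to lie inside $(l_{\langle\rangle}(P) \setminus \{\varphi\}) \cup \{\varphi_a\} = l_{\langle\rangle}(\mathcal{I}_{\varphi,a}P)$, which precisely exploits that $\varphi_a$ is already an explicit member of the target sequent (and that $\varphi_a \neq \varphi$, so the deletion of $\varphi$ is harmless). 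All remaining cases reduce to the routine fact that removing $\varphi$ and adjoining $\varphi_a$ commutes with the formation of non-principal premises.
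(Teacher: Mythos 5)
Your construction matches the paper's almost exactly: the same clauses for $l_{\langle\rangle}$, $o_{\langle\rangle}$ and $d$, the same $(\rep,a)$ device in the principal case, uniform propagation $n(\mathcal I_{\varphi,a}P,b)=\mathcal I_{\varphi,a}n(P,b)$, and the same label computation in the crucial case. But there is one genuine gap: your case distinction omits the possibility $r_{\langle\rangle}(P)=(\true,\varphi)$. A conjunctive formula can perfectly well be bounded (e.g.\ $\forall_{x\in L^u_\gamma}\psi(x)$ with bounded $\psi$, or $\psi_0\land\psi_1$), and the lemma is stated for arbitrary conjunctive $\varphi$ with $\iota_\alpha(\varphi)\neq\emptyset$, so this case is live. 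You half-notice it --- your first ``observation'' is hedged by the clause ``when $\varphi$ is unbounded'' --- but your recipe then files $(\true,\varphi)$ under ``every other case'' and preserves the rule. That breaks condition~(L): the rule $(\true,\varphi)$ requires $\varphi\in l_{\langle\rangle}(\mathcal I_{\varphi,a}P)=(l_{\langle\rangle}(P)\setminus\{\varphi\})\cup\{\varphi_a\}$, and since $\varphi_a\neq\varphi$ the principal formula has just been deleted from the sequent.

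The paper's fix is to add a third clause, $r_{\langle\rangle}(\mathcal I_{\varphi,a}P)=(\true,\varphi_a)$ when $r_{\langle\rangle}(P)=(\true,\varphi)$, and to check that this is a legitimate $\mathbf L^u_\alpha$-rule: since $\varphi$ is bounded with $\mathbb L^u_\alpha\vDash\varphi$, Proposition~\ref{prop:clauses-inf-verification-sound} yields that $\varphi_a$ is bounded with $\mathbb L^u_\alpha\vDash\varphi_a$. With that clause added, the rest of your proposal is sound and is essentially the argument the paper gives.
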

\begin{proof}
According to Remark~\ref{rmk:codes-methodology} we must state the recursive clauses and prove the induction steps for the new function symbols $\mathcal I_{\varphi,a}$. The clauses for $l_{\langle\rangle}$, $o_{\langle\rangle}$ and $d$ can be found in the statement of the lemma. The clauses for $r_{\langle\rangle}$ and $n$ are given by
 \begin{equation*}
 r_{\langle\rangle}(\mathcal I_{\varphi,a}P)=\begin{cases}
                      (\rep,a) & \text{if $r_{\langle\rangle}(P)=(\bigwedge,\varphi)$},\\
                      (\true,\varphi_a) & \text{if $r_{\langle\rangle}(P)=(\true,\varphi)$},\\
                      r_{\langle\rangle}(P) & \text{otherwise},
                     \end{cases}\qquad
 n(\mathcal I_{\varphi,a}P,b)=\mathcal I_{\varphi,a}n(P,b).
\end{equation*}
It remains to show that conditions (L), (C1) and (C2) for $P$ imply the same conditions for $\mathcal I_{\varphi,a}P$ (the functions $h_0$ and $h_1$ and the corresponding conditions (H1) to~(H3), which were also mentioned in Remark~\ref{rmk:codes-methodology}, will be introduced later). This can be verified by case distinction on the last rule of $P$. We consider the crucial case $r_{\langle\rangle}(P)=(\bigwedge,\varphi)$. In view of $a\in\iota_\alpha(\varphi)=\iota(P)$ condition~(L) for $P$ implies
\begin{equation*}
 o_{\langle\rangle}(n(\mathcal I_{\varphi,a}P,a))=o_{\langle\rangle}(\mathcal I_{\varphi,a}n(P,a))=o_{\langle\rangle}(n(P,a))<o_{\langle\rangle}(P)=o_{\langle\rangle}(\mathcal I_{\varphi,a}P),
\end{equation*}
as well as
\begin{multline*}
 l_{\langle\rangle}(n(\mathcal I_{\varphi,a}P,a))=l_{\langle\rangle}(\mathcal I_{\varphi,a}n(P,a))=(l_{\langle\rangle}(n(P,a))\backslash\{\varphi\})\cup\{\varphi_a\}\subseteq\\
 \subseteq((l_{\langle\rangle}(P)\cup\{\varphi_a\})\backslash\{\varphi\})\cup\{\varphi_a\}\subseteq(l_{\langle\rangle}(P)\backslash\{\varphi\})\cup\{\varphi_a\}=l_{\langle\rangle}(\mathcal I_{\varphi,a}P).
\end{multline*}
As we have $r_{\langle\rangle}(\mathcal I_{\varphi,a}P)=(\rep,a)$ this is just what condition (L) for $\mathcal I_{\varphi,a}P$ demands. Conditions (C1) and (C2) for $\mathcal I_{\varphi,a}P$ are easily deduced from the same conditions for $P$. Let us also consider the case $r_{\langle\rangle}(P)=(\true,\varphi)$. Inductively we may assume that $(\true,\varphi)$ is an $\mathbf L^u_\alpha$-rule, so that $\varphi$ is a bounded formula with $\mathbb L^u_\alpha\vDash\varphi$. Then $\varphi_a$ is a bounded formula as well. By Proposition~\ref{prop:clauses-inf-verification-sound} we get $\mathbb L^u_\alpha\vDash\varphi_a$, which confirms that $r_{\langle\rangle}(\mathcal I_{\varphi,a}P)=(\true,\varphi_a)$ is an $\mathbf L^u_\alpha$-rule. The remaining verifications are straightforward. Note that $\varphi$ is different from any formula $\psi$ introduced by a rule $(\bigvee,\psi,b)$ or $(\refl,\psi)$, since such a formula $\psi$ must be disjunctive.
\end{proof}

Even though recursion over well-founded trees is not available in $\atrs$, it can be a useful framework for an intuitive explanation of our constructions: Assume that the proof $P$ from the previous result deduces a sequent $\Gamma,\varphi$ by the rule $(\bigwedge,\varphi)$. Then the immediate subtrees $n(P,a)$ of $P$ deduce the sequents $\Gamma,\varphi,\varphi_a$. Recursively we may remove the formula $\varphi$ in these subproofs, to get proofs $\mathcal I_{\varphi,a}n(P,a)$ of the sequents $\Gamma,\varphi_a$. In a strong meta theory we could define $\mathcal I_{\varphi,a}P$ as the proof~$\mathcal I_{\varphi,a}n(P,a)$. The resulting equality $r_{\langle\rangle}(\mathcal I_{\varphi,a}P)=r_{\langle\rangle}(\mathcal I_{\varphi,a}n(P,a))$, however, cannot be used as a clause for our recursive definition. The repetition rule $r_{\langle\rangle}(\mathcal I_{\varphi,a}P)=(\rep,a)$ is crucial, because it allows us to call the proof $\mathcal I_{\varphi,a}n(P,a)$ without committing to its last rule. This use of the repetition rule is due to Mints~\cite{mints-continuous}. The improper $\omega$-rule considered by Schwichtenberg~\cite{schwichtenberg77} serves a similar purpose. We continue with an operation known as reduction: It combines a proof of $\Gamma,\varphi$ and a proof of $\Gamma,\neg\varphi$ into a proof of $\Gamma$, without applying a cut over $\varphi$. The assignment of ordinal heights relies on Lemma~\ref{lem:ordinal-addition}.

\begin{lemma}
 For any conjunctive $\mathbf L^u_\alpha$-formula $\varphi$ with $\rk(\varphi)\geq 2$ we can extend the system of $(u,\alpha)$-codes by a binary function symbol $\mathcal R_\varphi$, such that we have
\begin{align*}
 l_{\langle\rangle}(\mathcal R_\varphi P_0P_1)&=(l_{\langle\rangle}(P_0)\backslash\{\neg\varphi\})\cup(l_{\langle\rangle}(P_1)\backslash\{\varphi\}),\\
 o_{\langle\rangle}(\mathcal R_\varphi P_0P_1)&=o_{\langle\rangle}(P_1)+o_{\langle\rangle}(P_0),\\
 d(\mathcal R_\varphi P_0P_1)&=\max\{d(P_0),d(P_1),\rk(\varphi)\}
\end{align*}
for all $(u,\alpha)$-codes $P_0$ and $P_1$.
\end{lemma}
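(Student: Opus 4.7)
The construction follows standard Buchholz-style cut reduction, cast in the coded proof system of Remark~\ref{rmk:codes-methodology}. The clauses for $l_{\langle\rangle}$, $o_{\langle\rangle}$, and $d$ on $\mathcal R_\varphi P_0 P_1$ are read directly from the lemma's stipulations. For $r_{\langle\rangle}$ and $n$ I would split on $r_{\langle\rangle}(P_0)$: if $r_{\langle\rangle}(P_0) = (\bigvee, \neg\varphi, a)$, set
\[
r_{\langle\rangle}(\mathcal R_\varphi P_0 P_1) = (\cut, \varphi_a),\quad n(\mathcal R_\varphi P_0 P_1, 0) = \mathcal R_\varphi\,n(P_0,0)\,P_1,\quad n(\mathcal R_\varphi P_0 P_1, 1) = \mathcal I_{\varphi, a} P_1;
\]
otherwise propagate by setting $r_{\langle\rangle}(\mathcal R_\varphi P_0 P_1) = r_{\langle\rangle}(P_0)$ and $n(\mathcal R_\varphi P_0 P_1, b) = \mathcal R_\varphi\,n(P_0, b)\,P_1$ for all $b \in \iota(P_0)$.

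The induction steps for conditions (L), (C1), (C2) proceed by case distinction on $r_{\langle\rangle}(P_0)$. In the active case, (C1) amounts to $\rk(\varphi_a) < \rk(\varphi)$, which is Lemma~\ref{lem:rank-instances} (applicable since $\rk(\varphi) \geq 2 > 0$); hence $\rk(\varphi_a) < \rk(\varphi) \leq d(\mathcal R_\varphi P_0 P_1)$. The ordinal inequality
\[
o_{\langle\rangle}(n(\mathcal R_\varphi P_0 P_1, 0)) = o_{\langle\rangle}(P_1) + o_{\langle\rangle}(n(P_0,0)) < o_{\langle\rangle}(P_1) + o_{\langle\rangle}(P_0)
\]
follows from strict monotonicity of $+$ in the second argument (Lemma~\ref{lem:ordinal-addition}(a)) combined with $o_{\langle\rangle}(n(P_0,0)) < o_{\langle\rangle}(P_0)$ from (L) for $P_0$; the companion inequality $o_{\langle\rangle}(\mathcal I_{\varphi, a} P_1) = o_{\langle\rangle}(P_1) < o_{\langle\rangle}(P_1) + o_{\langle\rangle}(P_0)$ uses the same monotonicity applied to $0 < o_{\langle\rangle}(P_0)$, together with $o_{\langle\rangle}(P_1) + 0 = o_{\langle\rangle}(P_1)$. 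The sequent containments are routine once one notes that $\neg\varphi_a \neq \neg\varphi$, because negation preserves rank and Lemma~\ref{lem:rank-instances} yields $\rk(\neg\varphi_a) = \rk(\varphi_a) < \rk(\varphi) = \rk(\neg\varphi)$. In the commutation case the principal formula $\psi$ of $r_{\langle\rangle}(P_0)$ survives the $\{\neg\varphi\}$-subtraction: for $(\bigwedge, \psi)$ because $\psi$ is conjunctive while $\neg\varphi$ is disjunctive; for $(\true, \psi)$ because $\psi$ is bounded (rank $0$) while $\rk(\neg\varphi) \geq 2$; and for $(\bigvee, \psi, b)$ by the fall-through assumption $\psi \neq \neg\varphi$. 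The side conditions $\suppl_\alpha(b) \lef o_{\langle\rangle}(\mathcal R_\varphi P_0 P_1)$ (for $\bigvee$) and $\Omega \leq o_{\langle\rangle}(\mathcal R_\varphi P_0 P_1)$ (for $\refl$) follow from $o_{\langle\rangle}(P_0) \leq o_{\langle\rangle}(P_1) + o_{\langle\rangle}(P_0)$ via Lemma~\ref{lem:ordinal-addition}(b), and (C2) transfers directly from (C2) for $P_0$ together with $d(\mathcal I_{\varphi, a} P_1) = d(P_1) \leq d(\mathcal R_\varphi P_0 P_1)$.

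The main obstacle I anticipate is the subcase $r_{\langle\rangle}(P_0) = (\refl, \neg\varphi)$, which can arise precisely when $\varphi$ takes the form $\forall_w \exists_{x \in a'} \forall_{y \in w} \neg\theta$ (so $\rk(\varphi) = 5$ and $\neg\varphi$ is itself a reflection formula). Pure commutation through such a rule would reintroduce $\neg\varphi$ into the end-sequent and violate the prescribed equation for $l_{\langle\rangle}(\mathcal R_\varphi P_0 P_1)$, so this subcase requires a dedicated clause that splices the reflection premise $\forall_{x \in a'} \exists_y \theta$ against an inversion $\mathcal I_{\varphi, b} P_1$ by way of an auxiliary inference of strictly smaller rank than $\rk(\varphi)$. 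The asymmetric ordinal budget $o_{\langle\rangle}(P_1) + o_{\langle\rangle}(P_0)$ and rank cap $\max\{d(P_0), d(P_1), \rk(\varphi)\}$ are exactly tight for this bookkeeping; verifying that a single such clause uniformly handles all witnesses is, I expect, the only genuinely delicate point in an otherwise routine verification.
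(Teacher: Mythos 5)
Your recursive clauses for $r_{\langle\rangle}$ and $n$ --- cut on $\varphi_a$ when $r_{\langle\rangle}(P_0)=(\bigvee,\neg\varphi,a)$, with $n(\cdot,1)=\mathcal I_{\varphi,a}P_1$, and commutation through every other rule --- are exactly the paper's, and your verifications of (L), (C1), (C2) in the active and commutation cases follow the same route (Lemma~\ref{lem:rank-instances} for (C1), Lemma~\ref{lem:ordinal-addition} for the ordinal inequalities).

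The problem is your final paragraph. The ``main obstacle'' you anticipate does not exist, and your diagnosis rests on a miscomputation of the rank. For $\varphi=\forall_w\exists_{x\in a'}\forall_{y\in w}\neg\theta$ with $\theta$ bounded, clause~(iii) of Definition~\ref{def:formula-rank} adds $2$ only when the matrix of the bounded quantifier is \emph{unbounded}; since $\neg\theta$ is bounded, the subformulas $\forall_{y\in w}\neg\theta$ and $\exists_{x\in a'}\forall_{y\in w}\neg\theta$ are themselves bounded and have rank $0$ by clause~(i), so $\rk(\varphi)=1$, not $5$. More generally, every principal formula $\exists_w\forall_{x\in b}\exists_{y\in w}\theta$ of a reflection rule has rank exactly $1$, whereas the lemma's hypothesis gives $\rk(\neg\varphi)=\rk(\varphi)\geq 2$; hence $r_{\langle\rangle}(P_0)=(\refl,\neg\varphi)$ is impossible, and when $r_{\langle\rangle}(P_0)$ is a reflection rule its principal formula is distinct from $\neg\varphi$ and survives the $\{\neg\varphi\}$-subtraction, so your fall-through clause already handles it (one only needs $\Omega\leq o_{\langle\rangle}(P_0)\leq o_{\langle\rangle}(P_1)+o_{\langle\rangle}(P_0)$ for the side condition, which you have). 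This is precisely the closing remark in the paper's proof. So no ``dedicated splicing clause'' is needed; asserting that one is, and leaving its construction and verification open as ``the only genuinely delicate point,'' leaves your write-up formally incomplete even though the construction you actually gave is already correct and total.
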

\begin{proof}
 The clauses from the statement of the lemma can be complemented by
 \begin{align*}
 r_{\langle\rangle}(\mathcal R_\varphi P_0P_1)&=\begin{cases}
                                                 (\cut,\varphi_b) & \text{if $r_{\langle\rangle}(P_0)=(\bigvee,\neg\varphi,b)$ for some $b\in\iota_\alpha(\neg\varphi)=\iota_\alpha(\varphi)$},\\
                                                 r_{\langle\rangle}(P_0) & \text{otherwise},
                                                \end{cases}\\
 n(\mathcal R_\varphi P_0P_1,a)&=\begin{cases}
                                  \mathcal I_{\varphi,b}P_1 & \text{if $r_{\langle\rangle}(P_0)=(\bigvee,\neg\varphi,b)$ and $a=1$},\\
                                  \mathcal R_\varphi n(P_0,a)P_1 & \text{otherwise}.
                                 \end{cases}
\end{align*}
It remains to verify the local correctness conditions (L), (C1) and (C2). Let us consider the crucial case of a rule $r_{\langle\rangle}(P_0)=(\bigvee,\neg\varphi,b)$. By condition~(L) for $P_0$ we have $o_{\langle\rangle}(n(P_0,0))<o_{\langle\rangle}(P_0)$. Using Lemma~\ref{lem:ordinal-addition} we can deduce
\begin{multline*}
 o_{\langle\rangle}(n(\mathcal R_\varphi P_0P_1,0))=o_{\langle\rangle}(\mathcal R_\varphi n(P_0,0)P_1)=o_{\langle\rangle}(P_1)+o_{\langle\rangle}(n(P_0,0))<\\
 <o_{\langle\rangle}(P_1)+o_{\langle\rangle}(P_0)=o_{\langle\rangle}(\mathcal R_\varphi P_0P_1),
\end{multline*}
as well as
\begin{equation*}
 o_{\langle\rangle}(n(\mathcal R_\varphi P_0P_1,1))=o_{\langle\rangle}(\mathcal I_{\varphi,b}P_1)=o_{\langle\rangle}(P_1)\leq o_{\langle\rangle}(P_1)+o_{\langle\rangle}(n(P_0,0))<o_{\langle\rangle}(\mathcal R_\varphi P_0P_1).
\end{equation*}
Condition (L) for $P_0$ also provides $l_{\langle\rangle}(n(P_0,0))\subseteq l_{\langle\rangle}(P_0)\cup\{\neg\varphi_b\}$, which implies
\begin{equation*}
 l_{\langle\rangle}(n(\mathcal R_\varphi P_0P_1,0))=l_{\langle\rangle}(\mathcal R_\varphi n(P_0,0)P_1)\subseteq l_{\langle\rangle}(\mathcal R_\varphi P_0P_1)\cup\{\neg\varphi_b\}.
\end{equation*}
Furthermore we have
\begin{equation*}
 l_{\langle\rangle}(n(\mathcal R_\varphi P_0P_1,1))=l_{\langle\rangle}(\mathcal I_{\varphi,b}P_1)=l_{\langle\rangle}(P_1)\backslash\{\varphi\}\cup\{\varphi_b\}\subseteq l_{\langle\rangle}(\mathcal R_\varphi P_0P_1)\cup\{\varphi_b\}.
\end{equation*}
In view of $r_{\langle\rangle}(\mathcal R_\varphi P_0P_1)=(\cut,\varphi_b)$ this is just what condition (L) for $\mathcal R_\varphi P_0P_1$ demands. Condition (C1) is satisfied since Lemma~\ref{lem:rank-instances} yields
\begin{equation*}
 \rk(\varphi_b)<\rk(\varphi)\leq d(\mathcal R_\varphi P_0P_1).
\end{equation*}
By condition (C2) for $P_0$ we have $d(n(P_0,0))\leq d(P_0)$, so that we get
\begin{equation*}
 d(n(\mathcal R_\varphi P_0P_1,0))=d(\mathcal R_\varphi n(P_0,0)P_1)\leq d(\mathcal R_\varphi P_0P_1).
\end{equation*}
To complete the verification of condition (C2) for $\mathcal R_\varphi P_0P_1$ we observe
\begin{equation*}
 d(n(\mathcal R_\varphi P_0P_1,1))=d(\mathcal I_{\varphi,b}P_1)=d(P_1)\leq d(\mathcal R_\varphi P_0P_1).
\end{equation*}
Let us also consider the case $r_{\langle\rangle}(P_0)=(\refl,\exists_w\forall_{x\in b}\exists_{y\in w}\theta)$. To verify condition~(L) one should observe that the formulas $\neg\varphi$ and $\exists_w\forall_{x\in b}\exists_{y\in w}\theta$ are different, since we have $\rk(\exists_w\forall_{x\in b}\exists_{y\in w}\theta)=1$ but $\rk(\neg\varphi)=\rk(\varphi)\geq 2$. Also note that $\Omega\leq o_{\langle\rangle}(P_0)$ implies $\Omega\leq o_{\langle\rangle}(P_1)+o_{\langle\rangle}(P_0)=o_{\langle\rangle}(\mathcal R_\varphi P_0P_1)$, using Lemma~\ref{lem:ordinal-addition}. The remaining verifications are straightforward.
\end{proof}

Now we have all ingredients for cut elimination. In the presence of the reflection rule $(\refl,\cdot)$ the cut rank can only be lowered as long as it is bigger than two:

\begin{proposition}\label{prop:cut-elimination}
 We can extend the system of $(u,\alpha)$-codes by a unary function symbol $\mathcal E$, such that we have
\begin{equation*}
 l_{\langle\rangle}(\mathcal EP)=l_{\langle\rangle}(P),\qquad o_{\langle\rangle}(\mathcal EP)=\omega^{o_{\langle\rangle}(P)},\qquad d(\mathcal EP)=\max\{2,d(P)-1\}
\end{equation*}
for any $(u,\alpha)$-code $P$.
\end{proposition}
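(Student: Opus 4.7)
The plan is to define $\mathcal E$ by case distinction on the last rule of $P$, mimicking one-step predicative cut reduction. The recursive clauses for $l_{\langle\rangle}$, $o_{\langle\rangle}$ and $d$ are dictated by the statement, so only $r_{\langle\rangle}(\mathcal EP)$ and $a\mapsto n(\mathcal EP,a)$ need to be specified, after which conditions (L), (C1) and~(C2) must be verified by induction on $P$.

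The main case is $r_{\langle\rangle}(P)=(\cut,\varphi)$ with $\rk(\varphi)\geq 2$, where one appeals to the reduction operator $\mathcal R$. Since $\mathcal R_\psi$ requires $\psi$ conjunctive, one splits on the polarity of $\varphi$: if $\varphi$ is conjunctive, set
\begin{equation*}
 r_{\langle\rangle}(\mathcal EP)=(\rep,0),\qquad n(\mathcal EP,0)=\mathcal R_\varphi\,(\mathcal E\,n(P,0))\,(\mathcal E\,n(P,1));
\end{equation*}
if $\varphi$ is disjunctive (so $\neg\varphi$ is conjunctive and of the same rank), set
\begin{equation*}
 r_{\langle\rangle}(\mathcal EP)=(\rep,0),\qquad n(\mathcal EP,0)=\mathcal R_{\neg\varphi}\,(\mathcal E\,n(P,1))\,(\mathcal E\,n(P,0)).
\end{equation*}
In every other case $\mathcal E$ propagates through the last rule: set $r_{\langle\rangle}(\mathcal EP)=r_{\langle\rangle}(P)$ and $n(\mathcal EP,a)=\mathcal E\,n(P,a)$ for all $a\in\iota(P)$.

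For the reduction case, condition~(L) for $P$ yields $l_{\langle\rangle}(n(P,0))\subseteq l_{\langle\rangle}(P)\cup\{\neg\varphi\}$ and $l_{\langle\rangle}(n(P,1))\subseteq l_{\langle\rangle}(P)\cup\{\varphi\}$, so the specification of $\mathcal R$ gives $l_{\langle\rangle}(n(\mathcal EP,0))\subseteq l_{\langle\rangle}(P)=l_{\langle\rangle}(\mathcal EP)$, as demanded by the repetition rule. The ordinal bound
\begin{equation*}
 o_{\langle\rangle}(n(\mathcal EP,0))=\omega^{o_{\langle\rangle}(n(P,0))}+\omega^{o_{\langle\rangle}(n(P,1))}<\omega^{o_{\langle\rangle}(P)}=o_{\langle\rangle}(\mathcal EP)
\end{equation*}
follows from Lemma~\ref{lem:ordinal-addition}(a,d) and the strict inequalities $o_{\langle\rangle}(n(P,i))<o_{\langle\rangle}(P)$ given by (L) for $P$. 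The rank bound $d(n(\mathcal EP,0))\leq\max\{2,d(P)-1\}$ combines the specification of $\mathcal R$, condition~(C2) for $P$, and the inequality $\rk(\varphi)\leq d(P)-1$ from (C1) for $P$. In the propagation case, (L) and~(C2) are immediate from the corresponding conditions on $P$ and on $n(P,a)$; side conditions of rules $(\bigvee,\psi,b)$ and $(\refl,\psi)$ carry over because $s\leq\omega^s$ by Lemma~\ref{lem:ordinal-addition}(b), giving $\suppl_\alpha(b)\lef\omega^{o_{\langle\rangle}(P)}$ and $\Omega\leq\omega^{o_{\langle\rangle}(P)}$. Any cut on $\psi$ that is left in place has either $\rk(\psi)\leq 1$ or $\rk(\psi)\leq d(P)-2$, hence $\rk(\psi)<\max\{2,d(P)-1\}$, securing (C1) for $\mathcal EP$.

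The only delicate point is the disjunctive sub-case of the reduction step: the asymmetric cut convention (with $\neg\varphi$ on premise $0$ and $\varphi$ on premise $1$) clashes with the conjunctivity requirement of $\mathcal R$. This is resolved by passing to $\neg\varphi$ and swapping the premises, using that negation is involutive in negation normal form, so that the end-sequent computation still returns $l_{\langle\rangle}(P)$. No genuinely new idea is required; once one has bookkeeping set up so that $\mathcal R$ can always be invoked, the three verifications reduce to routine arithmetic in $\varepsilon(S)^u_\alpha$.
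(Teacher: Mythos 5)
Your proposal is correct and follows essentially the same route as the paper: the repetition rule at the root, the polarity split sending a cut on a disjunctive $\varphi$ to $\mathcal R_{\neg\varphi}$ with swapped premises, and the same (L)/(C1)/(C2) verifications via Lemma~\ref{lem:ordinal-addition}. (Only a cosmetic slip: with $o_{\langle\rangle}(\mathcal R_\varphi P_0P_1)=o_{\langle\rangle}(P_1)+o_{\langle\rangle}(P_0)$ the summands in your displayed ordinal computation for the conjunctive case appear in the reverse order, but the strict bound below $\omega^{o_{\langle\rangle}(P)}$ holds either way.)
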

\begin{proof}
The remaining recursive clauses can be given by
  \begin{align*}
  r_{\langle\rangle}(\mathcal EP)&=\begin{cases}
                                   (\rep,0) & \text{if $r_{\langle\rangle}(P)=(\cut,\varphi)$ with $\rk(\varphi)\geq 2$},\\
                                   r_{\langle\rangle}(P) & \text{otherwise},
                                  \end{cases}\\
  n(\mathcal EP,a)&=\begin{cases}
                    \mathcal R_\varphi(\mathcal E n(P,0))(\mathcal E n(P,1)) & \parbox[t]{.37\textwidth}{if $r_{\langle\rangle}(P)=(\cut,\varphi)$ where $\varphi$ is conjunctive and $\rk(\varphi)\geq 2$,}\\
                    \mathcal R_{\neg\varphi}(\mathcal E n(P,1))(\mathcal E n(P,0)) & \parbox[t]{.37\textwidth}{if $r_{\langle\rangle}(P)=(\cut,\varphi)$ where $\varphi$ is disjunctive and $\rk(\varphi)\geq 2$,}\\
                    \mathcal E n(P,a) & \text{otherwise}.
                   \end{cases}
 \end{align*}
 Let us verify the local correctness conditions in the crucial case $r_{\langle\rangle}(P)=(\cut,\varphi)$, where $\varphi$ is a conjunctive $\mathbf L^u_\alpha$-formula with $\rk(\varphi)\geq 2$. By condition (L) for $P$ we have $o_{\langle\rangle}(n(P,i))<o_{\langle\rangle}(P)$ for $i=0,1$. Using Lemma~\ref{lem:ordinal-addition} we can deduce
 \begin{multline*}
  o_{\langle\rangle}(n(\mathcal EP,0))=o_{\langle\rangle}(\mathcal R_\varphi(\mathcal E n(P,0))(\mathcal E n(P,1)))=o_{\langle\rangle}(\mathcal E n(P,1))+o_{\langle\rangle}(\mathcal E n(P,0))=\\
  =\omega^{o_{\langle\rangle}(n(P,1))}+\omega^{o_{\langle\rangle}(n(P,0))}<\omega^{o_{\langle\rangle}(P)}=o_{\langle\rangle}(\mathcal EP).
 \end{multline*}
 From condition (L) for $P$ we also get $l_{\langle\rangle}(\mathcal E n(P,0))=l_{\langle\rangle}(n(P,0))\subseteq l_{\langle\rangle}(P)\cup\{\neg\varphi\}$. Similarly we have $l_{\langle\rangle}(\mathcal E n(P,1))\subseteq l_{\langle\rangle}(P)\cup\{\varphi\}$, so that we obtain
 \begin{multline*}
  l_{\langle\rangle}(n(\mathcal EP,0))=l_{\langle\rangle}(\mathcal R_\varphi(\mathcal E n(P,0))(\mathcal E n(P,1)))=\\
  =(l_{\langle\rangle}(\mathcal E n(P,0))\backslash\{\neg\varphi\})\cup(l_{\langle\rangle}(\mathcal E n(P,1))\backslash\{\varphi\})\subseteq l_{\langle\rangle}(P)=l_{\langle\rangle}(\mathcal EP).
 \end{multline*}
 In view of $r_{\langle\rangle}(\mathcal EP)=(\rep,0)$ this is just what condition (L) for $\mathcal EP$ demands. Condition (C1) is void in the case of a repetition rule. Using condition (C2) for $P$ it is easy to show $d(\mathcal En(P,i))\leq d(\mathcal EP)$ for $i=0,1$. Also note that condition (C1) for $P$ ensures $\rk(\varphi)\leq d(P)-1\leq d(\mathcal EP)$. Together we get
 \begin{multline*}
  d(n(\mathcal EP,0))=d(\mathcal R_\varphi(\mathcal E n(P,0))(\mathcal E n(P,1)))=\\
  =\max\{d(\mathcal E n(P,0)),d(\mathcal E n(P,1)),\rk(\varphi)\}\leq d(\mathcal EP),
 \end{multline*}
 as required by condition (C2) for $\mathcal EP$. The other verifications are straightforward. To see that the side condition of a reflection rule is preserved one should observe that $\Omega\leq o_{\langle\rangle}(P)$ implies $\Omega\leq\omega^{o_{\langle\rangle}(P)}=o_{\langle\rangle}(\mathcal EP)$, by Lemma~\ref{lem:ordinal-addition}. 
\end{proof}

At the beginning of this section we have introduced the basic $(u,\alpha)$-codes $P^u_\alpha\sigma$ to represent the $(u,\alpha)$-proof $P^u_\alpha=(P^u_\alpha,l^u_\alpha,r^u_\alpha,o^u_\alpha)$ from Proposition~\ref{prop:proof-from-search-tree}. We can now consider the $(u,\alpha)$-code $\mathcal E P^u_\alpha\langle\rangle$. Proposition~\ref{prop:codes-to-proofs} tells us that it represents a $(u,\alpha)$-proof $[\mathcal E P^u_\alpha\langle\rangle]$. The latter has the same end-sequent as $P^u_\alpha$, namely
\begin{equation*}
 l_{\mathcal EP^u_\alpha\langle\rangle}(\langle\rangle)=l_{\langle\rangle}(\bar n(\mathcal EP^u_\alpha\langle\rangle,\langle\rangle))=l_{\langle\rangle}(\mathcal EP^u_\alpha\langle\rangle)=l_{\langle\rangle}(P^u_\alpha\langle\rangle)=l^u_\alpha(\langle\rangle).
\end{equation*}
At the same time the maximal complexity of cut formulas has been reduced, since we have $d(P^u_\alpha\langle\rangle)=C+6$ and $d(\mathcal EP^u_\alpha\langle\rangle)=C+5$. Thus we have managed to implement cut elimination for $(u,\alpha)$-proofs, taking a detour via $(u,\alpha)$-codes.

\section{Operator Control}

We would like to collapse certain proofs to height below $\Omega$, using the function
\begin{equation*}
 \bar\vartheta:\varepsilon(S)^u_\alpha\rightarrow\varepsilon(S)^u_\alpha\cap\Omega
\end{equation*}
from Section~\ref{section:epsilon-variant}. A complication arises from the fact that $\bar\vartheta$ is not fully monotone. Thus the inequality $o_{\langle\rangle}(n(P,a))<o_{\langle\rangle}(P)$ required by condition~(L) may not always be preserved. Buchholz~\cite{buchholz-local-predicativity} has introduced operator control as an elegant way to ensure that $\bar\vartheta$ is order preserving in all relevant cases. In the present section we adapt this approach to our setting. The first step is to define suitable operators. As before we identify the isomorphic orders $\alpha\cong\varepsilon(S)^u_\alpha\cap\Omega$.

\begin{definition}
 Given $t\in\varepsilon(S)^u_\alpha$ and $x\in[\alpha]^{<\omega}$, we construct $\mathcal H_t(x)\subseteq\varepsilon(S)^u_\alpha$ by
 \begin{align*}
  \mathcal H^0_t(x)&=\{s\in\varepsilon(S)^u_\alpha\,|\,\suppe_\alpha(s)\leq^{\operatorname{fin}}x\},\\
  \mathcal H^{n+1}_t(x)&\!\begin{aligned}[t]
                          {}={}&\{\bar\vartheta(s)\,|\,s\in\mathcal H^n_t(x)\text{ and }s\leq t\}\cup{}\\
                          &{}\cup\{s\in\varepsilon(S)^u_\alpha\,|\,\suppe_\alpha(s)\leq^{\operatorname{fin}}y\text{ for some finite set $y\subseteq\mathcal H^n_t(x)\cap\Omega$}\},
                         \end{aligned}\\
  \mathcal H_t(x)&=\textstyle\bigcup_{n\in\omega}\mathcal H^n_t(x).
 \end{align*}
\end{definition}

Let us observe some basic properties:

\begin{lemma}\label{lem:operators-basic}
 The following holds for any number $n$:
 \begin{enumerate}[label=(\alph*)]
  \item If we have $s\in\mathcal H^n_t(x)$, then we have $\suppe_\alpha(s)\subseteq\mathcal H^n_t(x)$.
  \item If we have $\suppe_\alpha(s)\subseteq\mathcal H^n_t(x)$, then we have $s\in\mathcal H^{n+1}_t(x)$.
  \item We have $\mathcal H^n_t(x)\subseteq\mathcal H^{n+1}_t(x)$.
 \end{enumerate}
It follows that $s\in\mathcal H_t(x)$ is equivalent to $\suppe_\alpha(s)\subseteq\mathcal H_t(x)$.
\end{lemma}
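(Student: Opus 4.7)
The plan is to prove (a), (b), (c) in that order, then derive the final equivalence from them together with the finiteness of supports.

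For (a) I would induct on $n$. In the base case, suppose $s\in\mathcal H^0_t(x)$, so $\suppe_\alpha(s)\leqf x$, and fix $\gamma\in\suppe_\alpha(s)\subseteq\alpha$; viewing $\gamma$ as an element of $\varepsilon(S)^u_\alpha\cap\Omega$, Lemma~\ref{lem:collapse-support}(a) gives $\suppe_\alpha(\gamma)\leqf\gamma$, and from $\gamma\in\suppe_\alpha(s)\leqf x$ we pick $\delta\in x$ with $\gamma\leq\delta$. Chaining these by transitivity of $\leqf$ yields $\suppe_\alpha(\gamma)\leqf x$, so $\gamma\in\mathcal H^0_t(x)$. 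For the step, assume (a) for $n$ and let $s\in\mathcal H^{n+1}_t(x)$. If $s=\bar\vartheta(s')$ for some $s'\in\mathcal H^n_t(x)$ with $s'\leq t$, then Lemma~\ref{lem:collapse-support}(b) gives $\suppe_\alpha(s)=\{s\}$, which is trivially contained in $\mathcal H^{n+1}_t(x)$. Otherwise $\suppe_\alpha(s)\leqf y$ for some finite $y\subseteq\mathcal H^n_t(x)\cap\Omega$; for each $\gamma\in\suppe_\alpha(s)$ we find $\delta\in y$ with $\gamma\leq\delta$, and $\suppe_\alpha(\gamma)\leqf\gamma\leq\delta$ (again by Lemma~\ref{lem:collapse-support}(a)) shows $\suppe_\alpha(\gamma)\leqf\{\delta\}\subseteq\mathcal H^n_t(x)\cap\Omega$, placing $\gamma$ in $\mathcal H^{n+1}_t(x)$ via the second clause of the definition.

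Part (b) is immediate: if $\suppe_\alpha(s)\subseteq\mathcal H^n_t(x)$, then since $\suppe_\alpha(s)\in[\alpha]^{<\omega}$ the set $y=\suppe_\alpha(s)$ is itself a finite subset of $\mathcal H^n_t(x)\cap\Omega$ with $\suppe_\alpha(s)\leqf y$, so $s\in\mathcal H^{n+1}_t(x)$. Part (c) then follows at once by combining (a) and (b): any $s\in\mathcal H^n_t(x)$ satisfies $\suppe_\alpha(s)\subseteq\mathcal H^n_t(x)$ by (a), hence lies in $\mathcal H^{n+1}_t(x)$ by (b).

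For the concluding biconditional, one direction is that if $s\in\mathcal H_t(x)$, then $s\in\mathcal H^n_t(x)$ for some $n$, so by (a) we get $\suppe_\alpha(s)\subseteq\mathcal H^n_t(x)\subseteq\mathcal H_t(x)$. Conversely, if $\suppe_\alpha(s)\subseteq\mathcal H_t(x)$, then because $\suppe_\alpha(s)$ is finite and the family $(\mathcal H^n_t(x))_n$ is increasing by (c), there exists a single $n$ with $\suppe_\alpha(s)\subseteq\mathcal H^n_t(x)$; applying (b) gives $s\in\mathcal H^{n+1}_t(x)\subseteq\mathcal H_t(x)$.

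No real obstacle is expected: the statement is essentially a bookkeeping lemma designed so that the two nontrivial facts about supports already in hand (Lemma~\ref{lem:collapse-support}(a,b)) align with the two clauses in the definition of $\mathcal H^{n+1}_t(x)$. The only mild subtlety is treating elements of $\suppe_\alpha(s)\subseteq\alpha$ simultaneously as ordinals (for applying $\leq$) and as terms in $\varepsilon(S)^u_\alpha\cap\Omega$ (for applying $\suppe_\alpha$), via the identification $\alpha\cong\varepsilon(S)^u_\alpha\cap\Omega$ fixed after Proposition~\ref{prop:alpha-epsilon-number}.
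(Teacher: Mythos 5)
Your proof is correct and follows essentially the same route as the paper: both parts (a) and (b) come down to the two facts $\suppe_\alpha(r)\leqf r$ and $\suppe_\alpha(\bar\vartheta(s'))=\{\bar\vartheta(s')\}$ from Lemma~\ref{lem:collapse-support}, combined with transitivity of $\leqf$ and the second clause in the definition of $\mathcal H^{n+1}_t(x)$. The only cosmetic differences are that your "induction on $n$" in (a) never actually uses its induction hypothesis (the paper phrases it as a direct case split), and that you spell out the finiteness-of-support argument for the concluding biconditional, which the paper leaves implicit.
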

\begin{proof}
 (a) The case $s=\bar\vartheta(s')$ is trivial, as Lemma~\ref{lem:collapse-support} yields $\suppe_\alpha(s)=\{s\}$. Now assume that $s\in\mathcal H^n_t(x)$ holds because we have $\suppe_\alpha(s)\leq^{\operatorname{fin}}y$ with $y=x$ or~$y\subseteq\mathcal H^{n-1}_t(x)\cap\Omega$. For $r\in\suppe_\alpha(s)$ we get $\suppe_\alpha(r)\leqf y$ by Lemma~\ref{lem:collapse-support}. We can conclude $r\in\mathcal H^n_t(x)$ by construction.\\
 (b) It suffices to invoke the definition of $\mathcal H_t^{n+1}(x)$ with $y=\suppe_\alpha(s)$.\\
 (c) This follows from parts (a) and (b).
\end{proof}

As pointed out by Buchholz~\cite{buchholz-local-predicativity}, it is crucial that $\mathcal H_t$ is a closure operator:

\begin{lemma}\label{lem:operators-closure}
 The following holds for all sets $x,y\in[\alpha]^{<\omega}$:
 \begin{enumerate}[label=(\alph*)]
  \item We have $x\subseteq\mathcal H_t(x)$.
  \item If we have $x\subseteq\mathcal H_t(y)$, then we get $\mathcal H_t(x)\subseteq\mathcal H_t(y)$.
 \end{enumerate}
\end{lemma}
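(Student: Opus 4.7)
The plan is to handle (a) directly from the definition of $\mathcal H^0_t(x)$ and to prove (b) by induction on the stage index $n$, showing that $\mathcal H^n_t(x)\subseteq\mathcal H_t(y)$ for every $n$.

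For part (a), given $\gamma\in x\subseteq\alpha$ I will verify that already $\gamma\in\mathcal H^0_t(x)$; this reduces to the requirement $\suppe_\alpha(\gamma)\leqf x$. By Lemma \ref{lem:collapse-support}(a) every element of $\suppe_\alpha(\gamma)$ is $\leq\gamma$, and since $\gamma$ itself lies in $x$, the required inequality follows immediately.

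For part (b), I will carry out induction on $n$ with the hypothesis $\mathcal H^n_t(x)\subseteq\mathcal H_t(y)$. The base case is the core of the argument: if $s\in\mathcal H^0_t(x)$, then $\suppe_\alpha(s)\leqf x$, so I set $z:=x$. Because $x$ is finite, $x\subseteq\alpha\cong\varepsilon(S)^u_\alpha\cap\Omega$, and $x\subseteq\mathcal H_t(y)=\bigcup_k\mathcal H^k_t(y)$, the monotonicity statement Lemma \ref{lem:operators-basic}(c) lets me choose a single $k$ with $z\subseteq\mathcal H^k_t(y)\cap\Omega$. The second clause in the definition of $\mathcal H^{k+1}_t(y)$ then yields $s\in\mathcal H^{k+1}_t(y)\subseteq\mathcal H_t(y)$, as required.

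For the induction step I split $s\in\mathcal H^{n+1}_t(x)$ according to which clause of the definition produced it. If $s=\bar\vartheta(s')$ with $s'\in\mathcal H^n_t(x)$ and $s'\leq t$, the induction hypothesis places $s'$ in some $\mathcal H^k_t(y)$, and the first clause gives $s=\bar\vartheta(s')\in\mathcal H^{k+1}_t(y)$. If instead $\suppe_\alpha(s)\leqf z$ for a finite $z\subseteq\mathcal H^n_t(x)\cap\Omega$, then the induction hypothesis combined with finiteness of $z$ and Lemma \ref{lem:operators-basic}(c) produces a single $k$ with $z\subseteq\mathcal H^k_t(y)\cap\Omega$, and the second clause again gives $s\in\mathcal H^{k+1}_t(y)$. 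The only delicate point is the passage ``finite subset of $\bigcup_k\mathcal H^k_t(y)$ sits in some $\mathcal H^k_t(y)$'', which is precisely where the monotonicity of the stages and the finiteness of the supports are needed; no genuine obstacle remains.
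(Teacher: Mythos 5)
Your proof is correct and follows essentially the same route as the paper: part (a) via $\suppe_\alpha(s)\leqf s$ from Lemma~\ref{lem:collapse-support}(a), and part (b) by induction on the stage index, using finiteness of the relevant sets together with the monotonicity of the stages (Lemma~\ref{lem:operators-basic}(c)) to land in a single $\mathcal H^k_t(y)$. The only cosmetic difference is that the paper fixes one $m$ with $x\subseteq\mathcal H^m_t(y)$ up front and proves $\mathcal H^n_t(x)\subseteq\mathcal H^{m+1+n}_t(y)$, whereas you re-extract a finite stage inside each induction step; both are fine.
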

\begin{proof}
 (a) By Lemma~\ref{lem:collapse-support} we get $\suppe_\alpha(s)\leqf x$ for any $s\in x$. Then $s\in\mathcal H^0_t(x)$ holds by construction.\\
 (b) In view of the previous lemma we have $x\subseteq\mathcal H^m_t(y)$ for some $m$. A straightforward induction on $n$ shows $\mathcal H^n_t(x)\subseteq\mathcal H^{m+1+n}_t(y)$.
\end{proof}

Let us also show that $\mathcal H_t(x)$ is closed under basic ordinal arithmetic:

\begin{lemma}\label{lem:operators-arithmetic}
 The following holds for any $t\in\varepsilon(S)^u_\alpha$:
 \begin{enumerate}[label=(\alph*)]
  \item We have $0\in\mathcal H_t(\emptyset)$ and $\Omega\in\mathcal H_t(\emptyset)$.
  \item We have $\mathfrak E_\sigma\in\mathcal H_t(\supps_\alpha(\sigma))$ for all $\sigma\in S^u_\alpha$.
  \item If we have $s,s'\in\mathcal H_t(x)$, then we have $s+s'\in\mathcal H_t(x)$ and $\omega^s\in\mathcal H_t(x)$.
  \item If we have $s<s'$ for some $s'\in\mathcal H_t(x)\cap\Omega$, then we have $s\in\mathcal H_t(x)$.
 \end{enumerate}
\end{lemma}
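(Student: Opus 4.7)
My plan is to prove each clause separately, with Lemma~\ref{lem:operators-basic}(b) (``$\suppe_\alpha(s)\subseteq\mathcal H_t(x)$ implies $s\in\mathcal H_t(x)$'') doing most of the work. Clauses (a) and (b) are immediate from Definition~\ref{def:suppe}: we have $\suppe_\alpha(0)=\suppe_\alpha(\Omega)=\emptyset$, so $0,\Omega$ lie in $\mathcal H^0_t(\emptyset)$ vacuously; and $\suppe_\alpha(\mathfrak E_\sigma)=\supps_\alpha(\sigma)$ places $\mathfrak E_\sigma$ in $\mathcal H^0_t(\supps_\alpha(\sigma))$ since a set is $\leqf$ itself.

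For clause (c) I would first use Lemma~\ref{lem:operators-basic} to translate the assumption into $\suppe_\alpha(s),\suppe_\alpha(s')\subseteq\mathcal H_t(x)$. The closure properties from Lemma~\ref{lem:ordinal-addition}(e), namely $\suppe_\alpha(s+s')\subseteq\suppe_\alpha(s)\cup\suppe_\alpha(s')$ and $\suppe_\alpha(\omega^s)=\suppe_\alpha(s)$, then yield $\suppe_\alpha(s+s')\subseteq\mathcal H_t(x)$ and $\suppe_\alpha(\omega^s)\subseteq\mathcal H_t(x)$, and another application of Lemma~\ref{lem:operators-basic} gives the conclusion.

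The most interesting part is clause (d), where one must go through the second generating clause in the construction of $\mathcal H^{n+1}_t(x)$. Given $s<s'<\Omega$ with $s'\in\mathcal H_t(x)$, pick $n$ with $s'\in\mathcal H^n_t(x)\cap\Omega$ and set $y=\{s'\}\subseteq\mathcal H^n_t(x)\cap\Omega$. Lemma~\ref{lem:collapse-support}(a) gives $\suppe_\alpha(s)\leqf s$, and since $s<s'$ this refines to $\suppe_\alpha(s)\leqf\{s'\}$. By construction $s\in\mathcal H^{n+1}_t(x)\subseteq\mathcal H_t(x)$. I expect this clause to be the only one requiring genuine thought: the subtlety is that the first generating clause of $\mathcal H_t^{n+1}(x)$ is not rich enough on its own (an element below $s'$ need not have its support bounded by $x$), and one really has to exploit the fact that $s'$ itself sits below $\Omega$ and can thus serve as a new control point. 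The side condition ``$\cap\,\Omega$'' in the second clause is exactly what makes this work.
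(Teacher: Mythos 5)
Your proof is correct and follows essentially the same route as the paper: (a)--(c) via the support computations and Lemma~\ref{lem:operators-basic}, and (d) via Lemma~\ref{lem:collapse-support}(a) combined with $s<s'$ to get $\suppe_\alpha(s)\lef\{s'\}$ and then the second generating clause of $\mathcal H^{n+1}_t(x)$ with $y=\{s'\}$. No gaps.
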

\begin{proof}
 (a) In view of $\suppe_\alpha(0)=\suppe_\alpha(\Omega)=\emptyset$ this follows from Lemma~\ref{lem:operators-basic}.\\
 (b) It suffices to observe $\suppe_\alpha(\mathfrak E_\sigma)=\supps_\alpha(\sigma)\subseteq\mathcal H_t(\supps_\alpha(\sigma))$.\\
 (c) From Lemma~\ref{lem:ordinal-addition} and Lemma~\ref{lem:operators-basic} we get
 \begin{equation*}
  \suppe_\alpha(s+s')\subseteq\suppe_\alpha(s)\cup\suppe_\alpha(s')\subseteq\mathcal H_t(x),
 \end{equation*}
 which implies $s+s'\in\mathcal H_t(x)$. By $\suppe_\alpha(\omega^s)=\suppe_\alpha(s)$ we also get $\omega^s\in\mathcal H_t(x)$.\\
 (d) By Lemma~\ref{lem:collapse-support} we have $\suppe_\alpha(s)\lef s'$. Now it suffices to invoke the definition of $\mathcal H_t(x)$ with $y=\{s'\}$.
\end{proof}

The relation between operators and collapsing values is particularly important:

\begin{proposition}\label{prop:operators-collapse}
 The following holds:
 \begin{enumerate}[label=(\alph*)]
  \item For $t<t'$ we have $\mathcal H_t(x)\subseteq\mathcal H_{t'}(x)$.
  \item From $s\in\mathcal H_t(x)$ and $s\leq t$ we can infer $\bar\vartheta(s)\in\mathcal H_t(x)$.
  \item If we have $s\in\mathcal H_t(\emptyset)\cap\Omega$ and $t<t'$, then we have $s<\bar\vartheta(t')$.
  \item If we have $s,t\in\mathcal H_t(\emptyset)$ and $s<s'$, then we have $\bar\vartheta(t+\omega^s)<\bar\vartheta(t+\omega^{s'})$.
 \end{enumerate}
\end{proposition}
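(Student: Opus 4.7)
My plan is to dispatch parts (a) and (b) directly from the definition and then use part (c) as the essential tool for part (d).

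For part (a), I would observe that in the inductive definition of $\mathcal{H}_t^{n+1}(x)$ the parameter $t$ appears only in the side condition $s\leq t$ of the collapsing clause, and enlarging $t$ to $t'$ only weakens this condition. A straightforward induction on $n$ then gives $\mathcal{H}_t^n(x)\subseteq\mathcal{H}_{t'}^n(x)$, hence $\mathcal{H}_t(x)\subseteq\mathcal{H}_{t'}(x)$. Part (b) is immediate from the definition: if $s\in\mathcal{H}_t^n(x)$ and $s\leq t$, then $\bar\vartheta(s)$ belongs to the first union appearing in the definition of $\mathcal{H}_t^{n+1}(x)$.

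The heart of the argument is (c). I would prove the slightly stronger auxiliary statement that $\suppe_\alpha(s)\lef\bar\vartheta(t')$ holds for \emph{every} $s\in\mathcal{H}_t(\emptyset)$, by induction on the level $n$ with $s\in\mathcal{H}_t^n(\emptyset)$. At level $0$ the support is empty, so the assertion is trivial. At level $n+1$ the first subcase has $s=\bar\vartheta(s_0)$ with $s_0\in\mathcal{H}_t^n(\emptyset)$ and $s_0\leq t$; by Lemma~\ref{lem:collapse-support}(b) we have $\suppe_\alpha(s)=\{\bar\vartheta(s_0)\}$, the induction hypothesis gives $\suppe_\alpha(s_0)\lef\bar\vartheta(t')$, and $s_0\leq t<t'$, so Proposition~\ref{prop:bar-collapse}(a) yields $\bar\vartheta(s_0)<\bar\vartheta(t')$. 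In the second subcase $\suppe_\alpha(s)\leqf y$ for a finite $y\subseteq\mathcal{H}_t^n(\emptyset)\cap\Omega$; each $b\in y$ satisfies $\suppe_\alpha(b)\lef\bar\vartheta(t')$ by induction and $b<\Omega$, so Lemma~\ref{lem:collapse-support}(c) gives $b<\bar\vartheta(t')$, whence $\suppe_\alpha(s)\lef\bar\vartheta(t')$ follows. The original statement of (c) then drops out by one more application of Lemma~\ref{lem:collapse-support}(c) to $s<\Omega$.

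For part (d) I would apply Proposition~\ref{prop:bar-collapse}(a) to the pair $r=t+\omega^s$, $r'=t+\omega^{s'}$. The inequality $r<r'$ comes from $s<s'$ by two uses of Lemma~\ref{lem:ordinal-addition}(a). For the side condition $\suppe_\alpha(r)\lef\bar\vartheta(r')$, I would invoke Lemma~\ref{lem:ordinal-addition}(e) to reduce to showing that every element of $\suppe_\alpha(t)\cup\suppe_\alpha(s)$ lies below $\bar\vartheta(t+\omega^{s'})$; the hypothesis $s,t\in\mathcal{H}_t(\emptyset)$ combined with Lemma~\ref{lem:operators-basic}(a) places these support elements in $\mathcal{H}_t(\emptyset)\cap\Omega$, and $t<t+\omega^{s'}$ (since $\omega^{s'}>0$) lets part (c) finish the job. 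The main obstacle I anticipate is choosing the right induction hypothesis in (c): the statement must be phrased in terms of supports rather than of the elements themselves, since elements of $\mathcal{H}_t(\emptyset)$ may well lie at or above $\Omega$ (for instance $\Omega$ itself or terms $\mathfrak{E}_\sigma$) and so need not satisfy any strict inequality with the countable ordinal $\bar\vartheta(t')$.
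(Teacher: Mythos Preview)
Your proof is correct. Parts (a) and (b) match the paper exactly. For (c) and (d) your arguments are sound but organised differently from the paper's.

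For (c), the paper does not strengthen the induction hypothesis to supports; it proves the claim directly for $s\in\mathcal H^n_t(\emptyset)\cap\Omega$. In the case $s=\bar\vartheta(s')$ the paper uses Lemma~\ref{lem:operators-basic}(a) to get $\suppe_\alpha(s')\subseteq\mathcal H^{n-1}_t(\emptyset)$, and then applies the induction hypothesis to these support elements, which automatically lie below $\Omega$ since $\suppe_\alpha(s')\subseteq\alpha$. So your anticipated obstacle---that the hypothesis \emph{must} be about supports---does not actually arise: restricting the hypothesis to $\mathcal H^n_t(\emptyset)\cap\Omega$ suffices because supports always land there. Your support-based formulation is slightly more uniform, but both routes are essentially the same induction.

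For (d), the paper takes a different and somewhat slicker path: from $s,t\in\mathcal H_t(\emptyset)$ it gets $t+\omega^s\in\mathcal H_t(\emptyset)\subseteq\mathcal H_{t+\omega^s}(\emptyset)$ via Lemma~\ref{lem:operators-arithmetic} and part~(a), then part~(b) yields $\bar\vartheta(t+\omega^s)\in\mathcal H_{t+\omega^s}(\emptyset)\cap\Omega$, and finally part~(c) applied to this single collapsed value (with $t+\omega^s<t+\omega^{s'}$) gives the result directly. Your route instead goes back to Proposition~\ref{prop:bar-collapse}(a) and checks the support condition element by element via~(c). Both work; the paper's version has the advantage of exhibiting (d) as a formal consequence of (a)--(c), while yours makes the dependence on the Bachmann-Howard collapse more explicit.
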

\begin{proof}
 (a) It is straightforward to establish $\mathcal H^n_t(x)\subseteq\mathcal H^n_{t'}(x)$ by induction on $n$.\\
 (b) This holds by the definition of $\mathcal H_t(x)$.\\
 (c) We prove the claim for $s\in\mathcal H^n_t(\emptyset)\cap\Omega$ by induction on $n$: First assume that $s\in\mathcal H^n_t(\emptyset)$ holds because we have $\suppe_\alpha(s)\leqf y$, with $y=\emptyset$ or $y\subseteq\mathcal H^{n-1}_t(\emptyset)\cap\Omega$. Then the induction hypothesis provides $\suppe_\alpha(s)\lef\bar\vartheta(t')$. By Lemma~\ref{lem:collapse-support} we get $s<\bar\vartheta(t')$. Now assume that we have $s=\bar\vartheta(s')$ with $s'\in\mathcal H^{n-1}_t(\emptyset)$ and $s'\leq t<t'$. Lemma~\ref{lem:operators-basic} ensures $\suppe_\alpha(s')\subseteq\mathcal H^{n-1}_t(\emptyset)$, so that the induction hypothesis yields $\suppe_\alpha(s')\lef\bar\vartheta(t')$. By Proposition~\ref{prop:bar-collapse} we get $s=\bar\vartheta(s')<\bar\vartheta(t')$, as desired.\\
 (d) Using the previous lemma and part (a) we get $t+\omega^s\in\mathcal H_t(\emptyset)\subseteq\mathcal H_{t+\omega^s}(\emptyset)$. By part (b) this yields $\bar\vartheta(t+\omega^s)\in\mathcal H_{t+\omega^s}(\emptyset)\cap\Omega$. Also note that $s<s'$ implies $t+\omega^s<t+\omega^{s'}$, by Lemma~\ref{lem:ordinal-addition}. Now the claim follows from part (c).
\end{proof}

To relate operators and infinite proofs we use the functions $h_0$ and $h_1$ mentioned in Remark~\ref{rmk:codes-methodology}. We have explained that these functions are to be defined by recursion over $(u,\alpha)$-codes. The following definition covers the codes that we have introduced so far. Whenever we extend the system of $(u,\alpha)$-codes by a new function symbol we will add corresponding recursive clauses. Let us point out that the clauses from the proof of Theorem~\ref{thm:collapsing} will lead to non-zero values of the function~$h_0$.

\begin{definition}\label{def:operators-for-proofs}
 The functions
 \begin{equation*}
  h_0:\text{``$(u,\alpha)$-codes''}\rightarrow\varepsilon(S)^u_\alpha\qquad\text{and}\qquad h_1:\text{``$(u,\alpha)$-codes''}\rightarrow[\alpha]^{<\omega}
 \end{equation*}
 are defined by the clauses
 \begin{align*}
h_0(P^u_\alpha\sigma)&=0,& h_1(P^u_\alpha\sigma)&=\supps_\alpha(\sigma),\\
h_0(\mathcal I_{\varphi,a}P)&=h_0(P),& h_1(\mathcal I_{\varphi,a}P)&=h_1(P)\cup\suppl_\alpha(a),\\
h_0(\mathcal R_\varphi P_0P_1)&=\max\{h_0(P_0),h_0(P_1)\},& h_1(\mathcal R_\varphi P_0P_1)&=h_1(P_0)\cup h_1(P_1),\\
h_0(\mathcal E P)&=h_0(P),& h_1(\mathcal E P)&=h_1(P).
\end{align*}
 We use the abbreviation $\mathcal H_P(x)=\mathcal H_{h_0(P)}(h_1(P)\cup x)$.
\end{definition}

The idea is that all relevant parameters at a node $\sigma\in[P]$ should be captured by the set $\mathcal H_P(\supps_\alpha(\sigma))$. In particular we will want to consider the support
\begin{equation*}
 \supp(\varphi)=\bigcup\{\suppl_\alpha(a)\,|\,\text{the parameter $a\in\mathbf L^u_\alpha$ occurs in $\varphi$}\}\in[\alpha]^{<\omega}
\end{equation*}
of an $\mathbf L^u_\alpha$-formula $\varphi$. Operator control is implemented via the local correctness conditions (H1) to (H3), which have also been mentioned in Remark~\ref{rmk:codes-methodology}:

\begin{proposition}\label{prop:local-correctness-operator}
The following holds for any $(u,\alpha)$-code $P$:
\begin{enumerate}[label=(H\arabic*)]
\item We have $o_{\langle\rangle}(P)\in\mathcal H_P(\emptyset)$.
\item If $r_{\langle\rangle}(P)$ is of the form $(\bigwedge,\varphi)$, then we have $\supp(\varphi)\subseteq\mathcal H_P(\emptyset)$. If $r_{\langle\rangle}(P)$ is of the form $(\bigvee,\varphi,a)$ or $(\rep,a)$, then we have $\suppl_\alpha(a)\subseteq\mathcal H_P(\emptyset)$.
\item We have $h_0(n(P,a))\leq h_0(P)$ and $h_1(n(P,a))\subseteq\mathcal H_P(\suppl_\alpha(a))$ for any element~$a\in\iota(P)$.
\end{enumerate}
\end{proposition}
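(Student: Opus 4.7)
The plan is to argue by induction on the $(u,\alpha)$-code $P$, following the two-phase schedule of Remark~\ref{rmk:codes-methodology}: first establish (H1) for every code, and only then carry out a simultaneous induction for (H2) and (H3) (together with (L), (C1), (C2) in the surrounding results). The split is forced by the fact that, for later function symbols such as $\mathcal{C}_t$, the rule $r_{\langle\rangle}$ will be defined in terms of ordinals $o_{\langle\rangle}(n(\cdot,a))$, so (H1) must already be available for the immediate sub-codes when those rules are phrased. For the codes introduced so far — basic codes $P^u_\alpha\sigma$, and the symbols $\mathcal{I}_{\varphi,a}$, $\mathcal{R}_\varphi$, $\mathcal{E}$ — the induction steps are simpler but follow the same organisation.

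For the base case $P=P^u_\alpha\sigma$ we have $h_0(P)=0$ and $h_1(P)=\supps_\alpha(\sigma)$, so the relevant operator is $\mathcal{H}_0(\supps_\alpha(\sigma))$. For nodes $\sigma\in S^u_\alpha$ the ordinal height is $o^u_\alpha(\sigma)=\mathfrak{E}_\sigma$, which lies in $\mathcal{H}_0(\supps_\alpha(\sigma))$ by Lemma~\ref{lem:operators-arithmetic}(b). For nodes inside the auxiliary proofs $P_n$ from Lemmas~\ref{lem:P_0} and \ref{lem:P_n+1}, the ordinal is built from $0$, $\omega^0$ and $\Omega$ by addition (and in the proof of Lemma~\ref{lem:P_0} by one extra successor $\beta_a+1=(\max\suppl_\alpha(a))+\omega^0$), so the closure clauses in Lemma~\ref{lem:operators-arithmetic}(a,c) give (H1). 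For (H2) the conjunctive/reflection rules in the search tree part are covered by Corollary~\ref{cor:operator-control-search-trees}, which places the support of every parameter in $\supps_\alpha(\sigma)=h_1(P)\subseteq\mathcal{H}_P(\emptyset)$; the disjunctive witnesses $b$ that appear inside the attached proofs $P_n$ are constructed explicitly (for instance $b_a=\{z\in L^u_{\beta_a}\mid z\in a\lor z=a\}$), and their supports decompose into $\suppl_\alpha(a)\cup\{\beta_a\}$, again inside $\mathcal{H}_0(\supps_\alpha(\tau))$. Condition (H3) is immediate because $n(P^u_\alpha\sigma,a)=P^u_\alpha(\sigma^\frown a)$, so $h_0$ is unchanged and $h_1(n(P,a))=\supps_\alpha(\sigma)\cup\suppl_\alpha(a)$.

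For the inductive cases I would verify the three conditions clause by clause. The recursive definitions in Definition~\ref{def:operators-for-proofs} ensure that $h_0(P')\leq h_0(P)$ and $h_1(P')\subseteq h_1(P)\cup\suppl_\alpha(a)$ whenever $P'=n(P,a)$ lies inside the cases covered by $\mathcal{I}$, $\mathcal{R}$, or $\mathcal{E}$; together with Proposition~\ref{prop:operators-collapse}(a) and the closure operator property (Lemma~\ref{lem:operators-closure}(b)), this yields (H3) and the inclusion $\mathcal{H}_{n(P,a)}\subseteq\mathcal{H}_P$. (H1) for $\mathcal{R}_\varphi P_0P_1$ and $\mathcal{E}P$ reduces to the closure of $\mathcal{H}_t(x)$ under $+$ and $\omega^{(-)}$ from Lemma~\ref{lem:operators-arithmetic}(c), applied to the heights coming from the inductive hypothesis. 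For (H2) the new root rule is either a repetition or cut (no support condition), one of the original rules $r_{\langle\rangle}(P_i)$ (direct induction hypothesis), or a cut on $\varphi_b$ inside $\mathcal{R}_\varphi$ — and in the last case Lemma~\ref{lem:operators-basic} plus the fact that $b$ was introduced by a $(\bigvee,\neg\varphi,b)$-rule inside $P_0$, to which the inductive version of (H2) applies, supplies the required containment.

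The main obstacle is really bookkeeping, not mathematics: one must simultaneously track how $h_0$, $h_1$, and the operator $\mathcal{H}_P$ behave under each construction, while staying faithful to the extensibility discipline of Remark~\ref{rmk:codes-methodology}. In particular the non-trivial direction, $h_1(n(P,a))\subseteq\mathcal{H}_P(\suppl_\alpha(a))$, rather than the stronger $\subseteq h_1(P)\cup\suppl_\alpha(a)$, is designed precisely to accommodate the later collapsing symbol $\mathcal{C}_t$, where new ordinals of the form $\bar\vartheta(\cdot)$ can enter $h_1(n(P,a))$ that were not literally present in $h_1(P)$. No such phenomenon arises for the present list of codes, so the inductive step here is uniform, and once (H1) has been handled first, (H2) and (H3) drop out directly from the closure properties of $\mathcal{H}_t$.
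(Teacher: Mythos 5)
Your overall strategy---induction over $(u,\alpha)$-codes respecting the two-phase schedule of Remark~\ref{rmk:codes-methodology}, with the base case handled via Lemma~\ref{lem:operators-arithmetic}, Corollary~\ref{cor:operator-control-search-trees} and the explicit shape of the auxiliary proofs $P_n$, and the inductive steps via the closure and monotonicity properties of the operators---is exactly the paper's. Two points need correction. First, your interim claim that $h_1(n(P,a))\subseteq h_1(P)\cup\suppl_\alpha(a)$ for all codes built from $\mathcal I$, $\mathcal R$, $\mathcal E$, and your closing remark that the weaker formulation of (H3) (containment in $\mathcal H_P(\suppl_\alpha(a))$ rather than in $h_1(P)\cup\suppl_\alpha(a)$) is only needed to accommodate $\mathcal C_t$, are both already false for $\mathcal R_\varphi$: when $r_{\langle\rangle}(P_0)=(\bigvee,\neg\varphi,b)$ one has $n(\mathcal R_\varphi P_0P_1,1)=\mathcal I_{\varphi,b}P_1$ with $h_1=h_1(P_1)\cup\suppl_\alpha(b)$, and $\suppl_\alpha(b)$ is only known to lie in the closure $\mathcal H_{P_0}(\emptyset)\subseteq\mathcal H_P(\emptyset)$ via condition (H2) for $P_0$, not in $h_1(P)$ itself. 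You do identify this ingredient, but you file it under (H2) for the new root rule $(\cut,\varphi_b)$---where (H2) is vacuous, since it only constrains rules of the form $(\bigwedge,\cdot)$, $(\bigvee,\cdot,\cdot)$ and $(\rep,\cdot)$---rather than under (H3) for the child $n(P,1)$, which is where the argument actually needs it. Second, in the base case the rule $(\bigvee,\varphi,b)$ also occurs at odd-length nodes of the search-tree part of $P^u_\alpha$, with $b=\en_\alpha(\supps_\alpha(\sigma\!\restriction\!\pi_1(n)),\pi_2(n))$; there the required containment $\suppl_\alpha(b)\subseteq\supps_\alpha(\sigma)$ comes from Corollary~\ref{cor:support-enumeration}, which your sketch omits (you only address disjunctive witnesses inside the attached proofs $P_n$). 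Neither point changes the architecture of the proof, but both must be stated correctly for the induction to close.
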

\begin{proof}
 The conditions are established by induction over the $(u,\alpha)$-code $P$, as explained in Remark~\ref{rmk:codes-methodology}. Let us begin with the case of a basic $(u,\alpha)$-code $P=P^u_\alpha\sigma$. Condition (H3) is satisfied because $a\in\iota(P^u_\alpha\sigma)=\iota(r^u_\alpha(\sigma))$ implies $\sigma^\frown a\in P^u_\alpha$, so that we get $h_0(n(P^u_\alpha\sigma,a))=h_0(P^u_\alpha\,\sigma^\frown a)=0=h_0(P^u_\alpha\sigma)$ and
 \begin{multline*}
  h_1(n(P^u_\alpha\sigma,a))=h_1(P^u_\alpha\,\sigma^\frown a)=\supps_\alpha(\sigma^\frown a)=\supps_\alpha(\sigma)\cup\suppl_\alpha(a)\subseteq\\
  \subseteq\mathcal H_0(\supps_\alpha(\sigma)\cup\suppl_\alpha(a))=\mathcal H_{P^u_\alpha\sigma}(\suppl_\alpha(a)).
 \end{multline*}
 To verify the other conditions we distinguish two cases, following the proof of Proposition~\ref{prop:proof-from-search-tree}: First assume $\sigma\in S^u_\alpha\subseteq P^u_\alpha$. Then condition (H1) holds, since Lemma~\ref{lem:operators-arithmetic} yields
 \begin{equation*}
  o_{\langle\rangle}(P^u_\alpha\sigma)=o^u_\alpha(\sigma)=\mathfrak E_\sigma\in\mathcal H_0(\supps_\alpha(\sigma))=\mathcal H_{P^u_\alpha\sigma}(\emptyset).
 \end{equation*}
 If $\sigma$ has even length, then $r_{\langle\rangle}(P^u_\alpha\sigma)=r^u_\alpha(\sigma)$ is a cut rule and condition (H2) is void. Now assume that $\sigma$ has odd length $2n+1$. We write $\varphi$ for the $\pi_0(n)$-th formula of~$l_\alpha(\sigma)$. If $\varphi$ is conjunctive, then we have $r_{\langle\rangle}(P^u_\alpha\sigma)=(\bigwedge,\varphi)$. Corollary~\ref{cor:operator-control-search-trees} yields
 \begin{equation*}
  \supp(\varphi)\subseteq\supps_\alpha(\sigma)\subseteq\mathcal H_{P^u_\alpha\sigma}(\emptyset),
 \end{equation*}
 as required for condition (H2). If $\varphi$ is disjunctive, then we consider
 \begin{equation*}
  b=\en_\alpha(\supps_\alpha(\sigma\!\restriction\!\pi_1(n)),\pi_2(n)).
 \end{equation*}
 In case $b\in\iota_\alpha(\varphi)$ we have $r_{\langle\rangle}(P^u_\alpha\sigma)=(\bigvee,\varphi,b)$. Invoking Corollary~\ref{cor:support-enumeration} we get
 \begin{equation*}
  \suppl_\alpha(b)\subseteq\supps_\alpha(\sigma\!\restriction\!\pi_1(n))\subseteq\supps_\alpha(\sigma)\subseteq\mathcal H_{P^u_\alpha\sigma}(\emptyset),
 \end{equation*}
 as condition (H2) demands. In case $b\notin\iota_\alpha(\varphi)$ we have $r_{\langle\rangle}(P^u_\alpha\sigma)=(\rep,0)$. Here it suffices to observe $\suppl_\alpha(0)=\emptyset$ (recall that Assumption~\ref{ass:properties-u} ensures $0\in u$). Now assume that $\sigma\in P^u_\alpha$ is of the form $\sigma={\sigma_0}^\frown 1^\frown\tau$, where $\sigma_0\in S^u_\alpha$ has length~$2n$ and we have~$\tau\in P_n$ (cf.~the proof of Proposition~\ref{prop:proof-from-search-tree}). It is straightforward to check conditions~(H1) and~(H2) explicitly. As an example we consider $\sigma_0=\langle\rangle$ and~$\tau=\langle a\rangle\in P_0$. According to the proof of Lemma~\ref{lem:P_0} we have
 \begin{equation*}
  o_{\langle\rangle}(P^u_\alpha\sigma)=o^u_\alpha(\sigma)=o_0(\tau)=\beta_a+1\quad\text{with}\quad\beta_a=\sup\{\beta+1\,|\,\beta\in\suppl_\alpha(a)\}.
 \end{equation*}
 Lemma~\ref{lem:operators-arithmetic} yields $\beta_a+1\in\mathcal H_0(\suppl_\alpha(a))\subseteq\mathcal H_{P^u_\alpha\sigma}(\emptyset)$, as needed for condition~(H1). Furthermore we have
 \begin{equation*}
  r_{\langle\rangle}(P^u_\alpha\sigma)=r_0(\tau)=(\bigvee,\exists_y\,y=a\cup\{a\},b_a)\quad\text{with}\quad b_a=\{z\in L^u_{\beta_a}\,|\,z\in a\lor z=a\}.
 \end{equation*}
 Condition (H2) is satisfied in view of $\suppl_\alpha(b_a)=\{\beta_a\}\cup\suppl_\alpha(a)\subseteq\mathcal H_0(\suppl_\alpha(a))$. We have verified conditions (H1) to (H3) for all basic $(u,\alpha)$-codes $P=P^u_\alpha\sigma$. It remains to prove the induction steps for $(u,\alpha)$-codes of the form $\mathcal I_{\varphi,a}P_0$, $\mathcal R_\varphi P_0P_1$ and~$\mathcal EP_0$. As an example we consider a term $P=\mathcal R_\varphi P_0P_1$ with $r_{\langle\rangle}(P_0)=(\bigvee,\neg\varphi,b)$. Condition (H1) for $P_i$ yields $o_{\langle\rangle}(P_i)\in\mathcal H_{P_i}(\emptyset)$ for $i=0,1$. Using Lemma~\ref{lem:operators-closure} and Proposition~\ref{prop:operators-collapse}(a) we can infer $o_{\langle\rangle}(P_i)\in\mathcal H_P(\emptyset)$. Together with Lemma~\ref{lem:operators-arithmetic} we get
 \begin{equation*}
  o_{\langle\rangle}(P)=o_{\langle\rangle}(P_1)+o_{\langle\rangle}(P_0)\in\mathcal H_P(\emptyset),
 \end{equation*}
 as required by condition (H1) for $P$. Condition (H2) is void in the present case, since we have $r_{\langle\rangle}(P)=(\cut,\varphi_b)$. Using condition (H3) for $P_0$ we get
 \begin{multline*}
  h_0(n(P,0))=h_0(\mathcal R_\varphi n(P_0,0)P_1)=\max\{h_0(n(P_0,0)),h_0(P_1)\}\leq\\
  \leq\max\{h_0(P_0),h_0(P_1)\}=h_0(P).
 \end{multline*}
 Together with Lemma~\ref{lem:operators-closure} we can show
 \begin{multline*}
  h_1(n(P,0))=h_1(\mathcal R_\varphi n(P_0,0)P_1)=h_1(n(P_0,0))\cup h_1(P_1)\subseteq\\
  \subseteq\mathcal H_{P_0}(\suppl_\alpha(0))\cup\mathcal H_{P_1}(\emptyset)\subseteq\mathcal H_P(\suppl_\alpha(0)).
 \end{multline*}
 Even without the induction hypothesis we see
 \begin{equation*}
  h_0(n(P,1))=h_0(\mathcal I_{\varphi,b}P_1)=h_0(P_1)\leq h_0(P).
 \end{equation*}
 Crucially, condition (H2) for $P_0$ ensures $\suppl_\alpha(b)\subseteq\mathcal H_{P_0}(\emptyset)$. We can deduce
 \begin{equation*}
  h_1(n(P,1))=h_1(\mathcal I_{\varphi,b}P_1)=h_1(P_1)\cup\suppl_\alpha(b)\subseteq\mathcal H_P(\emptyset),
 \end{equation*}
 which completes the inductive verification of condition (H3) for $P=\mathcal R_\varphi P_0P_1$. The remaining cases are checked similarly.
\end{proof}

\section{Collapsing}\label{sect:collapsing}

In this section we show that suitable $(u,\alpha)$-proofs can be collapsed to proofs of height below $\Omega$. Using this result we complete the proof of our main theorem: the abstract Bachmann-Howard principle implies the existence of admissible sets. The required collapsing procedure for infinite proofs originates from J\"ager's~\cite{jaeger-kripke-platek} ordinal analysis of Kripke-Platek set theory. We also rely on Buchholz'~\cite{buchholz-local-predicativity} presentation of impredicative ordinal analysis in terms of operator controlled derivations.

When we collapse a proof we will need to relativize certain formulas that it contains: Consider an $\mathbf L^u_\alpha$-formula $\varphi$ and an ordinal $\gamma<\alpha$. We write $\varphi^\gamma$ for the $\mathbf L^u_\alpha$-formula that results from $\varphi$ when we replace all unbounded quantifiers $\exists_x$ and $\forall_x$ by the bounded quantifiers $\exists_{x\in L^u_\gamma}$ resp.~$\forall_{x\in L^u_\gamma}$. By a $\Sigma(\alpha)$-formula (resp.~$\Pi(\alpha)$-formula) we mean an $\mathbf L^u_\alpha$-formula that contains no unbounded universal (resp.~existential) quantifiers. Let us relate these notions to Definition~\ref{def:disj-conj-formula}:

\begin{lemma}\label{lem:relativization-properties}
 The following holds for any $\mathbf L^u_\alpha$-formula $\varphi$ and any ordinal $\gamma<\alpha$:
 \begin{enumerate}[label=(\alph*)]
  \item If $\varphi$ is a $\Sigma(\alpha)$-formula resp.~$\Pi(\alpha)$-formula, then so is $\varphi_a$ for any $a\in\iota_\alpha(\varphi)$.
  \item If $\varphi$ is disjunctive resp.~conjunctive, then so is $\varphi^\gamma$.
  \item We have $\iota_\alpha(\varphi^\gamma)\subseteq\iota_\alpha(\varphi)$, as well as $(\varphi^\gamma)_a=(\varphi_a)^\gamma$ for any $a\in\iota_\alpha(\varphi^\gamma)$.
  \item If we have $a\in\iota_\alpha(\varphi)$ and $\suppl_\alpha(a)\lef\gamma$, then we have $a\in\iota_\alpha(\varphi^\gamma)$.
  \item If $\varphi$ is a disjunctive $\Pi(\alpha)$-formula, then we have $\iota_\alpha(\varphi^\gamma)=\iota_\alpha(\varphi)$.
 \end{enumerate}
\end{lemma}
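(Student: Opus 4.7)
The plan is to prove all five parts simultaneously by structural recursion on the $\mathbf L^u_\alpha$-formula $\varphi$, using the explicit case list from Definition~\ref{def:disj-conj-formula}. For part~(a) the point is simply that every clause of Definition~\ref{def:disj-conj-formula} forms $\varphi_a$ from proper subformulas of $\varphi$ (or from atomic instances), so no unbounded quantifier can appear in $\varphi_a$ that was not already present in $\varphi$; in particular the class of $\Sigma(\alpha)$- resp.~$\Pi(\alpha)$-formulas is closed under passage to $\varphi_a$. Part~(b) is equally direct: relativization replaces $\exists_x$ by $\exists_{x\in L^u_\gamma}$ and $\forall_x$ by $\forall_{x\in L^u_\gamma}$, but does not touch the principal connective of $\varphi$; inspecting the table of types shows that each of the disjunctive (resp.~conjunctive) main connectives $\exists_x$, $\exists_{x\in b}$, $\lor$, $b_0\neq b_1$, $b\in\cdots$, etc.~stays disjunctive (resp.~conjunctive) under relativization.

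For parts~(c) and~(d) I would go through the disjunctive cases of Definition~\ref{def:disj-conj-formula} one by one; the conjunctive cases then follow by the duality clause $\neg\varphi\simeq\bigwedge_a\neg\varphi_a$ together with the obvious identity $(\neg\varphi)^\gamma=\neg(\varphi^\gamma)$. The only case in which relativization actually alters the index set is $\varphi=\exists_x\psi(x)$: here $\iota_\alpha(\varphi)=\mathbf L^u_\alpha$ while $\varphi^\gamma=\exists_{x\in L^u_\gamma}\psi^\gamma(x)$ has $\iota_\alpha(\varphi^\gamma)=\{a\in\mathbf L^u_\alpha\mid\suppl_\alpha(a)\lef_\alpha\gamma\}$, which simultaneously yields $\iota_\alpha(\varphi^\gamma)\subseteq\iota_\alpha(\varphi)$ for~(c) and the implication required in~(d). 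In every other disjunctive case (atomic formulas, $\lor$, and the bounded existential $\exists_{x\in b}\psi$) the index set is defined by a condition on $a$ that depends only on the parameter $b$ and is therefore unchanged by relativization; so in those cases $\iota_\alpha(\varphi^\gamma)=\iota_\alpha(\varphi)$ and the side condition in~(d) is vacuous. The identity $(\varphi^\gamma)_a=(\varphi_a)^\gamma$ is established by inspecting each of these cases and using that relativization commutes with substitution of parameters (since parameters carry no quantifiers).

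Finally, part~(e) follows from what was just done. A disjunctive formula $\varphi$ has one of the principal shapes enumerated in the disjunctive clauses of Definition~\ref{def:disj-conj-formula}; being a $\Pi(\alpha)$-formula rules out precisely the shape $\exists_x\psi$, because that is the unique clause whose main connective is an unbounded existential quantifier. In all remaining cases we observed above that $\iota_\alpha(\varphi^\gamma)=\iota_\alpha(\varphi)$, which is exactly the claim of~(e).

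I expect no real obstacle; the argument is a bookkeeping exercise on the eight-line table of disjunctive types in Definition~\ref{def:disj-conj-formula}. The only subtle point is the interplay between substitution and relativization in~(c), which must be stated carefully (relativization commutes with substitution only because constants $a\in\mathbf L^u_\alpha$ contain no object-language quantifiers), but this is harmless once noticed.
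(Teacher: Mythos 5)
Your proposal is correct and takes essentially the same route as the paper's proof: a direct case-by-case check against the table of types in Definition~\ref{def:disj-conj-formula}, with the conjunctive cases reduced to the disjunctive ones by duality via $(\neg\varphi)^\gamma=\neg(\varphi^\gamma)$ and $(\neg\varphi)_a=\neg(\varphi_a)$. Your explicit isolation of $\exists_x\psi$ as the only clause whose index set changes under relativization, and your remark that the $\Delta_0$-formulas occurring inside parameter terms are bounded (so that relativization commutes with forming $\varphi_a$), are exactly the two points the paper's worked example is designed to illustrate.
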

\begin{proof}
 The claims can be verified explicitly for all cases from Definition~\ref{def:disj-conj-formula}. As an example we consider a disjunctive formula
 \begin{equation*}
  \varphi=\exists_{x\in\{y\in L^u_\delta\,|\,\theta(y,\vec c)\}}\psi(x)\simeq\textstyle\bigvee_{\suppl_\alpha(a)\lef\delta}\theta(a,\vec c)\land\psi(a).
 \end{equation*}
 In view of Definition~\ref{def:term-version-L} the formula $\theta$ must be bounded. Thus $\varphi_a=\theta(a,\vec c)\land\psi(a)$ is a $\Sigma(\alpha)$-formula (resp.~$\Pi(\alpha)$-formula) whenever the same holds for $\varphi$. Now observe
 \begin{equation*}
  \varphi^\gamma=\exists_{x\in\{y\in L^u_\delta\,|\,\theta(y,\vec c)\}}\psi^\gamma(x)\simeq\textstyle\bigvee_{\suppl_\alpha(a)\lef\delta}\theta(a,\vec c)\land\psi(a)^\gamma.
 \end{equation*}
 So $\varphi^\gamma$ is disjunctive and we have $\iota_\alpha(\varphi^\gamma)=\{a\in\mathbf L^u_\alpha\,|\,\suppl_\alpha(a)\lef\delta\}=\iota_\alpha(\varphi)$. As $\theta$ is bounded we have $\theta(a,\vec c)^\gamma=\theta(a,\vec c)$ and thus $(\varphi^\gamma)_a=\theta(a,\vec c)\land\psi(a)^\gamma=(\varphi_a)^\gamma$. Once all disjunctive cases are verified, the conjunctive cases follow by duality (recall that we have $\iota_\alpha(\neg\varphi)=\iota_\alpha(\varphi)$ and $(\neg\varphi)_a=\neg(\varphi_a)$, and observe $(\neg\varphi)^\gamma=\neg(\varphi^\gamma)$).
\end{proof}

The following result covers two proof transformations that are often presented separately: Given a proof of the sequent $\Gamma,\varphi$ with height $\gamma\in\alpha\cong\varepsilon(S)^u_\alpha\cap\Omega$, we can construct a proof of $\Gamma,\varphi^\gamma$. If $\varphi$ is a $\Pi(\alpha)$-formula, then any proof of $\Gamma,\varphi$ (possibly with height above $\Omega$) can be transformed into a proof of $\Gamma,\varphi^\gamma$.

\begin{lemma}\label{lem:boundedness}
 For any $\mathbf L^u_\alpha$-formula $\varphi$ and any ordinal $\gamma<\alpha$ we can extend the system of $(u,\alpha)$-codes by a unary function symbol $\mathcal B_{\varphi,\gamma}$, such that we have
 \begin{gather*}
  l_{\langle\rangle}(\mathcal B_{\varphi,\gamma}P)=\begin{cases}
                                                           (l_{\langle\rangle}(P)\backslash\{\varphi\})\cup\{\varphi^\gamma\} & \text{if $o_{\langle\rangle}(P)\leq\gamma$ or if $\varphi$ is a $\Pi(\alpha)$-formula},\\
                                                           l_{\langle\rangle}(P) & \text{otherwise},
                                                          \end{cases}\\
  \begin{aligned}
   o_{\langle\rangle}(\mathcal B_{\varphi,\gamma}P)&=o_{\langle\rangle}(P),\qquad & d(\mathcal B_{\varphi,\gamma}P)&=d(P),\\
   h_0(\mathcal B_{\varphi,\gamma}P)&=h_0(P),\qquad & h_1(\mathcal B_{\varphi,\gamma}P)&=h_1(P)\cup\{\gamma\}
  \end{aligned}
 \end{gather*}
 for any $(u,\alpha)$-code $P$.
\end{lemma}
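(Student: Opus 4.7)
Following the methodology of Remark~\ref{rmk:codes-methodology}, I will first fix the recursive clauses for $l_{\langle\rangle}$, $o_{\langle\rangle}$, $d$, $h_0$, $h_1$ as given in the statement; then define $r_{\langle\rangle}$ and $n$ by cases; and finally verify the local correctness conditions (L), (C1), (C2), (H1), (H2), (H3) by induction on the $(u,\alpha)$-code. Call $P$ \emph{relativising} if $o_{\langle\rangle}(P)\le\gamma$ or $\varphi$ is a $\Pi(\alpha)$-formula, and \emph{inert} otherwise. Condition~(L) for $P$ ensures that whenever $P$ is relativising, so is every $n(P,a)$: either $o_{\langle\rangle}(n(P,a))<o_{\langle\rangle}(P)\le\gamma$, or $\varphi$ is still a $\Pi(\alpha)$-formula.

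In the inert case I simply mimic $P$: set $r_{\langle\rangle}(\mathcal B_{\varphi,\gamma}P)=r_{\langle\rangle}(P)$ and $n(\mathcal B_{\varphi,\gamma}P,a)=n(P,a)$, so that local correctness transfers directly from $P$. In the relativising case I distinguish by the last rule. If $r_{\langle\rangle}(P)=(\bigvee,\varphi,a)$, set $r_{\langle\rangle}(\mathcal B_{\varphi,\gamma}P)=(\bigvee,\varphi^\gamma,a)$ together with $n(\mathcal B_{\varphi,\gamma}P,0)=\mathcal B_{\varphi_a,\gamma}\mathcal B_{\varphi,\gamma}n(P,0)$; symmetrically, if $r_{\langle\rangle}(P)=(\bigwedge,\varphi)$ set $r_{\langle\rangle}(\mathcal B_{\varphi,\gamma}P)=(\bigwedge,\varphi^\gamma)$ with $n(\mathcal B_{\varphi,\gamma}P,a)=\mathcal B_{\varphi_a,\gamma}\mathcal B_{\varphi,\gamma}n(P,a)$ for $a\in\iota_\alpha(\varphi^\gamma)$. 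In every remaining subcase --- including $(\bigvee,\psi,b)$ or $(\bigwedge,\psi)$ with $\psi\ne\varphi$, as well as $(\cut,\cdot)$, $(\refl,\cdot)$ and $(\rep,\cdot)$ --- just propagate: $r_{\langle\rangle}(\mathcal B_{\varphi,\gamma}P)=r_{\langle\rangle}(P)$ and $n(\mathcal B_{\varphi,\gamma}P,a)=\mathcal B_{\varphi,\gamma}n(P,a)$. A rule $(\true,\psi)$ needs no special treatment because $\psi$ is bounded by definition, so $\psi^\gamma=\psi$ and in particular $\varphi^\gamma=\varphi$ in case $\psi=\varphi$.

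The verifications are largely routine using Lemma~\ref{lem:relativization-properties}. Legitimacy of the new rule $(\bigvee,\varphi^\gamma,a)$ requires $a\in\iota_\alpha(\varphi^\gamma)$: this is given by clause~(e) when $\varphi$ is $\Pi(\alpha)$, or by clause~(d) applied to $\suppl_\alpha(a)\lef o_{\langle\rangle}(P)\le\gamma$ obtained from condition~(L) in the height case. The premise fits because $(\varphi^\gamma)_a=(\varphi_a)^\gamma$ by~(c), and the nested wrapper $\mathcal B_{\varphi_a,\gamma}\mathcal B_{\varphi,\gamma}n(P,0)$ effects precisely this simultaneous relativisation on the end-sequent of $n(P,0)$. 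Ordinal descent is immediate since $\mathcal B$ preserves $o_{\langle\rangle}$, so $o_{\langle\rangle}(n(\mathcal B_{\varphi,\gamma}P,a))=o_{\langle\rangle}(n(P,a))<o_{\langle\rangle}(P)=o_{\langle\rangle}(\mathcal B_{\varphi,\gamma}P)$; conditions (C1), (C2) transfer verbatim because $d$ is unchanged. For (H1)--(H3), note that $h_0$ is preserved and $h_1$ only grows by $\{\gamma\}$, so $\mathcal H_{\mathcal B_{\varphi,\gamma}P}(x)\supseteq\mathcal H_P(x)\cup\{\gamma\}$: this covers exactly the fresh support element in $\supp(\varphi^\gamma)=\supp(\varphi)\cup\{\gamma\}$ required by (H2) for the new rule.

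The main obstacle is the simultaneous relativisation of the principal formula $\varphi$ and its instance $\varphi_a$ in the premises of $(\bigvee,\varphi,a)$ and $(\bigwedge,\varphi)$: a single $\mathcal B_{\varphi,\gamma}$ acts only on occurrences of $\varphi$ itself, but the premise contains $\varphi_a$, which must become $(\varphi^\gamma)_a$. The nested application $\mathcal B_{\varphi_a,\gamma}\mathcal B_{\varphi,\gamma}$ resolves this cleanly, but only because the two triggering conditions propagate in perfect synchrony: a $\Pi(\alpha)$-formula $\varphi$ produces $\Pi(\alpha)$-instances $\varphi_a$ by Lemma~\ref{lem:relativization-properties}(a), and the height bound $o_{\langle\rangle}(P)\le\gamma$ passes to $o_{\langle\rangle}(n(P,a))\le\gamma$ by condition~(L). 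Thus the disjunction ``$o_{\langle\rangle}(P)\le\gamma$ or $\varphi$ is a $\Pi(\alpha)$-formula'' is precisely the invariant needed for the recursion to close.
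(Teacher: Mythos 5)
Your construction is the same as the paper's: the same split into the ``relativising'' (intended) and ``inert'' (unintended) cases, the same nested wrappers $\mathcal B_{\varphi_a,\gamma}\mathcal B_{\varphi,\gamma}$ on the premises of $(\bigwedge,\varphi)$ and $(\bigvee,\varphi,b)$, the same appeal to Lemma~\ref{lem:relativization-properties} for $b\in\iota_\alpha(\varphi^\gamma)$ and $(\varphi^\gamma)_a=(\varphi_a)^\gamma$, and the same key observation that the two triggering conditions propagate to the premises so that the recursion stays in the intended case.

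One verification detail is glossed over. For $r_{\langle\rangle}(P)=(\refl,\exists_w\forall_{x\in b}\exists_{y\in w}\theta)$ you ``just propagate'', but condition (L) for $\mathcal B_{\varphi,\gamma}P$ still demands $\exists_w\forall_{x\in b}\exists_{y\in w}\theta\in l_{\langle\rangle}(\mathcal B_{\varphi,\gamma}P)$; this would fail if $\varphi$ were that very formula, since it would then have been replaced by $\varphi^\gamma$ in the end-sequent (and $\varphi^\gamma\neq\varphi$, as the reflection formula is unbounded). You need to observe that this cannot happen in the relativising case: the height alternative gives $o_{\langle\rangle}(P)\leq\gamma<\Omega$, contradicting the side condition $\Omega\leq o_{\langle\rangle}(P)$ of the reflection rule, and $\exists_w\forall_{x\in b}\exists_{y\in w}\theta$ is not a $\Pi(\alpha)$-formula. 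With that one-line remark added, your argument is complete.
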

\begin{proof}
 As explained in Remark~\ref{rmk:codes-methodology}, we must complement the clauses from the lemma by recursive clauses for the functions $r_{\langle\rangle}$ and $n$. We must also prove the induction step for the local correctness conditions (L), (C1), (C2) and (H1) to~(H3). Let us first consider the ``unintended'' case, i.e.~we assume that we have $\gamma<o_{\langle\rangle}(P)$ and that $\varphi$ fails to be a $\Pi(\alpha)$-formula. In this situation we stipulate that $\mathcal B_{\varphi,\gamma}P$ behaves like $P$, i.e.~we set $r_{\langle\rangle}(\mathcal B_{\varphi,\gamma}P)=r_{\langle\rangle}(P)$ and $n(\mathcal B_{\varphi,\gamma}P,a)=n(P,a)$. Then the local correctness conditions for $\mathcal B_{\varphi,\gamma}P$ follow from the same conditions for $P$. In the rest of the proof we consider the ``intended'' case, i.e.~we assume that we have $o_{\langle\rangle}(P)\leq\gamma$ or that $\varphi$ is a $\Pi(\alpha)$-formula. In this situation we put
 \begin{align*}
 r_{\langle\rangle}(\mathcal B_{\varphi,\gamma}P)&=\begin{cases}
 (\bigwedge,\varphi^\gamma) & \text{if $r_{\langle\rangle}(P)=(\bigwedge,\varphi)$},\\
 (\bigvee,\varphi^\gamma,b) & \text{if $r_{\langle\rangle}(P)=(\bigvee,\varphi,b)$},\\
 r_{\langle\rangle}(P) & \text{otherwise},
 \end{cases}\\
 n(\mathcal B_{\varphi,\gamma}P,a)&=\begin{cases}
 \mathcal B_{\varphi_a,\gamma}\mathcal B_{\varphi,\gamma}n(P,a) & \text{if $r_{\langle\rangle}(P)=(\bigwedge,\varphi)$ and $a\in\iota_\alpha(\varphi)$},\\
 \mathcal B_{\varphi_b,\gamma}\mathcal B_{\varphi,\gamma}n(P,a) & \text{if $r_{\langle\rangle}(P)=(\bigvee,\varphi,b)$},\\
 \mathcal B_{\varphi,\gamma}n(P,a) & \text{otherwise}.
 \end{cases}
\end{align*}
In the case of a rule $r_{\langle\rangle}(P)=(\bigvee,\varphi,b)$ it is crucial to observe that condition (L) for~$P$ ensures $\suppl_\alpha(b)\lef o_{\langle\rangle}(P)$. By parts (d) and (e) of the previous lemma we get $b\in\iota_\alpha(\varphi^\gamma)$, so that $r_{\langle\rangle}(\mathcal B_{\varphi,\gamma}P)$ is indeed an $\mathbf L^u_\alpha$-rule. To show how local correctness is verified we consider the case $r_{\langle\rangle}(P)=(\bigwedge,\varphi)$. The crucial observation is that $\mathcal B_{\varphi_a,\gamma}\mathcal B_{\varphi,\gamma}n(P,a)$ and $\mathcal B_{\varphi,\gamma}n(P,a)$ are evaluated according to the ``intended'' case, for any $a\in\iota_\alpha(\varphi^\gamma)\subseteq\iota_\alpha(\varphi)$: If we have $o_{\langle\rangle}(P)\leq\gamma$, then condition (L) for $P$ yields
\begin{equation*}
 o_{\langle\rangle}(\mathcal B_{\varphi,\gamma}n(P,a))=o_{\langle\rangle}(n(P,a))<o_{\langle\rangle}(P)\leq\gamma.
\end{equation*}
If $\varphi$ is a $\Pi(\alpha)$-formula, then the previous lemma ensures that $\varphi_a$ is a $\Pi(\alpha)$-formula as well. It follows that we have
\begin{equation*}
 l_{\langle\rangle}(n(\mathcal B_{\varphi,\gamma}P,a))=l_{\langle\rangle}(\mathcal B_{\varphi_a,\gamma}\mathcal B_{\varphi,\gamma}n(P,a))=(l_{\langle\rangle}(n(P,a))\backslash\{\varphi,\varphi_a\})\cup\{\varphi^\gamma,\varphi_a^\gamma\}.
\end{equation*}
Note that the previous lemma allows us to write $\varphi^\gamma_a=(\varphi_a)^\gamma=(\varphi^\gamma)_a$. Condition~(L) for $P$ provides $l_{\langle\rangle}(n(P,a))\subseteq l_{\langle\rangle}(P)\cup\{\varphi_a\}$. Thus we get
\begin{equation*}
 l_{\langle\rangle}(n(\mathcal B_{\varphi,\gamma}P,a))\subseteq(l_{\langle\rangle}(P)\backslash\{\varphi\})\cup\{\varphi^\gamma,\varphi_a^\gamma\}=l_{\langle\rangle}(\mathcal B_{\varphi,\gamma}P)\cup\{\varphi_a^\gamma\},
\end{equation*}
as required by condition (L) for $\mathcal B_{\varphi,\gamma}P$. To establish condition (H2) we observe
\begin{equation*}
 \supp(\varphi^\gamma)\subseteq\supp(\varphi)\cup\suppl_\alpha(L^u_\gamma)=\supp(\varphi)\cup\{\gamma\}.
\end{equation*}
 By definition we have $\gamma\in h_1(\mathcal B_{\varphi,\gamma}P)\subseteq\mathcal H_{\mathcal B_{\varphi,\gamma}P}(\emptyset)$. Also note that condition~(H2) for $P$ yields $\supp(\varphi)\subseteq\mathcal H_P(\emptyset)$. Together we obtain $\supp(\varphi^\gamma)\subseteq\mathcal H_{\mathcal B_{\varphi,\gamma}P}(\emptyset)$, as condition~(H2) for $\mathcal B_{\varphi,\gamma}P$ demands. The remaining verifications are similar. Concerning the case of a reflection rule $r_{\langle\rangle}(P)=(\refl,\exists_w\forall_{x\in b}\exists_{y\in w}\theta)$ we remark that the formulas $\varphi$ and $\exists_w\forall_{x\in b}\exists_{y\in w}\theta$ must be different: Invoking condition (L) for $P$ we see $\gamma<\Omega\leq o_{\langle\rangle}(P)$, and $\exists_w\forall_{x\in b}\exists_{y\in w}\theta$ is not a $\Pi(\alpha)$-formula.
\end{proof}

Let us now single out the proofs that can be collapsed to height below $\Omega$:

\begin{definition}\label{def:t-collapsing}
Consider a term $t\in\varepsilon(S)^u_\alpha$ with $t\in\mathcal H_t(\emptyset)$. A $(u,\alpha)$-code $P$ is called $t$-controlled if the following conditions are satisfied:
\begin{enumerate}[label=(\roman*)]
\item The end-sequent $l_{\langle\rangle}(P)$ of $P$ consists of $\Sigma(\alpha)$-formulas.
\item We have $h_0(P)\leq t$ and $h_1(P)\subseteq\mathcal H_t(\emptyset)$.
\end{enumerate}
If $P$ is $t$-controlled and has cut rank $d(P)\leq 2$, then $P$ is called $t$-collapsing.
\end{definition}

The restriction to $\Sigma(\alpha)$-formulas and proofs of low cut rank can be explained in view of the following facts:

\begin{lemma}\label{lem:instances-conjunctive-Sigma}
The following holds for any $\mathbf L^u_\alpha$-formula $\varphi$:
\begin{enumerate}[label=(\alph*)]
\item If $\varphi$ is a conjunctive $\Sigma(\alpha)$-formula, then we have $\suppl_\alpha(a)\lef\supp(\varphi)$ for all $a\in\iota_\alpha(\varphi)$.
\item If we have $\rk(\varphi)\leq 1$ and $\varphi$ is disjunctive (resp.~conjunctive), then $\varphi$ is a $\Sigma(\alpha)$-formula (resp.~$\Pi(\alpha)$-formula).
\end{enumerate}
\end{lemma}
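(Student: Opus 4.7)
The plan is to prove both statements by straightforward case analysis: for (a) we inspect each conjunctive clause of Definition~\ref{def:disj-conj-formula}, and for (b) we inspect the recursive clauses of Definition~\ref{def:formula-rank}.

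For part (a), recall that the conjunctive formulas arise as negations of the disjunctive ones listed in Definition~\ref{def:disj-conj-formula}. Excluding $\forall_x\psi$ (which is not $\Sigma(\alpha)$), the conjunctive $\Sigma(\alpha)$-cases are $b\in v$ (with $b$ of the form $v'\in u$ and $v'\in v$, giving $\iota_\alpha(\varphi)=\emptyset$), $b\notin v$, $b\notin L^u_s$, $b\notin\{x\in L^u_s\,|\,\theta\}$, $b_0=b_1$, $\psi_0\land\psi_1$, and $\forall_{x\in b}\psi$. Inspection shows that $\iota_\alpha(\varphi)$ is in each case either a subset of $u$ (for $b\notin v$, $b_0=b_1$, $\psi_0\land\psi_1$, and the subcase of $\forall_{x\in v}\psi$ with $v\in u$) or of the form $\{a\in\mathbf L^u_\alpha\,|\,\suppl_\alpha(a)\lef_\alpha s\}$ for an $s$ that appears as the subscript of some $L^u_s$ occurring in $\varphi$ (the cases $b\notin L^u_s$, $b\notin\{x\in L^u_s\,|\,\theta\}$, and $\forall_{x\in b}\psi$ with $b=L^u_s$ or $b=\{y\in L^u_s\,|\,\theta\}$). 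In the first alternative $\suppl_\alpha(a)=\emptyset$ by Assumption~\ref{ass:properties-u}, so $\suppl_\alpha(a)\lef\supp(\varphi)$ holds vacuously. In the second alternative $s\in\supp(\varphi)$, since $L^u_s$ (or a term containing it) is a parameter of $\varphi$, so $\suppl_\alpha(a)\lef_\alpha s\leq^{\operatorname{fin}}\supp(\varphi)$, which is what we need.

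For part (b), I would argue by the structure of $\varphi$ using Definition~\ref{def:formula-rank}. If $\rk(\varphi)=0$, then $\varphi$ is bounded, hence simultaneously $\Sigma(\alpha)$ and $\Pi(\alpha)$, and the claim is immediate. Suppose then that $\rk(\varphi)=1$. A propositional combination $\varphi_0\land\varphi_1$ or $\varphi_0\lor\varphi_1$ is excluded: if both constituents are bounded it is itself bounded and hence has rank $0$, while if at least one is unbounded it has rank $\geq 2$ (since an unbounded formula has rank $\geq 1$ by clauses (ii)--(iv)). The bounded quantifier clauses give rank $\geq 3$ when applicable, so $\varphi$ must be of the form $\exists_x\psi$ or $\forall_x\psi$ with $\psi$ bounded. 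If $\varphi$ is disjunctive it must be $\exists_x\psi$, which contains exactly one unbounded quantifier, an existential one, hence is $\Sigma(\alpha)$; dually the conjunctive case yields $\forall_x\psi\in\Pi(\alpha)$.

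Neither step is deep; this is bookkeeping against the definitions. The only mildly subtle point is recognising that the index sets $\{v'\in u\,|\,v'\in v\}$ and $\{0,1\}$ land in $u\subseteq\mathbf L^u_\alpha$, so that Assumption~\ref{ass:properties-u}'s stipulation $\{0,1\}\subseteq u$ guarantees empty $\suppl_\alpha$ and makes the relevant instances of $\lef$ vacuous. Once that observation is in hand, all remaining cases are immediate from the displayed clauses of Definitions~\ref{def:disj-conj-formula} and~\ref{def:formula-rank}.
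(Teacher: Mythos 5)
Your proof is correct and follows essentially the same route as the paper: an explicit case check against the conjunctive clauses of Definition~\ref{def:disj-conj-formula} for (a), with the key observation that a conjunctive $\Sigma(\alpha)$-formula cannot begin with an unbounded quantifier and that the index sets are otherwise bounded by a subscript $s$ occurring in a parameter (or land in $u$, where supports are empty), and the rank computation from Definition~\ref{def:formula-rank} for (b). The paper merely presents one representative case where you spell out all of them; there is no substantive difference.
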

\begin{proof}
(a) Based on Definition~\ref{def:disj-conj-formula}, the claim can be checked explicitly for all possible forms of $\varphi$. The point is that $\varphi$ cannot begin with an unbounded quantifier: Note that $\forall_x\psi$ is no $\Sigma(\alpha)$-formula while $\exists_x\psi$ is not conjunctive. As a positive example we consider
\begin{equation*}
\varphi=\forall_{x\in\{y\in L^u_\gamma\,|\,\theta(y,\vec c)\}}\psi(x)\simeq\textstyle\bigwedge_{\suppl_\alpha(a)\lef\gamma}\neg\theta(a,\vec c)\lor\psi(a).
\end{equation*}
Since the $\mathbf L^u_\alpha$-term $\{y\in L^u_\gamma\,|\,\theta(y,\vec c)\}$ is a parameter of $\varphi$ we have
\begin{equation*}
\suppl_\alpha(a)\lef\gamma\in\suppl_\alpha(\{y\in L^u_\gamma\,|\,\theta(y,\vec c)\})\subseteq\supp(\varphi)
\end{equation*}
for any $a\in\iota_\alpha(\varphi)$.\\
(b) In view of Definition~\ref{def:formula-rank} any formula of rank zero must be bounded. Thus a formula of rank one must be of the form $\exists_x\theta$ or $\forall_x\theta$ with a bounded formula $\theta$.
\end{proof}

In Section~\ref{section:epsilon-variant} we have considered a collapsing function
\begin{equation*}
 \bar\vartheta:\varepsilon(S)^u_\alpha\rightarrow\varepsilon(S)^u_\alpha\cap\Omega\cong\alpha
\end{equation*}
on our ordinal notation system. Together with the previous proof transformations it allows us to collapse infinite proofs to height below $\Omega$ (cf.~the ``Kollabierungs\-lemma'' in J\"ager's~\cite{jaeger-kripke-platek} ordinal analysis of Kripke-Platek set theory):

\begin{theorem}\label{thm:collapsing}
For any $t\in\varepsilon(S)^u_\alpha$ we can extend the system of $(u,\alpha)$-codes by a unary function symbol $\mathcal C_t$, such that we have
\begin{align*}
l_{\langle\rangle}(\mathcal C_tP)&=l_{\langle\rangle}(P),\\
o_{\langle\rangle}(\mathcal C_tP)&=\begin{cases}
   \bar\vartheta(t+\omega^{o_{\langle\rangle}(P)}) & \text{if we have $t\in\mathcal H_t(\emptyset)$ and $P$ is $t$-collapsing},\\
   o_{\langle\rangle}(P)                             & \text{otherwise},
\end{cases}
\end{align*}
for any $(u,\alpha)$-code $P$.
\end{theorem}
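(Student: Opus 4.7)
The construction will follow the standard Buchholz–J\"ager collapsing recipe, adapted to our code-based formalization. In the ``unintended'' case (when $t\notin\mathcal H_t(\emptyset)$ or $P$ fails to be $t$-collapsing) I simply copy $P$: set $r_{\langle\rangle}(\mathcal C_tP)=r_{\langle\rangle}(P)$ and $n(\mathcal C_tP,a)=n(P,a)$, and take $h_0,h_1,d$ to agree with those of $P$, so local correctness is inherited verbatim. For the functions $h_0$ and $h_1$ in the ``intended'' case I set $h_0(\mathcal C_tP)=t+\omega^{o_{\langle\rangle}(P)}$ and $h_1(\mathcal C_tP)=h_1(P)$, so that by Lemma~\ref{lem:operators-arithmetic} and Proposition~\ref{prop:operators-collapse}(b) the new height $\bar\vartheta(t+\omega^{o_{\langle\rangle}(P)})$ lies in $\mathcal H_{\mathcal C_tP}(\emptyset)$ as required by (H1). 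I take $d(\mathcal C_tP)=\max\{2,d(P)\}=2$, because after collapsing only cuts of rank $\le 1$ (bounded or purely existential/universal) will remain.

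\paragraph{Case analysis in the intended case.} For the rules $(\true,\varphi)$, $(\bigwedge,\varphi)$, $(\bigvee,\varphi,b)$ and $(\rep,a)$, the rule is kept and $\mathcal C_t$ is propagated to each child: $n(\mathcal C_tP,a)=\mathcal C_tn(P,a)$. Three observations drive the verification. First, the end-sequents of the children are still $\Sigma(\alpha)$: for a $(\bigwedge,\varphi)$-rule this uses that $\varphi$ is a conjunctive $\Sigma(\alpha)$-formula, so $\varphi_a$ is $\Sigma(\alpha)$ by Lemma~\ref{lem:relativization-properties}(a), while the additional supports lie in $\mathcal H_t(\emptyset)$ by Lemma~\ref{lem:instances-conjunctive-Sigma}(a) combined with Lemma~\ref{lem:operators-arithmetic}(d). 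Second, for the witness $b$ in a $(\bigvee,\varphi,b)$-rule, condition (H2) for $P$ gives $\suppl_\alpha(b)\subseteq\mathcal H_t(\emptyset)\cap\Omega$, so Proposition~\ref{prop:operators-collapse}(c) with $t'=t+\omega^{o_{\langle\rangle}(P)}>t$ yields $\suppl_\alpha(b)\lef\bar\vartheta(t+\omega^{o_{\langle\rangle}(P)})=o_{\langle\rangle}(\mathcal C_tP)$. Third, the strict decrease in ordinal height is delivered by Proposition~\ref{prop:operators-collapse}(d): from $o_{\langle\rangle}(n(P,a))<o_{\langle\rangle}(P)$ in $\mathcal H_t(\emptyset)$ one obtains $\bar\vartheta(t+\omega^{o_{\langle\rangle}(n(P,a))})<\bar\vartheta(t+\omega^{o_{\langle\rangle}(P)})$.

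\paragraph{The two critical cases.} For a cut $r_{\langle\rangle}(P)=(\cut,\varphi)$, condition (C1) forces $\rk(\varphi)\le 1$, so by Lemma~\ref{lem:instances-conjunctive-Sigma}(b) one of $\varphi,\neg\varphi$ is a disjunctive $\Sigma(\alpha)$-formula and the other a conjunctive $\Pi(\alpha)$-formula; WLOG $\varphi$ is $\Sigma(\alpha)$. Letting $\gamma=\bar\vartheta(t+\omega^{o_{\langle\rangle}(n(P,1))})$, I will define
\[
 n(\mathcal C_tP,0)=\mathcal B_{\neg\varphi,\gamma}\mathcal C_tn(P,0),\qquad n(\mathcal C_tP,1)=\mathcal B_{\varphi,\gamma}\mathcal C_tn(P,1),
\]
and $r_{\langle\rangle}(\mathcal C_tP)=(\cut,\varphi^\gamma)$. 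The relativization $\mathcal B_{\neg\varphi,\gamma}$ applies because $\neg\varphi$ is $\Pi(\alpha)$; the relativization $\mathcal B_{\varphi,\gamma}$ applies because after collapsing the height of $\mathcal C_tn(P,1)$ is exactly $\gamma$, so the $o_{\langle\rangle}\le\gamma$ clause of Lemma~\ref{lem:boundedness} is available. The new cut formula $\varphi^\gamma$ is bounded, so has rank $0<d(\mathcal C_tP)=2$. For the reflection rule $r_{\langle\rangle}(P)=(\refl,\exists_w\forall_{x\in a}\exists_{y\in w}\theta)$, the premise has end-sequent $\subseteq l(P),\psi$ with $\psi=\forall_{x\in a}\exists_y\theta$ a $\Sigma(\alpha)$-formula; setting $\gamma'=\bar\vartheta(t+\omega^{o_{\langle\rangle}(n(P,0))})<\bar\vartheta(t+\omega^{o_{\langle\rangle}(P)})$, the collapsed proof uses the witness $L^u_{\gamma'}$ via a $(\bigvee,\exists_w\psi,L^u_{\gamma'})$-rule whose child is $\mathcal B_{\psi,\gamma'}\mathcal C_tn(P,0)$.

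\paragraph{Main obstacle and bookkeeping.} The delicate point is maintaining the operator-control invariants (H1)--(H3) through the reflection and cut cases, in particular tracking that all newly introduced parameters ($\gamma'$ as a support of $L^u_{\gamma'}$, and $\gamma$ added to $h_1$ by $\mathcal B$) remain inside $\mathcal H_{\mathcal C_tP}(\emptyset)$. This is where the choice $h_0(\mathcal C_tP)=t+\omega^{o_{\langle\rangle}(P)}$ pays off: every ordinal collapsed inside a recursive call has the form $\bar\vartheta(s)$ with $s\le h_0(\mathcal C_tP)$ and $s\in\mathcal H_{h_0(\mathcal C_tP)}(\emptyset)$, so Proposition~\ref{prop:operators-collapse}(b) places it in $\mathcal H_{\mathcal C_tP}(\emptyset)$. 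The simultaneous induction of Remark~\ref{rmk:codes-methodology} is set up exactly so that (H1) for $n(P,a)$ is available when verifying (L) for $\mathcal C_tP$, which is what allows Proposition~\ref{prop:operators-collapse}(d) to be invoked. Once every rule case is checked, local correctness conditions (L), (C1), (C2), (H1)--(H3) have been established for $\mathcal C_t P$, completing the construction.
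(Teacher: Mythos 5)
Your overall architecture matches the paper's: the same unintended/intended split, the same choice $h_0(\mathcal C_tP)=t+\omega^{o_{\langle\rangle}(P)}$, the same propagation of $\mathcal C_t$ through $(\true)$, $(\bigwedge)$, $(\bigvee)$ and $(\rep)$ with Proposition~\ref{prop:operators-collapse}(c),(d) doing the work, and the same treatment of reflection via a $(\bigvee,\cdot,L^u_{\gamma'})$-rule applied to $\mathcal B_{\forall_{x\in b}\exists_y\theta,\gamma'}\mathcal C_tn(P,0)$. The minor deviations in bookkeeping ($h_1(\mathcal C_tP)=h_1(P)$ instead of $\emptyset$, $d(\mathcal C_tP)=2$ instead of $1$) are harmless.

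There is, however, a genuine gap in the cut case. You define $n(\mathcal C_tP,0)=\mathcal B_{\neg\varphi,\gamma}\mathcal C_tn(P,0)$, i.e.\ you collapse the premise carrying $\neg\varphi$ \emph{first} and bound it afterwards. But when $\rk(\varphi)=1$ the formula $\neg\varphi$ is a genuine $\Pi(\alpha)$-formula (e.g.\ $\forall_x\neg\theta$) and is \emph{not} $\Sigma(\alpha)$, so the end-sequent $l_{\langle\rangle}(n(P,0))\subseteq l_{\langle\rangle}(P),\neg\varphi$ violates clause~(i) of Definition~\ref{def:t-collapsing}: $n(P,0)$ is not $t$-collapsing, $\mathcal C_tn(P,0)$ is evaluated in the unintended case, and its ordinal height remains $o_{\langle\rangle}(n(P,0))$, which may be $\geq\Omega$. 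Condition~(L) for $\mathcal C_tP$ then fails, since $o_{\langle\rangle}(n(\mathcal C_tP,0))=o_{\langle\rangle}(n(P,0))\not<\bar\vartheta(t+\omega^{o_{\langle\rangle}(P)})$. The paper's construction reverses the order on that branch: first apply $\mathcal B_{\neg\varphi,\bar\vartheta(s)}$ (legitimate because $\neg\varphi$ is $\Pi(\alpha)$), which makes the end-sequent $\Sigma(\alpha)$, and only then collapse. Moreover the second collapse cannot be $\mathcal C_t$: the bounding step inserts $\bar\vartheta(s)$ with $s=t+\omega^{o_{\langle\rangle}(n(P,1))}>t$ into $h_1$, and by Proposition~\ref{prop:operators-collapse}(c) one has $\bar\vartheta(s)\notin\mathcal H_t(\emptyset)$, so $\mathcal B_{\neg\varphi,\bar\vartheta(s)}n(P,0)$ is only $s$-collapsing, not $t$-collapsing. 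One must therefore take $n(\mathcal C_tP,0)=\mathcal C_s\mathcal B_{\neg\varphi,\bar\vartheta(s)}n(P,0)$, yielding height $\bar\vartheta(s+\omega^{o_{\langle\rangle}(n(P,0))})=\bar\vartheta(t+(\omega^{o_{\langle\rangle}(n(P,1))}+\omega^{o_{\langle\rangle}(n(P,0))}))<\bar\vartheta(t+\omega^{o_{\langle\rangle}(P)})$. This stacking of collapsing parameters is the essential content of the Kollabierungslemma and is missing from your proposal.
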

\begin{proof}
Let us first observe that the given characterization of $o_{\langle\rangle}(\mathcal C_tP)$ is a valid recursive clause in the sense of Remark~\ref{rmk:codes-methodology}: Since we must decide whether $P$ is $t$-collapsing, the value $o_{\langle\rangle}(\mathcal C_tP)$ does not only depend on $o_{\langle\rangle}(P)$ but also on $l_{\langle\rangle}(P)$, $h_0(P)$, $h_1(P)$ and $d(P)$. The point is that all these values are defined in part (i) of the recursion mentioned in Remark~\ref{rmk:codes-methodology}. To prove the theorem we must provide the remaining recursive clauses and show the corresponding induction steps. As in the proof of Lemma~\ref{lem:boundedness} we begin with the ``unintended" case, i.e.~we assume that we have $t\notin\mathcal H_t(\emptyset)$ or that $P$ is not $t$-collapsing. In this situation we stipulate that $\mathcal C_tP$ behaves like $P$, i.e.~we set
\begin{gather*}
d(\mathcal C_tP)=d(P),\qquad h_0(\mathcal C_tP)=h_0(P),\qquad h_1(\mathcal C_tP)=h_1(P),\\
r_{\langle\rangle}(\mathcal C_tP)=r_{\langle\rangle}(P),\qquad n(\mathcal C_tP,a)=n(P,a).
\end{gather*}
Then the local correctness conditions for $\mathcal C_tP$ follow from the same conditions for~$P$. In the rest of the proof we consider the ``intended case", i.e.~we assume that we have $t\in\mathcal H_t(\emptyset)$ and that $P$ is $t$-collapsing. In this situation we put
\begin{equation*}
d(\mathcal C_tP)=1,\qquad h_0(\mathcal C_tP)=t+\omega^{o_{\langle\rangle}(P)},\qquad h_1(\mathcal C_tP)=\emptyset.
\end{equation*}
The value $r_{\langle\rangle}(\mathcal C_tP)$ and the function $a\mapsto n(\mathcal C_tP,a)$ are defined by case distinction over the rule $r_{\langle\rangle}(P)$. We verify the local correctness conditions as we go along:

\emph{Case $r_{\langle\rangle}(P)=(\true,\varphi)$:} We set
\begin{equation*}
r_{\langle\rangle}(\mathcal C_tP)=(\true,\varphi),\qquad n(\mathcal C_tP,a)=P.
\end{equation*}
The only interesting condition is~(H1): We have $o_{\langle\rangle}(P)\in\mathcal H_P(\emptyset)$ by the same condition for~$P$. Since $P$ is $t$-collapsing we can use Proposition~\ref{prop:operators-collapse}(a) and Lemma~\ref{lem:operators-closure} to infer
\begin{equation*}
o_{\langle\rangle}(P)\in\mathcal H_{h_0(P)}(h_1(P))\subseteq\mathcal H_t(h_1(P))\subseteq\mathcal H_t(\emptyset)\subseteq\mathcal H_{t+\omega^{o_{\langle\rangle}(P)}}(\emptyset).
\end{equation*}
By Lemma~\ref{lem:operators-arithmetic} we obtain $t+\omega^{o_{\langle\rangle}(P)}\in\mathcal H_{t+\omega^{o_{\langle\rangle}(P)}}(\emptyset)$. Proposition~\ref{prop:operators-collapse}(b) yields
\begin{equation*}
o_{\langle\rangle}(\mathcal C_tP)=\bar\vartheta(t+\omega^{o_{\langle\rangle}(P)})\in\mathcal H_{t+\omega^{o_{\langle\rangle}(P)}}(\emptyset)=\mathcal H_{\mathcal C_tP}(\emptyset),
\end{equation*}
as required by condition (H1) for $\mathcal C_tP$.

\emph{Case $r_{\langle\rangle}(P)=(\bigwedge,\varphi)$:} In this case we set
\begin{equation*}
r_{\langle\rangle}(\mathcal C_tP)=(\bigwedge,\varphi),\qquad n(\mathcal C_tP,a)=\mathcal C_t n(P,a).
\end{equation*}
To see that $\mathcal C_t n(P,a)$ is evaluated according to the intended case we must show that $n(P,a)$ is $t$-collapsing, for any $a\in\iota(\mathcal C_tP)=\iota_\alpha(\varphi)$: Observe that $\varphi$ must be a $\Sigma(\alpha)$-formula, since condition (L) for $P$ ensures $\varphi\in l_{\langle\rangle}(P)$. In view of Lemma~\ref{lem:relativization-properties} we can infer that $l_{\langle\rangle}(n(P,a))\subseteq l_{\langle\rangle}(P)\cup\{\varphi_a\}$ consists of $\Sigma(\alpha)$-formulas, as required by condition~(i) of Definition~\ref{def:t-collapsing}. Condition (H3) for $P$ yields $h_0(n(P,a))\leq h_0(P)\leq t$. Crucially, Lemma~\ref{lem:instances-conjunctive-Sigma} and condition (H2) for $P$ ensure
\begin{equation*}
\suppl_\alpha(a)\lef\supp(\varphi)\subseteq\mathcal H_P(\emptyset).
\end{equation*}
In view of $\supp(\varphi)\subseteq\alpha\cong\varepsilon(S)^u_\alpha\cap\Omega$ we get $\suppl_\alpha(a)\subseteq\mathcal H_P(\emptyset)$ by Lemma~\ref{lem:operators-arithmetic}(d). From condition (H3) for $P$ and Lemma~\ref{lem:operators-closure} we can now deduce
\begin{equation*}
h_1(n(P,a))\subseteq\mathcal H_P(\suppl_\alpha(a))\subseteq\mathcal H_P(\emptyset)\subseteq\mathcal H_t(\emptyset),
\end{equation*}
as required by condition~(ii) of Definition~\ref{def:t-collapsing}. Invoking condition (C2) for $P$ we also get $d(n(P,a))\leq d(P)\leq 2$, completing the verification that $n(P,a)$ is $t$-collapsing. Let us now establish condition (L) for $\mathcal C_tP$: Even though $n(P,a)$ may not be a subterm of $P$ we can use condition (H1) for $n(P,a)$, since the latter is established in part~(i') of the induction mentioned in Remark~\ref{rmk:codes-methodology}. With the above we get
\begin{equation*}
o_{\langle\rangle}(n(P,a))\in\mathcal H_{n(P,a)}(\emptyset)\subseteq\mathcal H_t(\emptyset).
\end{equation*}
Also note that condition (L) for $P$ provides $o_{\langle\rangle}(n(P,a))<o_{\langle\rangle}(P)$. Using the fact that $n(P,a)$ is $t$-collapsing and Proposition~\ref{prop:operators-collapse}(d) we can infer
\begin{equation*}
o_{\langle\rangle}(n(\mathcal C_tP,a))=o_{\langle\rangle}(\mathcal C_t n(P,a))=\bar\vartheta(t+\omega^{o_{\langle\rangle}(n(P,a))})<\bar\vartheta(t+\omega^{o_{\langle\rangle}(P)})=o_{\langle\rangle}(\mathcal C_tP),
\end{equation*}
as required by condition (L) for $\mathcal C_tP$. The remaining verifications are similar.

\emph{Case $r_{\langle\rangle}(P)=(\bigvee,\varphi,b)$:} We set
\begin{equation*}
r_{\langle\rangle}(\mathcal C_tP)=(\bigvee,\varphi,b),\qquad n(\mathcal C_tP,a)=\mathcal C_t n(P,a).
\end{equation*}
The crucial observation is that the side condition $\suppl_\alpha(b)\lef o_{\langle\rangle}(\mathcal C_tP)$ of condition (L) is preserved: Using condition (H2) for $P$ and the fact that $P$ is $t$-collapsing we get
\begin{equation*}
\suppl_\alpha(b)\subseteq\mathcal H_P(\emptyset)\subseteq\mathcal H_t(\emptyset).
\end{equation*}
In view of $\suppl_\alpha(b)\subseteq\alpha\cong\varepsilon(S)^u_\alpha\cap\Omega$ and $t<t+\omega^{o_{\langle\rangle}(P)}$ Proposition~\ref{prop:operators-collapse}(c) yields
\begin{equation*}
\suppl_\alpha(b)\lef\bar\vartheta(t+\omega^{o_{\langle\rangle}(P)})=o_{\langle\rangle}(\mathcal C_tP),
\end{equation*}
as required. The other conditions are shown as in the previous case. In particular one should observe that $n(P,0)$ is $t$-collapsing, so that $n(\mathcal C_tP,0)=\mathcal C_t n(P,0)$ is evaluated according to the intended case.

\emph{Case $r_{\langle\rangle}(P)=(\cut,\varphi)$ with a disjunctive formula $\varphi$:} Invoking condition (C1) for $P$ and the fact that $P$ is $t$-collapsing we see
\begin{equation*}
\rk(\varphi)<d(P)\leq 2.
\end{equation*}
By the previous lemma it follows that $\varphi$ is a $\Sigma(\alpha)$-formula. As condition (L) for $P$ provides $l_{\langle\rangle}(n(P,1))\subseteq l_{\langle\rangle}(P),\varphi$ we can conclude that $n(P,1)$ is $t$-collapsing, as in the previous cases. The problem is that $l_{\langle\rangle}(n(P,0))\subseteq l_{\langle\rangle}(P),\neg\varphi$ may not consist of $\Sigma(\alpha)$-formulas. Before we can collapse $n(P,0)$ we must use Lemma~\ref{lem:boundedness} to restrict the $\Pi(\alpha)$-formula $\neg\varphi$ to a bounded formula $\neg\varphi^{\bar\vartheta(s)}$. To reapply a cut we must also restrict $\varphi$ to $\varphi^{\bar\vartheta(s)}$, but in this case after collapsing. This leads to the clauses
\begin{equation*}
r_{\langle\rangle}(\mathcal C_tP)=(\cut,\varphi^{\bar\vartheta(s)}),\qquad n(\mathcal C_tP,a)=\begin{cases}
\mathcal B_{\varphi,\bar\vartheta(s)}\mathcal C_t n(P,1) & \text{if $a=1$},\\
\mathcal C_s\mathcal B_{\neg\varphi,\bar\vartheta(s)} n(P,a) & \text{otherwise},
\end{cases}
\end{equation*}
where we set
\begin{equation*}
s=t+\omega^{o_{\langle\rangle}(n(P,1))}.
\end{equation*}
Note that $r_{\langle\rangle}(\mathcal C_tP)$ and $n(\mathcal C_tP,a)$ depend on $o_{\langle\rangle}(n(P,1))$. This is permitted even though $n(P,1)$ may not be a subterm of $P$, since the functions $r_{\langle\rangle}$ and $n$ are defined in part~(ii) of the recursion mentioned in Remark~\ref{rmk:codes-methodology}. Let us verify condition (L) for~$\mathcal C_tP$: Above we have observed that $n(P,1)$ is $t$-collapsing. Similarly to the previous cases we can infer
\begin{multline*}
o_{\langle\rangle}(n(\mathcal C_tP,1))=o_{\langle\rangle}(\mathcal B_{\varphi,\bar\vartheta(s)}\mathcal C_t n(P,1))=o_{\langle\rangle}(\mathcal C_t n(P,1))=\\
=\bar\vartheta(t+\omega^{o_{\langle\rangle}(n(P,1))})<\bar\vartheta(t+\omega^{o_{\langle\rangle}(P)})=o_{\langle\rangle}(\mathcal C_tP).
\end{multline*}
In particular we have $o_{\langle\rangle}(\mathcal C_t n(P,1))\leq\bar\vartheta(s)$, so that $\mathcal B_{\varphi,\bar\vartheta(s)}\mathcal C_t n(P,1)$ is evaluated according to the intended case. Using condition (L) for $P$ we get
\begin{multline*}
l_{\langle\rangle}(n(\mathcal C_tP,1))=l_{\langle\rangle}(\mathcal B_{\varphi,\bar\vartheta(s)}\mathcal C_t n(P,1))=(l_{\langle\rangle}(n(P,1))\backslash\{\varphi\})\cup\{\varphi^{\bar\vartheta(s)}\}\subseteq\\
\subseteq l_{\langle\rangle}(P)\cup\{\varphi^{\bar\vartheta(s)}\}=l_{\langle\rangle}(\mathcal C_tP)\cup\{\varphi^{\bar\vartheta(s)}\},
\end{multline*}
as condition (L) for $\mathcal C_tP$ demands. To establish the rest of condition (L) we must show that we have $s\in\mathcal H_s(\emptyset)$ and that $\mathcal B_{\neg\varphi,\bar\vartheta(s)}n(P,0)$ is $s$-collapsing: Since $\neg\varphi$ is a $\Pi(\alpha)$-formula we know that $\mathcal B_{\neg\varphi,\bar\vartheta(s)}n(P,0)$ is evaluated according to the intended case, independently of the ordinal height $o_{\langle\rangle}(n(P,0))$. We get
\begin{equation*}
l_{\langle\rangle}(\mathcal B_{\neg\varphi,\bar\vartheta(s)}n(P,0))=(l_{\langle\rangle}(n(P,0))\backslash\{\neg\varphi\})\cup\{\neg\varphi^{\bar\vartheta(s)}\}\subseteq l_{\langle\rangle}(P)\cup\{\neg\varphi^{\bar\vartheta(s)}\}.
\end{equation*}
Since the formula $\neg\varphi^{\bar\vartheta(s)}$ is bounded we learn that $l_{\langle\rangle}(\mathcal B_{\neg\varphi,\bar\vartheta(s)}n(P,0))$ consists of $\Sigma(\alpha)$-formulas. Condition (H3) for $P$ yields
\begin{equation*}
h_0(\mathcal B_{\neg\varphi,\bar\vartheta(s)}n(P,0))=h_0(n(P,0))\leq h_0(P)\leq t\leq s.
\end{equation*}
Using condition (H1) for $n(P,1)$ we get $o_{\langle\rangle}(n(P,1))\in\mathcal H_{n(P,1)}(\emptyset)\subseteq\mathcal H_t(\emptyset)$. Together with $t\in\mathcal H_t(\emptyset)$ this implies
\begin{equation*}
s\in\mathcal H_t(\emptyset)\subseteq\mathcal H_s(\emptyset)
\end{equation*}
and then
\begin{equation*}
h_1(\mathcal B_{\neg\varphi,\bar\vartheta(s)}n(P,0))=h_1(n(P,0))\cup\{\bar\vartheta(s)\}\subseteq\mathcal H_s(\emptyset).
\end{equation*}
Condition (C2) for $P$ yields
\begin{equation*}
d(\mathcal B_{\neg\varphi,\bar\vartheta(s)}n(P,0))=d(n(P,0))\leq d(P)\leq 2,
\end{equation*}
completing the proof that $\mathcal B_{\neg\varphi,\bar\vartheta(s)}n(P,0)$ is $s$-collapsing. Due to this fact we have
\begin{equation*}
o_{\langle\rangle}(n(\mathcal C_tP,0))=o_{\langle\rangle}(\mathcal C_s\mathcal B_{\neg\varphi,\bar\vartheta(s)} n(P,0))=\bar\vartheta(s+\omega^{o_{\langle\rangle}(n(P,0))}).
\end{equation*}
By condition (H1) for $n(P,0)$ we get $o_{\langle\rangle}(n(P,0))\in\mathcal H_{n(P,0)}(\emptyset)\subseteq\mathcal H_t(\emptyset)$ and then
\begin{equation*}
\bar\vartheta(s+\omega^{o_{\langle\rangle}(n(P,0))})\in\mathcal H_{s+\omega^{o_{\langle\rangle}(n(P,0))}}(\emptyset)\cap\Omega.
\end{equation*}
Condition (L) for $P$ provides $o_{\langle\rangle}(n(P,i))<o_{\langle\rangle}(P)$ for $i=0,1$. By Lemma~\ref{lem:ordinal-addition} we can conclude
\begin{equation*}
s+\omega^{o_{\langle\rangle}(n(P,0))}=t+(\omega^{o_{\langle\rangle}(n(P,1))}+\omega^{o_{\langle\rangle}(n(P,0))})<t+\omega^{o_{\langle\rangle}(P)}.
\end{equation*}
Now Proposition~\ref{prop:operators-collapse}(c) yields
\begin{equation*}
o_{\langle\rangle}(n(\mathcal C_tP,0))=\bar\vartheta(s+\omega^{o_{\langle\rangle}(n(P,0))})<\bar\vartheta(t+\omega^{o_{\langle\rangle}(P)})=o_{\langle\rangle}(\mathcal C_tP).
\end{equation*}
To complete the verification of condition (L) we observe
\begin{multline*}
l_{\langle\rangle}(n(\mathcal C_tP,0))=l_{\langle\rangle}(\mathcal C_s\mathcal B_{\neg\varphi,\bar\vartheta(s)} n(P,0))=l_{\langle\rangle}(\mathcal B_{\neg\varphi,\bar\vartheta(s)} n(P,0))\subseteq\\
\subseteq l_{\langle\rangle}(P)\cup\{\neg\varphi^{\bar\vartheta(s)}\}=l_{\langle\rangle}(\mathcal C_tP)\cup\{\neg\varphi^{\bar\vartheta(s)}\}.
\end{multline*}
Since the formula $\varphi^{\bar\vartheta(s)}$ is bounded we have
\begin{equation*}
\rk(\varphi^{\bar\vartheta(s)})=0<1=d(\mathcal C_tP),
\end{equation*}
as condition (C1) for $\mathcal C_tP$ demands. The remaining verifications are straightforward.

\emph{Case $r_{\langle\rangle}(P)=(\cut,\varphi)$ with a conjunctive formula $\varphi$:} Analogous to the previous case we set $s=t+\omega^{o_{\langle\rangle}(n(P,0))}$ and
\begin{equation*}
r_{\langle\rangle}(\mathcal C_tP)=(\cut,\varphi^{\bar\vartheta(s)}),\qquad n(\mathcal C_tP,a)=\begin{cases}
\mathcal B_{\neg\varphi,\bar\vartheta(s)}\mathcal C_t n(P,0) & \text{if $a=0$},\\
\mathcal C_s\mathcal B_{\varphi,\bar\vartheta(s)} n(P,a) & \text{otherwise}.
\end{cases}
\end{equation*}
Local correctness is verified as before.

\emph{Case $r_{\langle\rangle}(P)=(\refl,\exists_w\forall_{x\in b}\exists_{y\in w}\theta)$:} Using condition (L) for $P$ we see that the sequent $l_{\langle\rangle}(n(P,0))\subseteq l_{\langle\rangle}(P)\cup\{\forall_{x\in b}\exists_y\theta\}$ consists of $\Sigma(\alpha)$-formulas. As before we can deduce that $n(P,0)$ is $t$-collapsing. Let us set
\begin{equation*}
\gamma=\bar\vartheta(t+\omega^{o_{\langle\rangle}(n(P,0))})=o_{\langle\rangle}(\mathcal C_t n(P,0)).
\end{equation*}
Note that we cannot reapply the reflection rule to $\mathcal C_t n(P,0)$, since it requires ordinal height at least $\Omega$. Instead we invoke Lemma~\ref{lem:boundedness} to obtain
\begin{multline*}
l_{\langle\rangle}(\mathcal B_{\forall_{x\in b}\exists_y\theta,\gamma}\mathcal C_t n(P,0))=(l_{\langle\rangle}(n(P,0))\backslash\{\forall_{x\in b}\exists_y\theta\})\cup\{\forall_{x\in b}\exists_{y\in L^u_\gamma}\theta\}\subseteq\\
\subseteq l_{\langle\rangle}(P)\cup\{\forall_{x\in b}\exists_{y\in L^u_\gamma}\theta\}=l_{\langle\rangle}(\mathcal C_tP)\cup\{\forall_{x\in b}\exists_{y\in L^u_\gamma}\theta\}.
\end{multline*}
Using the existential witness $L^u_\gamma$ we can reintroduce the conclusion $\exists_w\forall_{x\in b}\exists_{y\in w}\theta$ of the reflection rule (recall that $\theta$ may contain $x$ and $y$ but not $w$). Officially, this idea is implemented by the clauses
\begin{equation*}
r_{\langle\rangle}(\mathcal C_tP)=(\bigvee,\exists_w\forall_{x\in b}\exists_{y\in w}\theta,L^u_\gamma),\qquad n(\mathcal C_tP,a)=\mathcal B_{\forall_{x\in b}\exists_y\theta,\gamma}\mathcal C_t n(P,a).
\end{equation*}
As in the case of a cut rule we get
\begin{equation*}
\gamma\in\mathcal H_{t+\omega^{o_{\langle\rangle}(n(P,0))}}(\emptyset)\cap\Omega.
\end{equation*}
In view of $t+\omega^{o_{\langle\rangle}(n(P,0))}<t+\omega^{o_{\langle\rangle}(P)}$ we can use Proposition~\ref{prop:operators-collapse}(c) to conclude
\begin{equation*}
\suppl_\alpha(L^u_\gamma)=\{\gamma\}\lef\bar\vartheta(t+\omega^{o_{\langle\rangle}(P)})=o_{\langle\rangle}(\mathcal C_tP),
\end{equation*}
as required by condition (L) for $\mathcal C_tP$. We can also infer
\begin{equation*}
\suppl_\alpha(L^u_\gamma)\subseteq\mathcal H_{t+\omega^{o_{\langle\rangle}(P)}}(\emptyset)=\mathcal H_{\mathcal C_tP}(\emptyset),
\end{equation*}
as condition (H2) for $\mathcal C_tP$ demands. The remaining verifications are straightforward. 

\emph{Case $r_{\langle\rangle}(P)=(\rep,b)$:} We set
\begin{equation*}
r_{\langle\rangle}(\mathcal C_tP)=(\rep,b),\qquad n(\mathcal C_tP,a)=\mathcal C_t n(P,a).
\end{equation*}
Crucially, condition (H2) for $P$ ensures $\suppl_\alpha(b)\subseteq\mathcal H_P(\emptyset)$. Using condition~(H3) for $P$ and the fact that $P$ is $t$-collapsing we can deduce
\begin{equation*}
h_1(n(P,b))\subseteq\mathcal H_P(\suppl_\alpha(b))\subseteq\mathcal H_P(\emptyset)\subseteq\mathcal H_t(\emptyset).
\end{equation*}
Based on this observation it is straightforward to show that $n(P,b)$ is $t$-collapsing. The local correctness conditions can now be verified as in the previous cases.
\end{proof}

Our ordinal analysis culminates in the following soundness result:

\begin{corollary}
Assume that the $(u,\alpha)$-code $P$ is $t$-controlled, for some $t\in\varepsilon(S)^u_\alpha$ with $t\in\mathcal H_t(\emptyset)$. Then we have $\mathbb L^u_\alpha\vDash\varphi$ for some formula $\varphi\in l_{\langle\rangle}(P)$.
\end{corollary}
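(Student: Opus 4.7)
The strategy is to reduce to Proposition~\ref{prop:soundness-below-Omega}: first iterate cut elimination to push the cut rank of $P$ down to at most $2$, obtaining a $t$-collapsing code; then apply $\mathcal C_t$ to push the ordinal height of the root below $\Omega$; finally invoke Proposition~\ref{prop:soundness-below-Omega} to read off a satisfied formula from the end-sequent.

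Concretely, set $k=\max\{d(P)-2,0\}$ and let $P'=\mathcal E^k P$. From $d(\mathcal E Q)=\max\{2,d(Q)-1\}$ an easy induction on $k$ gives $d(P')\le 2$. At every application of $\mathcal E$ the end-sequent is preserved, so $l_{\langle\rangle}(P')=l_{\langle\rangle}(P)$ still consists of $\Sigma(\alpha)$-formulas. The clauses $h_0(\mathcal EQ)=h_0(Q)$ and $h_1(\mathcal EQ)=h_1(Q)$ from Definition~\ref{def:operators-for-proofs} also propagate, yielding $h_0(P')=h_0(P)\le t$ and $h_1(P')=h_1(P)\subseteq\mathcal H_t(\emptyset)$. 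Hence $P'$ is $t$-controlled with $d(P')\le 2$, i.e.\ $t$-collapsing in the sense of Definition~\ref{def:t-collapsing}.

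Next, apply Theorem~\ref{thm:collapsing} and form $P''=\mathcal C_t P'$. Since $t\in\mathcal H_t(\emptyset)$ and $P'$ is $t$-collapsing, the ``intended'' case of the theorem applies and
\begin{equation*}
o_{\langle\rangle}(P'')=\bar\vartheta\bigl(t+\omega^{o_{\langle\rangle}(P')}\bigr)\in\varepsilon(S)^u_\alpha\cap\Omega,
\end{equation*}
while $l_{\langle\rangle}(P'')=l_{\langle\rangle}(P)$. By Proposition~\ref{prop:codes-to-proofs} the interpretation $[P'']$ is a $(u,\alpha)$-proof whose root carries the sequent $l_{\langle\rangle}(P)$ and has ordinal height $o_{\langle\rangle}(P'')<\Omega$. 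Proposition~\ref{prop:soundness-below-Omega} therefore yields an $\mathbf L^u_\alpha$-formula $\varphi\in l_{\langle\rangle}(P)$ with $\mathbb L^u_\alpha\vDash\varphi$, as required.

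The conceptual content has been discharged in Theorem~\ref{thm:collapsing}; the only thing to check here is that the $\mathcal E$-operator preserves $t$-control, which is immediate from the clauses for $h_0,h_1$ and the preservation of end-sequents. Thus no genuine obstacle remains, and the corollary follows by composing the already constructed operations $\mathcal E^k$ and~$\mathcal C_t$.
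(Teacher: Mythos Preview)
Your proposal is correct and follows essentially the same approach as the paper: iterate $\mathcal E$ to bring the cut rank down to at most $2$, observe that $t$-control is preserved (so the result is $t$-collapsing), apply $\mathcal C_t$ to push the height below $\Omega$, and finish with Proposition~\ref{prop:soundness-below-Omega}. The only cosmetic difference is that the paper iterates $\mathcal E$ exactly $d(P)$ times rather than your tighter $\max\{d(P)-2,0\}$ times, but this is immaterial.
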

\begin{proof}
Form the $(u,\alpha)$-code $\mathcal E^{d(P)}P=\mathcal E\cdots\mathcal EP$ with $d(P)$ occurrences of the function symbol $\mathcal E$. By Proposition~\ref{prop:cut-elimination} we have $d(\mathcal E^{d(P)}P)\leq 2$. Since $P$ is $t$-controlled we can infer that $\mathcal E^{d(P)}P$ is $t$-collapsing. Thus the previous theorem yields
\begin{equation*}
o_{\langle\rangle}(\mathcal C_t\mathcal E^{d(P)}P)=\bar\vartheta(t+\omega^{o_{\langle\rangle}(\mathcal E^{d(P)}P)})<\Omega.
\end{equation*}
According to Proposition~\ref{prop:codes-to-proofs} and Remark~\ref{rmk:codes-methodology} the $(u,\alpha)$-code $\mathcal C_t\mathcal E^{d(P)}P$ is interpreted as a $(u,\alpha)$-proof $[\mathcal C_t\mathcal E^{d(P)}P]$. In view of Definition~\ref{def:code-reconstruct-proof} it has ordinal height
\begin{equation*}
o_{\mathcal C_t\mathcal E^{d(P)}P}(\langle\rangle)=o_{\langle\rangle}(\bar n(\mathcal C_t\mathcal E^{d(P)}P,\langle\rangle))=o_{\langle\rangle}(\mathcal C_t\mathcal E^{d(P)}P)<\Omega.
\end{equation*}
Now Proposition~\ref{prop:soundness-below-Omega} yields $\mathbb L^u_\alpha\vDash\varphi$ for some formula
\begin{equation*}
\varphi\in l_{\mathcal C_t\mathcal E^{d(P)}P}(\langle\rangle)=l_{\langle\rangle}(\mathcal C_t\mathcal E^{d(P)}P)=l_{\langle\rangle}(P),
\end{equation*}
as desired.
\end{proof}

Putting things together we can prove the main result of our paper:

\begin{theorem}\label{thm:main-abstract}
 The following are equivalent over $\atrs$:
 \begin{enumerate}[label=(\roman*)]
  \item The principle of $\Pi^1_1$-comprehension.
  \item The statement that every set is an element of an admissible set.
  \item The abstract Bachmann-Howard principle, which states that every dilator has a well-founded Bachmann-Howard fixed point.
 \end{enumerate}
\end{theorem}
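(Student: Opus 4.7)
The plan is to assemble the scaffolding of Sections 3--8 into the missing direction (iii)$\Rightarrow$(ii); the equivalence (i)$\Leftrightarrow$(ii) is already cited from J\"ager and the author's thesis, while (ii)$\Rightarrow$(iii) is precisely Theorem~\ref{thm:admissible-to-bhp}. So the task is: assuming the abstract Bachmann-Howard principle, show that an arbitrary set $x$ lies in some admissible set.

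First I would pick a parameter $u$ satisfying Assumption~\ref{ass:properties-u} with $x\in u$. Using axiom beta and countability of $\atrs$, a canonical choice is the transitive closure of $\{x,\omega\}$, padded up a couple of levels (e.g.~by iterating $v\mapsto v\cup\{v\}$) so that the height is a successor ordinal above $\omega$, and equipped with any enumeration. With this $u$ fixed, Theorem~\ref{thm:admissible-set-dilator} gives a dichotomy: either there is already an admissible set $\mathbb A\supseteq u\ni x$, and we are done, or the construction of search trees yields a dilator $(S^u,\supps)$. In the second case, I will derive a contradiction from (iii).

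Assuming $S^u$ is a dilator, Theorem~\ref{thm:eps-dilator} upgrades it to a dilator $\varepsilon(S)^u$. Invoking the abstract Bachmann-Howard principle, I obtain an ordinal $\alpha$ and a Bachmann-Howard collapse $\vartheta:\varepsilon(S)^u_\alpha\rightarrow\alpha$, which is exactly Assumption~\ref{ass:bachmann-howard-collapse}. This activates Sections~6--8: in particular, Proposition~\ref{prop:proof-from-search-tree} provides the basic $(u,\alpha)$-code $P^u_\alpha\langle\rangle$ whose interpretation is a $(u,\alpha)$-proof with the \emph{empty} end-sequent $l_{\langle\rangle}(P^u_\alpha\langle\rangle)=\langle\rangle$. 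I now want to apply the soundness corollary following Theorem~\ref{thm:collapsing}, which reads: if $P$ is $t$-controlled for some $t\in\mathcal H_t(\emptyset)$, then $\mathbb L^u_\alpha\vDash\varphi$ for some $\varphi\in l_{\langle\rangle}(P)$. Since the end-sequent of $P^u_\alpha\langle\rangle$ is empty, such a $\varphi$ cannot exist, so verifying $t$-controlledness will deliver the desired contradiction.

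The verification is a short check against Definition~\ref{def:t-collapsing}. Take $t=\mathfrak E_{\langle\rangle}=o_{\langle\rangle}(P^u_\alpha\langle\rangle)$. Since $\suppe_\alpha(\mathfrak E_{\langle\rangle})=\supps_\alpha(\langle\rangle)=\emptyset$, Lemma~\ref{lem:operators-basic}(b) gives $t\in\mathcal H_t(\emptyset)$. From Definition~\ref{def:operators-for-proofs} we have $h_0(P^u_\alpha\langle\rangle)=0\leq t$ and $h_1(P^u_\alpha\langle\rangle)=\supps_\alpha(\langle\rangle)=\emptyset\subseteq\mathcal H_t(\emptyset)$, and the (empty) end-sequent trivially consists of $\Sigma(\alpha)$-formulas; so the code is $t$-controlled. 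The cut rank bound needed for \emph{collapsing} is absorbed by the soundness corollary itself, which applies $\mathcal E^{d(P)}$ to reduce $d$ before collapsing via $\mathcal C_t$. Thus the soundness corollary applies and produces the contradiction, ruling out the second alternative of the dichotomy and leaving only the existence of an admissible set $\mathbb A\supseteq u\ni x$.

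The main intellectual obstacle was already overcome in Sections~6--8, where the local correctness conditions (L), (C1)--(C2), (H1)--(H3) had to be threaded through every new function symbol so that the collapse inequality $o_{\langle\rangle}(n(\mathcal C_tP,a))<o_{\langle\rangle}(\mathcal C_tP)$ survives the non-monotonicity of $\bar\vartheta$. For the theorem itself, the only remaining subtlety is the initial choice of $u$: it must genuinely contain $x$, be countable and transitive with $o(u)>\omega$ a successor, and provide a fixed enumeration. Once that is in place, the argument is a straightforward application of the machinery.
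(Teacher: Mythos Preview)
Your proposal is correct and follows essentially the same route as the paper, which takes $t=0$ rather than your $t=\mathfrak E_{\langle\rangle}$ (either choice works) and uses the specific padding $u=v\cup\{o(v)\}$ for $v=\operatorname{TC}(\{x,\omega\})$. One small slip: your suggested padding $v\mapsto v\cup\{v\}$ does not increase the ordinal height of a transitive set (since $v$ is typically not an ordinal), so something like the paper's move of adjoining $o(v)$ is actually required to force $o(u)$ to be a successor.
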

\begin{proof}
As pointed out in the introduction, the equivalence between (i) and~(ii) is known (see~\cite[Section~7]{jaeger-admissibles} and the additional verification in~\cite[Proposition~1.4.12]{freund-thesis}). In Theorem~\ref{thm:admissible-to-bhp} above we have shown that~(ii) implies~(iii). It remains to prove that (iii) implies~(ii): Given an arbitrary set $x$, we consider the transitive closure
\begin{equation*}
v=\operatorname{TC}(\{x,\omega\}).
\end{equation*}
In order to turn the height $o(v)=v\cap\ordi$ into a successor ordinal we set
\begin{equation*}
u=v\cup\{o(v)\}.
\end{equation*}
As $\atrs$ contains the axiom of countability we can enumerate $u=\{u_i\,|\,i\in\omega\}$. Thus $u$ satisfies Assumption~\ref{ass:properties-u}, upon which our construction of search trees was founded. According to Theorem~\ref{thm:admissible-set-dilator} it suffices to consider the following two cases: First assume that there is an admissible set $\mathbb A\supseteq u$. Then we have $x\in\mathbb A$, as required for claim~(ii) of the present theorem. Now assume that the search trees form a dilator $(S^u,\supps)$. In the rest of this proof we show that this contradicts claim~(iii). Theorem~\ref{thm:eps-dilator} tells us that $(\varepsilon(S)^u,\suppe)$ is a dilator as well. By~(iii) there is a well-order $X$ which is a Bachmann-Howard fixed point of $\varepsilon(S)^u$. Since $\atrs$ contains axiom beta we obtain an ordinal $\alpha\cong X$. Using the functoriality of $\varepsilon(S)^u$ and the naturality of $\suppe$ it is straightforward to check that $\alpha$ is a Bachmann-Howard fixed point of $\varepsilon(S)^u$ as well. Thus $\alpha$ satisfies Assumption~\ref{ass:bachmann-howard-collapse}, upon which we have based our ordinal analysis. By Proposition~\ref{prop:proof-from-search-tree} and Definition~\ref{def:proof-codes} the search tree $S^u_\alpha$ can be extended to a $(u,\alpha)$-proof $P^u_\alpha=(P^u_\alpha,l^u_\alpha,r^u_\alpha,o^u_\alpha)$, which is represented by the $(u,\alpha)$-code $P^u_\alpha\langle\rangle$. Together with Definition~\ref{def:operators-for-proofs} we have
\begin{equation*}
l_{\langle\rangle}(P^u_\alpha\langle\rangle)=l^u_\alpha(\langle\rangle)=\langle\rangle,\qquad h_0(P^u_\alpha\langle\rangle)=0,\qquad h_1(P^u_\alpha\langle\rangle)=\supps_\alpha(\langle\rangle)=\emptyset.
\end{equation*}
We can conclude that the $(u,\alpha)$-code $P^u_\alpha\langle\rangle$ is $0$-controlled. By the previous corollary there is a formula $\varphi\in l_{\langle\rangle}(P^u_\alpha\langle\rangle)$ with $\mathbb L^u_\alpha\vDash\varphi$. This, however, contradicts the fact that the sequent $l_{\langle\rangle}(P^u_\alpha\langle\rangle)=\langle\rangle$ is empty.
\end{proof}

As mentioned in the introduction, the equivalence between~(i) and an appropriate formalization of~(iii) also holds over the much weaker base theory~$\rca$. In order to show that this is the case, it suffices to establish that~(iii) implies arithmetical transfinite recursion. This is done in~\cite{freund-computable}. The same paper shows that any prae-dilator $T$ has a minimal Bachmann-Howard fixed point $\vartheta(T)$, which is computable relative to a representation of~$T$. The statement that $\vartheta(T)$ is well-founded for any dilator $T$ will be called the computable Bachmann-Howard principle. It is equivalent to its abstract counterpart, due to the minimality of $\vartheta(T)$. These improvements to Theorem~\ref{thm:main-abstract} are significant, because they show that $\Pi^1_1$-comprehension can be characterized by a computable transformation and a statement about the preservation of well-foundedness, over a base theory that does not itself introduce any non-computable sets.

\bibliographystyle{amsplain}
\bibliography{Bibliography_Freund}

\newcommand{\noopsort}[1]{}
\providecommand{\bysame}{\leavevmode\hbox to3em{\hrulefill}\thinspace}
\providecommand{\MR}{\relax\ifhmode\unskip\space\fi MR }
% \MRhref is called by the amsart/book/proc definition of \MR.
\providecommand{\MRhref}[2]{%
  \href{http://www.ams.org/mathscinet-getitem?mr=#1}{#2}
}
\providecommand{\href}[2]{#2}
\begin{thebibliography}{10}

\bibitem{rathjen-afshari}
Bahareh Afshari and Michael Rathjen, \emph{Reverse mathematics and
  well-ordering principles: {A} pilot study}, Annals of Pure and Applied Logic
  \textbf{160} (2009), 231--237.

\bibitem{barwise-admissible}
Jon Barwise, \emph{Admissible sets and structures}, Perspectives in
  Mathematical Logic, vol.~7, Springer, Berlin, 1975.

\bibitem{buchholz-inductive-dilator}
Wilfried Buchholz, \emph{Induktive {D}efinitionen und {D}ilatoren}, Archive for
  Mathematical Logic \textbf{27} (1988), 51--60.

\bibitem{buchholz91}
\bysame, \emph{Notation systems for infinitary derivations}, Archive for
  Mathematical Logic \textbf{30} (1991), 277--296.

\bibitem{buchholz-local-predicativity}
\bysame, \emph{A simplified version of local predicativity}, Proof Theory: A
  selection of papers from the Leeds Proof Theory Programme 1990 (Peter Aczel,
  Harold Simmons, and Stanley~S.\ Wainer, eds.), Cambridge University Press,
  1992, pp.~115--147.

\bibitem{buchholz-notations-set-theory}
\bysame, \emph{Finitary treatment of operator controlled derivations},
  Mathematical Logic Quarterly \textbf{3} (2001), 363--396.

\bibitem{freund-bh-preprint}
Anton Freund, \emph{A {H}igher {B}achmann-{H}oward {P}rinciple},
  2017\noopsort{2017c}, unpublished manuscript, available as arXiv:1704.01662.

\bibitem{freund-thesis}
\bysame, \emph{Type-{T}wo {W}ell-{O}rdering {P}rinciples, {A}dmissible {S}ets,
  and ${\Pi}^1_1$-{C}omprehension}, Ph{D} thesis, University of Leeds, 2018,
  available via \url{http://etheses.whiterose.ac.uk/20929/}.

\bibitem{freund-categorical}
\bysame, \emph{A categorical construction of {B}achmann-{H}oward fixed points},
  to appear in the Bulletin of the London Mathematical Society\noopsort{2018b},
  preprint available as arXiv:1809.06769.

\bibitem{freund-computable}
\bysame, \emph{Computable aspects of the {B}achmann-{H}oward principle}, to
  appear in the Journal of Mathematical Logic\noopsort{2018c}, preprint
  available as arXiv:1809.06774.

\bibitem{freund-rathjen_derivatives}
Anton Freund and Michael Rathjen, \emph{Derivatives of normal functions in
  reverse mathematics}, 2019, preprint available as arXiv:1904.04630.

\bibitem{friedman-mw}
Harvey Friedman, Antonio Montalb{\'a}n, and Andreas Weiermann, \emph{A
  characterization of $\mathbf{ATR}_0$ in terms of a {K}ruskal-like tree
  theorem}, unpublished.

\bibitem{girard-pi2}
Jean-Yves Girard, \emph{${\Pi^1_2}$-logic, part 1: Dilators}, Annals of Pure
  and Applied Logic \textbf{21} (1981), 75--219.

\bibitem{girard-intro}
\bysame, \emph{Introduction to ${\Pi^1_2}$-logic}, Synthese \textbf{62} (1985),
  191--216.

\bibitem{girard87}
\bysame, \emph{Proof theory and logical complexity, volume 1}, Studies in Proof
  Theory, Bibliopolis, Napoli, 1987.

\bibitem{girard-book-part2}
\bysame, \emph{Proof theory and logical complexity, volume 2},
  \url{http://girard.perso.math.cnrs.fr/Archives4.html} (accessed on November
  21, 2017), \noopsort{1987b}1982, Manuscript.

\bibitem{hirst94}
Jeffry~L. Hirst, \emph{Reverse mathematics and ordinal exponentiation}, Annals
  of Pure and Applied Logic \textbf{66} (1994), 1--18.

\bibitem{jaeger-KPN}
Gerhard J{\"a}ger, \emph{Beweistheorie von $\mathbf{KPN}$}, Archiv f{\"u}r
  mathematische {L}ogik und {G}rundlagenforschung \textbf{20} (1980), 53--64.

\bibitem{jaeger-kripke-platek}
\bysame, \emph{Zur {B}eweistheorie der {K}ripke-{P}latek-{M}engenlehre {\"u}ber
  den nat{\"u}rlichen {Z}ahlen}, Archiv f{\"u}r mathematische {L}ogik und
  {G}rundlagenforschung \textbf{22} (1982), 121--139.

\bibitem{jaeger86}
\bysame, \emph{Countable admissible ordinals and dilators}, Zeitschrift f\"ur
  mathematische Logik und Grundlagen der Mathematik \textbf{32} (1986),
  451--456.

\bibitem{jaeger-admissibles}
\bysame, \emph{Theories for {A}dmissible {S}ets. {A} {U}nifying {A}pproach to
  {P}roof {T}heory}, Studies in Proof Theory, Bibliopolis, Napoli, 1986.

\bibitem{jaeger-strahm-bi-reflection}
Gerhard J\"ager and Thomas Strahm, \emph{Bar induction and $\omega$ model
  reflection}, Annals of Pure and Applied Logic \textbf{97} (1999), 221--230.

\bibitem{jech03}
Thomas Jech, \emph{Set theory}, Springer Monographs in Mathematics, 2003.

\bibitem{jensen-karp}
Ronald~B. Jensen and Carol Karp, \emph{Primitive recursive set functions},
  Axiomatic Set Theory (Providence, Rhode Island) (Dana~S. Scott, ed.),
  Proceedings of Symposia in Pure Mathematics, vol. XIII, part~I, American
  Mathematical Society, 1971, pp.~143--176.

\bibitem{herbrand-symposium-stern}
Herman~R. Jervell, \emph{Introducing homogeneous trees}, Proceedings of the
  Herbrand Symposium. Logic Colloquium '81 (J.~Stern, ed.), Studies in Logic
  and the Foundations of Mathematics, vol. 107, North-Holland, 1982,
  pp.~147--158.

\bibitem{marcone-montalban}
Alberto Marcone and Antonio Montalb{\'a}n, \emph{The {V}eblen functions for
  computability theorists}, Journal of Symbolic Logic \textbf{76} (2011),
  575--602.

\bibitem{mints-continuous}
Grigori Mints, \emph{Finite investigations of transfinite derivations}, Journal
  of Soviet Mathematics \textbf{10} (1978), 548--596, translated from Zap.\
  Nauchn.\ Semin.\ LOMI \textbf{49} (1975).

\bibitem{montalban-draft}
Antonio Montalb{\'a}n, \emph{Ordinal functors and
  $\mathbf{\Pi^1_1-{}}\mathbf{CA_0}$}, December 2009, draft.

\bibitem{montalban-open-problems}
\bysame, \emph{Open questions in reverse mathematics}, Bulletin of Symbolic
  Logic \textbf{17} (2011), 431--454.

\bibitem{pohlers-proof-theory}
Wolfram Pohlers, \emph{Proof theory. {T}he first step into impredicativity},
  Springer, Berlin, 2009.

\bibitem{rathjen92}
Michael Rathjen, \emph{Fragments of {K}ripke-{P}latek set theory}, Proof
  Theory: A selection of papers from the Leeds Proof Theory Programme 1990
  (Peter Aczel, Harold Simmons, and Stanley~S.\ Wainer, eds.), Cambridge
  University Press, 1992, pp.~251--273.

\bibitem{rathjen-wops-chicago}
\bysame, \emph{Omega {M}odels and {W}ell-{O}rdering {P}rinciples, {B}eta
  {M}odels and {F}unctors, \emph{{T}alk at the {R}everse {M}athematics
  {W}orkshop, {U}niversity of {C}hicago}}, September 2011.

\bibitem{rathjen-atr}
\bysame, \emph{$\omega$-models and well-ordering principles}, Foundational
  Adventures: Essays in Honor of Harvey M.~Friedman (Neil Tennant, ed.),
  College Publications, 2014, pp.~179--212.

\bibitem{rathjen-model-bi}
Michael Rathjen and Pedro Francisco~Valencia Vizca\'{i}no, \emph{Well ordering
  principles and bar induction}, Gentzen's centenary: The quest for consistency
  (Reinhard Kahle and Michael Rathjen, eds.), Springer, Berlin, 2015,
  pp.~533--561.

\bibitem{rathjen-weiermann-kruskal}
Michael Rathjen and Andreas Weiermann, \emph{Proof-theoretic investigations on
  {K}ruskal's theorem}, Annals of Pure and Applied Logic \textbf{60} (1993),
  49--88.

\bibitem{rathjen-weiermann-atr}
\bysame, \emph{Reverse mathematics and well-ordering principles}, Computability
  in Context: Computation and Logic in the Real World (S.~Barry Cooper and
  Andrea Sorbi, eds.), Imperial College Press, 2011, pp.~351--370.

\bibitem{schuette56}
Kurt Sch{\"u}tte, \emph{Ein {S}ystem des verkn\"upfenden {S}chlie{\ss}ens},
  Archiv f\"ur mathematische Logik und Grundlagenforschung \textbf{2} (1956),
  55--67.

\bibitem{schuette64}
\bysame, \emph{Eine {G}renze für die {B}eweisbarkeit der transfiniten
  {I}nduktion in der ver\-zweigten {T}ypenlogik}, Archiv für mathematische
  Logik und Grundlagenforschung \textbf{7} (1964), 45--60.

\bibitem{schuette77}
\bysame, \emph{Proof theory}, Grundlehren der Mathematischen Wissenschaften,
  vol. 225, Springer, Berlin, 1977.

\bibitem{schwichtenberg77}
Helmut Schwichtenberg, \emph{Proof theory: Some applications of
  cut-elimination}, Handbook of Mathematical Logic (Jon Barwise, ed.), North
  Holland, 1977, pp.~867--895.

\bibitem{simpson82}
Stephen~G. Simpson, \emph{Set theoretic aspects of $\mathbf{ATR_0}$}, Logic
  Colloquium '80 (Dirk van Dalen, Daniel Lascar, and Timothy~J. Smiley, eds.),
  Studies in Logic and the Foundations of Mathematics, North Holland, 1982,
  pp.~255--271.

\bibitem{simpson09}
\bysame, \emph{Subsystems of second order arithmetic}, Perspectives in Logic,
  Cambridge University Press, 2009.

\bibitem{thomson-thesis}
Ian~Alexander Thomson, \emph{Well-{O}rdering {P}rinciples and
  ${\Pi}^1_1$-{C}omprehension $+$ {B}ar {I}nduction}, Ph{D} thesis, University
  of Leeds, 2017, available via \url{http://etheses.whiterose.ac.uk/22206/}.

\bibitem{thomson-rathjen-Pi-1-1}
Ian~Alexander Thomson and Michael Rathjen, \emph{Well-ordering principles,
  $\omega$-models and ${\Pi}^1_1$-comprehension}, The Legacy of Kurt
  Sch{\"u}tte (Reinhard Kahle and Michael Rathjen, eds.), Springer, to appear.

\end{thebibliography}

\end{document}